\documentclass[usenames,dvipsnames,letterpaper,11pt]{article} % 12 pt required for JASA
\usepackage{setspace}
%\doublespacing % double spacing required by JASA
\usepackage{amsmath}
\usepackage[T1]{fontenc}
\usepackage[margin=1in]{geometry}
\usepackage[numbers, semicolon]{natbib}
\bibliographystyle{apalike}
\setcitestyle{authoryear,open={(},close={)},aysep={}} 
\usepackage{amsfonts}
\usepackage{float}
\usepackage{graphicx}
\usepackage{tikz}
\usetikzlibrary{arrows,shapes,snakes,automata,backgrounds,petri,fit, positioning,patterns}
\usepackage{mathrsfs}
\usepackage{caption}
\usepackage{subcaption}
\usepackage[titles,subfigure]{tocloft}
\usepackage{layouts}
\usepackage{hyperref}
\usepackage[normalem]{ulem}
\usepackage{authblk}
\usepackage{bm}
\usepackage{hyperref}
\DeclareMathOperator*{\argmax}{arg\,max}
%\hypersetup{
%    colorlinks=true,
%    linkcolor=blue,
%    filecolor=magenta,      
%    urlcolor=cyan,
%}
%\urlstyle{same}
\newcommand{\simiid}{\stackrel{\textnormal{i.i.d.}}{\sim}}
\usepackage[ruled,procnumbered]{algorithm2e}
\usepackage{amsthm}
\usepackage{lscape}
\allowdisplaybreaks

\newtheorem{thm}{Theorem}
\newtheorem{defi}{Definition}

\newtheorem{lemma}{Lemma}

\newtheorem{model}{Setting}

\newtheorem{conjecture}{Conjecture}

\theoremstyle{remark}
\newtheorem{remark}{Remark}

\newcommand{\rr}{\mathbb R}

\newcommand{\p}{\mathbb P}
\newcommand{\ind}{\mathbf{1}}
\newcommand{\di}[1]{\mathop{d#1}}
\newcommand{\Xno}[1]{X_{\text{-}{#1}}}

\newcommand{\ndata}[1]{\mathbf{#1}}
\DeclareMathOperator{\Var}{Var}
\DeclareMathOperator{\Cov}{Cov}
\DeclareMathOperator{\Corr}{Corr}

\DeclareMathOperator{\Unif}{Unif}

\DeclareMathOperator{\Bern}{Bern}

\DeclareMathOperator{\trace}{tr}
\DeclareMathOperator{\TV}{TV}
\DeclareMathOperator*{\argmin}{argmin}
\newcommand{\e}{\mathbb E}

\newcommand{\ci}{\mathrel{\perp\mspace{-10mu}\perp}}

\newcommand{\eqd}{\stackrel{d}{=}}
\newcommand{\cid}{\stackrel{d}{\to}}

\newcommand{\cip}{\stackrel{p}{\to}}
\newcommand{\cas}{\stackrel{\textnormal{a.s.}}{\to}}

\usepackage{etoolbox}

\makeatletter
\AtBeginEnvironment{procedure}{\let\c@algocf\c@procedure}
\makeatother

\title{A Power Analysis of the\\ Conditional Randomization Test and Knockoffs}
%\title{A Power Analysis of Knockoffs\\ and the Conditional Randomization Test}
%\author{Wenshuo Wang and Lucas Janson\\Department of Statistics, Harvard University}
\date{}
\author{Wenshuo Wang}
\author{Lucas Janson}
%\date{\today}
\affil{Department of Statistics, Harvard University}

\begin{document}

\maketitle

\begin{abstract}
In many scientific problems, researchers try to relate a response variable $Y$ to a set of potential explanatory variables $X = (X_1,\dots,X_p)$, and start by trying to identify variables that contribute to this relationship. In statistical terms, this goal can be posed as trying to identify $X_j$'s upon which $Y$ is conditionally dependent. %In statistical terms, we can test the conditional independence of a response $Y$ and a covariate $X_j$ given the other covariates $X_{\text{-}j}$.
Sometimes it is of value to simultaneously test for each $j$, which is more commonly known as variable selection. The conditional randomization test (CRT) and model-X knockoffs are two recently proposed methods that respectively perform conditional independence testing and variable selection by, for each $X_j$, computing any test statistic on the data and assessing that test statistic's significance by comparing it to test statistics computed on synthetic variables generated using knowledge of $X$'s distribution. %are within the model-X framework, which refers to the setting where the main effort is devoted to modeling $X$ for valid inference regardless of the conditional distribution of $Y$ given $X$.
%While these methods have been actively studied, little theory on their power has been established.
Our main contribution is to analyze their power in a high-dimensional linear model where the ratio of the dimension $p$ and the sample size $n$ converge to a positive constant. We give explicit expressions of the asymptotic power of the CRT, variable selection with CRT $p$-values, and model-X knockoffs, each with a test statistic based on either the marginal covariance, the least squares coefficient, or the lasso. One useful application of our analysis is the direct theoretical comparison of the asymptotic powers of variable selection with CRT $p$-values and model-X knockoffs; in the instances with independent covariates that we consider, the CRT provably dominates knockoffs. We also analyze the power gain from using unlabeled data in the CRT when limited knowledge of $X$'s distribution is available, and the power of the CRT when samples are collected retrospectively.
\end{abstract}

{\small {\bf Keywords.} Conditional randomization testing, knockoffs, Benjamini--Hochberg, model-X, retrospective sampling, approximate message passing. }

%\tableofcontents
\section{Introduction}

\subsection{The conditional randomization test and model-X knockoffs}
Analyzing the statistical relationship between random variables lies at the heart of many practical problems. For example, in clinical trials, doctors aim to determine whether a certain treatment has any effect on the patient's health. In genome-wide association studies (GWAS), researchers seek to understand which genes directly contribute to a trait of interest. Many such modern statistical problems are set up in high dimensions, partly because scientific advances have allowed us to easily collect a large number of covariates. %In high dimensions, valid inference for even the simplest linear regression model usually requires strong sparsity assumption on the unknown nuisance parameters.

\citet{candes2018panning} proposed two methods to identify important variables: the conditional randomization test (CRT) for testing conditional independence, and model-X knockoffs, or simply knockoffs, for variable selection while controlling the false discovery rate (FDR).
Coined in \citet{candes2018panning}, ``model-X'' refers to a framework where inference is conducted by making as many assumptions on the distribution of $X$ (covariates) as possible and as few assumptions on the conditional distribution of $Y$ (outcome) given $X$ as possible. While the CRT and knockoffs have gained the interest of many researchers, there has been limited theoretical work on their power. %\sout{This article aims to bridge this gap by analyzing the power of these methods.} 

\subsection{Our contribution}
This article analyzes the CRT for single hypothesis testing, its generalization for multiple hypothesis testing, and knockoffs for variable selection \citep{candes2018panning}. 
%We also study a variable selection method by combining CRT $p$-values with \rev{multiple testing procedures, such as} the Benjamini--Hochberg (BH) procedure. 
We mainly study the asymptotic performance of the CRT and knockoffs with different choices of popular test statistics. %\rev{Our theoretical results allow power computation, which can guide practitioners to pick a test statistic and help them choose between the CRT, knockoffs, and other methods. [Not sure if this sentence is necessary since we already discussed this in the previous section.]}
Power analysis is beneficial for various reasons. First, it is useful for determining how many samples one would like to collect beforehand in order to achieve a certain target power in a given experiment. Second, %it saves time for simulations. Third, it allows a comparison with other methods and oracle.
it allows direct comparison between methods in infinitely many data-generating distributions without the need for any simulations.
For CRT and knockoffs, power analysis is particularly important for two reasons: (a) both methods act as wrappers around a chosen test statistic, and our theory can be used to choose among test statistics according to their power for a given data-generating distribution; (b) both methods provide particularly easy ways to leverage unlabeled data and also apply directly to retrospectively sampled data, though the impact of the unlabeled data or the retrospective sampling scheme on power has not been studied.

Our results are in the setting of high-dimensional linear regression, where the ratio of the numbers of observations and covariates converges to a positive constant and the covariates follow a multivariate Gaussian distribution $\mathcal N(0,\Sigma)$. Our main results are:
\begin{enumerate}
    \item We give explicit expressions for the asymptotic power of the CRT when the %\sout{ordinary least square (OLS) coefficient or the marginal covariance is chosen as the test statistic. We show that when the OLS coefficient is used, there is no difference in power for the fixed-X and model-X methods, while the model-X method can potentially get a higher power if there is additional unlabeled data. Assuming the covariates are independent and identically distributed (i.i.d.) Gaussian random variables, we are able to derive the asymptotic power of CRT using the distilled lasso statistic, which was proposed in \mbox{\citet{liu2020constructing}} to accelerate the CRT and has demonstrated the highest power in their simulations. We are then able to theoretically compare its power to its main competitors.}
    test statistic is derived from the marginal covariance, ordinary least square (OLS) coefficient, or the lasso \citep{tibshirani96regression}. We also prove bounds on and conjecture an exact expression for the asymptotic power of the CRT with marginal covariance test statistic when finite unlabeled samples are incorporated.
    
    \item When  $\Sigma=I$, i.e., the covariates are independent and identically distributed (i.i.d.) Gaussian random variables, we characterize the asymptotic power of the Benjamini--Hochberg (BH) procedure and the adaptive $p$-value thresholding (AdaPT) \citep{lei2018adapt} procedure applied to the CRT $p$-values given by the aforementioned three statistics. In the same setting, we also show all of these procedures asymptotically control the FDR at the nominal level.
    \item When $\Sigma=I$, we derive the asymptotic power of knockoffs using statistics derived from the marginal covariance, the OLS coefficient, or the lasso. We show that knockoffs is asymptotically equivalent to applying the AdaPT procedure to CDF-transformed knockoff statistics, and thus we can directly compare knockoffs' power with our earlier results on the CRT. %, and we find that applying AdaPT to CRT $p$-values effectively multiplies the signal strength by a factor of at least $\sqrt2$ compared to knockoffs for all three statistics we consider.
    \item We extend the above three contributions with the marginal covariance test statistic to retrospectively sampled data, showing that the resulting effective signal strength is an explicit function of the marginal second moment of the retrospectively sampled $Y$.
\end{enumerate}

We demonstrate that our asymptotic power expressions are quite accurate in finite samples, and the CRT and knockoffs can achieve power close to oracle methods.

% \subsection{Model-X inference}
% \label{sec:model-X-intro}
% In this section, we review two main model-X methods in the literature.
% \subsubsection{Conditional randomization testing}
% \label{sec:CRT-intro}

% \subsubsection{Model-X knockoffs}
% \label{sec:knockoffs-intro}

\subsection{Related work}
Since \citet{candes2018panning} introduced the CRT and model-X knockoffs, subsequent works have studied their robustness \citep{robust-knockoffs,berrett2019conditional,huang2019relaxing,barber2019construction}, computation \citep{tansey2018holdout,SB-EC-LJ-WW:2020,liu2020constructing}, and application to, e.g., neural networks \citep{deeppink2018}, time series \citep{ipad2018}, graphical models \citep{li2019nodewise}, and biology \citep{sesia2018gene,katsevich2019multilayer,sesia2020multi,bates2020causal,sesia2020controlling,chia2020interpretable,Katsevich2020.08.13.250092}.
Regarding the power of these methods,
% There has been some work on the power of model-X methods, yet our work addresses a crucial gap in the literature.
%\sout{Now we review existing theoretical works on the power of model-X methods and discuss the difference of our work with them.}
 %Particularly related, however, is our study of the asymptotic power of the CRT in low dimensions \rev{in Section 2.2, which is not our main contribution}. The main difference is that we assume we know the nuisance parameters, but we allow for general asymptotically linear statistics and general distributions, while their result applies to score-like statistics for conditionally Gaussian distributions.
\citet{weinstein2017power} analyzed the power of a knockoffs-inspired procedure that is only valid when all the covariates are i.i.d.; our work studies (in addition to the CRT) the original model-X knockoffs procedure, which is valid for any covariate distribution although we study it in a setting with i.i.d. covariates. \citet{liu2019power} provided a condition under which knockoffs' power goes to $1$ and FDR goes to $0$; our work exactly characterizes the asymptotic power when it is non-trivial (strictly between $0$ and $1$). \citet{katsevich2020theoretical} studied the CRT under low-dimensional asymptotics, while our work focuses on the high-dimensional regime, although we include a short note on the power of the CRT in low dimensions in Appendix~\ref{sec:scalar-CRT}. During the preparation of our manuscript, \citet{weinstein2020power} independently 
quantified the asymptotic power of knockoffs with the lasso coefficient difference statistic, a result which is quite similar to our Theorem~\ref{theorem:lasso-fdr-power-simple}, though that paper does not study the CRT or other statistics for knockoffs as we do.
%came up with a result that is similar to one of our many results:  the power analysis for knockoffs with the lasso coefficient. W
%we will compare their results with our Theorem~\ref{theorem:lasso-fdr-power-simple} in detail in Section~\ref{sec:knockoff-lasso}.

There have been a number of other works on the asymptotic power of other methods that test for covariate importance \citep{zhu2018significance,chernozhukov2018double,javanmard2018debiasing}. These methods are fundamentally different from the CRT and knockoffs, but we will compare their results with our own in Section~\ref{sec:crt-power-comp}.
%\rev{There are some other less related works on the asymptotic power of hypothesis testing \citep{zhu2018significance,chernozhukov2018double,javanmard2018debiasing}. We will compare them with our results on the CRT in Section~\ref{sec:crt-power-comp}.}

\subsection{Notation}
%\rev{[expanding as needed]}

Bold letters are used for matrices or vectors containing i.i.d. observations. For a set $S$, $|S|$ denotes the number of elements in $S$. The cumulative distribution function (CDF) of the Gaussian distribution $\mathcal N(0,1)$ is denoted by $\Phi$---for $\alpha\in(0,1)$, $z_\alpha$ denotes the $\alpha$-quantile of $\mathcal N(0,1)$, i.e., $\Phi(z_{\alpha})=\alpha$. For random variables or vectors $W_1$ and $W_2$, $\mathcal L(W_1)$ means the distribution of $W_1$ and $\mathcal L(W_1\,|\,W_2)$ means the conditional distribution of $W_1$ given $W_2$. %$\e$, $\Var$, $\Cov$ and $\Corr$ stand for expectation, variance, covariance and correlation.
%\sout{For a distribution $\pi$, $\pi^n$ denotes the joint distribution of $n$ i.i.d. draws from $\pi$. Convergences in probability and in distribution are denoted as $\cip$ and $\cid$, respectively. For a sequence of random distributions represented by their CDFs $F_j$, $F_j\cid F$ means for every continuity point $t$ of $F$, $F_j(t)\cas F(t)$.}

\subsection{Outline of the article}
In Section~\ref{sec:single}, we analyze the CRT's power for single hypothesis testing (conditional independence testing), including the case where we leverage unlabeled samples. In Section~\ref{sec:multiple}, we analyze the power of the CRT and knockoffs for multiple testing (variable selection). In Section~\ref{sec:retro}, we study the effect of retrospective sampling. Section~\ref{sec:discussion} supports our theoretical results with simulations. Finally, we conclude with a discussion of some questions raised by our work in Section~\ref{sec:discussion-new}.

\section{Power analysis of conditional independence testing}
\label{sec:single}
In this section, we study the power of the CRT for testing a single hypothesis of conditional independence. 
%We first examine a one-parameter family under low-dimensional asymptotics, and then consider the problem with nuisance parameters under high-dimensional asymptotics. %simple setting where there is no nuisance parameter
%Then we discuss a more realistic setting where nuisance parameters are present.
%Finally, we discuss the case with finite unlabeled samples.

\subsection{The conditional randomization test}
\label{sec:CRT-introduction}
We begin with a review of the CRT introduced in \citet{candes2018panning}. Consider the generic problem of testing $H_{0}^{(j)}:X_j\ci Y\mid \Xno{j}$ in a regression setting where we have $n$ i.i.d. observations. %, denoted by a data matrix $[\mathbf{X}_{n\times p},\mathbf{Y}_{n\times 1}]$.
To lighten notation, we label $X_j$ as simply $X$ and $\Xno{j}$ as $Z$, and the data matrix is therefore denoted by $[\mathbf{X}, \ndata{Z},\mathbf{Y}]$, where $\ndata{X}\in\rr^n$ is the covariate vector of interest, $\ndata{Z}\in\rr^{n\times(p-1)}$ is the matrix of confounding variables, and $\ndata{Y}\in\rr^n$ is the response vector.
Suppose we have a test statistic function $T$ of $(\ndata{X},\ndata{Z},\bf{Y})$ that intuitively measures the importance of variable $X$ (e.g., $T$ could be the absolute value of the coefficient for $\ndata{X}$ fitted by the lasso). To construct a test, we need to find a cutoff for the test statistic $T(\ndata{X},\ndata{Z},\bf{Y})$ such that we 
can guarantee $T(\ndata{X},\ndata{Z},\bf{Y})$ only falls above that cutoff with probability at most the nominal level $\alpha$ under $H_0^{(j)}$.
%reject when $T(\ndata{X},\ndata{Z},\bf{Y})$ is above the cutoff. 
This requires some knowledge of its distribution under the null. \citet{candes2018panning} suggested the following: if we know $\mathcal L(X\,|\,Z)$, then let $\tilde{\ndata{X}}\mid\ndata{Z},\ndata{Y}\sim\mathcal L(\ndata{X}\,|\,\ndata{Z})$ and we will have
\begin{equation*}
T(\ndata{X},\ndata{Z},\mathbf{Y})\eqd T(\tilde{\mathbf{X}},\ndata{Z},\mathbf Y)\mid \ndata{Z},\mathbf{Y}\text{ under the null}.
\end{equation*}
Thus, we can obtain a cutoff using the known distribution $\mathcal L(T(\tilde{\mathbf{X}},\ndata{Z},\mathbf Y)\,|\,\ndata{Z},\mathbf{Y})$. Although such a cutoff can only be computed analytically in special cases \citep{liu2020constructing}, an empirical one can be obtained by repeatedly resampling $\tilde{\ndata{X}}$ and recomputing $T(\tilde{\mathbf{X}},\ndata{Z},\mathbf Y)$. The CRT with an empirical cutoff contains a finite-sample correction to make the test exact, but it converges to the test using the exact quantile if the number of resamples $M_n$ goes to infinity. The cases we consider in this article all have an analytical cutoff available and this is the CRT we study, but the same results would still hold as long as $M_n\to\infty$. It is worthwhile to emphasize that \emph{any} test statistic function $T$ can be used in the CRT.

We have assumed that we know $\mathcal L(X\,|\,Z)$ exactly, and will make this assumption in Section~\ref{sec:CRT}; in Section~\ref{sec:CRT-conditional}, we will study the power when this assumption is relaxed by conditioning and leveraging unlabeled data \citep{huang2019relaxing}, and in Section~\ref{sec:retro}, we will discuss how the power changes with retrospective sampling \citep{barber2019construction}.

\subsection{CRT in high-dimensional linear regression}
\label{sec:CRT}

%\rev{We remark here that $X_j$ and $Y$ have symmetric roles in this setting.} Thus, the fixed-X testing that picks a cutoff $\tau(\mathbf{X})$ such that $\p(T(\mathbf{X},\mathbf{Y})\ge\tau(\mathbf{X})\mid\mathbf{X})\le\alpha$ can also be seen as a CRT.

%\sout{Note that the CRT essentially provides a way to calculate $p$-values for null hypotheses $H_{0}^{(1)}$, $H_{0}^{(2)}$, ..., $H_{0}^{(p)}$, and we can then proceed and apply multiple testing procedures such as the BH procedure for variable selection \mbox{\citep{candes2018panning,liu2020constructing}}.}% While there is no general theory that guarantees the BH procedure exactly controls the false discovery rate (FDR) under dependence of $p$-values, it can effectively control FDR and demonstrates competitive power in practice \citep{candes2018panning,liu2020constructing}.

We begin by analyzing the power of the CRT using several different statistics in the high-dimensional linear regression setting formally defined as follows in Setting~\ref{model:moderate-dim-lr}.
\begin{model}[High-dimensional linear model]
Consider the linear regression model
\begin{equation*}
\ndata{Y}=\ndata{X}\beta+\ndata Z\theta+\varepsilon, \qquad \varepsilon\sim\mathcal N(0,\sigma^2I),
\end{equation*}
where $\ndata{X}\in\rr^n$, $\ndata Z\in\rr^{n\times(p-1)}$, and for each row, they satisfy
\begin{equation*}
\ndata{Z}_{i\cdot}\simiid\mathcal N(0,\Sigma_Z),\qquad 
\ndata{X}_i\simiid\mathcal N(\ndata{Z}_{i\cdot}^\top\xi,1),
%\ndata{X}\mid\ndata{Z}\sim\mathcal N(\ndata{Z}\xi,I_n),
\qquad (\ndata{X},\ndata{Z})\ci\varepsilon.
\end{equation*}
This setting assumes the above model under the following high-dimensional asymptotics:
\begin{equation*}
\lim_{n\to\infty}p/n\to\kappa\in(0,\infty),\qquad\lim_{n\to\infty}\theta^\top\Sigma_Z\theta\to v_Z^2,\qquad\limsup_{n\to\infty}\xi^\top\Sigma_Z\xi<\infty,
\end{equation*}
and $\sigma^2$ and $\sqrt n\beta=h>0$ stay constant.% and $\tau^2$ are constants.
\label{model:moderate-dim-lr}
\end{model}
We emphasize again that the assumptions in Setting~\ref{model:moderate-dim-lr} are for the study of power and are not needed for the validity of the CRT.
Here, $\theta^\top\Sigma_Z\theta$ can be interpreted as the part of $Y$'s variance contributed by $Z$, as $\theta^\top\Sigma_Z\theta=\Var(\e[Y\,|\,Z])$; similarly, $\xi^\top\Sigma_Z\xi$ can be interpreted as the part of $X$'s variance contributed by $Z$. For instance, the assumptions on $\theta^\top\Sigma_Z\theta$ and $\xi^\top\Sigma_Z\xi$ hold if $\Sigma_Z=I$ and the components of $\theta$ and $\xi$ are $n^{-1/2}$-normalized i.i.d. draws from a distribution with finite second moment. More concretely, if $\sqrt n\theta_j\simiid B_0$ and $\Sigma_Z=I$, then $\theta^\top\Sigma_Z\theta\to\kappa\e[B_0^2]$ almost surely, which corresponds to the setting we will consider in Section~\ref{sec:multiple}.

The CRT tests $H_0:X\ci Y\mid Z$, which, under Setting~\ref{model:moderate-dim-lr}, is equivalent to $H_0:\beta=0$, and we are interested in the power under local alternatives $H_1:\beta=h/\sqrt n$ for a fixed $h>0$, which is the regime where the power has a non-trivial limit strictly between $0$ and $1$. In this section, the asymptotic power means the limit of the unconditional power of the test under $\beta=h/\sqrt n$. We consider three different test statistics for the CRT %.
and for each one we will show there exists a scalar $\mu$ (which we will give an expression for) such that the asymptotic power is equal to that of a $z$-test with standardized effect size $\mu$ as in Definition~\ref{definition:z-test}.
\begin{defi}
The CRT with test statistic $T$ under a given asymptotic regime is said to have asymptotic power equal to that of a $z$-test with standardized effect size $\mu$, if the level-$\alpha$ one-sided CRT (reject for large values of $T$) has asymptotic power $\Phi(\mu-z_{1-\alpha})$ and the level-$\alpha$ two-sided CRT (reject for large values of $|T|$) has asymptotic power $\Phi(\mu-z_{1-\alpha/2})+\Phi(-\mu-z_{1-\alpha/2})$.
\label{definition:z-test}
\end{defi}

%, i.e., the asymptotic power of the level-$\alpha$ CRT is given by \rev{[make into definition, add two-sided]} $$\Phi\left(\mu-z_{1-\alpha}\right).$$

% \begin{remark}
% \rev{[I believe this remark is no longer needed.]
% \sout{Note that in the following sections we will be effectively analyzing one-sided tests, corresponding to a positive $h$, where we only reject for large positive values of a signed test statistic (as opposed to using its absolute value as the test statistic, which is commonly done for a two-sided test). However, this choice is purely to simplify the power expressions we present, and it is straightforward to derive analogous (but slightly more cumbersome) results for the two-sided tests.}}
% \end{remark}

\subsubsection{Marginal covariance}
Consider testing using the statistic $T_\text{MC}(\ndata{X},\ndata{Y},\ndata{Z})=n^{-1}\ndata{Y}^\top\ndata{X}$, which is an unbiased and consistent estimator of $\Cov(X,Y)$. %Note that the CRT with $T_\textnormal{MC}$ will only reject for large \emph{positive} estimated values of the marginal covariance, which simplifies the power expression in our local asymptotic regime with $h>0$, but the CRT with $|n^{-1}\sum_{i=1}^n(X_i-\e[X_1])Y_i|$ can be analyzed similarly.
%Note that for presentational simplicity, we are not using $|T_\text{MC}|$ as the test statistic and only looking for a positive effect, while CRT using $|T_\text{MC}|$ can be analyzed similarly. 
Although it may seem na\"ive to consider a marginal test statistic that does not involve $Z$, it is actually a popular choice in many high-dimensional applications such as genome-wide association studies \citep{wu2010screen} and microbiome studies \citep{mcmurdie2014waste}. $T_\text{MC}$ is simple and intuitive and we will see it performs well when the confounding vector $Z$ does not contribute too much variance to $Y$.

\begin{thm}
In Setting~\ref{model:moderate-dim-lr}, the 
%level-$\alpha$ 
CRT with $T_\textnormal{MC}$ has asymptotic power
equal to that of a $z$-test with standardized effect size
\[ \frac{h}{\sqrt{\sigma^2+v_Z^2}}. \]
%i.e., its asymptotic power is $\Phi\left(\frac{h}{\sqrt{\sigma^2+v_Z^2}}-z_{1-\alpha}\right)$.
% \begin{equation*}
% \Phi\left(\frac{h}{\sqrt{\sigma^2+v_Z^2}}-z_{1-\alpha}\right).
% \end{equation*}
\label{theorem:prop-power-mc}
\end{thm}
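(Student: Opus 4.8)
The plan is to reduce the CRT's rejection event to a single studentized statistic and prove a central limit theorem for it, after which the two power formulas in Definition~\ref{definition:z-test} fall out immediately.

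\emph{Step 1 (exact rejection region).} Under Setting~\ref{model:moderate-dim-lr} the known conditional law $\mathcal L(\mathbf X\mid\mathbf Z)$ is $\mathcal N(\mathbf Z\xi, I_n)$, so a CRT resample is $\tilde{\mathbf X}=\mathbf Z\xi+\tilde{\mathbf w}$ with $\tilde{\mathbf w}\sim\mathcal N(0,I_n)$ independent of $(\mathbf Z,\mathbf Y)$. Because $T_\textnormal{MC}=n^{-1}\mathbf X^\top\mathbf Y$ is linear in $\mathbf X$, the resampling null distribution is exactly Gaussian,
\[
n^{-1}\tilde{\mathbf X}^\top\mathbf Y \mid \mathbf Z,\mathbf Y \sim \mathcal N\!\left(n^{-1}\xi^\top\mathbf Z^\top\mathbf Y,\ n^{-2}\|\mathbf Y\|^2\right),
\]
so the level-$\alpha$ one-sided CRT rejects exactly when $n^{-1}\mathbf X^\top\mathbf Y$ exceeds this law's $(1-\alpha)$-quantile, that is, when
\[
S_n\ :=\ \frac{\mathbf w^\top\mathbf Y}{\|\mathbf Y\|}\ >\ z_{1-\alpha},\qquad \mathbf w:=\mathbf X-\mathbf Z\xi,
\]
and the two-sided CRT rejects when $|n^{-1}\mathbf X^\top\mathbf Y|$ exceeds the $(1-\alpha)$-quantile of the folded Gaussian $|\mathcal N(n^{-1}\xi^\top\mathbf Z^\top\mathbf Y,\ n^{-2}\|\mathbf Y\|^2)|$. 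The structural fact I will exploit is that under Setting~\ref{model:moderate-dim-lr}, $\mathbf w\sim\mathcal N(0,I_n)$ and $\mathbf w\ci(\mathbf Z,\varepsilon)$.

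\emph{Step 2 (normal limit for $S_n$ under $\beta=h/\sqrt n$).} Set $\gamma:=\beta\xi+\theta$, so $\mathbf Y=\beta\mathbf w+\mathbf Z\gamma+\varepsilon$ and $\mathbf w^\top\mathbf Y=\beta\|\mathbf w\|^2+\gamma^\top\mathbf Z^\top\mathbf w+\varepsilon^\top\mathbf w$. Conditionally on $\mathbf w$, the last two terms are independent centered Gaussians (since $\mathbf Z\ci\varepsilon$ and both are independent of $\mathbf w$), hence $n^{-1/2}(\gamma^\top\mathbf Z^\top\mathbf w+\varepsilon^\top\mathbf w)\mid\mathbf w\sim\mathcal N\!\big(0,\ (\|\mathbf w\|^2/n)(\gamma^\top\Sigma_Z\gamma+\sigma^2)\big)$. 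Now $\|\mathbf w\|^2/n\to1$ almost surely, and
\[
\gamma^\top\Sigma_Z\gamma\ =\ \theta^\top\Sigma_Z\theta+2\beta\,\xi^\top\Sigma_Z\theta+\beta^2\,\xi^\top\Sigma_Z\xi\ \longrightarrow\ v_Z^2,
\]
the two cross terms vanishing because $\beta\to0$ while $|\xi^\top\Sigma_Z\theta|\le(\xi^\top\Sigma_Z\xi)^{1/2}(\theta^\top\Sigma_Z\theta)^{1/2}$ and $\xi^\top\Sigma_Z\xi$ stay bounded. Since a conditionally Gaussian sequence whose conditional variance converges almost surely to a constant converges unconditionally in distribution to the corresponding Gaussian, $n^{-1/2}(\gamma^\top\mathbf Z^\top\mathbf w+\varepsilon^\top\mathbf w)\cid\mathcal N(0,\sigma^2+v_Z^2)$; together with $n^{-1/2}\beta\|\mathbf w\|^2=h\,(n^{-1}\|\mathbf w\|^2)\cip h$ this gives $n^{-1/2}\mathbf w^\top\mathbf Y\cid\mathcal N(h,\sigma^2+v_Z^2)$. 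An analogous but simpler weak law of large numbers, after expanding $\|\mathbf Y\|^2=\beta^2\|\mathbf w\|^2+2\beta\,\mathbf w^\top(\mathbf Z\gamma+\varepsilon)+\|\mathbf Z\gamma+\varepsilon\|^2$ and discarding the first two terms (both $o_p(n)$), yields $n^{-1}\|\mathbf Y\|^2\cip\sigma^2+v_Z^2$. By Slutsky and the continuous mapping theorem,
\[
S_n\ =\ \frac{n^{-1/2}\mathbf w^\top\mathbf Y}{\sqrt{n^{-1}\|\mathbf Y\|^2}}\ \cid\ \mathcal N\!\left(\frac{h}{\sqrt{\sigma^2+v_Z^2}},\ 1\right).
\]

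\emph{Step 3 (conclusion) and main obstacle.} Since this convergence is unconditional, the one-sided CRT has asymptotic power $\p\big(\mathcal N(\mu,1)>z_{1-\alpha}\big)=\Phi(\mu-z_{1-\alpha})$ with $\mu=h/\sqrt{\sigma^2+v_Z^2}$; writing the two-sided statistic as $c_n+s_nS_n$ with $c_n=n^{-1}\xi^\top\mathbf Z^\top\mathbf Y$ and $s_n=n^{-1}\|\mathbf Y\|\to0$ and comparing with the $(1-\alpha)$-quantile of $|\mathcal N(c_n,s_n^2)|$, the same limit gives asymptotic power $\Phi(\mu-z_{1-\alpha/2})+\Phi(-\mu-z_{1-\alpha/2})$, matching Definition~\ref{definition:z-test}. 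Because $T_\textnormal{MC}$ is linear the probabilistic content is light---the resampling null is exactly Gaussian---so the only real work is bookkeeping: verifying that the cross terms $\beta\,\xi^\top\Sigma_Z\theta$ and $\beta^2\xi^\top\Sigma_Z\xi$ (and the corresponding cross terms in $\|\mathbf Y\|^2$) are asymptotically negligible, and correctly tracking the center $c_n$ of the resampling law, which need not vanish, in the two-sided case.
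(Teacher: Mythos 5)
Your one-sided argument is correct, but routes through a genuinely different conditioning than the paper's. The paper conditions on $\ndata{Y}$, computes the exact conditional law $\mathcal L\bigl(X - Z^\top\xi \mid Y\bigr)$ via the bivariate Gaussian formula, and then takes the expectation of the resulting $\Phi(\cdot)$; you instead condition on the exogenous noise $\ndata{w} = \ndata{X} - \ndata{Z}\xi$, exploit $\ndata{w} \ci (\ndata{Z},\varepsilon)$ to see that $\gamma^\top\ndata{Z}^\top\ndata{w} + \varepsilon^\top\ndata{w}$ is conditionally centered Gaussian given $\ndata{w}$, and combine a conditionally-Gaussian CLT with Slutsky to get the unconditional limit of $S_n$ directly. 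Both exploit the joint Gaussian structure of Setting~\ref{model:moderate-dim-lr} and reach the same limit; yours avoids the bivariate conditional formula and is a bit more elementary, at the cost of producing only the asymptotic distribution of $S_n$ rather than its exact conditional distribution for each $n$.

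On the two-sided case, you and the paper both gloss over something. You correctly note that the center $c_n = n^{-1}\xi^\top\ndata{Z}^\top\ndata{Y}$ of the resampling law need not vanish; in fact $c_n\cip\xi^\top\Sigma_Z\theta$, and Setting~\ref{model:moderate-dim-lr} only bounds $|\xi^\top\Sigma_Z\theta|$ via Cauchy--Schwarz without forcing it to $0$. Since $s_n = n^{-1}\|\ndata{Y}\| = O_p(n^{-1/2})$, one can have $c_n/s_n\to\infty$, in which case the $(1-\alpha)$-quantile of $|\mathcal N(c_n,s_n^2)|$ is near $|c_n|+s_n z_{1-\alpha}$ rather than $s_n z_{1-\alpha/2}$, and the two-sided rejection region $\{|c_n+s_nS_n|>q_{1-\alpha}\}$ degenerates to the one-sided region $\{S_n>z_{1-\alpha}\}$; the stated two-sided power then fails. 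So the two-sided formula implicitly needs $c_n = o_p(s_n)$, i.e.\ $\xi^\top\Sigma_Z\theta = o(n^{-1/2})$ (automatic when $\xi=0$, as in Setting~\ref{model:lr-iid}). You flag the issue but still assert the two-sided formula in Step~3, which is a gap---though no bigger than the paper's own ``can be dealt with almost identically.''
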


We first note that the power increases as $h$ increases, which is intuitive because $h$ is the coefficient (dropping the normalization term $\sqrt n$) and the effective signal strength. Second, the dimensionality (or equivalently, $\kappa$) does not appear explicitly, which can be understood by noticing that $Z$ only plays a role through $Z^\top\theta$, which we can consider as part of the error, thus adding extra variance $\theta^\top\Sigma_Z\theta\to v_Z^2$. Then the asymptotic effective error variance is $\sigma^2+v_Z^2$, and when this number is large, the power is low.

\subsubsection{Ordinary least squares}
\label{sec:CRT-OLS}
%We first discuss CRT using the ordinary least squares (OLS) estimate.
There are many reasons why one might want to use the ordinary least squares (OLS) estimate $T_\text{OLS}=\hat\beta^\text{OLS}$ as the test statistic; for instance, it is the maximum likelihood estimator and the best linear unbiased estimator. In this section, we will assume $\kappa<1$ in Setting~\ref{model:moderate-dim-lr} so that $\hat\beta^\text{OLS}$ is well-defined. %We now introduce the following Proposition~\ref{proposition:model-X-CRT-OLS}, which states the asymptotic power of the CRT with $T_\text{OLS}$.
%Note again that the CRT with $T_\text{OLS}$ will only reject for large \emph{positive} coefficient estimates, which simplifies the power expression in our local asymptotic regime with $h>0$, but again, the CRT with $|\hat\beta^\text{OLS}|$ can be analyzed similarly.
\begin{thm}
In Setting~\ref{model:moderate-dim-lr} with $\kappa<1$, the 
%level-$\alpha$ 
CRT with $T_\text{OLS}$ has asymptotic power
equal to that of a $z$-test with standardized effect size
\[ \frac{h}{\sigma}\sqrt{1-\kappa}. \]
% \begin{equation*}
% \Phi\left(\frac{h}{\sigma}\sqrt{1-\kappa}-z_{1-\alpha}\right).
% \end{equation*}
\label{theorem:prop-model-X-CRT-OLS}
\end{thm}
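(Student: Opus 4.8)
The plan is to reduce the statistic to its Frisch--Waugh--Lovell form and then exploit the rotational invariance of the resampled covariate. Let $P_{\mathbf Z}$ be the orthogonal projection onto the column span of $\mathbf Z$ (which has rank $p-1$ almost surely since $\kappa<1$), and write $\mathbf v^\perp=(I-P_{\mathbf Z})\mathbf v$ for $\mathbf v\in\rr^n$. By Frisch--Waugh--Lovell, $T_\text{OLS}(\mathbf X,\mathbf Z,\mathbf Y)=\hat\beta^\text{OLS}=(\mathbf X^\perp)^\top\mathbf Y/\|\mathbf X^\perp\|^2$, with the analogous formula (a.s.\ well-defined) for the resampled statistic. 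Because $\tilde{\mathbf X}=\mathbf Z\xi+\tilde{\mathbf W}$ with $\tilde{\mathbf W}\sim\mathcal N(0,I_n)$ drawn independently of $(\mathbf X,\mathbf Z,\mathbf Y,\varepsilon)$, we have $\tilde{\mathbf X}^\perp=(I-P_{\mathbf Z})\tilde{\mathbf W}$, so conditionally on $(\mathbf Z,\mathbf Y)$ the vector $\tilde{\mathbf X}^\perp$ is an isotropic Gaussian on the $d:=(n-p+1)$-dimensional subspace $\mathrm{range}(I-P_{\mathbf Z})$, into which $\mathbf Y^\perp$ also falls. Decomposing $\tilde{\mathbf X}^\perp$ along $\mathbf Y^\perp/\|\mathbf Y^\perp\|$ and orthogonally yields the exact conditional identity
\[ T_\text{OLS}(\tilde{\mathbf X},\mathbf Z,\mathbf Y)\;\eqd\;\frac{\|\mathbf Y^\perp\|\,g_1}{g_1^2+S_{d-1}}\qquad\text{given }(\mathbf Z,\mathbf Y), \]
where $g_1\sim\mathcal N(0,1)$ is independent of $S_{d-1}\sim\chi^2_{d-1}$.

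Next I would identify the asymptotic conditional null law. Writing $\mathbf Y^\perp=\beta\mathbf X^\perp+\varepsilon^\perp$ and using $\|\mathbf X^\perp\|^2\sim\chi^2_{d}$, $\|\varepsilon^\perp\|^2\mid\mathbf Z\sim\sigma^2\chi^2_{d}$, and $\beta=h/\sqrt n$, the signal term $\beta^2\|\mathbf X^\perp\|^2$ and the cross term are $O_p(1)$, so $\|\mathbf Y^\perp\|^2/n\cip\sigma^2(1-\kappa)$. Combined with $S_{d-1}/n\cip 1-\kappa$ and $g_1^2/n\cip 0$, the display above gives, for each fixed $t$, that the conditional CDF of $\sqrt n\,T_\text{OLS}(\tilde{\mathbf X},\mathbf Z,\mathbf Y)$ at $t$ converges in probability to $\Phi\!\big(t\sqrt{1-\kappa}/\sigma\big)$; since this limit is continuous and strictly increasing, the conditional $(1-\alpha)$-quantile $c_n$ satisfies $\sqrt n\,c_n\cip z_{1-\alpha}\,\sigma/\sqrt{1-\kappa}$, and the two-sided cutoff converges to $z_{1-\alpha/2}\,\sigma/\sqrt{1-\kappa}$ by symmetry of the limiting law.

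It then remains to analyze the observed statistic. From $\hat\beta^\text{OLS}=\beta+(\mathbf X^\perp)^\top\varepsilon/\|\mathbf X^\perp\|^2$ and the fact that $(\mathbf X^\perp)^\top\varepsilon\mid(\mathbf X,\mathbf Z)\sim\mathcal N(0,\sigma^2\|\mathbf X^\perp\|^2)$, we get $\sqrt n(\hat\beta^\text{OLS}-h/\sqrt n)\mid(\mathbf X,\mathbf Z)\sim\mathcal N\!\big(0,n\sigma^2/\|\mathbf X^\perp\|^2\big)$ with $n\sigma^2/\|\mathbf X^\perp\|^2\cip\sigma^2/(1-\kappa)$, hence $\sqrt n\,T_\text{OLS}(\mathbf X,\mathbf Z,\mathbf Y)\cid\mathcal N\!\big(h,\sigma^2/(1-\kappa)\big)$. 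Since $\sqrt n\,c_n$ converges to a constant, Slutsky gives joint convergence of $\big(\sqrt n\,T_\text{OLS}(\mathbf X,\mathbf Z,\mathbf Y),\sqrt n\,c_n\big)$, and as the limit of the first coordinate has a density the one-sided rejection probability converges to $\mathbb P\big(\mathcal N(h,\sigma^2/(1-\kappa))>z_{1-\alpha}\sigma/\sqrt{1-\kappa}\big)=\Phi\!\big(\tfrac{h}{\sigma}\sqrt{1-\kappa}-z_{1-\alpha}\big)$; the two-sided case is identical with $z_{1-\alpha/2}$, so by Definition~\ref{definition:z-test} the standardized effect size is $\mu=\tfrac{h}{\sigma}\sqrt{1-\kappa}$.

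The main obstacle is the second step: upgrading pointwise-in-$t$, in-probability convergence of the random conditional CDF to convergence of the random quantile $c_n$, cleanly enough that $c_n$ may be replaced by its deterministic limit in the power calculation. This requires care with the triangular-array nature of $\chi^2_{d}$ as $d=n-p+1$ grows and with interchanging conditioning on $(\mathbf Z,\mathbf Y)$ with the outer randomness; the remainder is routine Gaussian/chi-square bookkeeping together with Slutsky and the continuous mapping theorem, closely paralleling the proof of Theorem~\ref{theorem:prop-power-mc}.
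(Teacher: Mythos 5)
Your proposal is correct and follows the same high-level template as the paper's proof (condition on $(\ndata{Y},\ndata{Z})$, pin down the conditional null CDF of the resampled statistic, upgrade to quantile convergence, compute the limit of $\sqrt n\,\hat\beta^{\text{OLS}}$, and finish with Slutsky), but the mechanism you use to identify the conditional null law is genuinely different and arguably tidier. The paper treats the Frisch--Waugh--Lovell ratio's numerator and denominator separately --- applying Cochran's theorem to each, noting the denominator's conditional law is a deterministic $\chi^2$, and then invoking the ad hoc ratio Lemma~\ref{lemma:conditional-CDF} to combine a random-CDF factor with a factor converging to a point mass --- whereas you decompose the isotropic Gaussian $\tilde{\ndata{X}}^\perp$ within $\mathrm{range}(I-P_{\ndata{Z}})$ along $\ndata{Y}^\perp/\|\ndata{Y}^\perp\|$ and its orthocomplement, which yields the exact finite-sample identity $\tilde T_{\text{OLS}}\eqd \|\ndata{Y}^\perp\|\,g_1/(g_1^2+S_{d-1})$ with $g_1\ci S_{d-1}$. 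This has the nice feature that all dependence on $(\ndata{Y},\ndata{Z})$ enters through the single scalar $\|\ndata{Y}^\perp\|$, so the ``obstacle'' you flag at the end (upgrading pointwise in-probability convergence of the random conditional CDF to quantile convergence) is actually dissolved by your own decomposition: the conditional $(1-\alpha)$-quantile factors as $\|\ndata{Y}^\perp\|$ times a deterministic quantile, and both factors converge by elementary arguments without needing the paper's Lemmas~\ref{lemma:conditional-CDF} and~\ref{lemma:asy-eq}. One small correction in your favor: the paper writes $\chi^2_{n-p}$ for the conditional law of $\tilde{\ndata{X}}^\top(I-P_{\ndata{Z}})\tilde{\ndata{X}}$, but since $\ndata{Z}$ has $p-1$ columns the correct degrees of freedom are $n-p+1$ (as in your $d$); this off-by-one has no asymptotic effect.
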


We can see that the power decreases as $\sigma$ and $\kappa$ increase. Compared to using $T_\text{MC}$, the CRT with $T_\text{OLS}$ has higher power if $\kappa<v_Z^2/(\sigma^2+v_Z^2)$, and vice versa. In fact, as $\kappa$ approach $1$, OLS becomes ill-defined and the test becomes powerless. As another comparison, consider the canonical OLS test that takes $\hat\beta^\text{OLS}$ as the test statistic and conducts a test based on the conditional distribution $\hat\beta^\text{OLS}\mid \ndata{X},\ndata{Z}\sim\mathcal N\left(\beta,\sigma^2\left([\ndata{X},\ndata{Z}]^\top[\ndata{X},\ndata{Z}]\right)^{-1}_{11}\right)$. This canonical test turns out to have the same asymptotic power as the one for the CRT given in Theorem~\ref{theorem:prop-model-X-CRT-OLS} (see Appendix~\ref{sec:fixed-X-OLS} for derivation).

% We now show this is a non-trivial test. For instance, consider the alternative sequence where $\theta=-\beta\xi$, $\beta\to\infty$, and $\tau$ and $\sigma$ stay constant. Under this sequence of alternatives,
% \begin{equation*}
% T_{\text{model-X}}=\frac{\|X-Z\xi\|^2+\varepsilon^\top(X-Z\xi)/\beta}{\|(X-Z\xi)+\varepsilon/\beta\|}\to\infty.
% \end{equation*}

% On the other hand, in the fixed-X framework,
% \begin{equation*}
% T\mid X,Z,X_*,Z_*\sim\mathcal N((\beta X^\top+\theta^\top Z^\top)(X-Z(Z_*^\top Z_*)^{-1}Z_*^\top X_*),\sigma^2\|X-Z(Z_*^\top Z_*)^{-1}Z_*^\top X_*\|^2).
% \end{equation*}
% Since the mean is a non-constant linear function of $\theta$ (with probability one), for any $\beta\ne0$ we can find a point in the null space that admits the exact distribution. Hence, a non-trivial fixed-X test must not exist.

\subsubsection{The distilled lasso statistic}
\label{sec:distilled}
For high-dimensional regression, one might naturally look to the lasso to construct a test statistic. In this section, we consider the distilled lasso statistic, a test statistic proposed in \citet{liu2020constructing} derived from the lasso, which leverages the lasso for fitting a high-dimensional coefficient vector and has very similar power to, but is more computationally efficient than, using $\hat\beta^\text{lasso}$ as the test statistic. %In fact, it has demonstrated the highest power in the simulations in \citet{liu2020constructing}.
In our notation, the statistic can be defined as follows. We first regress $\ndata{Y}$ on $\ndata{Z}$ using the lasso with penalty parameter $\lambda$ to obtain $\hat\theta_\lambda$, i.e.,
\begin{equation*}
\hat\theta_\lambda=\argmin_\theta\frac{1}{2}\|\ndata{Y}-\ndata{Z}\theta\|_2^2+\sqrt n\lambda\|\theta\|_1.
\end{equation*}
The distilled lasso statistic is then defined as $T_\text{distilled}(\ndata{X},\ndata{Y},\ndata{Z})=n^{-1}(\ndata{Y}-\ndata{Z}\hat\theta_\lambda)^\top(\ndata{X}-\ndata{Z}\xi)$. Intuitively, it measures the covariance of $X$ and $Y$ after their dependence on $Z$ is removed, where $Y$'s dependence on $Z$ is estimated by the lasso.

To analyze this test statistic, we will leverage
%enable theoretical analysis, we will use the powerful tools from 
the theory of approximate message passing (AMP), which
%. AMP theory 
has been used to characterize the asymptotic distribution of the lasso coefficient vector \citep{bayati2011lasso}. This asymptotic distribution depends on two important parameters $\alpha_\lambda$ and $\tau_\lambda$
which are uniquely defined as the solution to a system of explicit fixed-point equations depending on $\lambda$,
%defined for $\lambda$ via a system of explicit fixed-point equations depending on 
$\sigma^2$, $\kappa$, and the asymptotic histogram of the true coefficients $\sqrt n\theta_j$'s.\footnote{Note that $\alpha_\lambda$ is unrelated to the level $\alpha$ of the statistical test.} %and the $\tau_\lambda$ in this context is not to be confused with the standard deviation of $X$ given $Z$.
In Appendix~\ref{sec:proofs}, we provide the fixed-point equations \eqref{equation:amp-fixed-point-equation}. Intuitively speaking, $\sqrt n\hat\theta_j$ is roughly distributed as $\eta(\sqrt n\theta_j+\tau_\lambda Z;\alpha_\lambda\tau_\lambda)$, where
\begin{equation}
\eta(x;y)=\left\{
\begin{aligned}
&x-y, &x>y,\\
&0, &|x|\le y,\\
&x+y, &x<-y,
\end{aligned}\right.
\label{equation:eta}
\end{equation}
$Z\sim\mathcal N(0,1)$, so that $\tau_\lambda$ plays the role of the level of the asymptotic estimation error and $\alpha_\lambda$ acts as a soft-thresholding parameter. AMP theory, and hence our use of it, relies on additional assumptions as stated in Theorem~\ref{theorem:prop-distilled-power}.%, which is essentially the same as the converging sequence defined in Definition~1 in \citet{bayati2011lasso} after proper re-normalization.

% \begin{assump}
% \label{assumption:AMP-setting}
% \begin{enumerate}
%     %\item[(a)] $\lim_{n\to\infty}p/n=\kappa>0$;
%     \item[(a)] $X_i,Z_{ij}\simiid\mathcal N(0,1)$, i.e., $\Sigma_Z=I$, $\eta=0$, $\tau^2=1$ in Setting~\ref{model:moderate-dim-lr}. %, thus making $\eta=0$;
%     \item[(b)] The empirical distribution of $(\sqrt n\theta_j)_{j=1}^p$ converges to a distribution represented by a random variable $B_0$ and $\|\sqrt n\theta\|_2^2/p\to\e[B_0^2]$.
% \end{enumerate}
% \end{assump}

% \subsubsection{Null distribution}
%\sout{We see that under the null, $\sqrt nT_\text{distilled}(\tilde{\mathbf X},\ndata{Y},\ndata{Z})\mid \ndata{Y},\ndata{Z}\sim\mathcal N(0,\|\ndata{Y}-\ndata{Z}\hat\theta_\lambda\|_2^2/n)$, so the rejection region is $\{\sqrt nT_\text{distilled}\ge z_\alpha\sqrt{\|\ndata{Y}-\ndata{Z}\hat\theta_\lambda\|_2^2/n}\}$. The asymptotic power is given in Proposition~\ref{proposition:distilled-power}.}% We show that $T_\text{distilled}(\ndata{X},\ndata{Y},\ndata{Z})\cid\mathcal N(0,\lambda^2/\tau^2)$ by introducing Theorem~\ref{theorem:training-loss}.

%Now we are ready to study the power.% The level-$q$ rejection region (assuming we know the alternative $\beta$ is positive) is simply $T_\text{distilled}(\ndata{X},\ndata{Y},\ndata{Z})\ge z_q\sqrt{\|\ndata{Y}-\ndata{Z}\hat\theta_\lambda\|_2^2/n}$. It follows immediately from Theorem~\ref{theorem:training-loss} that

\begin{thm}
Under Setting~\ref{model:moderate-dim-lr} with $\Sigma_Z=I$ and $\xi=0$, if the empirical distribution of $(\sqrt n\theta_j)_{j=1}^{p-1}$ converges to a distribution represented by a random variable $B_0$ and $\|\sqrt n\theta\|_2^2/p\to\e[B_0^2]$,
% \begin{enumerate}
%     %\item[(a)] $\lim_{n\to\infty}p/n=\kappa>0$;
%     \item[(a)] $X_i,Z_{ij}\simiid\mathcal N(0,1)$, i.e., $\Sigma_Z=I$, $\eta=0$, $\tau^2=1$ in Setting~\ref{model:moderate-dim-lr}. %, thus making $\eta=0$;
%     \item[(b)] The empirical distribution of $(\sqrt n\theta_j)_{j=1}^p$ converges to a distribution represented by a random variable $B_0$ and $\|\sqrt n\theta\|_2^2/p\to\e[B_0^2]$.
% \end{enumerate}
then the 
%level-$\alpha$ 
CRT with the distilled lasso statistic with lasso parameter $\lambda$ has asymptotic power
equal to that of a $z$-test with standardized effect size
\[ \frac h{\tau_\lambda}. \]
% \begin{equation*}
% \Phi\left(\frac h{\tau_\lambda}-z_\alpha\right).
% \end{equation*}
%where $\tau_\lambda$ is the aforementioned constant defined in \citet{bayati2011lasso}.
\label{theorem:prop-distilled-power}
\end{thm}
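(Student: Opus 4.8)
\emph{Proof proposal.} The plan is to exploit the fact that $T_\text{distilled}$ is linear in $\ndata X$, so that both the CRT cutoff and the power admit closed forms, leaving AMP to enter only through two scalar limits. First I would identify the rejection region. Since $\xi=0$, $\mathcal L(X\mid Z)=\mathcal N(0,1)$, so $\tilde{\ndata X}\mid\ndata Z,\ndata Y\sim\mathcal N(0,I_n)$; writing $r:=\ndata Y-\ndata Z\hat\theta_\lambda$, which is a function of $(\ndata Z,\ndata Y)$ alone (the lasso never sees $\ndata X$), the null law of the statistic is $n^{-1}r^\top\tilde{\ndata X}\mid\ndata Z,\ndata Y\sim\mathcal N(0,n^{-2}\|r\|_2^2)$, so the analytical CRT rejects (one-sided) iff $n^{-1}r^\top\ndata X>z_{1-\alpha}\,n^{-1}\|r\|_2$ and (two-sided) iff $n^{-1}|r^\top\ndata X|>z_{1-\alpha/2}\,n^{-1}\|r\|_2$. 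Next, under Setting~\ref{model:moderate-dim-lr} with $\xi=0$ the pair $(\ndata X,\ndata Y)$ is jointly Gaussian given $\ndata Z$, and a direct computation gives the posterior $\ndata X\mid\ndata Z,\ndata Y\sim\mathcal N\!\big(\tfrac{\beta}{\beta^2+\sigma^2}\varepsilon',\,\tfrac{\sigma^2}{\beta^2+\sigma^2}I_n\big)$ with $\varepsilon':=\ndata Y-\ndata Z\theta$. Substituting this into the rejection region, the power conditional on $(\ndata Z,\ndata Y)$ of the one-sided test equals $\Phi\!\big(\tfrac{\beta}{\sqrt{\sigma^2(\beta^2+\sigma^2)}}\tfrac{r^\top\varepsilon'}{\|r\|_2}-z_{1-\alpha}\sqrt{\tfrac{\beta^2+\sigma^2}{\sigma^2}}\big)$, with the obvious two-term analogue for the two-sided test.

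It then remains to prove the single probabilistic limit $\tfrac{r^\top\varepsilon'}{\sqrt n\,\|r\|_2}\cip\tfrac{\sigma^2}{\tau_\lambda}$. Granting this, since $\beta=h/\sqrt n\to0$ we have $\tfrac{\beta}{\sqrt{\sigma^2(\beta^2+\sigma^2)}}\sqrt n\to\sigma^{-2}$ and $\sqrt{\tfrac{\beta^2+\sigma^2}{\sigma^2}}\to1$, so the conditional power tends in probability to $\Phi(h/\tau_\lambda-z_{1-\alpha})$; being bounded in $[0,1]$, bounded convergence promotes this to the unconditional power, and the two-sided computation gives $\Phi(h/\tau_\lambda-z_{1-\alpha/2})+\Phi(-h/\tau_\lambda-z_{1-\alpha/2})$, i.e.\ a $z$-test with $\mu=h/\tau_\lambda$ as in Definition~\ref{definition:z-test}.

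To prove the limit I would pass to ``null'' versions: set $\ndata Y^{(0)}:=\ndata Z\theta+\varepsilon$, let $\hat\theta_\lambda^{(0)}$ be the lasso of $\ndata Y^{(0)}$ on $\ndata Z$ and $r^{(0)}:=\ndata Y^{(0)}-\ndata Z\hat\theta_\lambda^{(0)}$. Because the lasso residual map is the Euclidean projection of $y$ onto $\{u:\|\ndata Z^\top u\|_\infty\le\sqrt n\lambda\}$, hence $1$-Lipschitz, while $\|\ndata X\beta\|_2=O_p(1)$, we get $\|r-r^{(0)}\|_2=O_p(1)$ and $\|\varepsilon'-\varepsilon\|_2=O_p(1)$; since $\|r^{(0)}\|_2=\Theta_p(\sqrt n)$, a Cauchy--Schwarz accounting of the cross terms reduces the claim to $\tfrac1n(r^{(0)})^\top\varepsilon\cip\sigma^2(1-\kappa P)$ and $\tfrac1n\|r^{(0)}\|_2^2\cip\tau_\lambda^2(1-\kappa P)^2$, where $P:=\p(|B_0+\tau_\lambda Z|>\alpha_\lambda\tau_\lambda)$ and $1-\kappa P>0$ since $\lambda>0$; the common factor cancels, leaving $\sigma^2/\tau_\lambda$.

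These two limits are exactly where AMP enters, and the hypotheses that the empirical law of $(\sqrt n\theta_j)$ converges to $B_0$ and $\|\sqrt n\theta\|_2^2/p\to\e[B_0^2]$ are precisely what license it. For $\tfrac1n\|r^{(0)}\|_2^2$ I would invoke the lasso state-evolution description: the Onsager-corrected residual $z^\ast$ satisfies $\tfrac1n\|z^\ast\|_2^2\to\tau_\lambda^2$ and $r^{(0)}=(1-\kappa P)z^\ast$. For $\tfrac1n(r^{(0)})^\top\varepsilon=\tfrac1n\|\varepsilon\|_2^2-\tfrac1n(\ndata Z\hat\theta_\lambda^{(0)}-\ndata Z\theta)^\top\varepsilon$, Stein's identity in $\varepsilon$ (the lasso fitted value is globally Lipschitz in $\ndata Y^{(0)}$, hence weakly differentiable, with divergence equal a.e.\ to the number of selected variables) gives $\E[(\ndata Z\hat\theta_\lambda^{(0)}-\ndata Z\theta)^\top\varepsilon]=\sigma^2\,\E\|\hat\theta_\lambda^{(0)}\|_0$, and AMP gives $\|\hat\theta_\lambda^{(0)}\|_0/p\to P$; combined with Gaussian concentration of $\varepsilon^\top(\ndata Z\hat\theta_\lambda^{(0)}-\ndata Z\theta)$ around its mean this yields $\tfrac1n(r^{(0)})^\top\varepsilon\to\sigma^2(1-\kappa P)$. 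I expect this AMP step to be the main obstacle: carefully importing the residual identity and the support-size limit under the theorem's precise assumptions, and handling the attendant pseudo-Lipschitz/concentration technicalities (the indicator defining $\|\cdot\|_0$ is not continuous, and the Lipschitz constant in the concentration step is random of order $\sqrt n$, so one must condition on $\ndata Z$). Everything else is elementary Gaussian algebra and $O_p$ bookkeeping.
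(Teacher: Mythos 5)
Your proposal is correct and, after computing the exact conditional power formula, targets precisely the same two scalar limits as the paper: $n^{-1}\|r\|_2^2\to\tau_\lambda^2(1-\kappa P)^2=\lambda^2/\alpha_\lambda^2$ and $n^{-1}r^\top\varepsilon'\to\sigma^2(1-\kappa P)=\sigma^2\lambda/(\alpha_\lambda\tau_\lambda)$ (these match via the fixed-point identity $\lambda=\alpha_\lambda\tau_\lambda(1-\kappa P)$). The difference is in \emph{how} the second limit is established. The paper's Lemma~\ref{lemma:theorem-training-loss} gets $n^{-1}r^\top\varepsilon'\to\sigma^2\lambda/(\alpha_\lambda\tau_\lambda)$ directly from Bayati--Montanari's Lemma~F.3(d), which asserts $\langle\varepsilon'-z^t,\varepsilon'\rangle\to0$ for the AMP iterate $z^t$, combined with the algebraic relation $\ndata Y-\ndata Z\theta^t = z^t-z^{t-1}w_t$ and $w_t\to 1-\lambda/(\alpha_\lambda\tau_\lambda)$. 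You instead propose Stein's identity plus the lasso degrees-of-freedom formula: $\E[\varepsilon^\top\ndata Z\hat\theta_\lambda^{(0)}\mid\ndata Z]=\sigma^2\E[\|\hat\theta_\lambda^{(0)}\|_0\mid\ndata Z]$ and $\|\hat\theta_\lambda^{(0)}\|_0/p\to P$ from AMP, followed by a conditional Gaussian concentration argument. This is a genuinely different route and does work, and it has the conceptual appeal of re-deriving the correlation from a ``divergence/effective-degrees-of-freedom'' viewpoint rather than from the AMP iteration's internal bookkeeping; but it does not shorten the argument, since you still need AMP both for the residual norm (your $z^\ast$ state-evolution step is the same computation as the paper's) and for the support-size limit (which, since $\mathbf 1(\cdot\neq 0)$ is not pseudo-Lipschitz, requires the same sandwiching technique the paper uses elsewhere in Lemma~\ref{lemma:lasso-amp-even}), plus the additional weak-differentiability and concentration machinery you flag. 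Your device of passing to the ``null'' residual $r^{(0)}$ via the $1$-Lipschitzness of the residual/projection map is also a valid alternative to the paper's move of absorbing $\beta\ndata X$ into $\varepsilon'$ and re-verifying the Bayati--Montanari convergence conditions for the perturbed noise; both handle the $O_p(1)$ perturbation cleanly. One small slip: the conditional-Lipschitz constant of $\varepsilon\mapsto n^{-1}h(\varepsilon)^\top\varepsilon$ on the relevant ball is $O(n^{-1/2})$, not $O(\sqrt n)$ — presumably you meant the un-normalized inner product — which is exactly why the concentration works.
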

We prove Theorem~\ref{theorem:prop-distilled-power} using results from \citet{bayati2011lasso}. The key step is to analyze the asymptotic correlation between the errors and the fitted residuals of the lasso regression, which has not been studied before. Theorem~\ref{theorem:prop-distilled-power} is clean in that it only depends on the model parameters through a scalar $\tau_\lambda$, making it helpful for guiding the choice of $\lambda$. %\sout{Note that a difference from Propositions~\ref{proposition:power-mc} and \ref{proposition:model-X-CRT-OLS} is the standard deviation $\tau$ does not appear, because we have assumed it to be $1$ without loss of generality. One can also think of it as being absorbed into $h$.}

\subsubsection{Comparison of CRT statistics}

In Figure~\ref{figure:tau}, we plot the relationship of the asymptotic power of the CRT with the distilled lasso statistic and $\lambda$ in different settings and compare with that of the CRT using marginal covariance and OLS. We can see that the dependence of the power on $\lambda$ is mild, and the distilled lasso statistic with a good $\lambda$ is always better than marginal covariance and OLS in the considered parameter settings. This is not a coincidence: the best distilled lasso statistic has at least the same power as the marginal covariance and the OLS coefficient. To see this, note that if $\lambda=\infty$, $\hat\theta_\lambda=0$ and $T_\text{distilled}=T_\text{MC}$; if $\lambda=0$, $\hat\theta_\lambda=\hat\theta^\text{OLS}=(\ndata{Z}^\top\ndata{Z})^{-1}\ndata{Z}^\top\ndata{Y}$, and $T_\text{distilled}$ is equal to the numerator of the expression of $T_\text{OLS}$ in Equation~\eqref{equation:ols-expression} in the proof of Theorem~\ref{theorem:prop-model-X-CRT-OLS}, the power of which can be even more easily proved to be the same as $T_\text{OLS}$ following the same proof.

\begin{figure}[H]
    \centering
\includegraphics[width = 0.9\textwidth]{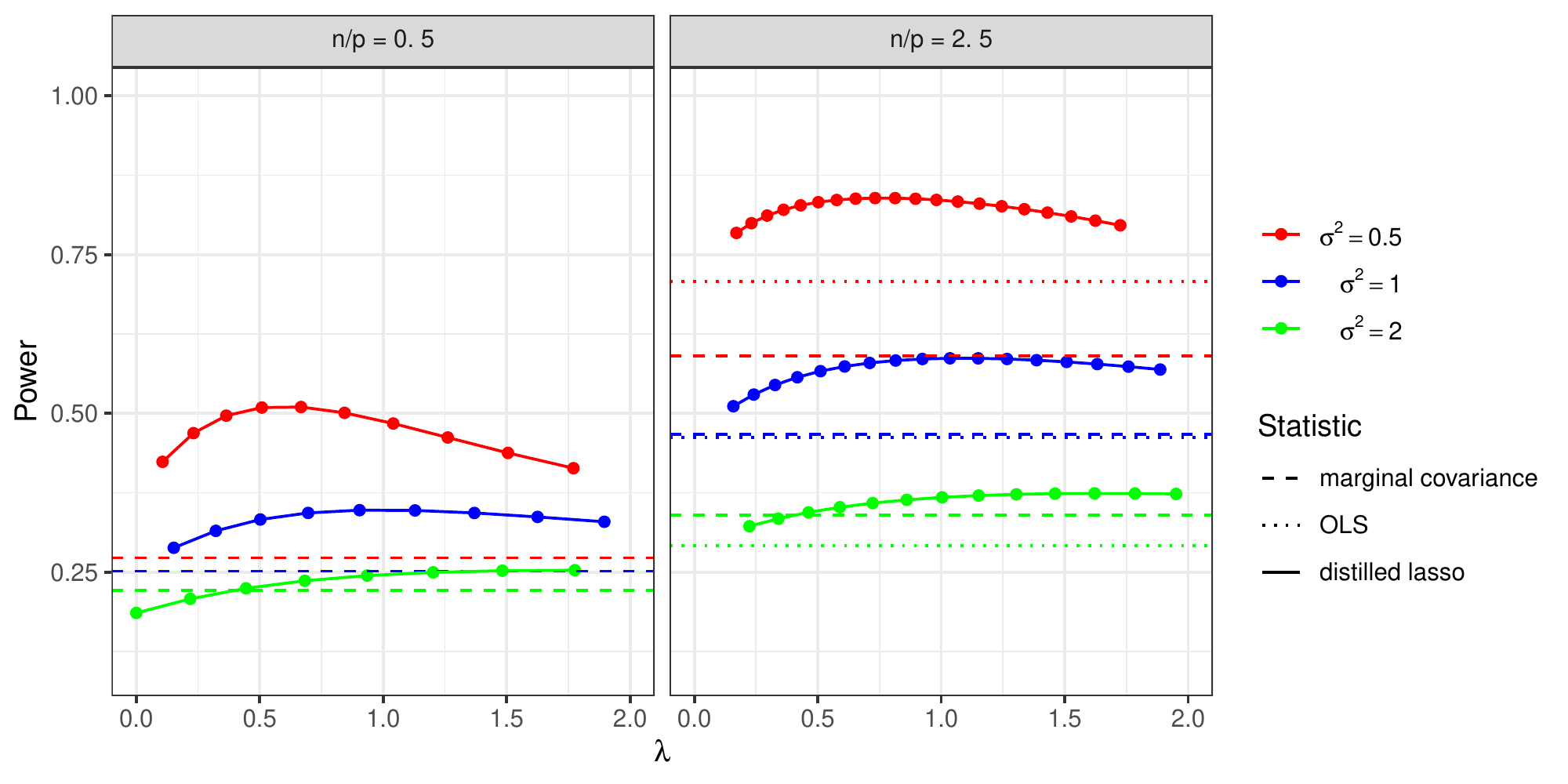}
    \caption{Dependence of the asymptotic power of the CRT with the distilled lasso statistic on $\lambda$, compared with that with the marginal covariance statistic and the OLS statistic; plots are in Setting~\ref{model:moderate-dim-lr} with $B_0\sim0.9\delta_0+0.1\delta_4$, $\Sigma_Z=I$ and $\xi=0$, so $v_Z^2=1.6\kappa$. Signal size is $\beta=2/\sqrt n$. OLS is not applicable for $n=0.5p$.}
    \label{figure:tau}
\end{figure}

% \begin{figure}[H]
%     \centering
% \includegraphics[width = 0.9\textwidth]{}
%     \caption{Dependence of $\tau_\lambda$ on $\lambda$, compared with marginal covariance and OLS (transformed to effective $\tau_\lambda$); $\tau^2=1$ and $B_0\sim0.9\delta_0+0.1\delta_4$ in Setting~\ref{model:moderate-dim-lr} with $\Sigma_Z=I$ and $\eta=0$, so $v_Z^2=1.6\kappa$. OLS is not applicable for $n=0.5p$. \rev{[plot the power]}} 
%     \label{figure:tau}
% \end{figure}

\subsubsection{Comparison with other methods}
\label{sec:crt-power-comp}
It is possible to achieve the power $\Phi\left(h/\sigma-z_\alpha\right)$ if $\theta$ and $\xi$ can be estimated at a sufficiently fast rate, which dominates all three of our statistics (for the lasso, note that $\tau_\lambda\ge\sigma$). \citet{javanmard2018debiasing} achieved this rate (their Theorem~3.8) by assuming, among other conditions, $\mathcal L(X,Z)$ is known and $\theta$ has sparsity level $o(n/(\log p)^2)$ (which is not satisfied in our setting). \citet{chernozhukov2018double} also obtained this rate (their Theorem~4.1) by assuming $\hat\xi$ and $\hat\theta$ are consistent and $\|\hat\xi-\xi\|\|\hat\theta-\theta\|=o(n^{-1/2})$, while it is well-known that consistency in high-dimensional settings is usually impossible without strong assumptions such as sparsity (which we do not make).

\citet{zhu2018significance} assumed sub-Gaussianity and moment conditions to derive the same asymptotic power (their Theorem~7) as in our Theorem~\ref{theorem:prop-power-mc} for a different method they proposed and under different assumptions, except that there a two-sided test was considered. There are differences that are worth noting: (a) \citet{zhu2018significance} does not assume $\xi$ is known, but requires $\xi$ to have sparsity $o(\sqrt n/(\log(\max(p,n)))^{5/2})$ in order to estimate it fast enough and gives an asymptotically valid test, and here we assume we know $\xi$ so the test has exact size $\alpha$ for any finite $(n,p)$; (b) we make stronger assumptions on $\mathcal L(Y\,|\,X,Z)$, which is mainly to facilitate analysis for other more complex statistics; our Theorem~\ref{theorem:prop-power-mc} could easily be extended to only assume moment conditions on $\varepsilon$.
%are actually required for the marginal covariance statistic.

\subsection{Leveraging unlabeled data in the CRT}
\label{sec:CRT-conditional}
%In Section~\ref{sec:CRT}, we assumed we knew $\mathcal L(X\,|\,Z)$ exactly. In some practical cases, however, we do not know $\mathcal L(X\,|\,Z)$, yet we can still apply the CRT with a useful idea introduced in \citet{huang2019relaxing} so that we only need to assume a model for $\mathcal L(X\,|\,Z)$ and conditions on a sufficient statistic.

In Section~\ref{sec:CRT}, we assumed we knew $\mathcal L(X\,|\,Z)$ exactly, which can be understood as a case in which we have sufficient unlabeled data and/or domain knowledge that we effectively know this distribution exactly. In some practical cases, however, we do not know $\mathcal L(X\,|\,Z)$ exactly and would like to leverage finite unlabeled samples to learn more about it. To this end, we explore in this section a useful idea introduced in \citet{huang2019relaxing}, which only assumes a model on $\mathcal L(X\,|\,Z)$ and conditions on a sufficient statistic that uses both labeled and unlabeled data.

As a concrete example, we again consider Setting~\ref{model:moderate-dim-lr}, but with the following changes: $\xi$ is unknown; $\Var(X\,|\,Z)=1$ but is unknown to the CRT. Effectively, we have assumed an unknown Gaussian linear model for $\mathcal L(X\,|\,Z)$. 
%Applying this assumption to each covariate in turn equates to assuming an unknown multivariate Gaussian model for $\mathcal L(X,Z)$. 
In this case, we would not be able to sample from $\mathcal L(\ndata{X}\,|\,\ndata{Z})$ to get $\tilde{\ndata{X}}$ as normally required by the CRT. Exploiting Gaussianity, we can proceed by conditioning on a sufficient statistic as follows. Let $T(\ndata{X},\ndata{Y},\ndata{Z})$ be the test statistic and $S(\ndata{X},\ndata{Z})$ be a sufficient statistic (e.g., $S(\ndata{X},\ndata{Z})=\ndata{Z}^\top\ndata{X}$ is sufficient in Setting~\ref{model:moderate-dim-lr}) for the unknown parameter in $\mathcal L(\ndata{X}\,|\,\ndata{Z})$. By the sufficiency of $S$, $\mathcal L(\ndata{X}\,|\,\ndata{Z},S(\ndata{X},\ndata{Z}))$ does not depend on the unknown $\xi$ or $\Var(X\,|\,Z)$, so we therefore know $\mathcal L(T(\ndata{X},\ndata{Y},\ndata{Z})\,|\,\ndata{Z},S(\ndata{X},\ndata{Z}),\ndata{Y})$ under the null. Thus, we can obtain an analytical or empirical cutoff in the same way as the original unconditional CRT.

Although weakening the assumed knowledge of $\mathcal L(X\,|\,Z)$ by moving from an unconditional test to a conditional one may reduce the power, this reduction can be mitigated by incorporating unlabeled data into the sufficient statistic.
%\rev{Another appeal of this idea is that it allows for seamless incorporation of unlabeled data.} 
Let the unlabeled samples be denoted by $(X_{n+i},Z_{n+i})_{i=1}^m$, i.e., they are recorded without the response $Y_{n+i}$, where we assume $m/p$ goes to a positive constant. 
%The method in this section also applies when the labeled samples are collected retrospectively (i.e., based on the response variable $Y$; see, for example, \citet{barber2019construction}), as we will explain later in Section~\ref{sec:retro}.
% The statistic takes the form where $\ndata{X}_*$ and $\ndata{Z}_*$ contain all $(n+m)$ data points. We assume $\kappa<1$ (so $n>p$, see Section~\ref{sec:unlabeled} for a discussion on this assumption).
Let $n_*=n+m$ and $\ndata{X}_*$ and $\ndata{Z}_*$ be the $n_*\times 1$ and $n_*\times(p-1)$ data matrices containing all $X$ and $Z$ samples, respectively, with the first $n$ rows being labeled and corresponding to $\ndata{Y}$. Similarly to the case without unlabeled data, let $T(\ndata{X}_*,\ndata{Y},\ndata{Z}_*)$ be the test statistic and $S(\ndata{X}_*,\ndata{Z}_*)$ be a sufficient statistic for the unknown parameter in $\mathcal L(\ndata{X}_*\,|\,\ndata{Z}_*)$, so that we know $\mathcal L(T(\ndata{X}_*,\ndata{Y},\ndata{Z}_*)\,|\,\ndata{Z}_*,S(\ndata{X}_*,\ndata{Z}_*),\ndata{Y})$ under the null. We emphasize that this idea also applies to non-Gaussian cases as long as a sufficient statistic exists. We can also see that $\mathcal L(\ndata{X}_*\,|\,\ndata{Z}_*,S(\ndata{X}_*,\ndata{Z}_*))$ does not change even if the labeled samples are collected retrospectively (i.e., based on the response variable $Y$; see, for example, \citet{barber2019construction}), as under the null, $\ndata{X}_*\ci\ndata{Y}\mid\ndata{Z}_*$. On the other hand, the power \emph{will} change, though power analysis with both retrospective sampling and unlabeled data is beyond the scope of this article (while we do provide an analysis in Section~\ref{sec:retro} in the case of known $\mathcal L(X\,|\,Z)$, i.e., infinite unlabeled data); we focus on the case when the labeled samples are collected independent of the responses. It turns out that this procedure admits a quite substantial simplification for the Gaussian distribution. For instance, if $T$ is chosen to be the marginal covariance $T_\text{MC}=n^{-1}\ndata{Y}^\top\ndata{X}$, it simplifies to a statistic that could be seen as a generalization of the OLS statistic, which enables the analysis of its asymptotic power. We defer the details to Appendix~\ref{sec:conditional-crt}, where we also discuss why it might not be beneficial to consider the original OLS statistic in this setting. We present here upper and lower bounds of the asymptotic power together with a conjecture for its exact value.

\begin{thm}
In Setting~\ref{model:moderate-dim-lr} with $\xi$ and $\Var(X\,|\,Z)$ unknown but fixed to be $1$, if there are $m$ additional data points $(X_i,Z_i)_{i=n+1}^{n+m}$, $n_*=n+m$, $n/n_*\to\kappa_*$ and $\kappa\kappa_*<1$, then the conditional CRT with statistic $T_\textnormal{MC}$ has asymptotic power 
lower-bounded (the $\liminf$ is lower-bounded) by that of a $z$-test with standardized effect size
\[ \frac{h\sqrt{1-\kappa\kappa_*}}{\sqrt{\sigma^2+v_Z^2\frac{1}{{1-\kappa\kappa_*}}}} \]
and upper-bounded (the $\limsup$ is upper-bounded) by that of a $z$-test with standardized effect size
\[ \frac{h\sqrt{1-\kappa\kappa_*}}{\sqrt{\sigma^2+v_Z^2\max\left(0,\frac{1-\frac{(1+\sqrt{1/\kappa})^2}{(1-\sqrt{\kappa\kappa_*})^2}\kappa\kappa_*}{1-\kappa\kappa_*}\right)}}. \]
% bounded by
% \begin{multline*}
% \Phi\left(\frac{h\tau\sqrt{1-\kappa\kappa_*}}{\sqrt{\sigma^2+v_Z^2\frac{1}{{1-\kappa\kappa_*}}}}-z_\alpha\right)\le\liminf\textnormal{Power}\\
% \le\limsup\textnormal{Power}\le\Phi\left(\frac{h\tau\sqrt{1-\kappa\kappa_*}}{\sqrt{\sigma^2+v_Z^2\max\left(0,\frac{1-\frac{(1+\sqrt{1/\kappa})^2}{(1-\sqrt{\kappa\kappa_*})^2}\kappa\kappa_*}{1-\kappa\kappa_*}\right)}}-z_\alpha\right).
% \end{multline*}
\label{theorem:prop-model-X-unlabeled}
\end{thm}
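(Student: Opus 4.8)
The plan is to reduce the conditional CRT to a concrete geometric statement and then combine a Gaussian approximation with two extreme-eigenvalue estimates for sample covariance matrices. In Setting~\ref{model:moderate-dim-lr} we have $\mathcal L(\mathbf X_*\mid\mathbf Z_*)=\mathcal N(\mathbf Z_*\xi,\sigma_X^2 I_{n_*})$ with $(\xi,\sigma_X^2)$ unknown, for which $S=(\mathbf Z_*^\top\mathbf X_*,\|\mathbf X_*\|_2^2)$ is sufficient. Writing $\Pi$ for the orthogonal projection onto the column span of $\mathbf Z_*$ and $d:=n_*-p+1$, conditioning on $(\mathbf Z_*,S,\mathbf Y)$ fixes $\Pi\mathbf X_*$ and $R:=\|(I-\Pi)\mathbf X_*\|_2$ and makes the direction $v:=(I-\Pi)\mathbf X_*/R$ — as well as the direction $\tilde v$ of the resample $\tilde{\mathbf X}_*$ — uniform on the unit sphere of the $d$-dimensional space $\mathrm{Im}(I-\Pi)$, with $\tilde v$ independent of $\mathbf Y$. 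Since $T_{\mathrm{MC}}$ uses only the first $n$ rows of its covariate argument, with $E_n$ the restriction to the first $n$ coordinates and $w:=(I-\Pi)E_n^\top\mathbf Y$ one gets $T_{\mathrm{MC}}(\mathbf X_*,\mathbf Y,\mathbf Z_*)=c_0+n^{-1}R\langle w,v\rangle$ and $T_{\mathrm{MC}}(\tilde{\mathbf X}_*,\mathbf Y,\mathbf Z_*)=c_0+n^{-1}R\langle w,\tilde v\rangle$ with $c_0,R,w$ all determined by the conditioning. Hence the level-$\alpha$ one-sided CRT rejects exactly when $\sqrt d\,\langle w/\|w\|,v\rangle>Q_d$, where $Q_d$ is the $(1-\alpha)$-quantile of $\sqrt d\,\langle w/\|w\|,\tilde v\rangle$; since $\tilde v$ is uniform on $S^{d-1}$, $\sqrt d\,\langle w/\|w\|,\tilde v\rangle\Rightarrow\mathcal N(0,1)$ and $Q_d\to z_{1-\alpha}$ (the two-sided case is analogous, cf.\ Definition~\ref{definition:z-test}).

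Next I analyze $\sqrt d\,\langle w/\|w\|,v\rangle$. Write $\mathbf X_*=\mathbf Z_*\xi+g$ with $g\sim\mathcal N(0,I_{n_*})$ independent of $\varepsilon$ given $\mathbf Z_*$, so $(I-\Pi)\mathbf X_*=(I-\Pi)g$ with $\|(I-\Pi)g\|_2^2/d\to1$. Substituting $\mathbf Y=\mathbf X\beta+\mathbf Z\theta+\varepsilon$ and $\mathbf X=E_n\mathbf X_*$ gives, with $Q:=E_n^\top E_n$, the decomposition $w=\beta(I-\Pi)Qg+w_0$ where $w_0:=(I-\Pi)Q\mathbf Z_*(\beta\xi+\theta)+(I-\Pi)E_n^\top\varepsilon$ is fixed given $(\mathbf Z_*,\varepsilon)$. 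Since $\beta=h/\sqrt n$, the $\beta$-terms change $\|w\|_2^2$ by a relative $O(1/n)$, so
\[ \sqrt d\,\langle w/\|w\|,v\rangle=\frac{\beta\, g^\top Q(I-\Pi)g+\langle w_0,g\rangle}{\|w_0\|_2}\,(1+o(1)). \]
Conditionally on $(\mathbf Z_*,\varepsilon)$, $\langle w_0,g\rangle\sim\mathcal N(0,\|w_0\|_2^2)$, while $g^\top Q(I-\Pi)g$ concentrates at $\trace(Q(I-\Pi))=n-\trace(Q\Pi)$ with $O(\sqrt n)$ fluctuations that are uncorrelated with $\langle w_0,g\rangle$ (odd Gaussian moments vanish) and, scaled by $\beta$, add only $O(1/n)$ to the variance. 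Hence the conditional power converges to $\Phi(\mu_n-z_{1-\alpha})$ with the (data-dependent) effective effect size
\[ \mu_n:=\frac{\beta\bigl(n-\trace(Q\Pi)\bigr)}{\|w_0\|_2}=\frac{h(1-\kappa\kappa_*)}{\sqrt{\|w_0\|_2^2/n}}\,(1+o(1)), \]
using $\trace(Q\Pi)/n=n^{-1}\sum_{i\le n}\Pi_{ii}\to\kappa\kappa_*$ (each $\E[\Pi_{ii}]=(p-1)/n_*$). It remains to sandwich $\|w_0\|_2^2/n$.

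This sandwich is the crux. Up to the negligible $\beta\xi$ term and an $O(\sqrt n)$ cross term, $\|w_0\|_2^2=\|(I-\Pi)Q\mathbf Z_*\theta\|_2^2+\|(I-\Pi)E_n^\top\varepsilon\|_2^2$; by Hanson--Wright and the leverage-score convergence above, the second summand is $n\sigma^2(1-\kappa\kappa_*)(1+o(1))$. For the first, splitting $Q\mathbf Z_*\theta$ orthogonally into its $\mathrm{Im}(\Pi)$- and $\mathrm{Im}(I-\Pi)$-parts and using $\mathbf Z_*^\top Q\mathbf Z_*=\mathbf Z^\top\mathbf Z$ gives $\|(I-\Pi)Q\mathbf Z_*\theta\|_2^2=\theta^\top A\theta-\theta^\top A(A+B)^{-1}A\theta$, where $A:=\mathbf Z^\top\mathbf Z$ and $B:=\mathbf Z_*^\top\mathbf Z_*-A$ are the Gram matrices of the labeled and unlabeled covariate rows. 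Now $\theta^\top A\theta/n=\|\mathbf Z\theta\|_2^2/n\to v_Z^2$ a.s., but $\theta^\top A(A+B)^{-1}A\theta/n$ need not converge without an assumption tying $\theta$ to the eigenbasis of $\Sigma_Z$ — which is exactly why the conclusion is a pair of bounds (and an exact value is only conjectured). Since $\|(I-\Pi)Q\mathbf Z_*\theta\|_2^2$ is a squared norm it lies in $[0,\theta^\top A\theta]$, so $\|w_0\|_2^2/n\le\sigma^2(1-\kappa\kappa_*)+v_Z^2+o(1)$, which through $\mu_n$ gives the claimed lower bound on $\liminf$ of the power. For the other side, whiten $\mathbf Z=\mathbf G\Sigma_Z^{1/2}$, $\mathbf Z_*=\mathbf G_*\Sigma_Z^{1/2}$ (so $\mathbf G,\mathbf G_*$ have i.i.d.\ standard Gaussian entries) and set $\psi:=\mathbf Z\theta=\mathbf G\Sigma_Z^{1/2}\theta$; a direct computation gives $\theta^\top A(A+B)^{-1}A\theta=\psi^\top\mathbf G(\mathbf G_*^\top\mathbf G_*)^{-1}\mathbf G^\top\psi\le\bigl(\lambda_{\max}(\mathbf G^\top\mathbf G)/\lambda_{\min}(\mathbf G_*^\top\mathbf G_*)\bigr)\|\psi\|_2^2$, and the Bai--Yin law (using $p/n_*\to\kappa\kappa_*<1$) gives $\lambda_{\max}(\mathbf G^\top\mathbf G)/n\to(1+\sqrt\kappa)^2$, $\lambda_{\min}(\mathbf G_*^\top\mathbf G_*)/n_*\to(1-\sqrt{\kappa\kappa_*})^2$, while $\|\psi\|_2^2/n\to v_Z^2$. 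Thus $\theta^\top A(A+B)^{-1}A\theta/n\le v_Z^2\,\kappa_*(1+\sqrt\kappa)^2/(1-\sqrt{\kappa\kappa_*})^2+o(1)$, so $\|w_0\|_2^2/n\ge\sigma^2(1-\kappa\kappa_*)+v_Z^2\max\bigl(0,\,1-\kappa_*(1+\sqrt\kappa)^2/(1-\sqrt{\kappa\kappa_*})^2\bigr)+o(1)$, which through $\mu_n$ gives the upper bound on $\limsup$ of the power after substituting and using $(1+\sqrt\kappa)^2=\kappa(1+\sqrt{1/\kappa})^2$.

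The main obstacle is precisely this last sandwich: $\theta^\top A(A+B)^{-1}A\theta/n$ genuinely can fail to converge, so only a two-sided bound is available, and obtaining its sharp constants requires the correct random-matrix edge estimates — an ordinary Wishart $\mathbf G^\top\mathbf G$ of aspect ratio $\kappa$ and a tall Wishart $\mathbf G_*^\top\mathbf G_*$ of aspect ratio $\kappa\kappa_*<1$ — applied only after the whitening that removes $\Sigma_Z$ from the picture. The remaining ingredients (the conditional CLT with asymptotic mean $\mu_n$ and unit variance, uniformly enough in the conditioning; $Q_d\to z_{1-\alpha}$; $\|(I-\Pi)g\|_2^2/d\to1$; and the leverage-score and quadratic-form concentrations) are standard, but have to be tracked carefully because the power is an unconditional probability over $(\mathbf Z_*,\varepsilon,g)$.
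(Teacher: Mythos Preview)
Your argument is correct and lands on exactly the same crux as the paper, but the route there is organized differently. The paper first spends a page arguing that the cutoff obtained when $\Var(X\mid Z)$ is unknown is asymptotically equal to the one obtained when it is known, and only then works with the exact Gaussian conditional law $T_{\text{model-X}}\mid(\mathbf Y,\mathbf Z_*)\sim\mathcal N(\mu_\beta,\sigma_\beta^2)$, analyzing $\mu_\beta$ and $\sigma_\beta^2$ term by term. You bypass that preliminary reduction by conditioning on the full sufficient statistic from the outset, which yields the sphere-uniform direction $v$ and the clean scalar test $\sqrt d\,\langle w/\|w\|,v\rangle>Q_d$; this is neater for the unknown-variance case and makes the role of $\|w_0\|_2^2/n$ as the sole data-dependent quantity transparent. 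At the decisive step, however, the two proofs coincide: your $\|w_0\|_2^2/n$ equals $\mathbf Y^\top(I-\mathbf Z(\mathbf Z_*^\top\mathbf Z_*)^{-1}\mathbf Z^\top)\mathbf Y/n$ up to $O(1/\sqrt n)$ corrections, and both proofs reduce to sandwiching $\theta^\top\mathbf Z^\top\mathbf Z(\mathbf Z_*^\top\mathbf Z_*)^{-1}\mathbf Z^\top\mathbf Z\theta/n$ by $0$ from below and by the Bai--Yin eigenvalue ratio $\lambda_{\max}(\mathbf G^\top\mathbf G)/\lambda_{\min}(\mathbf G_*^\top\mathbf G_*)$ (after whitening) from above. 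Your identity $\|(I-\Pi)Q\mathbf Z_*\theta\|_2^2=\theta^\top A\theta-\theta^\top A(A+B)^{-1}A\theta$ is a tidy way to isolate that term; the paper arrives at it through an expansion of $\mathbf Y^\top\mathbf Z(\mathbf Z_*^\top\mathbf Z_*)^{-1}\mathbf Z^\top\mathbf Y$. What your framing buys is a more direct treatment of the unknown scale and a single effect-size parameter $\mu_n$; what the paper's framing buys is an explicit Gaussian conditional that makes the leverage-score concentration $\trace(Q\Pi)/n\to\kappa\kappa_*$ (which you invoke in one line) fall out of an exchangeability-plus-variance argument.
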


\begin{conjecture}
In Setting~\ref{model:moderate-dim-lr}, if there are $m$ additional data points $(X_i,Z_i)_{i=n+1}^{n+m}$, $n_*=n+m$, $n/n_*\to\kappa_*$ and $\kappa\kappa_*<1$, then the conditional CRT with statistic $T_\textnormal{MC}$ has asymptotic power
equal to that of a $z$-test with standardized effect size
\[ \frac{h\sqrt{1-\kappa\kappa_*}}{\sqrt{\sigma^2+v_Z^2(1-\kappa_*)}}. \]
% \begin{equation*}
% \Phi\left(\frac{h\tau\sqrt{1-\kappa\kappa_*}}{\sqrt{\sigma^2+v_Z^2(1-\kappa_*)}}-z_\alpha\right).
% \end{equation*}
\label{conjecture:model-X-unlabeled}
\end{conjecture}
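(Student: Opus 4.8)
\textbf{Step 1: reduce the conditional CRT with $T_\textnormal{MC}$ to an explicit statistic.} Let $\ndata{Z}_*$ be the combined $n_*\times(p-1)$ design, $P_*$ the orthogonal projection onto $\operatorname{col}(\ndata{Z}_*)$, $P_*^\perp=I-P_*$, $\Pi$ the orthogonal projection onto the first $n$ (labeled) coordinates, and $d=n_*-(p-1)$. Conditioning on $\ndata{Z}_*$ and the Gaussian sufficient statistic $(\ndata{Z}_*^\top\ndata{X}_*,\|\ndata{X}_*\|^2)$ for $(\xi,\Var(X\mid Z))$ fixes $P_*\ndata{X}_*$ and $\|P_*^\perp\ndata{X}_*\|$ and makes $P_*^\perp\tilde{\ndata{X}}_*$ uniform on a sphere in $\operatorname{col}(\ndata{Z}_*)^\perp$. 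Plugging this into $T_\textnormal{MC}=n^{-1}\ndata{Y}^\top\ndata{X}$ and using the exact analytic quantile of the uniform-on-sphere law (this is the form derived in Appendix~\ref{sec:conditional-crt}), the one-sided level-$\alpha$ CRT rejects exactly when
\[
W:=\sqrt d\,\frac{(P_*^\perp\tilde{\ndata{Y}})^\top(P_*^\perp\ndata{X}_*)}{\|P_*^\perp\tilde{\ndata{Y}}\|\,\|P_*^\perp\ndata{X}_*\|}>z_{1-\alpha},
\]
where $\tilde{\ndata{Y}}\in\rr^{n_*}$ is $\ndata{Y}$ padded with zeros on the unlabeled rows (and $|W|>z_{1-\alpha/2}$ two-sided). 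This is the ``generalization of the OLS statistic'' of Section~\ref{sec:CRT-conditional}: $W$ is $\sqrt d$ times the sample correlation of the residuals of $\ndata{X}_*$ and of $\tilde{\ndata{Y}}$ after regressing both on $\ndata{Z}_*$, reducing to the ordinary OLS $t$-statistic when $m=0$. By Definition~\ref{definition:z-test} it then suffices to show $W\cid\mathcal N(\mu,1)$ with $\mu$ the claimed effect size.

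\textbf{Step 2: asymptotic normality of $W$.} Condition on $(\ndata{Z}_*,\varepsilon)$ and treat $\ndata{U}:=\ndata{X}_*-\ndata{Z}_*\xi\sim\mathcal N(0,I_{n_*})$ as the only remaining randomness, so $P_*^\perp\ndata{X}_*=P_*^\perp\ndata{U}$. Expanding $\tilde{\ndata{Y}}=\beta\Pi\ndata{X}_*+\Pi\ndata{Z}_*\theta+\tilde\varepsilon$ (with $\tilde\varepsilon$ the zero-padded $\varepsilon$) and using $P_*^\perp\ndata{Z}_*=0$, the numerator $N:=(P_*^\perp\tilde{\ndata{Y}})^\top P_*^\perp\ndata{U}$ is a quadratic-plus-linear form in $\ndata{U}$: its only quadratic term is $\beta\,\ndata{U}^\top\Pi P_*^\perp\ndata{U}$, with conditional mean $\beta\,\trace(\Pi P_*^\perp)$, while the linear terms have conditional mean zero and together contribute conditional variance $\|R\|^2+o_p(n)$, where $R:=P_*^\perp\Pi\ndata{Z}_*\theta+P_*^\perp\tilde\varepsilon$ (the $\beta$-dependent linear term is $O_p(1)$ and negligible). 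The identity $\trace(\Pi P_*^\perp)=n-\trace\big((\ndata{Z}_*^\top\ndata{Z}_*)^{-1}\ndata{Z}^\top\ndata{Z}\big)$ plus exchangeability of the rows of $\ndata{Z}_*$ give $\trace(\Pi P_*^\perp)/n\to1-\kappa\kappa_*$; also $\|P_*^\perp\ndata{X}_*\|^2=d(1+o_p(1))$ and $\|P_*^\perp\tilde{\ndata{Y}}\|=\|R\|(1+o_p(1))$. Hence $W=N/\|R\|+o_p(1)$ and $\mathcal L(N/\|R\|\mid\ndata{Z}_*,\varepsilon)$ is close to $\mathcal N\big(h\sqrt n(1-\kappa\kappa_*)/\|R\|,1\big)$, so everything reduces to showing $\|R\|^2/n$ converges to a positive constant $c^2$, which then yields $W\cid\mathcal N(\mu,1)$ with $\mu=h(1-\kappa\kappa_*)/c$.

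\textbf{Step 3: identify $c^2$.} Expand $\|R\|^2=\|P_*^\perp\tilde\varepsilon\|^2+\|P_*^\perp\Pi\ndata{Z}_*\theta\|^2+(\text{cross})$: the cross term is $o_p(n)$ after conditioning on $\ndata{Z}_*$; $\|P_*^\perp\tilde\varepsilon\|^2=\sigma^2\trace(\Pi P_*^\perp)+o_p(n)\to n\sigma^2(1-\kappa\kappa_*)$ by exchangeability; and $\|P_*^\perp\Pi\ndata{Z}_*\theta\|^2=\|\ndata{Z}\theta\|^2-Q_n$ with $\|\ndata{Z}\theta\|^2/n\to v_Z^2$ and $Q_n:=\theta^\top(\ndata{Z}^\top\ndata{Z})(\ndata{Z}_*^\top\ndata{Z}_*)^{-1}(\ndata{Z}^\top\ndata{Z})\theta$. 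The quadratic form $Q_n$ is where random-matrix input enters. Writing $\ndata{Z}=\ndata{G}\,\Sigma_Z^{1/2}$ and $\ndata{Z}_m=\ndata{G}_m\,\Sigma_Z^{1/2}$ with $\ndata{G},\ndata{G}_m$ having i.i.d.\ $\mathcal N(0,1)$ entries absorbs $\Sigma_Z$ into $\theta$: $Q_n=\tilde\theta^\top(\ndata{G}^\top\ndata{G})(\ndata{G}_*^\top\ndata{G}_*)^{-1}(\ndata{G}^\top\ndata{G})\tilde\theta$ with $\tilde\theta:=\Sigma_Z^{1/2}\theta$, $\|\tilde\theta\|^2\to v_Z^2$, which explains why only $v_Z^2$ enters. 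Rotational invariance then lets me take $\tilde\theta$ along $e_1$, so $Q_n\eqd\|\tilde\theta\|^2\,g^\top H_{[n],[n]}\,g$ where $g$ is the first column of $\ndata{G}$ and $H=\ndata{G}_*(\ndata{G}_*^\top\ndata{G}_*)^{-1}\ndata{G}_*^\top$; the difficulty that $g$ is part of $\ndata{G}_*$ (so $g$ and $H$ are dependent) is handled by a leave-one-column-out / Sherman--Morrison split $H=H'+\|P_{H'}^\perp g_*\|^{-2}(P_{H'}^\perp g_*)(P_{H'}^\perp g_*)^\top$, where $g_*$ is the first column of $\ndata{G}_*$ and $H'$ is built from the remaining $p-2$ columns and hence independent of $g$. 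The first piece gives $g^\top H'_{[n],[n]}g=\trace(H'_{[n],[n]})+o_p(n)\to n\kappa\kappa_*$ (a labeled-block trace of a uniformly random rank-$(p-2)$ projection), and the rank-one piece concentrates at $n\kappa_*(1-\kappa\kappa_*)$, so $Q_n/n\to v_Z^2\kappa_*\big(1+\kappa(1-\kappa_*)\big)$, hence $\|P_*^\perp\Pi\ndata{Z}_*\theta\|^2/n\to v_Z^2(1-\kappa_*)(1-\kappa\kappa_*)$ and $c^2=(1-\kappa\kappa_*)\big(\sigma^2+v_Z^2(1-\kappa_*)\big)$. Therefore $\mu=h(1-\kappa\kappa_*)/c=h\sqrt{1-\kappa\kappa_*}/\sqrt{\sigma^2+v_Z^2(1-\kappa_*)}$, which as a sanity check interpolates between Theorem~\ref{theorem:prop-power-mc} ($\kappa_*\to0$) and Theorem~\ref{theorem:prop-model-X-CRT-OLS} ($\kappa_*\to1$).

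\textbf{The main obstacle}---and presumably why this is a conjecture rather than a theorem---is making the random-matrix step fully rigorous with uniform control of the errors the sketch above absorbs into $o_p(n)$: one must verify that the $O_p(1)$ contributions of the $\beta$-terms to $N$ and to $\|P_*^\perp\tilde{\ndata{Y}}\|$ are negligible after dividing by $\sqrt n$; that $Q_n$, and each cross term produced by the Sherman--Morrison decomposition, concentrates around its mean at rate $o_p(n)$, which requires controlling $(\ndata{Z}_*^\top\ndata{Z}_*)^{-1}$ via the lower Marchenko--Pastur edge and hence uses $\kappa\kappa_*<1$; and that the conditional-Gaussian approximation to $\mathcal L(W\mid\ndata{Z}_*,\varepsilon)$ is strong enough to pass to the rejection probability. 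Replacing the exact limit of $Q_n$ by the trivial bound $\|P_*^\perp\Pi\ndata{Z}_*\theta\|^2\le\|\ndata{Z}\theta\|^2$, and by the crude spectral bounds $\lambda_{\max}(\ndata{Z}^\top\ndata{Z}/n)\le(1+\sqrt\kappa)^2+o_p(1)$ and $\lambda_{\min}(\ndata{Z}_*^\top\ndata{Z}_*/n_*)\ge(1-\sqrt{\kappa\kappa_*})^2+o_p(1)$, is exactly what produces the looser two-sided bounds of Theorem~\ref{theorem:prop-model-X-unlabeled}, so the remaining work is to close that gap to the exact constant.
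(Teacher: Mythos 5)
Your sketch is correct and recovers the conjectured effect size, but the route you take through the key random-matrix step differs from the paper's. Both approaches reduce the conditional CRT to the same normalized statistic (your $W$ is the paper's $T_\text{modified}$ up to an asymptotically negligible $\sqrt d/\|P_*^\perp\ndata{X}_*\|$ factor that accounts for not knowing $\Var(X\mid Z)$, and the paper handles that factor separately), and both show that everything hinges on the limit of $Q_n=\theta^\top(\ndata{Z}^\top\ndata{Z})(\ndata{Z}_*^\top\ndata{Z}_*)^{-1}(\ndata{Z}^\top\ndata{Z})\theta$. The paper evaluates $\lim \e[Q_n]/n$ by exploiting orthogonal invariance to write $\e[\ndata{Z}^\top\ndata{Z}(\ndata{Z}_*^\top\ndata{Z}_*)^{-1}\ndata{Z}^\top\ndata{Z}]=c(p,n,n_*)I_p$, then introducing exchangeable-row constants $a_{p,n_*}=\e[\ndata Z_1\ndata Z_1^\top(\ndata{Z}_*^\top\ndata{Z}_*)^{-1}\ndata Z_1\ndata Z_1^\top]_{11}$ and $b_{p,n_*}$, deriving $a_{p,n_*}+(n_*-1)b_{p,n_*}=1$ from $\sum_i A_i=I_p$, and showing $a_{p,n_*}\to\kappa\kappa_*$ by direct expectation/variance bounds. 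You instead absorb $\Sigma_Z$ into $\tilde\theta$, use rotational invariance to collapse $Q_n$ to a single scalar quadratic form $\|\tilde\theta\|^2 g^\top H_{[n],[n]}g$, and then apply a leave-one-column-out / Sherman--Morrison split $H=H'+\|P_{H'}^\perp g_*\|^{-2}(P_{H'}^\perp g_*)(P_{H'}^\perp g_*)^\top$ to decouple $g$ from $H'$; the two pieces contribute $n\kappa\kappa_*$ and $n\kappa_*(1-\kappa\kappa_*)$. These are genuinely different decompositions, and your rank-one isolation of the $g$-vs-$H$ dependence is arguably a cleaner starting point for proving concentration than the paper's trace-identity calculation, which only establishes the limit in expectation. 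That said, both routes stop at the same place: the paper explicitly conjectures concentration of the quadratic form, and you correctly identify making the $o_p(n)$ error control rigorous (via the lower Marchenko--Pastur edge and $\kappa\kappa_*<1$) as the remaining obstacle. Your sanity checks — the interpolation to Theorems~\ref{theorem:prop-power-mc} and \ref{theorem:prop-model-X-CRT-OLS}, and the observation that replacing the exact $Q_n$ limit by the crude spectral bounds recovers exactly the two-sided bounds of Theorem~\ref{theorem:prop-model-X-unlabeled} — are both correct and match the paper.
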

See Figure~\ref{figure:model-X-conjecture} in Section~\ref{sec:discussion} for a numerical validation. We discuss how we arrive at this conjecture in Appendix~\ref{sec:proofs}.
% There, we also rigorously derive the following result:
% \begin{multline*}
% \Phi\left(\frac{h\tau\sqrt{1-\kappa\kappa_*}}{\sqrt{\sigma^2+v_Z^2\frac{1}{{1-\kappa\kappa_*}}}}-z_\alpha\right)\le\liminf\textnormal{Power}\\
% \le\limsup\textnormal{Power}\le\Phi\left(\frac{h\tau\sqrt{1-\kappa\kappa_*}}{\sqrt{\sigma^2+v_Z^2\max\left(0,\frac{1-\frac{(1+\sqrt{1/\kappa})^2}{(1-\sqrt{\kappa\kappa_*})^2}\kappa\kappa_*}{1-\kappa\kappa_*}\right)}}-z_\alpha\right).
% \end{multline*}
Note that the two bounds in Theorem~\ref{theorem:prop-model-X-unlabeled} match the conjecture when $\kappa_*\to0$.

Trivially when $\mathcal{L}(X\,|\,Z)$ is unknown, unlabeled data helps to run a test if $\kappa>1$, since otherwise no non-trivial test can be run because the sufficient statistic uniquely determines $\ndata{X}$'s exact value, making $\mathcal L(\ndata{X}\,|\,\ndata{Z},S(\ndata{X},\ndata{Z}))$ degenerate so that the only valid tests have power equal to their size under any alternative. 
% It is surprising that the power in Conjecture~\ref{conjecture:model-X-unlabeled} is always higher than at least one of the powers presented in Theorems~\ref{theorem:prop-power-mc} and \ref{theorem:prop-model-X-CRT-OLS}. 
When $\kappa<1$, assuming Conjecture~\ref{conjecture:model-X-unlabeled} holds, we see that unlabeled data can boost the power compared to using only labeled data ($\kappa_*=1$) if $\kappa>{v_Z^2}/{(\sigma^2+v_Z^2)}$, i.e., if $p$ is close to $n$ or if the nuisance variables $Z$ contribute little variance to $Y$. This condition coincides with the condition under which the unconditional CRT with marginal covariance has higher power than with OLS. Another interesting takeaway is that if we keep $\kappa\kappa_*$ fixed and let $\kappa_*\to0$, the asymptotic power is equal to that of a $z$-test with standardized effect size
\[ \frac{h\sqrt{1-\kappa\kappa_*}}{\sqrt{\sigma^2+v_Z^2}}. \]
% \begin{equation*}
% \Phi\left(\frac{h\sqrt{1-\kappa\kappa_*}}{\sqrt{\sigma^2+v_Z^2}}-z_\alpha\right).
% \end{equation*}
This can be interpreted as a setting where $p/n\to\infty$, but the number of unlabeled samples $n_*$ scales with $p$ as $p/n_*$ goes to a non-zero constant $\kappa\kappa_*$.

\section{Power analysis of variable selection}
\label{sec:multiple}
In this section, we consider variable selection and return to our original notation $X_j$ and $\Xno{j}$ instead of $X$ and $Z$ (analogously for their bold counterparts), which were used in Section~\ref{sec:single} in their place while $j$ was fixed. More specifically, suppose we have a data matrix $[\mathbf X, \mathbf Y]$, where each row is an i.i.d. draw from a distribution $F_{X,Y}$, where $X$ is a $p$-dimensional random vector and $Y$ is a random variable. We can define variable selection as simultaneously testing the null hypotheses $H_{0}^{(1)},\dots,H_{0}^{(p)}$, where $H_{0}^{(j)}$ is $X_j\ci Y\mid X_{\text{-}j}$. In this section, the power means the expectation of the ratio between the number of true discoveries and the number of non-null covariates.

To enable theoretical analysis, we study the linear regression setting with independent Gaussian covariates as given in Setting~\ref{model:lr-iid}, where $H_{0}^{(j)}$ reduces to $\beta_j=0$. %Specifically, we write down the model and conditions as following.
% \begin{equation*}
% \ndata{Y}\mid\ndata{X}\sim\mathcal N(\ndata{X}\beta,\sigma^2I_p).
% \end{equation*}
% Note that we no longer use the notation $\ndata{Z}$, as all the covariates are symmetrical in the multiple testing framework. We further assume Assumption~\ref{assumption:AMP-setting-linear} is satisfied.

\begin{model}[High-dimensional linear model with independent covariates]
Consider the linear regression model
\begin{equation*}
\ndata Y=\ndata X\beta+\varepsilon, \qquad \varepsilon\sim\mathcal N(0,\sigma^2I),
\end{equation*}
where $\ndata X\in\rr^{n\times p}$ is a random matrix,
\begin{equation*}
X_{ij}\simiid\mathcal N(0,1),\qquad\ndata{X}\ci\varepsilon.
\end{equation*}
This setting assumes the above model under the following high-dimensional asymptotics:
\[ \lim_{n\rightarrow\infty} p/n = \kappa\in(0,\infty), \qquad \sqrt n\beta_j\simiid \gamma\delta_0+(1-\gamma)\pi_1, \]
where $\gamma$ and $\pi_1$ are fixed, $\pi_1$ has bounded support and puts no mass at $0$, and $\beta\ci (\ndata{X},\varepsilon)$.
%As $n,p\to\infty$, let $p/n\to\kappa >0$, and let the $\sqrt n\beta_j$'s be i.i.d. draws from a fixed distribution $\gamma\delta_0+(1-\gamma)\pi_1$, where $\pi_1$ has bounded support and no point mass at $0$. In addition, $\beta\ci(\ndata{X},\varepsilon)$.
\label{model:lr-iid}
\end{model}
In the future, we will use $B_0$ to represent a random variable following $\gamma\delta_0+(1-\gamma)\pi_1$. Setting~\ref{model:lr-iid} is a slight modification of Setting~\ref{model:moderate-dim-lr} that makes all $X_j$'s exchangeable. The Gaussian assumption makes theoretical derivation easier, and allows for the use of results on the lasso obtained by AMP theory. While the model is not believed to be appropriate if the covariates are too dependent on each other, in many applications the covariates are only slightly correlated. Hence, although a simple setting, Setting~\ref{model:lr-iid} is expected to be of value and can still guide statistic choice in many applications. 
% \rev{We remind the reader that the assumptions in Setting~\ref{model:lr-iid} is for the study of power and are not needed for the validity of the methods we study in this section? [I guess it is a little subtle that we do need the independence assumption for validity of multiple testing with CRT $p$-values...]}

% \begin{assump}
% \label{assumption:AMP-setting-linear}
% %begin{enumerate}
%     %\item[(a)] $\lim_{n\to\infty}p/n=\kappa>0$;
%     %\item[(a)] $X_{ij}\simiid\mathcal N(0,1/n)$;
%     %\item[(b)]
%     The $\beta_j$'s are i.i.d. draws from a fixed distribution $\gamma\delta_0+(1-\gamma)\pi_1$, where $\pi_1$ has bounded support and no point mass at $0$. In addition, $\beta\ci(\ndata{X},\varepsilon)$.
% %\end{enumerate}
% \end{assump}
% Note that Assumption~\ref{assumption:AMP-setting-linear} implies that the random variable $B_0$ appearing in Setting~\ref{model:lr-iid} follows $\gamma\delta_0+(1-\gamma)\pi_1$.
We analyze two types of procedures that control the FDR or asymptotic FDR in this setting.
\begin{enumerate}
    \item BH and AdaPT applied to $p$-values obtained by the CRT. To the best of our knowledge, these are the first results on the validity or power of BH and AdaPT applied to CRT $p$-values.
    \item The model-X knockoff filter \citep{candes2018panning}.
\end{enumerate}

\subsection{Variable selection with the CRT}
\label{sec:bh-p-val}
% {\color{red} ADD GENERAL RESULT FOR BH WITH P-VALUES (PAIRWISE INDEPENDENCE)}

% \begin{thm}
% Assume the conditions of Theorem~\ref{theorem:KF-ols}. For almost every FDR level $q\in(0,1)$, the Benjamini--Hochberg procedure using absolute OLS statistic has power converging in probability to
% \begin{equation*}
% \Phi(h\sqrt{1-\kappa/2}-\tau^{(q)})+\Phi(-\tau^{(q)}-h\sqrt{1-\kappa/2}),
% \end{equation*}
% where
% \begin{equation*}
% \tau^{(q)}=\min\{t>0:\frac{2\Phi(-t)}{\gamma 2\Phi(-t)+(1-\gamma)(\Phi(h\sqrt{1-\kappa/2}-\tau^{(q)})+\Phi(-\tau^{(q)}-h\sqrt{1-\kappa/2}))}\le q\}.
% \end{equation*}
% \end{thm}

A natural way of generalizing the conditional independence tests of Section~\ref{sec:single} to variable selection is to take the $p$-values from the CRT and plug them into a multiple testing procedure. Here, then, we consider the BH procedure \citep{benjamini1995controlling} and the AdaPT procedure \citep{lei2018adapt} for controlling the FDR, defined as
\begin{equation*}
\mathrm{FDR} = \e\left[\mathrm{FDP}\right],\qquad \mathrm{FDP}=\frac{|\hat S\cap S_0|}{\max(|\hat S|,1)},
\end{equation*}
where FDP stands for false discovery proportion, $S_0$ is the set of null variables and $\hat S$ is the set of selected variables. When we refer to AdaPT, we mean the intercept-only AdaPT procedure, (i.e., AdaPT without side information), which rejects all $p$-values below
\begin{equation*}
\max\{t\in[0,1]:\frac{1+\#\{j:p_j\ge1-t\}}{\#\{j:p_j\le t\}}\le q\},
\end{equation*}
with $q$ being the target FDR level. BH is the most used multiple testing procedure for controlling the FDR, and studying AdaPT allows us to directly compare variable selection using the CRT with knockoffs due to an asymptotic equivalence between knockoffs and a certain application of AdaPT, which we will explain in Section~\ref{sec:knockoffs-mc-ols}. % , which, together with its variants, is a popular choice for FDR-oriented goal. We will call this combination BH-CRT. 
It is known that BH and AdaPT control the FDR at the nominal level when all $p$-values are independent and the null $p$-values follow the standard uniform distribution on $[0,1]$ \citep{benjamini1995controlling,lei2018adapt}. However, this assumption does not hold for the CRT $p$-values as they are in general not independent. They are also in general only super-uniform under the null, but for all of the test statistics and settings considered in this paper the CRT's $p$-values are indeed exactly uniform under the null.

A key result is that under certain conditions, BH and AdaPT applied to the CRT $p$-values have the same asymptotic FDR and power as if the $p$-values were actually independent. Due to the cumbersome notation required, a formal presentation is deferred to Theorem~\ref{theorem:BH-AdaPT-CRT} in Appendix~\ref{sec:proofs}, where we give conditions on input $p$-values such that BH and AdaPT perform asymptotically as if the input $p$-values were independent.\footnote{Theorem~\ref{theorem:BH-AdaPT-CRT} represents a variation on results of \citet{ferreira2006benjamini} but with a different proof catered to our specific setting.} Here, we only show the following Theorem~\ref{theorem:coro-BH-CRT} that applies Theorem~\ref{theorem:BH-AdaPT-CRT} to the CRT with the three statistics considered in Section~\ref{sec:single}. In order to state Theorem~\ref{theorem:coro-BH-CRT}, we need Definition~\ref{definition:effect-pi}, which allows us to concisely and intuitively characterize the asymptotic power expressions derived from Theorem~\ref{theorem:BH-AdaPT-CRT}. We note that in addition to characterizing the power, these two theorems are the first that we know of to prove asymptotic FDR control of multiple testing with CRT $p$-values.

\begin{defi}
Let $\mathcal P$ be a multiple testing procedure that takes a set of $p$-values as input, e.g., the BH procedure at level $q$. Let $\mathcal P(\pi_\mu)$ be the procedure that applies $\mathcal P$ to $p$-values $\textnormal{pval}_{1}, \dots, \textnormal{pval}_p$ in the following independent normal means model:
for $j=1,2,\dots,p$,
\begin{equation*}
\mu_j\simiid\pi_{\mu},\qquad\varepsilon_j\simiid\mathcal N(0,1),\qquad \textnormal{pval}_j=1-\Phi(\mu_j+\varepsilon_j)\;\;\; \textnormal{(respectively, }\textnormal{pval}_j=2(1-\Phi(|\mu_j+\varepsilon_j|))\textnormal{)}.
\end{equation*}
We say a variable selection procedure has one-sided (respectively, two-sided) effective $\pi_\mu$ with respect to $\mathcal P$ if, as $p\to\infty$,
\begin{enumerate}
    
    \item[(a)] the realized power (i.e., the proportion of rejected non-nulls) of this variable selection procedure 
    converges in probability to the same constant that the realized power of $\mathcal P(\pi_\mu)$ converges in probability to; and
    %, and $\mathcal P(\pi_\mu)$ both converge in probability, as $p\to\infty$, to a constant, and these constants agree; and %(the proof of Theorem~\ref{theorem:BH-AdaPT-CRT} shows that this realized power converges in probability to a constant for almost every $q$); and
    \item[(b)] when the asymptotic realized power is positive, the FDP of this variable selection procedure 
    converges in probability to the same constant that the FDP of $\mathcal P(\pi_\mu)$ converges in probability to.
    %and $\mathcal P(\pi_\mu)$ converge in probability to a constant, and these constants agree. %the FDPs of both procedures agree asymptotically and converge in probability to a constant.
\end{enumerate}

%(respectively, $p_j=2(1-\Phi(|S_j|))$). %We apply the same deterministic transformation to each $S_j$ and obtain a $p$-value $p_j$, where $p_j\sim\Unif[0,1]$ if $\mu_j=0$. For example, can take $p_j=1-\Phi(S_j)$ for one-sided testing or $p_j=2(1-\Phi(|S_j|))$ for two-sided testing.
\label{definition:effect-pi}
\end{defi}
%, and our results show that the BH-CRT in Setting~\ref{model:lr-iid} has the same asymptotic power and FDR for the same $\gamma$ in Setting~\ref{model:lr-iid} and some $\pi_\mu$ which we will specify.
\begin{thm}
In Setting~\ref{model:lr-iid}, for Lebesgue-almost-every $q\in(0,1)$, BH or AdaPT at level $q$ using CRT $p$-values based on the statistics in Section~\ref{sec:CRT} (respectively, their absolute values) have the following one-sided (respectively, two-sided) effective $\pi_\mu$'s with respect to BH or AdaPT at level $q$:
\begin{enumerate}
    \item For the marginal covariance statistic, the effective $\pi_\mu$ is the distribution of $\frac{1}{\sqrt{\sigma^2+\kappa\e[B_0^2]}}{B_0}$.
    \item For the OLS statistic, assuming $\kappa<1$, the effective $\pi_\mu$ is the distribution of $\frac{\sqrt{1-\kappa}}\sigma B_0$.
    \item For the distilled lasso statistic, the effective $\pi_\mu$ is the distribution of $\frac1{\tau_\lambda}B_0$.
\end{enumerate}
\label{theorem:coro-BH-CRT}
\end{thm}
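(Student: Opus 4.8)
The plan is to combine the single-hypothesis power results of Section~\ref{sec:CRT} (Theorems~\ref{theorem:prop-power-mc}, \ref{theorem:prop-model-X-CRT-OLS}, and~\ref{theorem:prop-distilled-power}) with the deferred Theorem~\ref{theorem:BH-AdaPT-CRT}, which says that BH and AdaPT applied to the CRT $p$-values behave asymptotically as if those $p$-values were independent draws from an appropriate normal-means model. The work therefore splits into two pieces: (i) verify that the CRT $p$-values for each of the three statistics, in Setting~\ref{model:lr-iid}, satisfy the hypotheses of Theorem~\ref{theorem:BH-AdaPT-CRT}; and (ii) identify the limiting ``effective'' normal-means distribution $\pi_\mu$ for each statistic, which is exactly what Definition~\ref{definition:effect-pi} asks for.

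For step (ii), the heuristic is that under $H_1^{(j)}:\beta_j=h_j/\sqrt n$ with $\sqrt n\beta_j = b_j$ drawn from $\pi_1$ (conditional on being a non-null), Theorems~\ref{theorem:prop-power-mc}--\ref{theorem:prop-distilled-power} say the $j$-th CRT $p$-value behaves like the $p$-value of a one-sided (or two-sided) $z$-test with standardized effect size $\mu_j$, where $\mu_j = b_j/\sqrt{\sigma^2+v_Z^2}$ for marginal covariance, $\mu_j = b_j\sqrt{1-\kappa}/\sigma$ for OLS, and $\mu_j = b_j/\tau_\lambda$ for the distilled lasso. So I need to re-express the single-hypothesis quantities in the notation of Setting~\ref{model:lr-iid}. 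The key observation is that when testing coordinate $j$, the remaining coordinates play the role of $Z$, and $\theta^\top\Sigma_Z\theta = \sum_{k\neq j}\beta_k^2 = \|\beta\|_2^2 - \beta_j^2$; since $\sqrt n\beta_k\simiid B_0$ with $\|\sqrt n\beta\|_2^2/p\to\e[B_0^2]$ (by the law of large numbers, using that $\pi_1$ has bounded support) and $\kappa = \lim p/n$, we get $v_Z^2 = \kappa\,\e[B_0^2]$. Plugging this in gives the three claimed $\pi_\mu$'s: the pushforward of $\pi_1$ (more precisely of $B_0$, so that the null atom at $0$ is preserved) under multiplication by $1/\sqrt{\sigma^2+\kappa\e[B_0^2]}$, $\sqrt{1-\kappa}/\sigma$, and $1/\tau_\lambda$ respectively. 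For the distilled-lasso case one additionally needs the hypotheses of Theorem~\ref{theorem:prop-distilled-power} ($\Sigma_Z = I$, $\xi = 0$, empirical distribution of $\sqrt n\theta_j$ converging to $B_0$), all of which hold in Setting~\ref{model:lr-iid}; here the distinction between the empirical distribution of the coefficients being exactly $B_0$ versus the law of the i.i.d.\ draws is harmless because of the almost-sure convergence of empirical measures.

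The heart of the argument, and the main obstacle, is step (i): checking that the vector of CRT $p$-values satisfies whatever regularity conditions Theorem~\ref{theorem:BH-AdaPT-CRT} imposes (presumably: each null $p$-value is marginally uniform; each non-null $p$-value converges, conditionally on its signal size, to the stated $z$-test $p$-value distribution; and the joint dependence is weak enough — e.g.\ pairwise asymptotic independence or an exchangeability/empirical-process condition — that the empirical CDF of the $p$-values concentrates on its mean). Null uniformity is immediate because the CRT is exact for all three statistics in this Gaussian setting (stated in the text). The per-coordinate conditional convergence is exactly the content of Theorems~\ref{theorem:prop-power-mc}--\ref{theorem:prop-distilled-power}, but those are stated as statements about a single fixed $j$ with a deterministic sequence of parameters, so I must upgrade them to hold jointly/uniformly as $j$ ranges over all $p$ coordinates with random signal sizes — this requires a uniform-in-$b_j$ (over the bounded support of $\pi_1$) version of each single-hypothesis limit, plus control of the randomness from $\beta\ci(\ndata X,\varepsilon)$. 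The weak-dependence condition is where the i.i.d.-covariate structure of Setting~\ref{model:lr-iid} is essential: for marginal covariance and OLS one can get explicit joint-Gaussian expressions for pairs of statistics and show the cross-correlations vanish; for the distilled lasso, one leans on the AMP characterization, which delivers convergence of the empirical distribution of the coordinate-wise statistics (a stronger, ``all coordinates at once'' statement) rather than a one-coordinate-at-a-time statement, so it meshes naturally with what BH/AdaPT need. The ``Lebesgue-almost-every $q$'' qualifier enters because the asymptotic BH/AdaPT power and FDP are determined by level sets of the limiting $p$-value CDF, and one must exclude the (measure-zero) set of $q$ at which the relevant fixed-point equation for the BH threshold has a non-unique or unstable solution; this is inherited directly from Theorem~\ref{theorem:BH-AdaPT-CRT} and Definition~\ref{definition:effect-pi}, so no new work is needed beyond invoking it. I would organize the final proof as: (1) reduce to Theorem~\ref{theorem:BH-AdaPT-CRT} by stating its hypotheses; (2) verify null uniformity and identify $v_Z^2 = \kappa\e[B_0^2]$; (3) for each of the three statistics, cite the corresponding single-hypothesis theorem, upgrade it to the required uniform/joint form, and read off $\pi_\mu$; (4) conclude.
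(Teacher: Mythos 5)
Your high-level structure matches the paper's: reduce to a deferred general theorem (Theorem~\ref{theorem:BH-AdaPT-CRT}) on BH/AdaPT applied to $p$-values satisfying null uniformity, exchangeability, and weak dependence; verify those conditions for each statistic; and read off the effective $\pi_\mu$ via the identification $v_Z^2 = \kappa\e[B_0^2]$. For the marginal covariance and OLS statistics your plan is essentially the paper's: obtain the joint conditional Gaussian law of a pair $(T_j,T_k)$ (conditioning on $\ndata Y$ for MC, on $\ndata X$ for OLS) and show the cross terms vanish, giving pairwise asymptotic independence. One small refinement: the paper works directly with the asymptotic joint distribution of $(T_j, T_k)$ and the random conditional CDFs $F_j$, rather than ``upgrading'' the single-hypothesis power statements to hold uniformly over signal sizes; this avoids the uniform-in-$b_j$ issue you flag and is the cleaner route.

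The gap is in your treatment of the distilled lasso. You propose to ``lean on the AMP characterization, which delivers convergence of the empirical distribution of the coordinate-wise statistics.'' But AMP gives the empirical distribution of the coordinates of a \emph{single} lasso solution $\hat\beta_\lambda$ from one regression. The distilled-lasso statistic is $T_j = n^{-1}(\ndata Y - \ndata X_{\text{-}j}\hat\beta_\lambda^{(\text{-}j)})^\top \ndata X_j$, which involves $p$ \emph{different} lasso fits $\hat\beta_\lambda^{(\text{-}1)},\dots,\hat\beta_\lambda^{(\text{-}p)}$, one per left-out coordinate. There is no single AMP iteration whose empirical coordinate distribution is the collection $\{T_j\}$, so the ``all coordinates at once'' statement you want is not available off the shelf and the plan stalls here. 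The paper's actual argument (Lemma~\ref{lemma:theorem-BH-distilled}) is a leave-one-out device: it defines modified statistics $\tilde T_1,\tilde T_2$ that both use the lasso fit with \emph{both} coordinates $1$ and $2$ removed, derives their joint conditional Gaussian law given $(\ndata Y, \ndata X_{\text{-}(1:2)})$ (which factorizes asymptotically, using Lemma~\ref{lemma:theorem-training-loss}), and then shows $T_j - \tilde T_j \cip 0$ by proving $\|\hat{\ndata Y}^{(\text{-}1)} - \hat{\ndata Y}^{(\text{-}(1:2))}\|_2^2/n \cip 0$ via the lasso subgradient/KKT bounds from \citet[Lemma~3.1]{bayati2011lasso}. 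That leave-one-out stability estimate for lasso residuals is the genuinely new ingredient, and your proposal as written does not contain it.
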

The effective $\pi_\mu$'s in Theorem~\ref{theorem:coro-BH-CRT} follow from Theorems~\ref{theorem:prop-power-mc}, \ref{theorem:prop-model-X-CRT-OLS} and \ref{theorem:prop-distilled-power}. The key component of the proof of Theorem~\ref{theorem:coro-BH-CRT} is jointly analyzing the test statistics for two different covariates showing that its two coordinates are asymptotically independent. This is particularly non-trivial for the distilled-lasso statistic, where we employ a leave-one-out approach. As one would expect, these procedures have higher power if the respective CRT with the same statistic has higher power. For example, for the marginal covariance statistic, as $\sigma^2+\kappa\e[B_0^2]$ gets smaller, the null and non-null distributions of the $p$-values are more separated and higher power would be obtained. Naturally, this is the same condition under which the CRT with the marginal covariance statistic has higher power, once we realize that $v_Z^2=\kappa\e[B_0^2]$ (see the text immediately after Setting~\ref{model:moderate-dim-lr}). Although we choose BH and AdaPT as representatives, we note that the same proof techniques could be used to establish analogous results for other procedures such as Storey's BH \citep{storey2004strong} that use the empirical distribution of the $p$-values in a certain way. %Similarly to the discussion in the previous section, we should expect higher power as $\sigma/\sqrt{1-\kappa}$ becomes smaller, the same condition as when the CRT with the same statistic has higher power. Similarly to the discussion in the previous two sections, we should expect higher power as $\tau_\lambda$ becomes smaller, the same condition as when the CRT with the same statistic has higher power.

\subsection{Model-X knockoffs}
\label{sec:knockoffs}

\subsubsection{Review of knockoffs}
\label{sec:model-x-knockoffs}
We now turn to the analysis of model-X knockoffs \citep{candes2018panning}, %Model-X knockoffs or the model-X knockoff filter was proposed as a generalization of fixed-X knockoffs \citep{barber2015controlling} to directly tackle the variable selection problem.
beginning with a review of the knockoffs procedure.

Consider again the regression setting where our data is composed of $[\ndata{X}, \ndata{Y}]$, whose rows are i.i.d. copies of $(X,Y)\sim F_{X,Y}$.
The (eponymous) first step of the knockoffs procedure is to generate knockoffs. We say the $n\times p$ random matrix $\tilde{\mathbf{X}}$ is a knockoff matrix for $\ndata{X}$ if $\tilde{\ndata{X}}\ci \mathbf Y\mid\ndata{X}$ and the following \emph{pairwise exchangeability} is satisfied for each $j$:
\begin{equation*}
[\mathbf{X},\tilde{\mathbf{X}}]\eqd[\mathbf{X},\tilde{\mathbf{X}}]_{\text{swap}(j)},
\end{equation*}
where the subscript $\text{swap}(j)$ denotes swapping the $j$th and $(j+p)$th columns of a matrix or elements of a vector (in this case, swapping $\ndata{X}_j$ and $\tilde{\ndata{X}}_j$). In Setting~\ref{model:lr-iid}, because the covariates are independent, generating such knockoffs is particularly simple: we can just take $\tilde{\mathbf X}$ to be an i.i.d. copy of $\mathbf X$.

The second step is to define a variable importance statistic
\begin{equation*}
    T:=T([\mathbf{X},\tilde{\mathbf{X}}],\mathbf{Y})=(T_1,\dots,T_p,\tilde T_1,\dots, \tilde T_p),
\end{equation*}
which satisfies
\begin{equation*}
T([\mathbf{X},\tilde{\mathbf{X}}]_{\text{swap}(j)},\mathbf{Y})=(T_1,\dots,T_p,\tilde T_1,\dots, \tilde T_p)_{\text{swap}(j)}.
\end{equation*}
That is, swapping the column corresponding to the $j$th covariate $X_j$ with that of its knockoff $\tilde X_j$ will swap their corresponding variable importance statistics $T_j$ and $\tilde T_j$ and leave the other elements of $T$ unchanged. A typical example of $T$ is the absolute value of the fitted lasso coefficient vector from regressing $\mathbf{Y}$ on $[\mathbf{X},\tilde{\mathbf{X}}]$. $T$ is then plugged into an antisymmetric function $f(\cdot,\cdot)$ (i.e., $f(x,y)=-f(y,x)$) to compute $W\in\rr^p$: $W_j=f(T_j,\tilde T_j)$. For example, we can simply let $f(x,y)=x-y$.

The third step is variable selection.
%The $W_j$'s can then be used for controlled variable selection; 
It was shown in \citet{candes2018panning} that if we select the set of variables
\begin{equation}
\hat S=\left\{j:W_j\ge\hat w\right\},\quad\text{where}\qquad\hat w=\min\left\{w>0:\frac{1+|\{j:W_j\le -w\}|}{|\{j:W_j\ge w\}|}\le q\right\},\footnote{Formally, we make the minimum well-defined by only considering the minimum over $w\in\{|W_j|:W_j\ne0, 1\le j\le p\}$.}
\label{equation:knockoff-adapt}
\end{equation}
% and let the selected set of variables be
% \begin{equation*}
% \hat S=\left\{j:W_j\ge\hat t\right\},
% \end{equation*}
then the FDR is controlled at level $q$. %Note that these general guarantees are non-asymptotic, while in our asymptotic regime the choice of $c=0$ or $c=1$ does not matter.

% Many recent works have discussed how to obtain these knockoffs exactly \citep{candes2018panning,sesia2018gene,huang2019relaxing,SB-EC-LJ-WW:2020} or approximately \citep{liu2018auto,jordon2018knockoffgan,romano2019deep} for various distributions of $\mathbf X$. Besides canonical FDR-controlling variable selection, knockoffs have been applied to control the familywise error rate \citep{janson2016familywise}, perform group variable selection \citep{pmlr-v48-daia16}, and estimate the structure of Gaussian graphical models \citep{li2019nodewise}.

\subsubsection{Marginal covariance and ordinary least squares variable importance statistics}
\label{sec:knockoffs-mc-ols}
%\label{sec:KF-p-values}
A peculiarity of knockoffs is that its rejections are not determined by a vector of unordered $p$-values, but instead a ordered vector of signs, which could be viewed as ``one-bit'' $p$-values with an order. Thus, it is worthwhile to pause and discuss its relationship with $p$-values. As outlined in Section~\ref{sec:model-x-knockoffs}, knockoffs operates on unordered feature importance statistics $W_1,\dots,W_p$. If all the null $W_j$'s have the same marginal distribution, let $F$ be its CDF and consider oracle $p$-values given by $p_j=1-F(W_j)$ (such $p$-values cannot be computed in practice because $F$ is unknown).
%Define $F$ as the CDF of the mixture distribution that equally weights each of the null $W_j$'s marginal distributions. Although $F$ is not available to the user of knockoffs, from a theoretical perspective, we can randomly permute the null $W_j$'s and then convert all feature importance statistics to $p$-values through $p_j=1-F(W_j)$.
Knockoffs with nominal FDR level $q$ rejects all $p$-values below $\hat t_\text{KF}$, where
\begin{equation*}
\hat t_\text{KF}=\max\left\{{t\in[0,\frac12)}:\frac{1+\#\{j:p_j\ge 1-t\}}{\#\{j:p_j\le t\}}\le q\right\}.
\end{equation*}
This is equivalent to the intercept-only AdaPT procedure applied to the $p_j$'s \citep{lei2018adapt}.
% To compare this to the BH procedure, which rejects all $p$-values below $\tau_\text{BH}$, where
% \begin{equation*}
% \tau_\text{BH}=\max\left\{t\in[0,1]:\frac{pt}{\#\{j:p_j\le t\}}\le q\right\}.
% \end{equation*}
% It can be seen that both procedures try to estimate the number of false discoveries among the $p$-values that below a certain threshold $t$. BH uses $pt$, which is an upper bound of its expectation $m_0t$, with $m_0$ being the total number of nulls, while Knockoffs+ uses $1+\#\{j:p_j\ge 1-t\}$, which is a conservative estimate using the $p$-values near $1$ that are supposedly least contaminated by the non-null $p$-values.
% If $m_0$ is much smaller than $p$, BH becomes too conservative, in which case one can resort to the Storey correction \citep{storey2004strong}. For a fixed parameter $\lambda\in[0,1)$, the Storey correction rejects all $p$-values below $\tau_\text{Storey}^\lambda$, where
% \begin{equation*}
% \tau_\text{Storey}^\lambda=\max_{t\in[0,1]}\frac{\frac{\#\{j:p_j>\lambda\}}{(1-\lambda)}t}{\#\{j:p_j\le t\}}\le q.
% \end{equation*}
% When $\lambda=0$, this reduces to the standard BH procedure. In some sense, the knockoff filter can be viewed as adaptively choosing $\lambda$. \rev{A similar comparison between knockoffs and BH was also made in the supplement to \citet{barber2015controlling}.}
%Clearly, one method does not dominate the other.
%\subsubsection{Power analysis for knockoffs}
Thus, we can understand the asymptotic behavior of the knockoffs procedure by studying the joint distribution of the $p_j$'s. In fact, Theorem~\ref{theorem:knockoff-power} in Appendix~\ref{sec:proofs} shows that under certain conditions, we can treat the $p_j$'s as independent draws from their respective asymptotic marginal distributions.

% In Sections~\ref{sec:knockoffs-mc-ols} and \ref{sec:knockoff-lasso}, we will work in Setting~\ref{model:lr-iid-knockoff}. It is essentially Setting~\ref{model:lr-iid} with the additional specification of the knockoff sampling mechanism.
% \begin{modelstar}[High-dimensional linear regression with independent covariates and knockoffs]
% Assume the same setting as in Setting~\ref{model:lr-iid} and let the knockoff matrix $\tilde{\mathbf{X}}$ be an i.i.d. copy of $\ndata{X}$ independent from $\beta$ and $\varepsilon$. %The empirical distribution of $(\beta_j)_{j=1}^p$ converges to a distribution represented by a random variable $B_0$ and $\|\beta\|_2^2/p\to\e[B_0^2]$.
% \label{model:lr-iid-knockoff}
% \end{modelstar}
% Note that we choose to create the knockoffs $\tilde{\mathbf X}$ independent of $\ndata{X}$ not only for theoretical analysis, but also for powerful inference. Intuitively speaking, we want $\tilde{\mathbf X}$ to have low dependence on $\ndata{X}$, and having $\tilde{\mathbf X}\ci\ndata{X}$ would be optimal by any reasonable measure of dependence. As a special case, the mean absolute correlation (MAC, $p^{-1}\sum_{j=1}^p|\text{Corr}(X_j,\tilde X_j)|$) is minimized at $0$, which can be a good proxy for the power of knockoffs \citep{candes2018panning,SB-EC-LJ-WW:2020}. We discuss this further in Appendix~\ref{sec:knock-qual}.

In proving the expressions for the asymptotic power of multiple testing with CRT $p$-values, we needed to analyze the asymptotic distributions of pairs of test statistics, and it turns out the same tools are sufficient for both establishing the assumptions of Theorem~\ref{theorem:knockoff-power} and for characterizing the marginal distributions of the $p_j$'s, except that analysis of the asymptotic distributions of sets of \emph{four} test statistics is needed.
%In our analysis for the CRT in Section~\ref{sec:bh-p-val}, we analyzed the asymptotic pairwise independence of the test statistics. It turns out that the same principle applies in the analysis for knockoffs, except that we need to analyze the asymptotic independence of four random variables.
In particular, Lemma~\ref{lemma:knockoff-conditions} in Appendix~\ref{sec:proofs} says that we just need to check that for distinct $j$ and $k$, $(T_j, T_k, \tilde T_j, \tilde T_k)$ converges in distribution to a random vector with independent coordinates in order for Theorem~\ref{theorem:knockoff-power} to hold. %This will work for the statistics based on marginal covariance and OLS. Knockoffs with the lasso coefficients require slightly different techniques and will be analyzed in Section~\ref{sec:knockoff-lasso}. It is worth noting that this statistic is different from the distilled lasso statistic in Section~\ref{sec:distilled}, and the reasons we consider different statistics for the CRT and knockoffs are: (a) the distilled lasso is a compromise to the actual lasso coefficient, because the lasso coefficient is too expensive to use in the CRT; (b) the distilled lasso statistic is expensive to use for knockoffs.

While our asymptotic analysis of $(T_j, T_k, \tilde T_j, \tilde T_k)$ for a given statistic allows us to obtain the asymptotic power of knockoffs for any antisymmetric function $f$, when $f(x,y)=x-y$, if the test statistic is the marginal covariance or the OLS coefficient, there is a direct and easily interpretable connection to the AdaPT procedure applied to a normal means model, and we can state our results using the language of effective $\pi_\mu$ from Definition~\ref{definition:effect-pi}. %See Definition~\ref{definition:effect-pi-knockoff}.
% \begin{defi}
% Consider a statistic $T$ used in a BH-CRT procedure operating under Setting~\ref{model:lr-iid-knockoff}. We say its effective $\pi_\mu$ is $\pi_{T}$, which is a distribution with no point mass at $0$, if for any $\gamma$ and $q$, the asymptotic power and FDR of $\mathcal P$ of the BH-CRT using statistic $T$ with anti-symmetric function $f(x,y)=x-y$ are the same as the AdaPT procedure at level-$q$ applied to one-sided $p$-values in the following normal means model: suppose we have $p$ statistics $S_j\mid\mu\sim\mathcal N(\mu_j,1)$ conditionally independent given $\mu$, and $\mu_j\simiid\gamma\delta_0+(1-\gamma)\pi_{T}$, $p_j=1-\Phi(S_j)$. %We apply the same deterministic transformation to each $S_j$ and obtain a $p$-value $p_j$, where $p_j\sim\Unif[0,1]$ if $\mu_j=0$. For example, can take $p_j=1-\Phi(S_j)$ for one-sided testing or $p_j=2(1-\Phi(|S_j|))$ for two-sided testing.
% \label{definition:effect-pi-knockoff}
% \end{defi}
\begin{thm}
In Setting~\ref{model:lr-iid}, for almost every $q\in(0,1)$, knockoffs with $\tilde{\mathbf{X}}$ an i.i.d. copy of $\ndata{X}$ and the antisymmetric function $f(x,y)=x-y$ at level $q$ with marginal covariance or OLS test statistic has the following one-sided effective $\pi_\mu$'s with respect to the AdaPT procedure at level $q$:
\begin{enumerate}
    \item For the marginal covariance statistic, the effective $\pi_\mu$ is the distribution of $\frac{1}{\sqrt{2(\sigma^2+\kappa\e[B_0^2])}}B_0$.
    \item For the OLS statistic, assuming $\kappa<1/2$, the effective $\pi_\mu$ is the distribution of $\frac{\sqrt{1-2\kappa}}{\sqrt{2\sigma^2}}B_0$.
\end{enumerate}
\label{theorem:coro-knockoff}
\end{thm}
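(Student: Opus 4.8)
The plan is to route the argument through two facts already in place. By the discussion in Section~\ref{sec:knockoffs-mc-ols}, knockoffs with $f(x,y)=x-y$ at level $q$ \emph{is} the intercept-only AdaPT procedure applied to the oracle $p$-values $p_j=1-F(W_j)$, where $W_j=T_j-\tilde T_j$ and $F$ is the common (continuous) null CDF of the $W_j$, which exists here by exchangeability of the covariates and their knockoffs. And Theorem~\ref{theorem:knockoff-power}, via Lemma~\ref{lemma:knockoff-conditions}, reduces the asymptotic analysis to (i) checking that for distinct $j,k$ the quadruple $(T_j,T_k,\tilde T_j,\tilde T_k)$ converges in distribution to a vector with independent coordinates, and (ii) identifying the limiting marginal law of $p_j$ for null and non-null $j$. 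Writing $b_j:=\sqrt n\beta_j$ (so the $b_j$ are i.i.d.\ copies of $B_0$), it then suffices to show $p_j$ converges in distribution to $1-\Phi(\mu(b_j)+\mathcal N(0,1))$ for the appropriate scalar function $\mu$ (with $\mu(0)=0$ giving exactly uniform null $p$-values), after which the claimed effective $\pi_\mu=\mathcal L(\mu(B_0))$ follows from Definition~\ref{definition:effect-pi}.

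For the marginal covariance statistic, $T_j=n^{-1}\ndata Y^\top\ndata X_j$ and $\tilde T_j=n^{-1}\ndata Y^\top\tilde{\ndata X}_j$. Conditionally on $\ndata Y$, the vector $\sqrt n(T_j,T_k,\tilde T_j,\tilde T_k)$ is Gaussian, up to the deterministic shift $b_j\,n^{-1}\|\ndata X_j\|_2^2\to b_j$ in the $j$th coordinate when $\beta_j\ne0$, with covariance asymptotically $n^{-1}\|\ndata Y\|_2^2\,I_4$, since the four relevant columns are independent isotropic Gaussians. A law-of-large-numbers computation gives $n^{-1}\|\ndata Y\|_2^2=n^{-1}\|\ndata X\beta+\varepsilon\|_2^2\to\sigma^2+\kappa\e[B_0^2]$ (using $\beta^\top(n^{-1}\ndata X^\top\ndata X)\beta\to\|\beta\|_2^2\to\kappa\e[B_0^2]$), and the residual cross-covariances between coordinates — through the shared factor $\ndata Y$ and the term $\beta_k\ndata X_k^\top\ndata X_j$ in $T_j$ — are $O(1/n)$. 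Hence $\sqrt n(T_j,T_k,\tilde T_j,\tilde T_k)$ converges to a Gaussian with mean $(b_j,b_k,0,0)$ and covariance $(\sigma^2+\kappa\e[B_0^2])I_4$, verifying (i); and $\sqrt n W_j\cid b_j+\mathcal N(0,2(\sigma^2+\kappa\e[B_0^2]))$, so $F$ converges to $\Phi\bigl(\sqrt n\,\cdot/\sqrt{2(\sigma^2+\kappa\e[B_0^2])}\bigr)$ and $p_j\cid1-\Phi\bigl(b_j/\sqrt{2(\sigma^2+\kappa\e[B_0^2])}+\mathcal N(0,1)\bigr)$, giving $\mu(b_j)=b_j/\sqrt{2(\sigma^2+\kappa\e[B_0^2])}$.

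For the OLS statistic, $T$ is the least-squares coefficient vector from regressing $\ndata Y$ on the $n\times 2p$ matrix $M=[\ndata X,\tilde{\ndata X}]$, well-defined because $\kappa<1/2$, with $T_j=\hat\beta_j$ and $\tilde T_j=\hat\beta_{p+j}$. Since $\E[\ndata Y\mid M]=\ndata X\beta$, conditionally on $M$ we have $\hat\beta\sim\mathcal N\bigl((\beta,0),\sigma^2(M^\top M)^{-1}\bigr)$. By the partitioned-inverse identity the $4\times4$ principal submatrix of $(M^\top M)^{-1}$ indexed by $\{j,k,p+j,p+k\}$ equals $(A^\top(I-P_B)A)^{-1}$, where $A$ collects those four columns of $M$ and $P_B$ projects onto the span of the other $2p-4$. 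As the columns of $A$ are i.i.d.\ $\mathcal N(0,I_n)$, independent of $B$, and $I-P_B$ has rank $n-2p+4\sim n(1-2\kappa)$, concentration gives $n^{-1}A^\top(I-P_B)A\to(1-2\kappa)I_4$. Thus $\sqrt n(T_j,T_k,\tilde T_j,\tilde T_k)$ converges to a Gaussian with mean $(b_j,b_k,0,0)$ and covariance $\tfrac{\sigma^2}{1-2\kappa}I_4$, which has independent coordinates; and $\sqrt n W_j\cid b_j+\mathcal N\bigl(0,\tfrac{2\sigma^2}{1-2\kappa}\bigr)$, so $F$ converges to $\Phi\bigl(\sqrt n\,\cdot\,\sqrt{1-2\kappa}/\sqrt{2\sigma^2}\bigr)$ and $p_j\cid1-\Phi\bigl(b_j\sqrt{1-2\kappa}/\sqrt{2\sigma^2}+\mathcal N(0,1)\bigr)$, giving $\mu(b_j)=\sqrt{1-2\kappa}\,b_j/\sqrt{2\sigma^2}$. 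Feeding (i) and (ii) into Theorem~\ref{theorem:knockoff-power} and invoking the AdaPT equivalence completes the proof; the restriction to almost every $q$ is inherited from Theorem~\ref{theorem:knockoff-power}, the AdaPT threshold being discontinuous in $q$ only at countably many points.

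I expect the main obstacle to be the random-matrix step in the OLS case: showing that $n^{-1}A^\top(I-P_B)A$ concentrates at $(1-2\kappa)I_4$ with enough uniformity — and that the pieces of the joint $(j,k)$ convergence fit together despite $B$ not being literally independent of $A$ — to legitimately discharge the hypotheses of Lemma~\ref{lemma:knockoff-conditions}. A subtler secondary point is that for both statistics the null $W_j$ is only a Gaussian \emph{scale mixture} (scale $2\|\ndata Y\|_2^2/n^2$ for marginal covariance, a data-dependent quadratic form in $\varepsilon$ for OLS) rather than an exact Gaussian, so one must verify that the common null CDF $F$ converges to the limiting deterministic Gaussian CDF quickly and uniformly enough that the oracle $p$-values inherit the claimed limiting laws.
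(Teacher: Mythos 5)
Your overall route coincides with the paper's: both reduce to Theorem~\ref{theorem:knockoff-power} via Lemma~\ref{lemma:knockoff-conditions}, establishing that $(T_j,\tilde T_j,T_k,\tilde T_k)$ converges to a vector with independent Gaussian coordinates, and then read off the effective $\pi_\mu$. For OLS your argument is essentially the paper's (the paper invokes inverse-Wishart moment computations where you invoke partitioned-inverse plus concentration of $n^{-1}A^\top(I-P_B)A$; these are two phrasings of the same fact). The one loose step is in the marginal-covariance case: you write that conditionally on $\ndata Y$ the shift $b_j\,n^{-1}\|\ndata X_j\|_2^2$ is ``deterministic'' and the four columns are ``independent isotropic Gaussians.'' Neither is true after conditioning on $\ndata Y$ — when $\beta_j\ne0$, the conditional law $\mathcal L(\ndata X_j\,|\,\ndata Y,\beta)$ has a $\ndata Y$-dependent mean $\tfrac{\ndata Y\beta_j}{\|\beta\|_2^2+\sigma^2}$ and is correlated across $j$ through $\beta\beta^\top$ (see Lemma~\ref{lemma:gaussian-covariate-conditional}). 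You are implicitly mixing the unconditional decomposition $T_j=\beta_j n^{-1}\|\ndata X_j\|_2^2+\ldots$ with conditional-on-$\ndata Y$ Gaussianity. The paper sidesteps this by conditioning on $(\ndata Y,\beta)$ and computing the exact conditional Gaussian for $(T_j-\sqrt n\beta_j,\,T_k-\sqrt n\beta_k,\,\tilde T_j,\,\tilde T_k)$ in Lemma~\ref{lemma:mc-knockoff}, where the extra mean term $\bigl(\tfrac{\|\ndata Y\|^2/n}{\|\beta\|^2+\sigma^2}-1\bigr)\sqrt n\beta_j$ and the off-diagonal term $-\tfrac{\beta_j\beta_k}{\|\beta\|^2+\sigma^2}$ are explicit and then shown to vanish; your sketch should be reorganized along those lines to be airtight. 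Your closing concern about $F$ being only a scale mixture is correctly anticipated and is precisely what Theorem~\ref{theorem:knockoff-power} together with Lemma~\ref{lemma:unif-conv-in-prob} (uniform convergence of empirical CDFs) dispatches in the paper.
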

Note that our general result Theorem~\ref{theorem:knockoff-power} covers the two-sided case, which is equivalent to taking $f(x,y)=|x|-|y|$, but it cannot be expressed in terms of an effective $\pi_\mu$.
We can observe that the effective $\pi_\mu$'s in Theorem~\ref{theorem:coro-knockoff} agree with Theorems~\ref{theorem:prop-power-mc} and \ref{theorem:prop-model-X-CRT-OLS}. %\sout{, %and the technical details are deferred to Appendix~\ref{sec:bh-crt-app}.
%We now proceed to analyze specific statistics. We only state the results in the main text, and the technical details are deferred to Appendix~\ref{sec:model-x-kf}}.
We see that knockoffs with the OLS statistic outperforms knockoffs with the marginal covariance statistic if $\sigma^2/(1-2\kappa)<\sigma^2+\kappa\e[B_0^2]$, and vice versa. Comparing Theorems~\ref{theorem:coro-knockoff} and \ref{theorem:coro-BH-CRT}, we can see that multiple testing with CRT $p$-values effectively increases the signal size by a factor of $\sqrt2$ compared to knockoffs for the marginal covariance. For OLS, multiple testing with CRT $p$-values effectively increases the signal size by a factor of $\sqrt2\sqrt{(1-\kappa)/(1-2\kappa)}>\sqrt2$ over knockoffs, with the additional factor $\sqrt{(1-\kappa)/(1-2\kappa)}$ approaching infinity as $\kappa\to1/2$ from below.

\subsubsection{Knockoffs with the lasso coefficient}
\label{sec:knockoff-lasso}
The lasso coefficient is a popular statistic frequently used with knockoffs. Specifically, let $\hat\beta^\lambda$ be the coefficient estimate from using the lasso to regress $\ndata{Y}$ on $[\ndata{X},\tilde{\mathbf X}]$ with penalty parameter $\lambda$. We suppress the superscript $\lambda$ when there is no confusion. For $j=1,2,\dots,p$, let $T_j=\sqrt n\hat\beta_j$, $\tilde T_j=\sqrt n\hat\beta_{j+p}$, and $W_j=f(T_j,\tilde T_j)$ for some antisymmetric function $f$. %\sout{The asymptotic distribution of the lasso coefficients in the high-dimensional linear regression setting has been studied in \citet{bayati2011lasso}.}
%Due to the cumbersome expression, we defer the full presentation (Theorem~\ref{theorem:lasso-fdr-power}) to Appendix~\ref{sec:proofs} and only show the following Theorem~\ref{theorem:lasso-fdr-power-simple}.

\begin{thm}
In Setting~\ref{model:lr-iid}, knockoffs with $\tilde{\ndata{X}}$ an i.i.d. copy of $\ndata{X}$, antisymmetric function $f(x,y)$ satisfying the mild regularity condition in Theorem~\ref{theorem:lasso-fdr-power}, and variable importance statistic $\hat\beta^\lambda$, at Lebesgue-almost-every level $q\in(0,1)$, has the same asymptotic power as if the $\sqrt n\hat\beta^\lambda_j$'s were independent, where for $j=1,2,\dots,p$, $\sqrt n\hat\beta^\lambda_j\sim\eta(B_0+\tau_\lambda Z;\alpha_\lambda\tau_\lambda)$ and $\sqrt n\hat\beta^\lambda_{j+p}\sim\eta(\tau_\lambda Z;\alpha_\lambda\tau_\lambda)$, $Z\sim\mathcal N(0,1)$ independent of $B_0$.
\label{theorem:lasso-fdr-power-simple}
\end{thm}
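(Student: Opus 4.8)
The plan is to view the augmented lasso as an instance of the approximate message passing (AMP) framework of \citet{bayati2011lasso}, extract the marginal law of its coordinates, and then upgrade to the joint statement needed by the knockoffs machinery in the appendix. Regressing $\ndata{Y}$ on $[\ndata{X},\tilde{\ndata{X}}]$ is a lasso problem with $n$ samples, $2p$ i.i.d.\ $\mathcal N(0,1)$ covariates, noise variance $\sigma^2$, true coefficient vector $(\beta,0)\in\rr^{2p}$, and aspect ratio $2p/n\to2\kappa$; since $\sqrt n\beta_j\simiid B_0$ and the knockoff block is exactly zero, the empirical distribution of $\sqrt n$ times the true coefficients converges to $\tfrac12\mathcal L(B_0)+\tfrac12\delta_0$. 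Let $(\alpha_\lambda,\tau_\lambda)$ be the unique solution of the fixed-point equations~\eqref{equation:amp-fixed-point-equation} associated to these parameters---these are exactly the constants appearing in the theorem. Then \citet{bayati2011lasso} gives that the empirical distribution of $(\sqrt n\hat\beta^\lambda_\ell,\sqrt n b_\ell)_{\ell=1}^{2p}$ converges to that of $(\eta(B+\tau_\lambda Z;\alpha_\lambda\tau_\lambda),B)$, where $B\sim\tfrac12\mathcal L(B_0)+\tfrac12\delta_0$ and $Z\sim\mathcal N(0,1)$ is independent of $B$; restricted to the first (resp.\ second) block of $p$ coordinates this is the marginal $\eta(B_0+\tau_\lambda Z;\alpha_\lambda\tau_\lambda)$ (resp.\ $\eta(\tau_\lambda Z;\alpha_\lambda\tau_\lambda)$) claimed in the statement.

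By Lemma~\ref{lemma:knockoff-conditions}, in order to invoke Theorem~\ref{theorem:knockoff-power}---and thereby conclude that at Lebesgue-almost-every level $q$ the knockoff filter behaves asymptotically as if the oracle $p$-values, hence the $\sqrt n\hat\beta^\lambda_j$'s themselves, were independent draws from their limiting marginals---it suffices to show that for distinct $j,k$,
\[
\bigl(\sqrt n\hat\beta^\lambda_j,\sqrt n\hat\beta^\lambda_k,\sqrt n\hat\beta^\lambda_{j+p},\sqrt n\hat\beta^\lambda_{k+p}\bigr)\cid\bigl(\eta(B_0^{(1)}+\tau_\lambda Z_1;\alpha_\lambda\tau_\lambda),\eta(B_0^{(2)}+\tau_\lambda Z_2;\alpha_\lambda\tau_\lambda),\eta(\tau_\lambda Z_3;\alpha_\lambda\tau_\lambda),\eta(\tau_\lambda Z_4;\alpha_\lambda\tau_\lambda)\bigr),
\]
with $B_0^{(1)},B_0^{(2)},Z_1,Z_2,Z_3,Z_4$ mutually independent. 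The continuity and antisymmetry hypotheses on $f$ and on the limiting law of $W_j=f(T_j,\tilde T_j)$ required by Theorem~\ref{theorem:knockoff-power} are supplied by the mild regularity condition of Theorem~\ref{theorem:lasso-fdr-power}, together with the fact that the marginals above have a density off the atom at $0$; the ``almost every $q$'' qualifier is inherited from Theorem~\ref{theorem:knockoff-power}. This is the same reduction used for the distilled-lasso part of Theorem~\ref{theorem:coro-BH-CRT}, but now four coordinates must be controlled jointly rather than two.

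To prove this joint convergence I would run a leave-one-out argument, here deleting the four columns indexed by $\{j,k,j+p,k+p\}$. Let $\hat\beta^{(-)}$ be the lasso fit on the remaining $2p-4$ columns and $r^{(-)}=\ndata{Y}-[\text{retained columns}]\hat\beta^{(-)}$ its residual. The two ingredients are: (i) a stability estimate showing that, up to $o_p(n^{-1/2})$, the full solution agrees with $\hat\beta^{(-)}$ on the retained coordinates and that each deleted coordinate $\hat\beta^\lambda_\ell$ equals a soft-thresholding at level $\alpha_\lambda\tau_\lambda/\sqrt n$ of a one-step pseudo-estimate built from $\langle\mathbf x_\ell,r^{(-)}\rangle$ and $\beta_\ell$, obtained by expanding the KKT conditions for the deleted columns; and (ii) the observation that $(\hat\beta^{(-)},r^{(-)})$ is a function only of the retained columns, $\varepsilon$, and $\{\beta_\ell\}_{\ell\text{ retained}}$, so that conditionally the four deleted columns $\mathbf x_j,\mathbf x_k,\tilde{\mathbf x}_j,\tilde{\mathbf x}_k$ are i.i.d.\ $\mathcal N(0,I_n)$ and the inner products $\langle\mathbf x_\ell,r^{(-)}\rangle$ are conditionally i.i.d.\ Gaussian. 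Since removing four of $2p$ columns leaves the aspect ratio and coefficient limit unchanged, the state evolution of the leave-four-out problem has the same fixed point $(\alpha_\lambda,\tau_\lambda)$; combining (i), (ii) and this identity---which pins down the effective variance of the pseudo-estimate as $\tau_\lambda^2$ and the deleted signal as $\sqrt n\beta_\ell$ (zero on a knockoff column, $\sim B_0$ on an original column)---yields the four thresholded coordinates converging jointly to four independent copies of the stated $\eta$-limit.

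The main obstacle is reconciling this leave-four-out picture with the AMP fixed point. Conditionally, each deleted coordinate looks like a soft-thresholding of a Gaussian observation centered near $\sqrt n\beta_\ell$ whose variance is $\|r^{(-)}\|_2^2/n$; but $\|r^{(-)}\|_2/\sqrt n$ is strictly below $\tau_\lambda$ (the lasso residual systematically underestimates the effective noise), and the correct pre-threshold centering also absorbs a debiasing inflation coming from the shrinkage of the retained coordinates. It is precisely the state-evolution fixed point that forces these effects to combine so that the pre-threshold pseudo-observation has asymptotic law $\sqrt n\beta_\ell+\tau_\lambda Z$ with threshold exactly $\alpha_\lambda\tau_\lambda$. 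Making this quantitative for a fixed finite set of coordinates---rather than in the empirical-distribution sense that \citet{bayati2011lasso} delivers directly---together with the uniform stability estimates in (i), is the delicate part; the rest is bookkeeping.
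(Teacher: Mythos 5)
Your reduction to a marginal AMP law for the augmented design (aspect ratio $2\kappa$, coefficient distribution $\tfrac12\mathcal L(B_0)+\tfrac12\delta_0$, the same fixed point $(\alpha_\lambda,\tau_\lambda)$) is the correct first step, and the four-coordinate joint-independence target you identify is also the right shape for the problem. But the path you propose diverges from the paper's at exactly the point where you flag it as ``delicate,'' and that gap is not cosmetic.

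First, the paper does \emph{not} use a leave-one-out argument for the knockoff lasso---that is the technique used for the \emph{distilled} lasso in Lemma~\ref{lemma:theorem-BH-distilled}, where the statistic is $T_j=(\ndata{Y}-\ndata{X}_{\text{-}j}\hat\beta^{(\text{-}j)})^\top\ndata{X}_j$ and the leave-out structure is built into the definition. For the full lasso coefficient $\hat\beta^\lambda_j$, the paper's Lemma~\ref{lemma:theorem-lasso-central} instead runs a symmetry argument: because the knockoff columns are i.i.d.\ copies and the true coefficients on them are identically zero, the array $(\hat\beta^\lambda_{2j-1},\hat\beta^\lambda_{2j})_j$ is exchangeable in such a way that pairing a uniformly random real coordinate with its own knockoff has the same law as pairing it with an \emph{independently} random knockoff. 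This, together with the concentration of the empirical CDFs of the odd/even null/non-null blocks (proved by bounding the indicator above and below by uniformly continuous functions and applying Bayati--Montanari's $\psi$-convergence, plus a hypergeometric conditioning step to split the even and odd blocks), yields the required asymptotic independence without ever pinning down the law of a single fixed coordinate. Your proposal sidesteps this mechanism entirely.

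Second, the leave-four-out argument you sketch needs per-coordinate convergence of $\hat\beta^\lambda_\ell$ plus a stability lemma equating the full lasso fit with the leave-four-out fit up to $o_p(n^{-1/2})$. You correctly observe that $\|r^{(-)}\|_2/\sqrt n$ is not $\tau_\lambda$ and that a debiasing correction tied to the Onsager term is needed, but neither the stability estimate nor the fixed-coordinate state-evolution identity you describe is available from \citet{bayati2011lasso} (which only delivers empirical-distribution convergence); they would require results in the spirit of Miolane--Montanari or El Karoui that the paper deliberately avoids. You acknowledge this is ``the delicate part,'' but that is precisely the part that constitutes the proof.

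Third, you propose to plug into Lemma~\ref{lemma:knockoff-conditions} and then Theorem~\ref{theorem:knockoff-power}, but hypothesis~1 of Theorem~\ref{theorem:knockoff-power} requires $G^{(0)}$ and $G^{(1)}$ to have continuous densities on a common connected support, which fails for the lasso because $\eta$ thresholds and $W_j=f(T_j,\tilde T_j)$ has an atom at $0$. The paper cannot and does not invoke Theorem~\ref{theorem:knockoff-power} here; Theorem~\ref{theorem:lasso-fdr-power} is a separately stated and separately proved variant whose hypotheses and conclusion explicitly accommodate the atom at $0$ (note the modified definition of $g$ at $w=0$ and case (b) there). Mentioning that the marginals ``have a density off the atom at $0$'' does not discharge this: you need the modified version of the entire knockoff-threshold concentration argument, which is exactly what Theorem~\ref{theorem:lasso-fdr-power} supplies.
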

See Theorem~\ref{theorem:lasso-fdr-power} in Appendix~\ref{sec:proofs} for a detailed presentation. We prove the asymptotic independence via a symmetry argument, and extra care is taken due to the fact that the $\hat\beta_j$'s have a delta mass at $0$. During the preparation of this manuscript, we discovered an independent and parallel work on the asymptotic power of knockoffs using the lasso coefficient difference statistic \citep{weinstein2020power}, which provides a nearly identical power result to our Theorem~\ref{theorem:lasso-fdr-power-simple}. %The main result of \citet{weinstein2020power} is their Theorem~1, which studies the joint distribution of $(\beta_j,\hat\beta_j,\hat\beta_{j+p})$ by generalizing the AMP theory results in \citet{bayati2011lasso} studying the joint distribution of $(\beta_j,\hat\beta_j)$. Their general result allows them to derive the same asymptotic power results for knockoffs with the lasso difference statistic as a corollary. Our work uses AMP theory in a different way to establish weaker results on the joint distribution of $(\beta_j,\hat\beta_j,\hat\beta_{j+p})$, yet still rigorously characterizes knockoffs' asymptotic power. %; (b) \citet{weinstein2020power} considers knockoffs using the lasso coefficient difference statistic (including using $\lambda$ derived from an asymptotic version of cross-validation) with a non-adaptive threshold that asymptotically controls the FDR while our results hold for the adaptive threshold in the original model-X knockoffs procedure that controls the finite-sample FDR, although the two thresholds do agree asymptotically under some very mild conditions that we specify in our theorems.
%; and (c) \citet{weinstein2020power} find the asymptotic value of $\lambda$ chosen by cross-validation which we do not consider---this value can then be

Now we heuristically compare the asymptotic power of knockoffs with the lasso coefficient with that of multiple testing with CRT $p$-values obtained from the distilled lasso statistic. Since the two results involve two different $\tau_\lambda$'s, we differentiate them with $\tau^\textnormal{CRT}_{\lambda_\textnormal{CRT}}$ and $\tau^\textnormal{KF}_{\lambda_\text{KF}}$, respectively (note that the two are generally different even when $\lambda_\textnormal{CRT}=\lambda_\textnormal{KF}$, as they also implicitly depend on other parameters). From Theorem~\ref{theorem:lasso-fdr-power-simple}, we can interpret $\tau_\lambda$ as the standard deviation of the noise added to the signal $B_0$, with a thresholding operation afterwards. On the other hand, we see from Theorem~\ref{theorem:coro-BH-CRT}, by a rescaling of mean and variance, $\tau^\textnormal{CRT}_{\lambda_\textnormal{CRT}}$ as the standard deviation of noise added to the signal $B_0$. It turns out that if we choose the best oracle $\lambda$ for the CRT, $\tau^\textnormal{CRT}_{\lambda_\textnormal{CRT}}\le\tau^\textnormal{KF}_{\lambda_\text{KF}}$, because knockoffs doubles the dimension of covariates and thus introduces more noise (see Appendix~\ref{sec:tau} for a formal proof). Intuitively, we should expect higher power from the CRT. We provide a numerical comparison in Section~\ref{sec:numerical-comparison}.

\subsection{Asymptotic power comparison of multiple testing with CRT $p$-values and knockoffs}
\label{sec:numerical-comparison}

When the marginal covariance or the OLS coefficient is used as the variable importance statistic and the antisymmetric function is $f(x,y)=x-y$, Theorem~\ref{theorem:coro-knockoff} provides a direct comparison between the asymptotic power of knockoffs with that of multiple testing with CRT $p$-values (see the text after Theorem~\ref{theorem:coro-knockoff}). In practice, we usually do not know the signs of the signals, so it is more common to use the absolute value of the marginal covariance, the OLS coefficient, or the lasso coefficient as the variable importance statistic (or, equivalently, take $f(x,y)=|x|-|y|$). These results do not fit into Definition~\ref{definition:effect-pi} with an effective $\pi_\mu$ because asymptotically, although the test statistics are independent, they are not marginally Gaussian. In this section, we numerically compare these results with the power of multiple testing with CRT $p$-values. We see in Figures~\ref{figure:mc_compare}, \ref{figure:ols_compare}, and \ref{figure:lasso_compare} that knockoffs is less powerful than the CRT methods. It is interesting that in lower-dimensional settings such as $n=2.5p$, knockoffs with two-sided test statistics is more powerful than the $\sqrt2$-signal strength reduction relative to the CRT suggested by the analysis for one-sided test statistics in Section~\ref{sec:knockoffs-mc-ols}, and for the lasso statistic, there is almost no power difference in such lower-dimensional settings. %Then we can roughly think of $W_j=\sqrt n\hat\beta_{j}-\sqrt n\hat\beta_{j+p}$ as containing noise at the level of $\sqrt2\tau^\textnormal{KF}_{\lambda_\text{KF}}$. %Thus, we can think of the CRT as increasing the signal strength by at least a factor of $\sqrt2$, similar to the results we have for the marginal covariance and OLS.

\begin{figure}[h]
    \centering
\includegraphics[width = 0.9\textwidth]{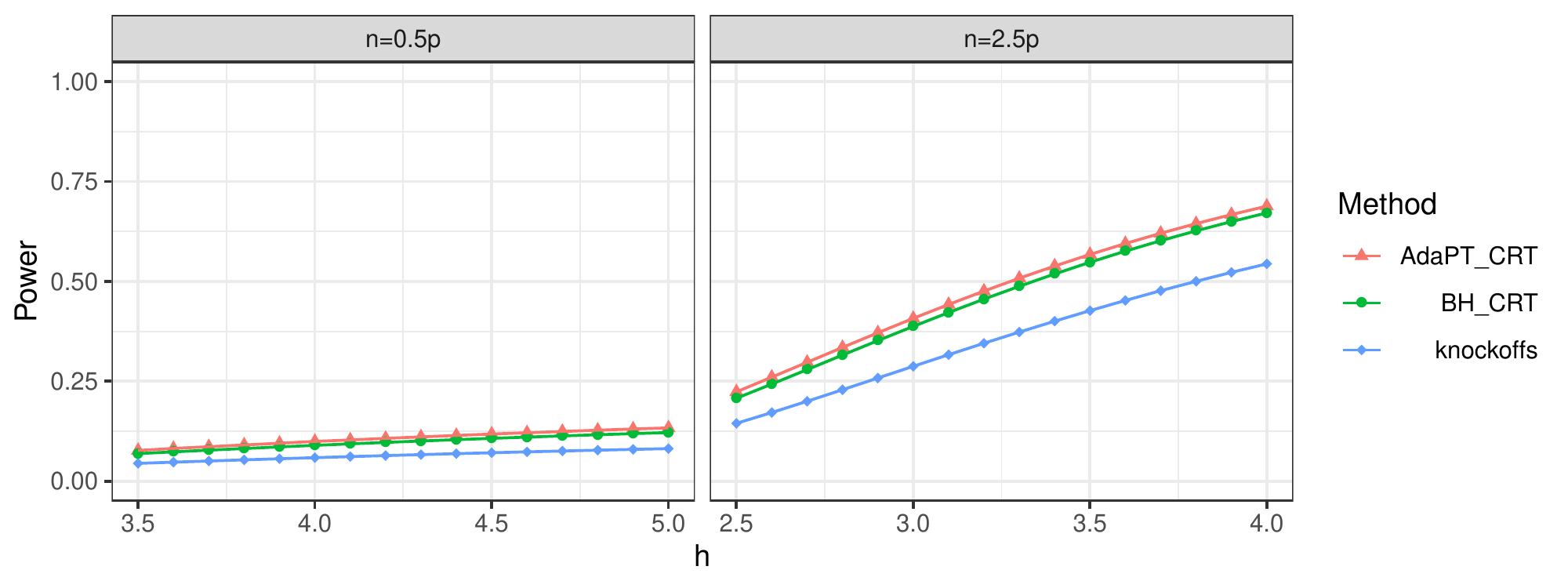}
    \caption{Asymptotic power comparison for BH and AdaPT applied to two-sided CRT $p$-values and knockoffs with the absolute value of the marginal covariance with the original signal size and $\sqrt2$ times the signal size. Plot is in Setting~\ref{model:lr-iid} with $\gamma=0.9$ and $\pi_1=\delta_h$ with varying $h$, and the nominal FDR level is $0.1$.}
    \label{figure:mc_compare}
\end{figure}

\begin{figure}[h]
    \centering
\includegraphics[width = 0.6\textwidth]{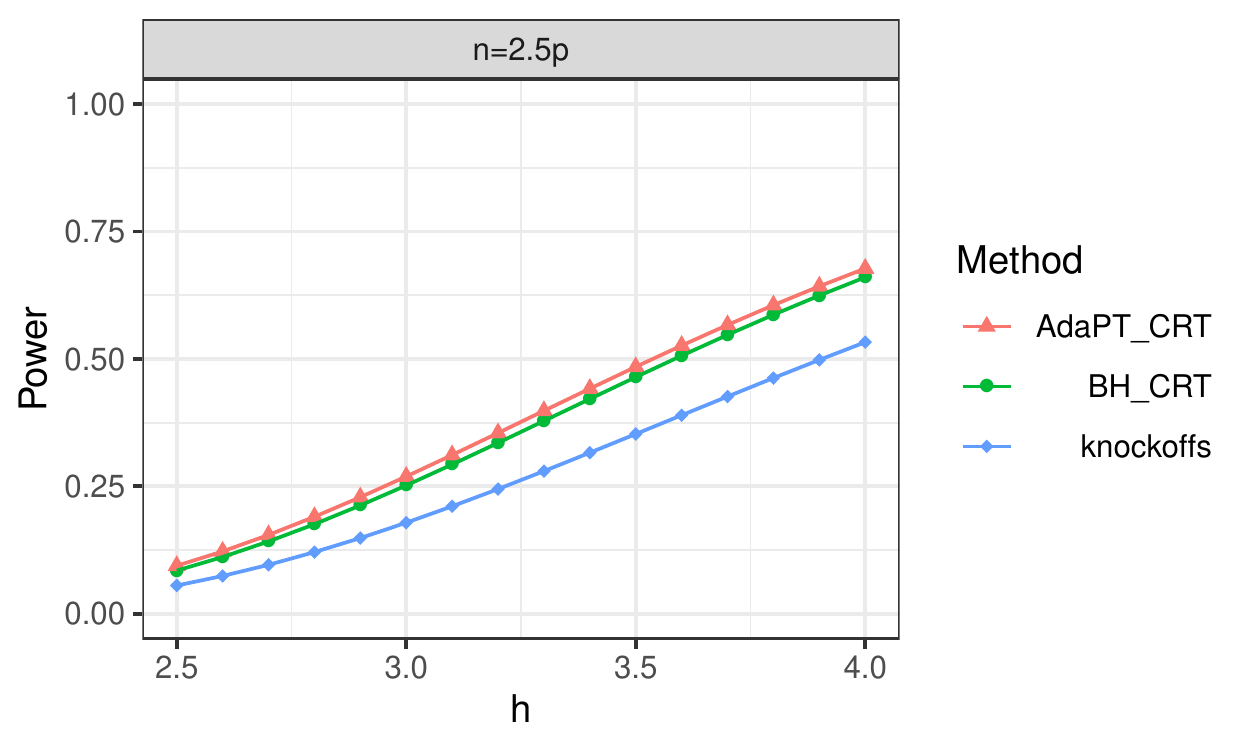}
    \caption{Asymptotic power comparison for BH and AdaPT applied to two-sided CRT $p$-values and knockoffs with the absolute value of the OLS coefficient with the original signal size and $\sqrt2$ times the signal size. The setting is the same as in Figure~\ref{figure:mc_compare}.}
    \label{figure:ols_compare}
\end{figure}

\begin{figure}[h]
    \centering
\includegraphics[width = 0.9\textwidth]{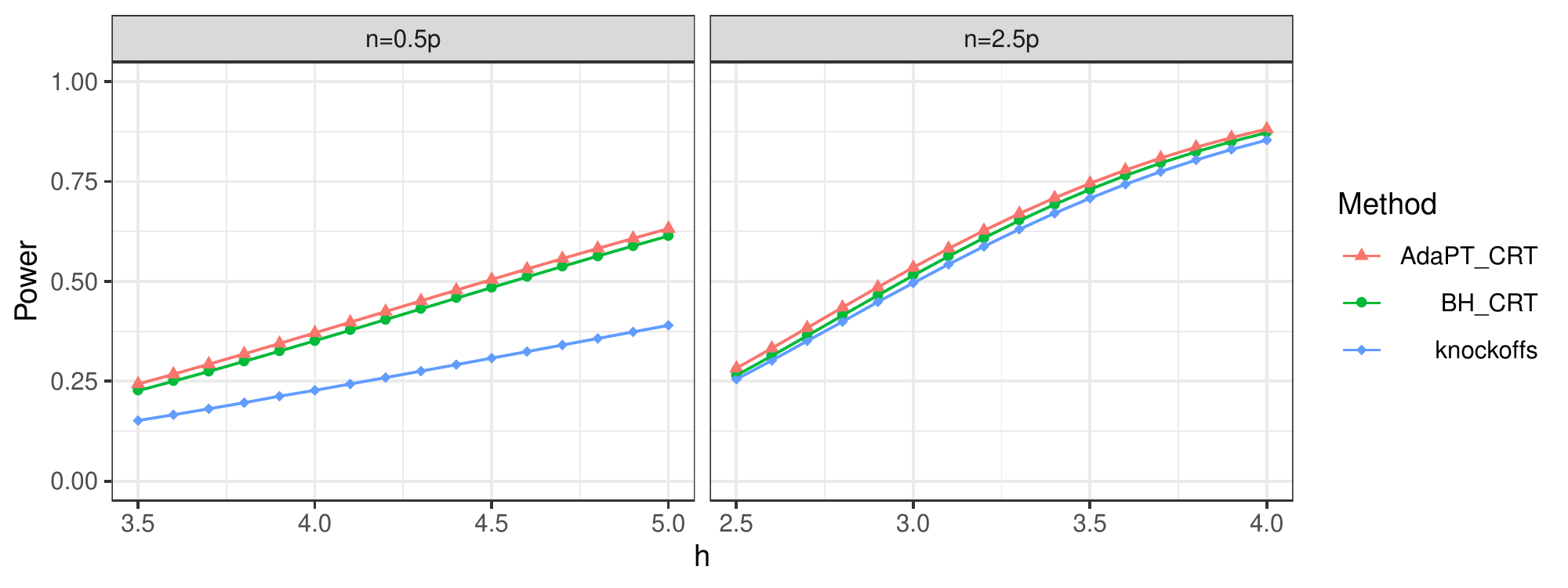}
    \caption{Asymptotic power comparison for BH and AdaPT applied to two-sided CRT $p$-values with the distilled lasso statistic and knockoffs with the absolute value of the lasso coefficient with the original signal size and $\sqrt2$ times the signal size. The setting is the same as in Figure~\ref{figure:mc_compare}. For all methods, $\lambda$ is chosen so that the asymptotic power is maximized (for the CRT, this is equivalent to minimizing $\tau_\lambda$).}
    \label{figure:lasso_compare}
\end{figure}

% we have almost surely,
% \begin{equation}
% \lim_{p\to\infty}\frac1p\sum_{j=1}^p\psi(\hat\beta_j,\beta_{0,j})
% =\e[\psi(\eta(B_0+\tau_*Z;\theta_*),B_0)],
% \label{eq:lasso-amp-dist}
% \end{equation}
% where $Z\sim N(0,1)$ independent of $B_0\sim p_{B_0}$, $\tau_*=\tau_*(\alpha(\lambda))$, $\theta_*=\alpha(\lambda)\tau_*$.
% Let $\{\beta_{2j-1}\}_{j=1}^\infty$ converges in distribution to $\gamma\delta_0+(1-\gamma)\delta_{h}$. and $\beta_{2j}=0$. Let $F_0$ and $F_1$ be the CDFs of $|\eta(\tau_*Z;\theta_*)|$ and $|\eta(h+\tau_*Z;\theta_*)|$. Let $G_0$ be the CDF of $F_0-F_0$, where the two $F_0$'s stand for independent random variables following $F_0$, and $G_1$ be the CDF of $F_1-F_0$, where $F_0$ and $F_1$ represent two independent random variables following $F_0$ and $F_1$.

% \begin{figure}[H]
%     \centering
% \includegraphics[width = 0.9\textwidth]{}
%     \caption{Comparision of emprical and theoretical results of Knockoffs at FDR level $0.1$.} 
%     \label{figure:comp_BH}
% \end{figure}

\section{Retrospective sampling}
\label{sec:retro}
As a generalization of our results for the CRT in Section~\ref{sec:single}, we consider a case in which we know the distribution of $\mathcal L(X\,|\,Z)$, but the data have been collected retrospectively. Specifically, we assume the following model.
\begin{model}[High-dimensional linear model with retrospective sampling] Let $g:\rr\to[0,1]$ be a Borel function that is not almost everywhere $0$. For each $p$, generate i.i.d. data from Setting~\ref{model:moderate-dim-lr} and reject each data point $(X_i,Y_i,Z_i)$ with probability $1-g(Y_i)$, until $n$ data points have been collected, such that $p/n\to\kappa$ still holds.
\label{model:retro}
\end{model}
\citet[Proposition~1]{barber2019construction} established that the CRT remains valid when Setting~\ref{model:moderate-dim-lr} is assumed but the data actually come from Setting~\ref{model:retro}. In addition to single hypothesis testing in Setting~\ref{model:retro}, we will also consider the variable selection with the CRT $p$-values, coming from Setting~\ref{model:retro-crt} as follows.

\begin{model}[High-dimensional linear model with retrospective sampling] Let $g:\rr\to[0,1]$ be a Borel function that is not almost everywhere $0$. For each $p$, generate i.i.d. data from Setting~\ref{model:lr-iid} and reject each data point $(X_i,Y_i)$ with probability $1-g(Y_i)$, until we have $n$ data points, such that $p/n\to\kappa$ still holds.
\label{model:retro-crt}
\end{model}
\citet{barber2019construction} also established that the knockoffs are still valid when Setting~\ref{model:lr-iid} is assumed but the data actually come from Setting~\ref{model:retro-crt}. Thus, in this section, we will consider knockoffs generated independently as in Section~\ref{sec:knockoffs}. %as in Setting~\ref{model:lr-iid-knockoff-retro}.
% \begin{modelstar}[High-dimensional linear regression with independent covariates and knockoffs and retrospective sampling] Let $g:\rr\to[0,1]$ be a Borel function that is not almost everywhere $0$. For each $p$, generate i.i.d. data from Setting~\ref{model:lr-iid-knockoff} and keep each data point $(X_i,Y_i)$ with probability $g(Y_i)$, until we have $n$ data points, such that $p/n\to\kappa$ still holds.
% \label{model:lr-iid-knockoff-retro}
% \end{modelstar}
The following theorem gives the asymptotic power of the CRT and knockoffs using the marginal covariance statistic with retrospective sampling.
\begin{thm}
Consider using the test statistics $T=n^{-1}\ndata{X}^\top\ndata{Y}$ for the CRT, and $T_j=n^{-1}\ndata{X}_j^\top\ndata{Y}$ for multiple testing with CRT $p$-values and knockoffs. Let $M_{\textnormal{retro}}^2$ be the asymptotic second moment of the retrospectively collected $Y_i$, i.e.,
\begin{equation*}
M_{\textnormal{retro}}^2=\frac{\e[Y_\textnormal{raw}^2g(Y_\textnormal{raw})]}{\e[g(Y_\textnormal{raw})]},
\end{equation*}
where $Y_\textnormal{raw}\sim\mathcal N(0,\sigma^2+v_Z^2)$ is drawn from the asymptotic distribution of $Y$ without rejection.\footnote{$M_\textnormal{retro}$ always exists because $g(y)\in[0,1]$ and is not almost everywhere zero.} Note that in Setting~\ref{model:retro-crt}, the corresponding $v_Z^2$ (or $v_{X_{\text{-}j}}^2$) is equal to $\kappa\e[B_0^2]$.
\begin{enumerate}
    \item In Setting~\ref{model:retro}, the asymptotic power of the CRT is equal to that of a $z$-test with standardized effect size
    \[ \frac{h{M_{\textnormal{retro}}}}{v_Z^2+\sigma^2}. \]
% \begin{equation*}
% \Phi\left(\frac{h{M_{\textnormal{retro}}}}{v_Z^2+\sigma^2}-z_\alpha\right).
% \end{equation*}

\item In Setting~\ref{model:retro-crt}, for almost all $q\in(0,1)$, BH or AdaPT at level $q$ applied to CRT $p$-values using $T_j$ (or $|T_j|$) have one-sided (or two-sided) effective $\pi_\mu$ given by the distribution of $\frac{M_{\textnormal{retro}}}{\sigma^2+\kappa\e[B_0^2]}B_0$ with respect to BH or AdaPT at level $q$.

\item In Setting~\ref{model:retro-crt}, for almost all $q\in(0,1)$, knockoffs with $\tilde{\ndata{X}}$ an i.i.d. copy of $\ndata{X}$, antisymmetric function $f(x,y)=x-y$, test statistic $T_j$, and level $q$ has one-sided effective $\pi_\mu$ given by the distribution of $\frac{M_{\textnormal{retro}}}{\sqrt{2}(\sigma^2+\kappa\e[B_0^2])}B_0$ with respect to AdaPT at level $q$.
\end{enumerate}
\label{theorem:retro-mc}
\end{thm}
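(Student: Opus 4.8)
The plan is to reduce each of the three claims to the non-retrospective results (Theorems~\ref{theorem:prop-power-mc}, \ref{theorem:coro-BH-CRT} and \ref{theorem:coro-knockoff}) by isolating the single effect of retrospective sampling. The key observation is that retaining a draw with probability $g(Y_i)$ reweights the joint law of $(X_i,Z_i,Y_i)$ (or $(X_i,Y_i)$) by the factor $g(Y_i)$, and since this factor depends only on $Y_i$, the \emph{conditional} law of the covariates given $\ndata Y$ (and any subset of the other covariate columns) is \emph{unchanged} from Setting~\ref{model:moderate-dim-lr}/\ref{model:lr-iid}. Hence the only object whose asymptotics differ is the empirical law of $\ndata Y$: the retained $Y_i$ are i.i.d.\ from the $g$-reweighting of the law of $Y$, and because in both settings $Y$ is, conditionally on the coefficient vector, exactly Gaussian with variance converging to $\sigma^2+v_Z^2$ (resp.\ $\sigma^2+\kappa\E[B_0^2]$), a triangular-array law of large numbers gives $n^{-1}\|\ndata Y\|^2\cip M_{\text{retro}}^2$, and, combined with the Gaussian-conditioning identity $\E[\varepsilon\mid Y]=\frac{\sigma^2}{\sigma^2+v_Z^2}Y$, also $n^{-1}\ndata Y^\top\varepsilon\cip\frac{\sigma^2}{\sigma^2+v_Z^2}M_{\text{retro}}^2$.

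For part~1, exactly as in the proof of Theorem~\ref{theorem:prop-power-mc} the CRT with $T_{\text{MC}}$ rejects based on $U=\ndata Y^\top(\ndata X-\ndata Z\xi)/\|\ndata Y\|$ (its $p$-value is $1-\Phi(U)$, resp.\ the two-sided analogue), so it suffices to show $U\cid\mathcal N(\mu,1)$ with $\mu=hM_{\text{retro}}/(\sigma^2+v_Z^2)$. Since $\mathcal L(\ndata X\mid\ndata Z,\ndata Y)$ in the retained sample is the original Gaussian conditional law, conditionally on $(\ndata Z,\ndata Y)$ the coordinates of $\ndata X-\ndata Z\xi$ are independent and Gaussian, with mean $\frac{\beta}{\beta^2+\sigma^2}(Y_i-Z_i^\top(\theta+\beta\xi))$ and variance $\frac{\sigma^2}{\beta^2+\sigma^2}$; hence $U\mid\ndata Z,\ndata Y$ is \emph{exactly} $\mathcal N(m_n,s_n^2)$ with $s_n^2=\frac{\sigma^2}{\beta^2+\sigma^2}\to1$ and $m_n=\frac{\beta}{\beta^2+\sigma^2}\cdot\frac{\ndata Y^\top(\varepsilon+\beta(\ndata X-\ndata Z\xi))}{\|\ndata Y\|}$. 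Taking $\beta=h/\sqrt n$ and using $n^{-1}\|\ndata Y\|^2\cip M_{\text{retro}}^2$, $n^{-1}\ndata Y^\top\varepsilon\cip\frac{\sigma^2}{\sigma^2+v_Z^2}M_{\text{retro}}^2$, and $\beta\,\ndata Y^\top(\ndata X-\ndata Z\xi)=o_p(\sqrt n)$, a short calculation gives $m_n\cip hM_{\text{retro}}/(\sigma^2+v_Z^2)$, whence $U\cid\mathcal N(\mu,1)$ and the claim follows from Definition~\ref{definition:z-test}.

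For parts~2 and~3 I would verify the hypotheses of Theorem~\ref{theorem:BH-AdaPT-CRT} and of Theorem~\ref{theorem:knockoff-power} (through Lemma~\ref{lemma:knockoff-conditions}), which reduces to understanding, for distinct $j\neq k$, the joint limit of the CRT $p$-value pair $(p_j,p_k)$ and of the quadruple $(T_j,T_k,\tilde T_j,\tilde T_k)$ for knockoffs with $T_j=n^{-1}\ndata X_j^\top\ndata Y$ and $\tilde T_j=n^{-1}\tilde{\ndata X}_j^\top\ndata Y$ ($\tilde{\ndata X}$ having i.i.d.\ standard Gaussian entries, independent of the data). Conditioning on $\ndata Y$ together with all covariate (and knockoff) columns other than $j$ and $k$, the covariates of interest retain their original Gaussian conditional law, so these statistics are jointly Gaussian; the $j$–$k$ cross-covariance is $\Cov(X_{ij},X_{ik}\mid Y_i,X_{i,-\{j,k\}})=-\beta_j\beta_k/(\beta_j^2+\beta_k^2+\sigma^2)=O(1/n)$ and the knockoff columns are exactly independent, so all coordinates are asymptotically independent; and the limiting non-null marginal comes from the same computation as in part~1, differing from the non-retrospective case only through $n^{-1}\|\ndata Y\|^2\to M_{\text{retro}}^2$ and $n^{-1}\ndata Y^\top\varepsilon\to\frac{\sigma^2}{\sigma^2+\kappa\E[B_0^2]}M_{\text{retro}}^2$. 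This gives the CRT marginal $p_j\cid1-\Phi\big(\mathcal N\big(\tfrac{M_{\text{retro}}}{\sigma^2+\kappa\E[B_0^2]}\sqrt n\beta_j,\,1\big)\big)$, i.e.\ the effective $\pi_\mu$ in part~2; and $\sqrt n\,T_j\cid\mathcal N\big(\tfrac{M_{\text{retro}}^2}{\sigma^2+\kappa\E[B_0^2]}\sqrt n\beta_j,\,M_{\text{retro}}^2\big)$, $\sqrt n\,\tilde T_j\cid\mathcal N(0,M_{\text{retro}}^2)$, so the oracle $p$-value $1-F(W_j)$ built from the limiting null CDF $F=\mathcal N(0,2M_{\text{retro}}^2)$ has effective $\mu_j=\tfrac{M_{\text{retro}}}{\sqrt2(\sigma^2+\kappa\E[B_0^2])}\sqrt n\beta_j$; invoking the AdaPT equivalence of Section~\ref{sec:knockoffs-mc-ols} gives part~3.

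The main obstacle is twofold. First is passing moments through the retrospective reweighting: one must show $\E[Y_{\text{raw}}^2 g(Y_{\text{raw}})]/\E[g(Y_{\text{raw}})]$ (and the analogous mixed moments involving $\varepsilon$ and $Z^\top\theta$) converge to the corresponding quantities for $\mathcal N(0,\sigma^2+v_Z^2)$, which rests on the (conditional) exact Gaussianity of $Y$ in Settings~\ref{model:moderate-dim-lr}--\ref{model:lr-iid} together with a uniform-integrability argument handling the fact that $g$ is merely a bounded Borel function. Second is bookkeeping within the transcription of the non-retrospective proofs: one must track which appearances of $\sigma^2+v_Z^2$ (resp.\ $\sigma^2+\kappa\E[B_0^2]$) arise as $\lim n^{-1}\|\ndata Y\|^2$ — and so become $M_{\text{retro}}^2$ — versus which arise intrinsically from conditional variances of the covariates given $Y$ and so persist unchanged. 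The remaining ingredients — the asymptotic independence of the statistic tuples and the invocation of the abstract multiple-testing theorems — are essentially verbatim from the proofs of Theorems~\ref{theorem:coro-BH-CRT} and \ref{theorem:coro-knockoff}.
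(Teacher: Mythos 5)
Your proposal is correct and rests on the same governing idea as the paper's proof: retrospective sampling reweights only the marginal law of $\ndata Y$, so the conditional law of the covariates given $\ndata Y$ (and hence all the CRT/knockoff null resampling machinery) is unaffected, and the only quantity whose limit changes is $n^{-1}\|\ndata Y\|^2$, which becomes $M_{\textnormal{retro}}^2$. You also correctly flag that establishing $n^{-1}\|\ndata Y\|^2\cip M_{\textnormal{retro}}^2$ requires a triangular-array argument; the paper handles it via the moment convergence in equation~\eqref{equation:y-moment} and dominated convergence.

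The one substantive difference is your choice of conditioning. For part~1 you condition on $(\ndata Z,\ndata Y)$, which forces you to control $n^{-1}\ndata Y^\top\varepsilon$ separately via $\e[\varepsilon\mid Y]=\tfrac{\sigma^2}{\Var(Y)}Y$; the paper conditions only on $\ndata Y$ (using $\mathcal L(X-Z^\top\xi\mid Y)$ directly, as in the proof of Theorem~\ref{theorem:prop-power-mc}), which avoids that bookkeeping. For parts~2--3 you condition on $(\ndata Y,\ndata X_{-\{j,k\}})$, whereas the paper conditions on $(\ndata Y,\beta)$ via Lemma~\ref{lemma:gaussian-covariate-conditional}. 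Your route works, but you are implicitly leaning on a further fact you do not state: after integrating out $\ndata X_{-\{j,k\}}$, the conditional means of $T_j$ and $T_k$ both involve the common random quantity $\ndata Y^\top\ndata X_{-\{j,k\}}\beta_{-\{j,k\}}$, so showing joint asymptotic independence requires either that these conditional means concentrate around constants (true, by the same law-of-large-numbers argument as in part~1, but worth stating) or that their induced covariance is $o(1)$ relative to the conditional variances. The paper's choice of conditioning integrates out $\ndata X_{-\{j,k\}}$ from the start and makes the cross-covariance manifestly $-\|\ndata Y\|^2\beta_j\beta_k/(\|\beta\|^2+\sigma^2)\cip 0$, which is a slightly cleaner decomposition to the same conclusion.
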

% \begin{prop}
% Assume the same setting as Proposition~\ref{proposition:power-mc} with Setting~\ref{model:moderate-dim-lr} replaced by Setting~\ref{model:retro}, and the asymptotic power is
% \begin{equation*}
% \Phi\left(\frac{h\tau{M_{Y,*}}}{v_Z^2+\sigma^2}-z_\alpha\right),
% \end{equation*}
% where $M_{Y,*}^2$ is the asymptotic second moment of the retrospectively collected $Y_i$, i.e.,
% \begin{equation*}
% M_{Y,*}^2=\frac{\int\phi_{\sigma^2+v_Z^2}(y)g(y)y^2\di y}{\int\phi_{\sigma^2+v_Z^2}(y)g(y)\di y},
% \end{equation*}
% where $\phi_{\sigma^2+v_Z^2}$ is the probability density function of $\mathcal N(0,\sigma^2+v_Z^2)$. The integrals are finite because $g(y)\in[0,1]$.
% \label{proposition:mc-retro}
% \end{prop}
To sum up, Theorem~\ref{theorem:retro-mc} establishes that for retrospective sampling, the same results on the asymptotic power hold with the signal size multiplied by $\frac{M_\text{retro}}{\sqrt{\sigma^2+v_Z^2}}$.
Thus, the power gets higher as $M_{\textnormal{retro}}$ gets larger. This is intuitive, since it should be easier for us to detect the signal in regions where $Y$ has extreme values. As a special case, if $g\equiv1$, then $M_{\textnormal{retro}}=\sqrt{\sigma^2+v_Z^2}$ and we return to the non-retrospective sampling case.

While the asymptotic power expressions for retrospective sampling can be higher than that of non-retrospective sampling, it comes at a price of requiring more raw samples, and it is worthwhile to discuss the implications. Let $n_\text{raw}$ be the number of raw samples needed to get $n$ retrospective samples, then $n/n_\text{raw}\to\int\phi_{\sigma^2+v_Z^2}(y)g(y)\di y$. If we do not discard any samples and use all $n_\text{raw}$, we return to the non-retrospective sampling settings with $h$ increased to $h/\sqrt{\int\phi_{\sigma^2+v_Z^2}(y)g(y)\di y}$ (or $B_0$ to $B_0/\sqrt{\int\phi_{\sigma^2+v_Z^2}(y)g(y)\di y}$). One can then directly compare the asymptotic powers and note that, as intuition would suggest, the power is maximized when no sample is rejected. In practice, however, collecting covariates might be expensive. Therefore, it can be beneficial to decide whether or not to collect the covariates based on a screening step using the value of $Y$. A natural question is then how to achieve the highest power while fixing the sampling cost. This is equivalent to maximizing $M_{\textnormal{retro}}$ while fixing $\int\phi_{\sigma^2+v_Z^2}(y)g(y)\di y$ and it is not hard to see that the maximum is attained when $g(y)=\mathbf{1}_{|y|>C}$ for an appropriate $C$.

\section{Simulations}
\label{sec:discussion}
In this section, we examine the finite-sample accuracy of our asymptotic power expressions.

\subsection{CRT in Setting~\ref{model:moderate-dim-lr}}
\label{sec:simu-crt}
% As stated in Section~\ref{sec:crt-opt-bayes}, the optimal statistic for the CRT (for any $n,p$) is the Bayesian posterior probability of the covariate being non-null.
% [add finite sample results, 45 degree figure, $p=500,1000,2000$]
In Figure~\ref{figure:CRT}, we compare the power of the CRT with each statistic mentioned in Section~\ref{sec:bh-p-val}. %and also with the power of the oracle CRT which is given knowledge has the highest power. The results are summarized in Table~\ref{table:bayes}. 
% \begin{center}
% \begin{table}
%  \begin{tabular}{c|c c c c c} 
% Statistic & MC & OLS & distilled lasso & Bayes ($p=1000$) & Bayes ($p=2000$) \\
% \hline
% Power ($n=0.5p$) & 0.28 & undefined & 0.65 & 0.72 (0.01) & 0.71 (0.01)\\
% Power ($n=2.5p$) & 0.71 & 0.75 & 0.88 & 0.88 (0.01) & 0.88 (0.01) \\
% \end{tabular}
% \caption{Comparison of the powers of the CRT using different statistics. The setting is Setting~\ref{model:moderate-dim-lr} with $\sigma^2=\tau^2=1$, $\eta=0$ $\sqrt n\theta_j\simiid0.9\delta_0+0.1\delta_3$. MC stands for marginal covariance and OLS stands for ordinary least squares. Results for MC, OLS and distilled lasso are theoretical asymptotic, while results for Bayes are empirical based on 960 independent simulations. Numbers in parentheses represent standard errors.}
% \label{table:bayes}
% \end{table}
% \end{center}
We plot as a horizontal line the power of the CRT with an oracle statistic that is the upper bound for the achievable power with the CRT (see Appendix~\ref{sec:crt-opt-bayes}). We can see that the distilled lasso statistic has comparable power with the optimal statistic.

\begin{figure}[H]
    \centering
\includegraphics[width = 0.9\textwidth]{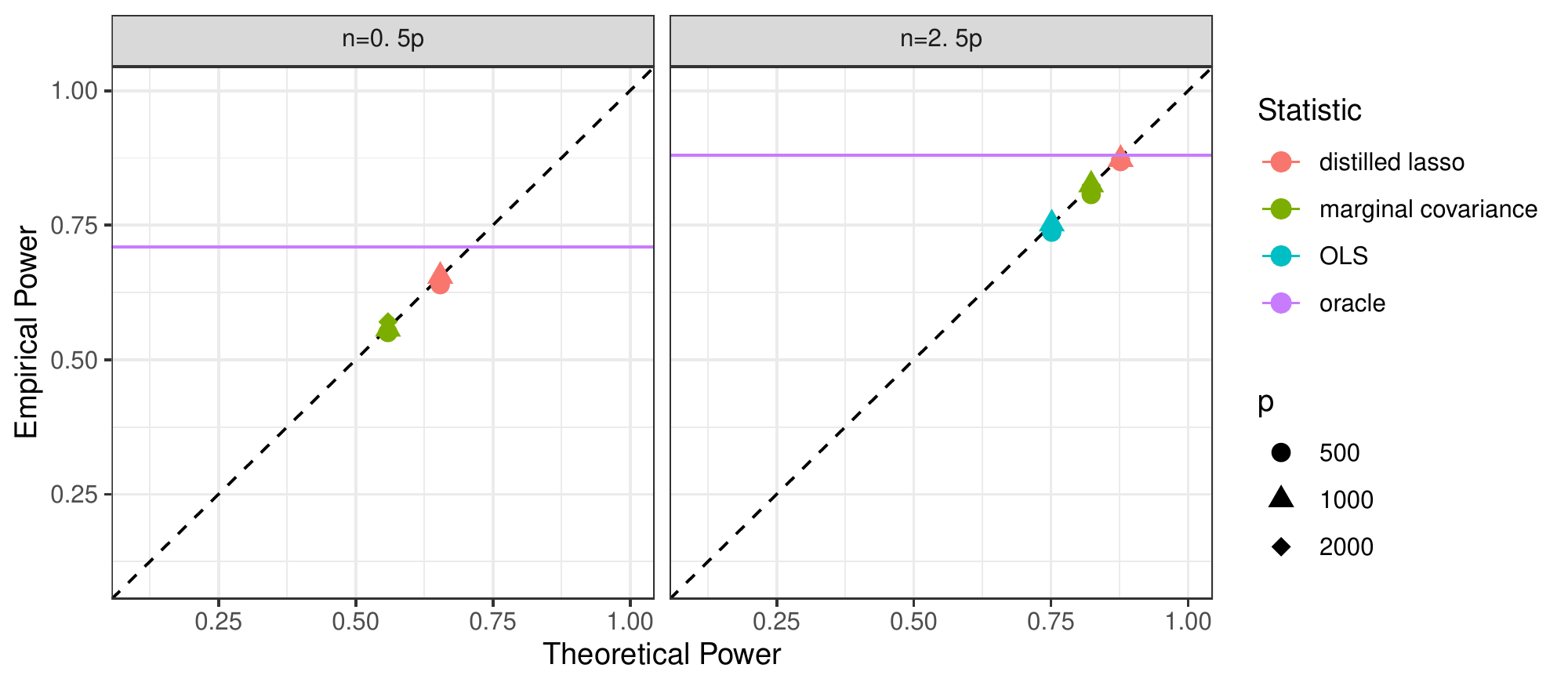}
    \caption{Comparison of the powers of the CRT using different statistics. The setting is Setting~\ref{model:moderate-dim-lr} with $\sigma^2=1$, $\xi=0$, $\beta=3$, $\sqrt n\theta_j\simiid0.9\delta_0+0.1\delta_3$. The results for Bayes are empirical based on 960 independent simulations. For the distilled lasso statistic, $\lambda$ is chosen so that $\tau_\lambda$ is minimized for highest asymptotic power. All standard errors are below $0.01$.}
    \label{figure:CRT}
\end{figure}

\subsection{Conjecture~\ref{conjecture:model-X-unlabeled}}
In this section, we show simulation results regarding Conjecture~\ref{conjecture:model-X-unlabeled} in Section~\ref{sec:CRT-conditional}. In Figure~\ref{figure:model-X-conjecture}, we plot the conjectured asymptotic power and empirical finite-sample power as a function of $n_*/p$ with $n/p=1.5$ fixed for two different values of $v_Z^2$. Note that the conjectured power must agree with the empirical power in the limit as $p$ and $n_*/p$ go to infinity (Theorem~\ref{theorem:prop-model-X-unlabeled}). %We can see that the conjecture is quite accurate when $n_*/p$ is small. It may seem that the conjecture might be off when $n_*/p$ is large, but we believe that this is because the convergence is slow when $n_*/p$ is large, since we know the conjecture is correct when $n_*/p$ approaches infinity.
\begin{figure}[H]
    \centering
\includegraphics[width = 0.9\textwidth]{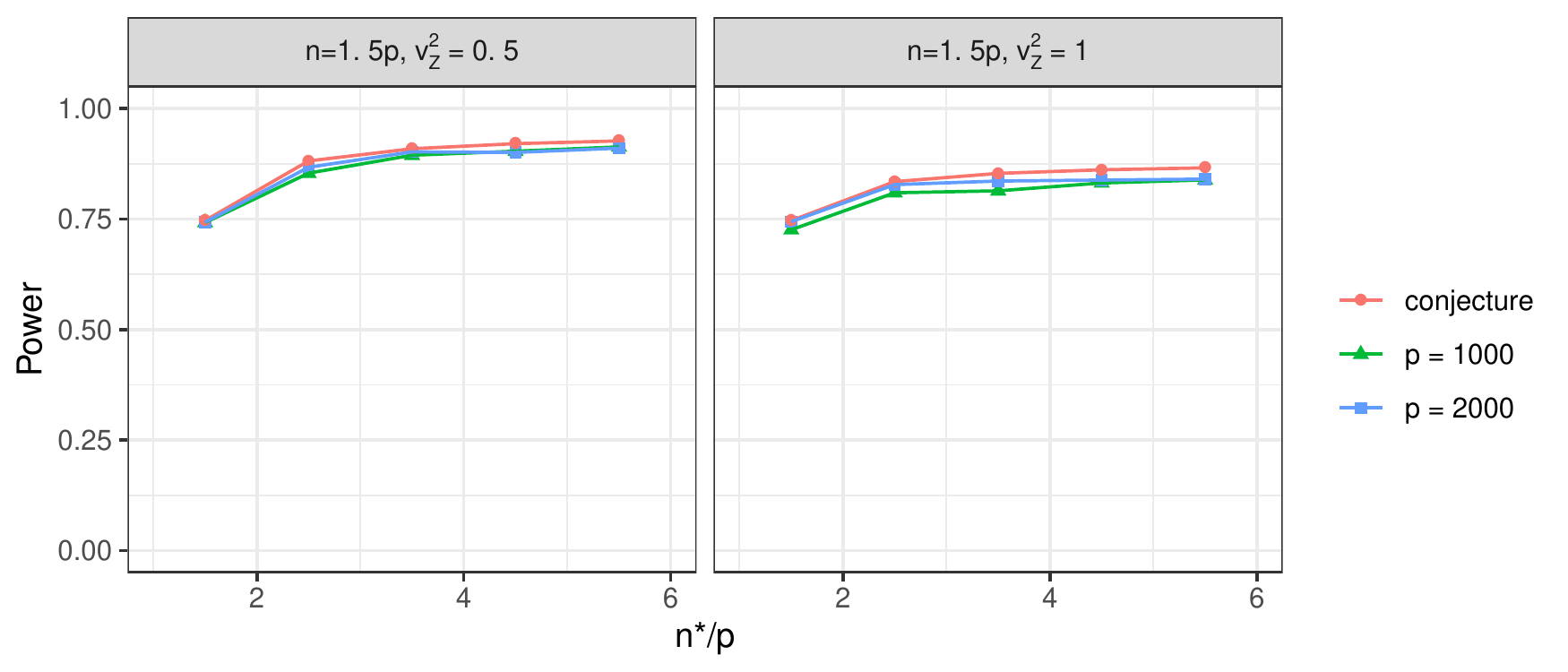}
    \caption{Simulations are for $p=1,000$ and $p=2,000$ with $h=4$ in the setting of Conjecture~\ref{conjecture:model-X-unlabeled}. All standard errors are below $0.01$.} 
    \label{figure:model-X-conjecture}
\end{figure}

\subsection{Multiple testing with CRT $p$-values and knockoffs}
In this section, we show some simulation results of BH applied to CRT $p$-values (BH-CRT) and knockoffs with the statistics discussed in this paper. We defer the results of AdaPT applied to CRT $p$-values to Appendix~\ref{sec:bh-adapt} so we do not crowd the plots; in summary, AdaPT has slightly higher power and FDR and converges more slowly than BH, especially in low-power settings. In our simulations, $\gamma=0.9$ and $\pi_1$ is a point mass at $h=4$. We use absolute values of the statistics, since in practice we do not know the sign of $h$. %It is straightforward to check that the asymptotic results still hold with minimal changes when absolute values of those statistics are used.
Points with the same color represent methods with the same statistic and different $p$'s, including $p=\infty$, which is calculated based on our theory. It can be seen that points of the same color form separated clusters, which means that our theory could guide statistic choices even in finite samples. We note that the finite-sample agreement is not quite as good for knockoffs in lower-power settings as that for the BH-CRT, because of the discreteness in the numerator of the FDP estimate in the knockoffs procedure (the fraction in Equation~\eqref{equation:knockoff-adapt}). We also include the results of an oracle using the Bayesian method that controls the Bayesian FDR (see Appendix~\ref{sec:bayes-fdr}), and BH-CRT with the distilled lasso statistic can be close to this oracle method when $n=2.5p$, while when $n<p$, there is still a substantial gap as would be expected given the relative value of the prior to the smaller sample size. 
%This phenomenon is not surprising, since when the sample size is smaller, the data contains less information and the prior would be more helpful. 

\begin{figure}[H]
    \centering
\includegraphics[width = 0.9\textwidth]{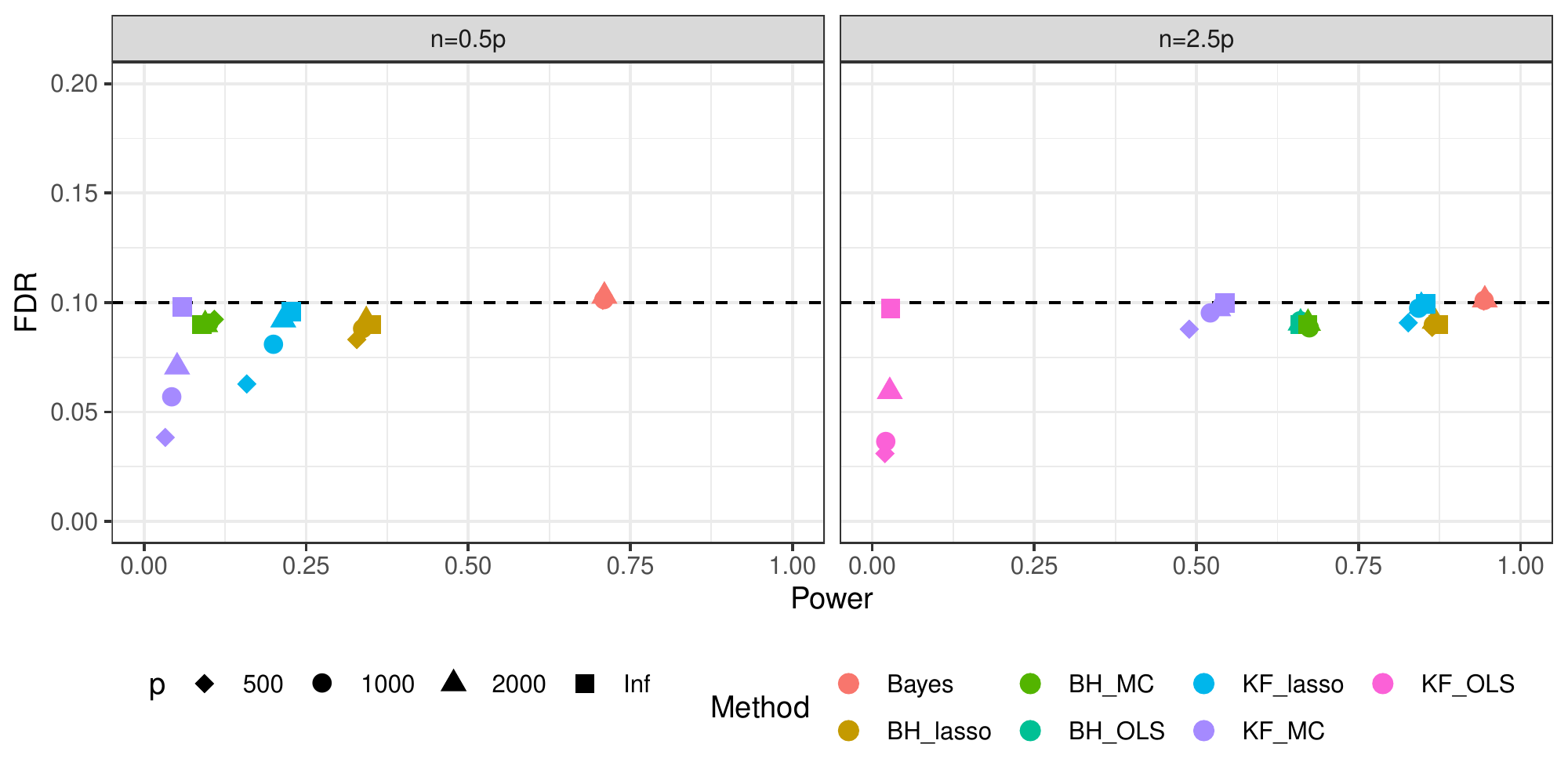}
    \caption{Power comparison of different methods at FDR level $0.1$ with the same setting as that of Figure~\ref{figure:lasso_compare}. For the lasso statistics, $\lambda$ is chosen so that the asymptotic power is maximized (for the CRT, this is equivalent to minimizing $\tau_\lambda$), while we note that the specific choice of $\lambda$ only affects the power mildly within a reasonable range (see Figure~\ref{figure:tau}). All standard errors are below $0.01$.} 
    \label{figure:comp_all}
\end{figure}

\subsection{Retrospective sampling}
In this section, we compare the empirical and theoretical powers of the CRT in Setting~\ref{model:retro}, where $g$ is taken to be of the form $g(x)=\ind_{|x|>\text{threshold}}$ for different $\text{threshold}$s.
\begin{figure}[H]
    \centering
\includegraphics[width = 0.9\textwidth]{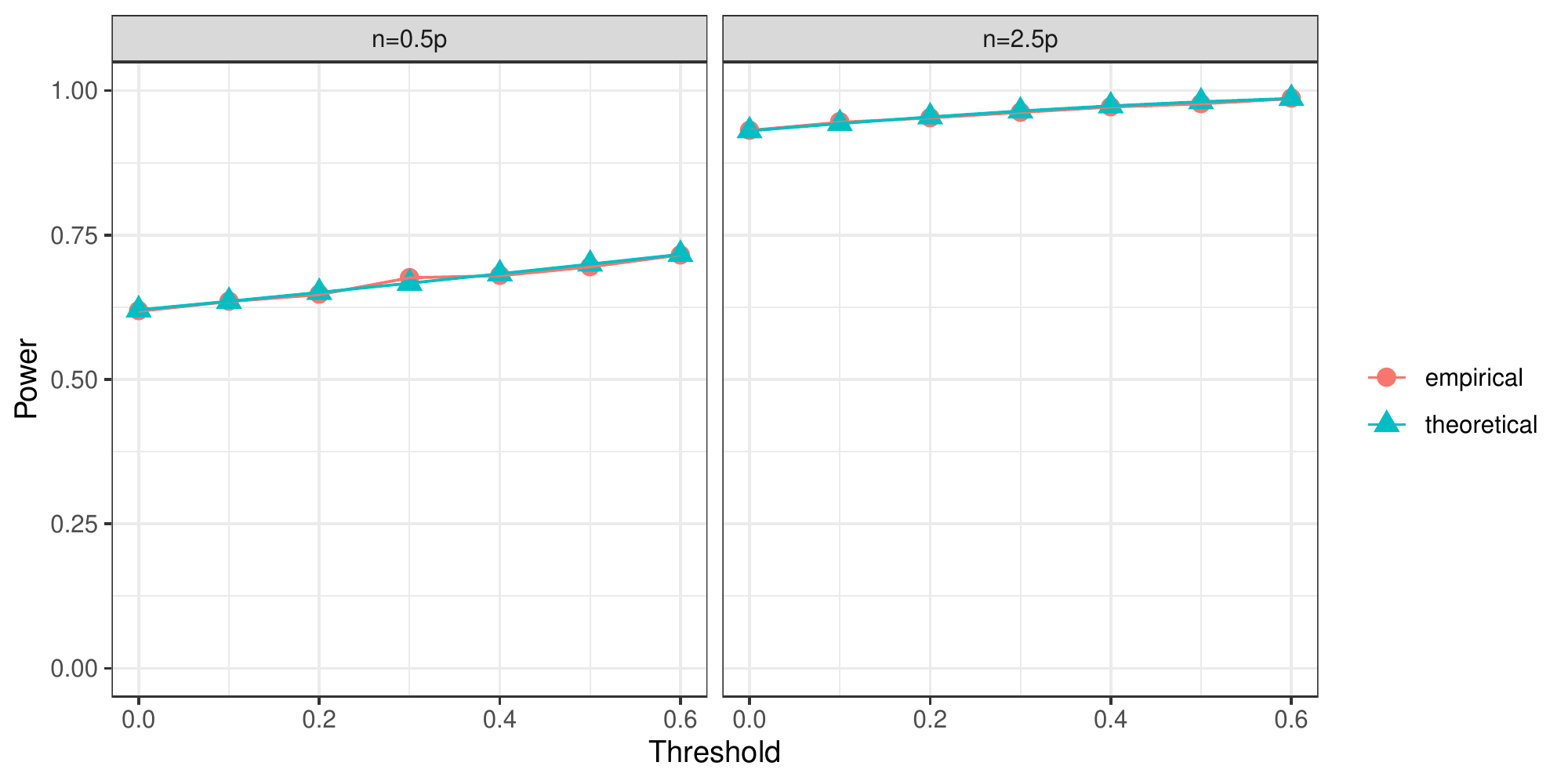}
    \caption{Comparison of the empirical ($p=500$) and theoretical (asymptotic) powers of the CRT with the marginal covariance statistic in the retrospective sampling setting (Setting~\ref{model:retro} with $\sigma^2=1$, $\xi=0$, $\beta=4$ and $\sqrt n\theta_j\simiid0.9\delta_0+0.1\delta_4$). All standard errors are below $0.01$.} 
    \label{figure:retro}
\end{figure}

\section{Discussion}
\label{sec:discussion-new}
This paper studied the asymptotic powers of the CRT and knockoffs in the high-dimensional regime, i.e., as $n,p\to\infty$, $n/p$ goes to a positive constant and a fixed non-zero proportion of variables are non-null. 
%The tools in this paper also allows analysis of similar methods (e.g., applying Storey's BH procedure to the CRT $p$-values). 
A very natural future direction is to study the behavior of the CRT and knockoffs with different statistics and/or in other settings. For example, \citet{celentano2020lasso} could provide starting points on extending our lasso power analysis to settings with correlated covariates, while \citet{sur2019modern,liang2020precise} could enable the study of binary regression settings and their corresponding test statistics. %Many practical problems involve a binary or discrete response $Y$, where a generalized linear model would present a canonical choice for power analysis. Recent works (e.g., \citet{sur2019modern}) on high-dimensional logistic regression can be a starting point.
    % \item Another interesting direction is extension to high-dimensional settings, especially for variable selection, e.g., a setting where $p/n\to\infty$ and some assumptions different from this article are made on the signals $\beta_j$'s so a non-trivial power expression can be obtained.
    % \item A challenging yet important problem is to analyze the effect of finite unlabeled data in conditional knockoffs \citep{huang2019relaxing}. Except for the simple case where $n$ is too small such that non-trivial conditional knockoffs can only be generated by incorporating unlabeled date, it is not even clear if unlabeled data increases power at all.
%Another natural direction is to analyze other test statistics. For instance, recent work establishing the limit of the cross-validation $\lambda$ in the lasso \citep{weinstein2020power} could be leveraged to generalize our results to cross-validated lasso statistics.
Alternatively, the power analysis of oracle test statistics (e.g., the one in Appendix~\ref{sec:crt-opt-bayes}) could provide theoretical bounds on the power of these methods with any statistics.
%It will also be interesting to quantitatively study the power of the CRT when the labeled data is collected retrospectively with statistics other than the marginal covariance, or when the distribution is non-Gaussian but still admits a tractable sufficient statistic (e.g., the graphical models analyzed in \citet{huang2019relaxing}).

\section*{Acknowledgements}
The authors would like to thank Hong Hu, Tracy Ke, Natesh Pillai, Subhabrata Sen, and Pragya Sur for valuable discussions and suggestions. L. J. was partially supported by the William F. Milton Fund.

\bibliography{refs}{}

\newpage
\appendix

\section{Notation}
Bold letters are used for matrices or vectors containing i.i.d. observations. Unless specified otherwise, a vector is always a column vector instead of row vector. For a vector $a$, $a_S$ denotes the sub-vector that consists of elements indexed by $S$; for a matrix $A$, $A_{S,S}$ denotes the sub-matrix that consists of rows and columns indexed by $S$. For integers $i\le j$, the notation $i:j$ means the set $\{i,i+1,\dots,j\}$, and we use $[p]$ to denote $1:p$. For a set $S\subseteq[p]$, $|S|$ denotes the number of elements in $S$, $\text{-}S$ denotes the set $[p]\setminus S$. Let $I_d$ be the $d\times d$ identity matrix and for $d_1\le d_2$, let $I_{d_1\times d_2}$ be the matrix obtained by adding $(d_2-d_1)$ rows of zeros to $I_{d_1}$. Let $\mathbb S^{d-1}=\{x\in\rr^d,\|x\|_2=1\}$. The indicator function of $B$ is denoted as $\mathbf{1}_B$, i.e., it takes value $1$ on $B$ and zero otherwise. The cumulative distribution function (CDF) of the Gaussian distribution $\mathcal N(0,1)$ is denoted by $\Phi$---for $\alpha\in(0,1)$, $z_\alpha$ denotes the $\alpha$-quantile of $\mathcal N(0,1)$, i.e., $\Phi(z_{\alpha})=\alpha$. We use $\chi_k^2$ and Inv-$\chi_k^2$ to denote the chi-squared distribution and inverse chi-squared distribution with $k$ degrees of freedom. For random variables or vectors $W_1$ and $W_2$, $\mathcal L(W_1)$ means the distribution of $W_1$ and $\mathcal L(W_1\,|\,W_2)$ means the conditional distribution of $W_1$ given $W_2$. To ease notation when analyzing the power and false discovery rate, we use the convention that $0/0$ is defined to be $0$. Unless another measure is explicitly specified, ``almost everywhere'' or ``almost every'' is with respect to the Lebesgue measure.
\section{CRT under low-dimensional asymptotics}
\label{sec:scalar-CRT}
As a side note, we consider a case in which we test a scalar parameter with no nuisance parameters under the asymptotics of local alternatives. One can think of this case as testing if a coefficient is zero in a linear regression setting, where the other coefficients are known. A similar problem was studied in \citet{katsevich2020theoretical}, the difference of which we will discuss below.

% We suppose that we are given a sequence of null hypotheses and a sequence of alternative hypotheses on the distribution of $(\ndata{X},\ndata{Y},\ndata{Z})$, where the two sequences are contiguous. \sout{For example, one can have a family of examples when the distribution is quadratic mean differentiable (q.m.d.). See Appendix~\ref{sec:contiguity-qmd}.}

We consider the setting with i.i.d. data $(X_i,Y_i,Z_i)_{i=1}^n=(\ndata{X},\ndata{Y},\ndata{Z})$. Recall that $(X,Y,Z)$ is actually simplified notation for $(X_{j},Y,\Xno{j})$. The null distribution is $(\ndata{X},\ndata{Y},\ndata{Z})\sim P_{\theta_0}^n$, where $X\ci Y\mid Z$ under $P_{\theta_0}$. The alternative distribution is $(\ndata{X},\ndata{Y},\ndata{Z})\sim P^n_{\theta_0+hn^{-1/2}}$, where $h$ is a fixed scalar. We assume $P_\theta$ is q.m.d. and thus the two sequences are contiguous (see Appendix~\ref{sec:contiguity-qmd}). In other words, we are testing $H_0:\theta=\theta_0$ against $H_1:\theta=\theta_0+h/\sqrt{n}$ with $n$ independent draws from $P_\theta$. For presentational simplicity, suppose we know the sign of $h$ is positive, while the case where we do not know the sign of $h$ can be similarly studied. We remark that although contiguity gives us an interesting setting to analyze non-trivial power, it is not a necessary condition (see Appendix~\ref{sec:local}).
%\rev{The CRT can be represented by a function $\phi(\ndata{X},\ndata{Y},\ndata{Z})\in[0,1]$ as the probability of rejection given data $(\ndata{X},\ndata{Y},\ndata{Z})$, such that $\e_{H_0}[\phi(\ndata{X},\ndata{Y},\ndata{Z})\mid\ndata{Y},\ndata{Z}]\le\alpha$. We point out that a fixed-X test is actually symmetric notation-wise, represented by a $\phi$ such that $\e_{H_0}[\phi(\ndata{X},\ndata{Y},\ndata{Z})\mid\ndata{X},\ndata{Z}]\le\alpha$. Since we have treated $X$ and $Y$ with symmetric notations, we only need to study the asymptotic power of tests conditionally valid on $(\ndata{X},\ndata{Z})$ (fixed-X tests), which also tells us the asymptotic power of tests conditionally valid on $(\ndata{Y},\ndata{Z})$ (CRTs).}

%\subsubsection{Asymptotically linear statistic}
Asymptotically linear statistics are an important class of statistics, which are of the form
\begin{equation*}
T_n(\ndata{X},\ndata{Y},\ndata{Z})=n^{-1/2}\sum_{i=1}^n\psi(X_i,Y_i,Z_i)+o_{\p_{H_0}}(1),
\end{equation*}
where %$\e_{H_0}[\psi(X_i,Y_i,Z_i)]=0$, $\Var_{H_0}(\psi(X_i,Y_i,Z_i))=\tau^2<\infty$ and
$o_{\p_{H_0}}(1)$ denotes a term that goes to zero in probability under $H_0$. Many statistics can be written in this form, e.g., the log-likelihood ratio statistic and the score statistic. We will see that this class of statistics also can offer a most powerful test.

As suggested in Section~\ref{sec:CRT-introduction}, the CRT is run by finding a cutoff $c_\alpha(\ndata{Y},\ndata{Z})$ such that we get an exactly size-$\alpha$ test conditional on $(\ndata{Y},\ndata{Z})$ by rejecting when $T_n< c_\alpha$, accepting when $T_n> c_\alpha$, and randomizing when $T_n= c_\alpha$.
\begin{thm}
\label{theorem:asym-power-local-alt}
The asymptotic unconditional power of the above test under the local alternatives is
\begin{equation*}
1-\Phi\left(z_{1-\alpha}-h\sqrt{\Var_{H_0}(s(X_i,Y_i,Z_i,\theta_0))}\Corr_{H_0}\left({\psi(X_i,Y_i,Z_i)-e_0(Y_i,Z_i)},s(X_i,Y_i,Z_i,\theta_0)\right)\right),
\end{equation*}
where,
\begin{equation*}
e_0(Y,Z)=\e_{H_0}[\psi(X,Y,Z)\,|\,Y,Z]
\end{equation*}
and $s$ is the score function, which, under very general regularity conditions,\footnote{See Theorem~12.2.1 in \citet{lehmann2006testing} for an example of such conditions. There, the notation $\tilde\eta$ is used instead of $s$.} admits the common form
\begin{equation*}
s(X,Y,Z,\theta_0)=\frac{\frac\partial{\partial\theta}\big|_{\theta=\theta_0}p_{\theta}(X,Y,Z)}{p_{\theta_0}(X,Y,Z)},
\end{equation*}
where $p_\theta$ is the density of $P_\theta$.
\end{thm}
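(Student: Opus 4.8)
The plan is to reduce the asymptotic power to a one-dimensional Gaussian computation, via a conditional central limit theorem (CLT) for the CRT cutoff, a local asymptotic normality expansion of the likelihood ratio, and Le Cam's third lemma. First I would work under $H_0$ and condition on $(\ndata Y,\ndata Z)$. Because $X\ci Y\mid Z$ under $P_{\theta_0}$, conditionally on $(\ndata Y,\ndata Z)$ the rows $X_i$ are independent with $X_i\sim\mathcal L_{H_0}(X\mid Z_i)$, so the summands $\psi(X_i,Y_i,Z_i)$ are independent with conditional means $e_0(Y_i,Z_i)$ (which under $H_0$ coincide with $\e_{H_0}[\psi(X,Y_i,Z_i)\mid Z_i]$) and conditional variances $\Var_{H_0}(\psi\mid Y_i,Z_i)$. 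A triangular-array (Lindeberg) CLT, combined with $n^{-1}\sum_{i=1}^n\Var_{H_0}(\psi\mid Y_i,Z_i)\cip\sigma_\psi^2:=\e_{H_0}[\Var_{H_0}(\psi\mid Y,Z)]=\Var_{H_0}(\psi-e_0)$ and the asymptotic-linearity remainder being $o_{\p_{H_0}}(1)$, then shows that conditionally on $(\ndata Y,\ndata Z)$, $T_n-n^{-1/2}\sum_{i=1}^n e_0(Y_i,Z_i)\cid\mathcal N(0,\sigma_\psi^2)$. The structural point to extract is that the CRT cutoff, being the conditional $(1-\alpha)$-quantile of $T_n$ under $H_0$, satisfies
\[
c_\alpha(\ndata Y,\ndata Z)=n^{-1/2}\sum_{i=1}^n e_0(Y_i,Z_i)+z_{1-\alpha}\,\sigma_\psi+o_{\p_{H_0}}(1),
\]
so the data-dependent centering cancels and $T_n-c_\alpha=n^{-1/2}\sum_{i=1}^n\bigl(\psi(X_i,Y_i,Z_i)-e_0(Y_i,Z_i)\bigr)-z_{1-\alpha}\sigma_\psi+o_{\p_{H_0}}(1)$, an i.i.d.\ sum of mean-zero terms plus a deterministic shift.

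Next I would bring in the likelihood ratio. By q.m.d.\ of $P_\theta$, the standard expansion gives $\Lambda_n:=\log\frac{dP_{\theta_0+h/\sqrt n}^n}{dP_{\theta_0}^n}=h\,n^{-1/2}\sum_{i=1}^n s(X_i,Y_i,Z_i,\theta_0)-\tfrac{h^2}{2}\Var_{H_0}(s)+o_{\p_{H_0}}(1)$, using $\e_{H_0}[s]=0$. Stacked against the expansion of $T_n-c_\alpha$, both are i.i.d.\ sums of the bivariate mean-zero vector $\bigl(\psi_i-e_{0,i},\,h\,s_i\bigr)$ plus deterministic terms plus $o_{\p_{H_0}}(1)$, so the bivariate CLT together with Slutsky yields, under $P_{\theta_0}^n$,
\[
\begin{pmatrix}T_n-c_\alpha\\ \Lambda_n\end{pmatrix}\cid\mathcal N\!\left(\begin{pmatrix}-z_{1-\alpha}\sigma_\psi\\ -\tfrac{h^2}{2}\Var_{H_0}(s)\end{pmatrix},\ \begin{pmatrix}\sigma_\psi^2 & h\,\Cov_{H_0}(\psi-e_0,s)\\ h\,\Cov_{H_0}(\psi-e_0,s) & h^2\,\Var_{H_0}(s)\end{pmatrix}\right).
\]
Because the null and alternative sequences are contiguous (given), Le Cam's third lemma transfers this to the alternative: under $P_{\theta_0+h/\sqrt n}^n$, $T_n-c_\alpha\cid\mathcal N\bigl(-z_{1-\alpha}\sigma_\psi+h\,\Cov_{H_0}(\psi-e_0,s),\,\sigma_\psi^2\bigr)$. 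The boundary event $\{T_n=c_\alpha\}$, where the CRT randomizes, is asymptotically negligible since the limit law is continuous, so the unconditional power converges to $1-\Phi\!\bigl(z_{1-\alpha}-h\,\Cov_{H_0}(\psi-e_0,s)/\sigma_\psi\bigr)$; rewriting $\Cov_{H_0}(\psi-e_0,s)/\sigma_\psi=\sqrt{\Var_{H_0}(s)}\,\Corr_{H_0}(\psi-e_0,s)$ gives exactly the claimed expression.

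The main obstacle I anticipate is making the cutoff approximation rigorous: one must upgrade the conditional CLT (which holds for almost every fixed realization of $(\ndata Y,\ndata Z)$) to the displayed \emph{unconditional} approximation for $c_\alpha$, which entails controlling the asymptotic-linearity remainder of $T$ uniformly over a high-probability set of conditioning values and verifying a Lindeberg/uniform-integrability condition for the conditional variances; it also requires converting convergence of conditional distribution functions into convergence of the conditional quantile (e.g.\ via Polya's theorem, using positivity of the Gaussian density). The complementary technical points are that the various $o_{\p_{H_0}}(1)$ remainders stay $o_\p(1)$ under $H_1$ (immediate from contiguity) and that one may assume $\sigma_\psi>0$, since otherwise $T_n$ is conditionally degenerate, the test is powerless, and $\Corr_{H_0}(\psi-e_0,s)$ is undefined. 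Everything after the cutoff approximation is the routine Gaussian bookkeeping sketched above.
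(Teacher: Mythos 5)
Your proposal is correct and follows the same overall architecture as the paper's proof: center $T_n$ by the conditional mean, establish convergence of the CRT cutoff, stack against the log-likelihood-ratio expansion, and transfer to the alternative via contiguity and Le Cam's third lemma. The substantive difference is in how you handle the cutoff. You propose a direct conditional CLT for $T_n\mid(\ndata Y,\ndata Z)$ and explicitly flag this as the main obstacle (triangular-array Lindeberg conditions, uniform control of the asymptotic-linearity remainder across conditioning values, passing from conditional CDFs to conditional quantiles). The paper sidesteps this entirely via its Lemma~\ref{lemma:asy-cond}: let $\tilde{\ndata X}$ be a conditionally independent resample of $\ndata X$ given $(\ndata Y,\ndata Z)$, and note that under $H_0$, $\Cov_{H_0}(\psi(X_i,Y_i,Z_i)-e_0(Y_i,Z_i),\,\psi(\tilde X_i,Y_i,Z_i)-e_0(Y_i,Z_i))=0$, so the bivariate \emph{marginal} CLT gives $(S_n,\tilde S_n)\cid(T,\tilde T)$ with independent Gaussian coordinates. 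Then $\e[\hat R_n(t)]=\p(S_n\le t)\to R(t)$ and $\e[\hat R_n(t)^2]=\p(S_n\le t,\tilde S_n\le t)\to R(t)^2$, so $\Var[\hat R_n(t)]\to 0$ and hence $\hat R_n(t)\cip R(t)$ without any conditional CLT at all; Lemma 11.2.1(ii) of \citet{lehmann2006testing} then upgrades this to convergence of the conditional quantile exactly as you anticipate. So the paper's route costs one clean covariance computation (plus an i.i.d.\ bivariate CLT that requires only finite second moments and needs no Lindeberg verification) and buys the elimination of your main obstacle; your route is the more obvious first attempt but genuinely requires more work to make rigorous. Everything downstream of the cutoff step—the LAN expansion, the joint normality of $(T_n-c_\alpha,\Lambda_n)$, Le Cam's third lemma, and the final rewriting with $\Cov/\sigma_\psi=\sqrt{\Var_{H_0}(s)}\Corr$—agrees with the paper.
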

Let $\varphi_\psi(X_i,Y_i,Z_i)=\psi(X_i,Y_i,Z_i)-e_0(Y_i,Z_i)$. We see that to achieve high power, we need to find a $\psi$ such that $\varphi_\psi$ is highly correlated with $s(X_i,Y_i,Z_i,\theta_0)$.
% Note that under some conditions, the score function admits the more common form
% \begin{equation*}
% \tilde\eta(X_i,Y_i,Z_i,\theta_0)=\frac{\frac\partial{\partial\theta}\big|_{\theta=\theta_0}p_{\theta}(X_i,Y_i,Z_i)}{p_{\theta_0}(X_i,Y_i,Z_i)}.
% \end{equation*}
% In this case,
% \begin{equation*}
% \begin{aligned}
% \sigma_{1,2}=&h\Cov_{H_0}(\psi(X_i,Y_i,Z_i)-e_0(X_i,Z_i),\tilde\eta(X_i,Y_i,Z_i,\theta_0))\\
% =&h\e_{H_0}[(\psi(X_i,Y_i,Z_i)-e_0(X_i,Z_i))\tilde\eta(X_i,Y_i,Z_i,\theta_0)]\\
% =&h\int(\psi(x,y,z)-e_0(x,z))\frac\partial{\partial\theta}p_{\theta_0}(x,y,z)\di x\di y\di z\\
% =&h\frac\partial{\partial\theta}\int(\psi(x,y,z)-e_0(x,z))p_{\theta}(x,y,z)\di x\di y\di z\bigg|_{\theta=\theta_0}\\
% =&h\frac\partial{\partial\theta}\e_{\theta}(\psi(X,Y,Z)-e_0(X,Z))\bigg|_{\theta=\theta_0}\\
% =&h\frac\partial{\partial\theta}\e_{\theta}(\psi(X,Y,Z))\bigg|_{\theta=\theta_0}-h\frac\partial{\partial\theta}\e_{\theta}(e_0(X,Z))\bigg|_{\theta=\theta_0}.
% \end{aligned}
% \end{equation*}

\begin{remark}
\label{remark:optimal}
If $\e_{H_0}[s(X,Y,Z,\theta_0)\,|\,Y,Z]=0$, which is satisfied when the distribution of $(Y,Z)$ does not depend on $\theta$ (but this is not necessary), then we can use $\psi=s$ itself and achieve the optimal asymptotic power (this is also the Neyman--Pearson statistic and achieves the unconditional optimal asymptotic power; see Example~12.3.12 in \citet{lehmann2006testing}). This means the family of asymptotically linear statistics includes an asymptotically most powerful test if $\e_{H_0}[s(X,Y,Z,\theta_0)\,|\,Y,Z]=0$. This partially answers the question about model-X optimality in Remark~1 of \citet{katsevich2020theoretical} (i.e., the CRT with the score statistic is optimal among all valid tests in a certain asymptotic regime), which can also be seen as a generalization of the discussion ``A precise parallel with OLS'' in their Section~5.3 to non-linear regression settings.
\end{remark}

\begin{remark}
\label{remark:fixed-X}
Notation-wise, $X$ and $Y$ are symmetric, and thus the same result holds if we swap $X$ and $Y$, which actually corresponds to the traditional fixed-X test, i.e., a test that is valid conditional on the covariates $(X,Z)$. Since $\e_{H_0}[s(X,Y,Z,\theta_0)\,|\,X,Z]=0$ always holds when $(X,Z)$ is the covariate and $\mathcal L(X,Z)$ does not depend on $\theta$, we can always use $\psi=s$ to achieve the optimal asymptotic power in the fixed-X framework.
\end{remark}

\begin{remark}
Consider using the maximum likelihood estimate (MLE)
\begin{equation*}
\hat\theta_n=\argmax_\theta p_\theta(\ndata{X},\ndata{Y},\ndata{Z})
\end{equation*}
as the test statistic, which is equivalent to using $\sqrt{n}(\hat\theta_n-\theta_0)$ that satisfies
\begin{equation*}
\sqrt{n}(\hat\theta_n-\theta_0)=\frac{-\frac1{\sqrt n}\sum_{i=1}^n\ell'(\theta_0\,|\,X_i,Y_i,Z_i)}{\frac1n\sum_{i=1}^n\ell''(\theta'\,|\,X_i,Y_i,Z_i)}
\end{equation*}
for some $\theta'$ between $\theta_0$ and $\hat\theta_n$, where $\ell$ is the log-likelihood function. Since $-n^{-1}\sum_{i=1}^n\ell''(\theta'\,|\,X_i,Y_i,Z_i)$ converges in probability to the Fisher information $I(\theta_0)$ under the null (thus also under the alternative by contiguity; see, e.g., Theorem~12.3.2 in \citet{lehmann2006testing}), we see that
% \begin{equation*}
% \sqrt{n}(\hat\theta_n-\theta_0)=n^{-1/2}\sum_{i=1}^n\frac{\ell'(\theta_0\mid X_i,Y_i,Z_i)}{I(\theta_0)}+o_{\p_{\mathbb H_0}}(1),
% \end{equation*}
%where $I(\theta_0)$ is the Fisher information. Hence, not only is the standardized MLE an asymptotically linear statistic, it
the standardized MLE is asymptotically equivalent to the score statistic up to a multiplicative constant. This signifies that it also enjoys the optimal asymptotic power under the same condition $\e_{H_0}[s(X,Y,Z,\theta_0)\,|\,Y,Z]=0$.
\end{remark}

\begin{remark}
This result is closely related to Theorem~1 in \citet{katsevich2020theoretical}, and we would like to highlight the key differences. (a) Our result applies to a general distribution $P_\theta$ and a general asymptotic linear statistic $\psi$, while \citet{katsevich2020theoretical} assume $\mathcal L(Y\,|\,X,Z)$ is Gaussian and considers a family of score-like statistics. (b) We assume there is no nuisance parameter, which corresponds to knowing the function $g$ in \citet{katsevich2020theoretical}; there, a deterministic estimate $\hat g$ is used instead, and the accuracy of $\hat g$ explicitly affects the power.
\end{remark}

We wish to emphasize that it is not true that the fixed-X framework always provides an optimal test, as seemingly suggested by Remarks~\ref{remark:optimal} and \ref{remark:fixed-X}. Specifically, Appendix~\ref{sec:fixed-no-power} exhibits a case where no fixed-X test can have nontrivial power, while a model-X test can, and Appendix~\ref{sec:dominate} shows that when testing a scalar parameter without nuisance parameters in non-asymptotic regimes, the optimal test can be a model-X one instead of a fixed-X one.

\section{Simple examples}
\label{sec:simple}

\subsection{Example where fixed-X has no power}
\label{sec:fixed-no-power}
Despite the fact that the fixed-X framework has been more heavily studied, it is not always ``better'' than the model-X framework. In fact, we provide a simple toy example where model-X methods have to be used for non-trivial inference. Consider the regression model
\begin{equation*}
\ndata{Y}\mid \ndata{X}\sim\mathcal N(\ndata{X}^\top\beta,I_n),X_{ij}\simiid\mathcal N(0,1), i=1,2,\dots,n, j=1,2,\dots,p, n<p-1.
\end{equation*}
Here, we use $\ndata{X}$ to denote the $n\times p$ data matrix and $\ndata{Y}$ to denote the $n\times1$ response vector. Now suppose we would like to construct a fixed-X statistical test for $H_0:\beta_1=0$. We claim that such a test must have trivial power. Formally, let $T_{\ndata{X}}(\ndata{Y})$ be a valid level-$\alpha$ test, i.e.,
\begin{equation}
\p(T_{\ndata{X}}(\ndata{Y})=1\mid \ndata{X},\beta)\le\alpha, \forall\beta\in\rr^p\text{ s.t. }\beta_1=0.
\label{eq:type-I-error-rate}
\end{equation}
% We will show that
% \begin{equation*}
% \p(m(\rr\setminus C_X(Y))=0\mid X)\ge1-\alpha, a.s.
% \end{equation*}
% Otherwise, there exists set $A$ such that $m(A)>0$ and
% \begin{equation*}
% \p(A\subseteq\rr\setminus C_X(Y))\mid X)\ge\alpha.
% \end{equation*}
To analyze its power, consider any $\gamma$ where $\gamma_1\ne0$. There exists $\tilde\gamma$ with $X^\top\tilde\gamma=0$ and $\tilde\gamma_1\ne0$. Then for any $a\in\rr$,
\begin{equation*}
(T_{\ndata{X}}(\ndata{Y})\mid \ndata{X},\gamma)\eqd (T_{\ndata{X}}(\ndata{Y})\mid \ndata{X},\gamma+a\tilde\gamma).
\end{equation*}
By picking $a^*=-\gamma_1/\tilde\gamma_1$, we conclude that the power
\begin{equation*}
\p(T_{\ndata{X}}(\ndata{Y})=1\mid \ndata{X},\gamma)=\p(T_{\ndata{X}}(\ndata{Y})=1\mid \ndata{X},\gamma+a^*\tilde\gamma)\le\alpha
\end{equation*}
by equation~\eqref{eq:type-I-error-rate}, since $(\gamma+a^*\tilde\gamma)_1=0$.

On the other hand, we could construct a non-trivial model-X test in the following way. Consider the test statistic
\begin{equation*}
T(\ndata{X},\ndata{Y})=\frac{\ndata{Y}^\top \ndata{X}_1}{\|\ndata{Y}\|}\sim\mathcal N(0,1)\text{ if }\beta_1=0.
\end{equation*}
where $\ndata{X}_1$ is the first column of $\ndata{X}$. If $\beta_1=0$, the statistic follows $\mathcal N(0,1)$. We will prove the power of the test which rejects when $|T(\ndata{X},\ndata{Y})|>z_{\alpha/2}=\chi_{1,\alpha}$ goes to a constant greater than $\alpha$ for a fixed $\beta_{\text{-}1}$ as $\beta_1\to\infty$. Let $\varepsilon=\ndata{Y}-\ndata{X}\beta$ and note that
\begin{equation*}
\frac{\ndata{X}_1^\top \ndata{Y}}{\|\ndata{Y}\|}=\frac{\ndata{X}_1^\top \ndata{Y}/\beta_1}{\|\ndata{Y}\|/\beta_1}=\frac{\|\ndata{X}_1\|^2+\sum_{j=2}^p\beta_j\ndata{X}_1^\top \ndata{X}_j/\beta_1+\ndata{X}_1^\top\varepsilon/\beta_1}{\|\ndata{X}_1+\sum_{j=2}^p\beta_j\ndata{X}_j/\beta_1+\varepsilon/\beta_1\|}\cip\|\ndata{X}_1\|\sim\chi_n.
\end{equation*}
Since $n>1$, the limit power is greater than $\alpha$.

\subsection{Example where model-X strictly dominates fixed-X}
\label{sec:dominate}
% Consider a simple null and a simple alternative. If $X$ is continuous and $Y$ is discrete with finite support, then for every $n$, for almost every $\alpha\in(0,1)$, we investigate the most powerful test conditional on $X$.

% Let $\Lambda(\ndata{X},\ndata{Y})$ be the likelihood ratio statistic. Let $\mathcal Y$ be the support of $\ndata{Y}$, which is a finite set. Let $N=|\mathcal Y|$. Consider $\Pi$, all possible ranks of $\{\ndata{y}\in\mathcal Y:\Lambda(\ndata{X},\ndata{y})\}$, including ties. For each rank $\pi$, Let $A_\pi$ be the set of values of $\ndata{X}$ that give rise to this rank, e.g., $\ndata{X}\in A_\pi\Leftrightarrow \pi$. It's easy to see that $\{\pi\in\Pi:A_\pi\}$ is a partition of $\ndata{X}$'s support.

% Consider a $\pi$ where $\p(\ndata{X}\in A_\pi)>0$. When $\pi$ is the rank, the rejection values of $\ndata{Y}$ is the top ones in this rank, followed by a possibly randomized rejection at the boundary to make size $\alpha$. Note that this rejection rule does not depend on the specific value of $\ndata{X}$, as $\ndata{X}$ and $\ndata{Y}$ are independent under the null. Obviously, for almost every $\alpha\in(0,1)$, a randomization is required. When a randomization is required, we can use the randomness in $\mathcal L(\ndata{X}\mid \ndata{X}\in A_\pi)$. Most of the time, we can expect an increased power.

We present a simple example in this section, which reveals that in the finite-sample case, the most powerful test can be model-X instead of fixed-X. We will see in the following sections that this is not the case in asymptotic regimes. Let $X\sim f$ and $Y\mid X\sim\Bern(g_\theta(X))$. Let $(X_i,Y_i)_{i=1}^n$ be i.i.d. copies of $(X,Y)$. Assume $g_0(x)\equiv1/2$, $g_\theta(x)+g_\theta(-x)\equiv1$ and $g_\theta(x)$ is an increasing function of $x$ for $\theta>0$. We also assume $f$ has symmetric tails; that is, there is a positive constant $M$ such that $X\mid|X|>M\eqd-X\mid|X|>M$.
% \begin{enumerate}
%     \item there is a positive constant $M$ such that $X\mid|X|>M\eqd-X\mid|X|>M$ holds and $X\mid|X|\le M\eqd-X\mid|X|\le M$ doesn't hold.
%     \item $g_0(x)\equiv1/2$;
%     \item $g_\theta(x)+g_\theta(-x)\equiv1$;
%     \item $g_\theta(x)$ is an increasing function of $x$ for $\theta>0$.
% \end{enumerate}
%Condition 1 ensures the marginal distribution of $Y$ depends on $\theta$, as opposed to the covariate $X$, whose marginal distribution does not depend on $\theta$.
Consider testing $H_0:\theta=0$ versus $H_1:\theta=\theta_1>0$. Follow the Neyman--Pearson Lemma, the most powerful test is with rejection region of the form
\begin{equation*}
\left\{(X_i, Y_i)_{i=1}^n:\prod_{i=1}^n(\ind_{\{Y_i=1\}}g_{\theta_1}(X_i)+\ind_{\{Y_i=0\}}g_{\theta_1}(-X_i))\ge c_\alpha\right\}.
\end{equation*}
For simplicity, let $Z_i=2Y_i-1$ be the symmetric version of $Y_i$, then the rejection region is
\begin{equation*}
\left\{(X_i, Z_i)_{i=1}^n:\prod_{i=1}^ng_{\theta_1}(Z_iX_i)\ge c_\alpha\right\}.
\end{equation*}
Since $c_\alpha$ goes to $1$ as $\alpha\to0$, there is sufficiently small $\alpha$ such that $c_\alpha>g_{\theta_1}(M)$. For this $\alpha$, it is clear that
\begin{equation*}
\{x:\prod_{i=1}^ng_{\theta_1}(z_ix_i)\ge c_\alpha\}\subseteq\{x:|x_i|>M\},
\end{equation*}
for any fixed binary $\pm1$ sequence $z_1,z_2,\dots,z_n$. In this region, $f$ is symmetric, so this most powerful test has the correct size $\alpha$ conditional on $Z$. Put another way, the unique most powerful test is indeed a valid model-X test.

What if we restrict ourselves to fixed-X tests? Due to the discrete nature of this problem, the optimal fixed-X test will involve a randomization step for every level $\alpha\in(0,1)$ except for a finite number of values. Thus, for almost every $\alpha$, the most powerful fixed-X test is not the most powerful test.
\section{Testability of alternative sequences}
\label{sec:local}

\subsection{Contiguity and q.m.d.}
\label{sec:contiguity-qmd}
%In this section, we study the asymptotic power of conditional randomization tests.
We wish to first note that it is not true that if the alternative sequence is not contiguous to the null then there must exist a test with power converges to one. If the dimension can be fixed, a simple counterexample is $\Unif[0,1]$ versus $\Unif[1/2,3/2]$. If we require them to be the measure on $n$ i.i.d. samples, then let $P_0=\Unif[0,1]^n$ and $P_n=\Unif[0,1+1/n]^n$. Obviously, the event $A_n=\{\max_{1\le i\le n}|X_i|>1\}$ has probability $0$ under $P_0$, but probability $1-(1+1/n)^{-n}\to1-e^{-1}$ under $P_n$. So $P_n$ is not contiguous to $P_0$. The most powerful level-$\alpha$ test is to reject when $\max_{1\le i\le n}|X_i|>1$ and reject with probability $\alpha$ if $\max|X_i|\le 1$. The power under $P_n$ is
\begin{equation*}
1-\frac{1}{(1+1/n)^n}+\alpha\times\frac{1}{(1+1/n)^n}\to1-\frac{1-\alpha}e<1.
\end{equation*}

%See Definition~\ref{definition:contiguity} for a formal definition.
Now we present some background on contiguity and q.m.d.
\begin{defi}[Contiguity, \citet{lehmann2006testing}]
\label{definition:contiguity}
Let $P_n$ and $Q_n$ be probability distributions on $(\mathcal X_n,\mathcal F_n)$. The sequence $\{Q_n\}$ is contiguous to the sequence $\{P_n\}$ if $P_n(E_n)\to0$ implies $Q_n(E_n)\to0$ for every sequence $\{E_n\}$ with $E_n\in\mathcal F_n$. If $\{Q_n\}$ is contiguous to $\{P_n\}$ and vice versa, we say $\{P_n\}$ and $\{Q_n\}$ are contiguous.
\end{defi}
%More concretely, Lemma~\ref{lemma:qmd-contiguity} gives a family of examples under a very general condition.
\begin{lemma}[\citet{lehmann2006testing}]
Let $\{P_\theta,\theta\in\Omega\}$ with $\Omega$ being an open subset of $\rr^k$ be \emph{quadratic mean differentiable} (q.m.d.) with densities $p_\theta(\cdot)$. Then for a fixed $h$, $P^n_{\theta_0+hn^{-1/2}}$ and $P_{\theta_0}^n$ are contiguous.
\label{lemma:qmd-contiguity}
\end{lemma}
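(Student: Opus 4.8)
The plan is to deduce this classical fact from the ``quadratic mean differentiability implies local asymptotic normality'' expansion together with Le Cam's first lemma. Write $p_\theta$ for the density of $P_\theta$ with respect to a dominating measure $\mu$, and recall that q.m.d.\ at $\theta_0$ means there is a score $\dot\ell_{\theta_0}$ (measurable, $\rr^k$-valued, square-integrable against $P_{\theta_0}$) with
\[
\int \Bigl(\sqrt{p_{\theta_0+\eta}} - \sqrt{p_{\theta_0}} - \tfrac12\, \eta^\top\dot\ell_{\theta_0}\sqrt{p_{\theta_0}}\Bigr)^2 \di\mu = o(\|\eta\|^2)
\]
as $\eta\to 0$; set $I(\theta_0)=\e_{\theta_0}[\dot\ell_{\theta_0}\dot\ell_{\theta_0}^\top]$ and $\sigma^2 = h^\top I(\theta_0)\, h$.

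First I would establish the log-likelihood-ratio expansion for the product experiment: writing $W_{ni} = \sqrt{p_{\theta_0+h/\sqrt n}(X_i)/p_{\theta_0}(X_i)} - 1$, one has under $P^n_{\theta_0}$
\[
\log\frac{dP^n_{\theta_0+h/\sqrt n}}{dP^n_{\theta_0}} \;=\; 2\sum_{i=1}^n\log(1+W_{ni}) \;=\; \frac{1}{\sqrt n}\sum_{i=1}^n h^\top\dot\ell_{\theta_0}(X_i) \;-\; \tfrac12\sigma^2 \;+\; o_{P^n_{\theta_0}}(1).
\]
This is the standard consequence of q.m.d.\ (Theorem~12.2.3 in \citet{lehmann2006testing}): the q.m.d.\ condition controls the first two moments of $\sum_i W_{ni}$ and $\sum_i W_{ni}^2$, one shows $\max_i|W_{ni}| = o_{P^n_{\theta_0}}(1)$, and a second-order Taylor expansion of $\log(1+x)$ with a triangular-array/Lindeberg argument handles the remainder.

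Next, by the i.i.d.\ central limit theorem---the summands $h^\top\dot\ell_{\theta_0}(X_i)$ have mean $0$ (q.m.d.\ forces $\e_{\theta_0}[\dot\ell_{\theta_0}]=0$) and variance $\sigma^2$ under $P_{\theta_0}$---the leading term converges in distribution under $P^n_{\theta_0}$ to $\mathcal N(0,\sigma^2)$, so the log-likelihood ratio converges to $W\sim\mathcal N(-\sigma^2/2,\sigma^2)$. I then invoke Le Cam's first lemma (Corollary~12.3.1 in \citet{lehmann2006testing}): $\{Q_n\}$ is contiguous to $\{P_n\}$ when every subsequential weak limit $L$ (under $P_n$) of $dQ_n/dP_n$ has $\e[L]=1$, equivalently no mass of the log-ratio limit escapes to $-\infty$ and $\e[e^W]=1$. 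Here $\e[e^W]=e^{-\sigma^2/2}e^{\sigma^2/2}=1$, giving contiguity of $\{P^n_{\theta_0+h/\sqrt n}\}$ to $\{P^n_{\theta_0}\}$; since q.m.d.\ at $\theta_0$ yields the analogous expansion with $h$ replaced by $-h$ (and the same exponential-moment computation applies), the reverse contiguity follows, hence mutual contiguity.

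The main obstacle is the first step, the LAN expansion: passing from the single-observation q.m.d.\ condition to the product log-likelihood expansion requires careful triangular-array control of the remainder in $\log(1+W_{ni})$ and of the negligibility of $\max_i|W_{ni}|$, which is the genuine technical content here. Everything after it---the CLT, the $\e[e^W]=1$ computation, and the symmetry argument for the reverse direction---is routine, so in the paper I would simply cite \citet[Theorem~12.2.3 and Corollary~12.3.1]{lehmann2006testing} rather than reproduce this argument.
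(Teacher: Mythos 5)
The paper does not prove this lemma; it simply cites \citet{lehmann2006testing} and relies on the textbook result (indeed the lemma header is tagged with that citation). Your outline is a faithful account of the standard argument behind that citation: single-observation q.m.d.\ yields the LAN expansion of the product log-likelihood ratio (Theorem~12.2.3 there), the CLT gives the $\mathcal N(-\sigma^2/2,\sigma^2)$ limit, and Le Cam's first lemma with $\e[e^W]=1$ delivers contiguity, with the reverse direction by symmetry in $h$. Since you conclude by saying you would cite rather than reproduce this, you end up doing exactly what the paper does; the content is correct and the approach coincides.
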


\subsection{Total variation distance}
\label{sec:TV}
Let $H_1:P\in\mathcal P_{1,n}$ be alternatives against $H_0: P\in\mathcal P_{0,n}$, with possibly growing dimensions. The problem is untestable (i.e., every level-$\alpha$ test has power bounded by $\alpha$) if \citep{romano2004non}
\begin{equation*}
\inf_{P_0\in\mathcal P_{0,n},P_1\in\mathcal P_{1,n}}\TV(P_0,P_1)=0.
\end{equation*}
Thus, if
\begin{equation*}
\lim_{n\to\infty}\inf_{P_0\in\mathcal P_{0,n},P_1\in\mathcal P_{1,n}}\TV(P_0,P_1)=0,
\end{equation*}
the sequence of alternatives is indistinguishable from the null.

To examine the converse, if the total variation distance is lower bounded away from zero, there could still be no test that has non-trivial power against all alternatives. For example, if $\mathcal P_{0,n}=\{\Unif[0,1]\}$ and $\mathcal P_{1,n}=\{\Unif[0,1/2],\Unif[1/2,1]\}$. For any test $\psi$ which rejects with probability $\psi(x)$ if $x$ is observed,
\begin{equation*}
\int_{[0,1]}\psi(x)\di x\le\alpha.
\end{equation*}
This test cannot have non-trivial power for both alternatives, because at least one inequality holds in
\begin{equation*}
\int_{[0,1/2]}\psi(x)\di x\le\alpha/2,\quad \int_{[1/2,1]}\psi(x)\di x\le\alpha/2.
\end{equation*}

Another more non-trivial example is testing $n=0$ against $n\ge1$ in
\begin{equation*}
p_n(x)=1+\sin(2n\pi x),x\in[0,1], n\in\mathbb N.
\end{equation*}
It is easy to calculate that
\begin{equation*}
\TV(p_0,p_n)=2/\pi,n>1.
\end{equation*}
But any test level-$\alpha$ test $\psi$ will satisfy
\begin{equation*}
\int_{[0,1]}\psi(x)(1+\sin(2n\pi x))\di x\le\alpha+\int_{[0,1]}\psi(x)\sin(2n\pi x)\di x\to\alpha
\end{equation*}
as $n\to\infty$ by Riemann--Lebesgue Lemma.

\section{Proofs}
\label{sec:proofs}

\begin{lemma}
Assume $X\ci Y\mid Z$. Let
\begin{equation*}
\hat R_n(t)=\p(T_n(\ndata{X},\ndata{Y},\ndata{Z})\le t\mid \ndata{Y},\ndata{Z}).
\end{equation*}
Let ${\tilde {\ndata{X}}}$ be a conditionally independent copy of $\ndata{X}$ given $\ndata{Y}$ and $\ndata{Z}$.
If
\begin{equation}
(T_n(\ndata{X},\ndata{Y},\ndata{Z}),T_n({\tilde {\ndata{X}}},\ndata{Y},\ndata{Z}))\cid(T,\tilde T),
\label{eq:convergence}
\end{equation}
where $T$ and $\tilde T$ are independent with CDF $R(\cdot)$. Then for every $t$ which is a continuity point of $R(\cdot)$, we have
\begin{equation}
\hat R_n(t)\cip R(t).
\end{equation}
\label{lemma:asy-cond}
\end{lemma}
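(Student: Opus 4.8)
The plan is to exploit the exchangeability of $\ndata{X}$ and $\tilde{\ndata{X}}$ given $(\ndata{Y},\ndata{Z})$ together with the convergence in \eqref{eq:convergence} to control the random CDF $\hat R_n$. The key observation is that $\hat R_n(t) = \p(T_n(\tilde{\ndata{X}},\ndata{Y},\ndata{Z}) \le t \mid \ndata{Y},\ndata{Z})$ is itself a conditional expectation: writing $g_n(\ndata{Y},\ndata{Z}) := \p(T_n(\tilde{\ndata{X}},\ndata{Y},\ndata{Z}) \le t \mid \ndata{Y},\ndata{Z})$, we have $\hat R_n(t) = g_n(\ndata{Y},\ndata{Z})$, and also $\E[\hat R_n(t)] = \p(T_n(\tilde{\ndata{X}},\ndata{Y},\ndata{Z}) \le t) \to R(t)$ at continuity points $t$ of $R$ (here I use that the marginal of $\tilde T$ is $R$, and that $\mathcal{L}(T_n(\tilde{\ndata X},\ndata Y,\ndata Z)) = \mathcal{L}(T_n(\ndata X,\ndata Y,\ndata Z))$ by the conditional-exchangeability which follows from $X\ci Y\mid Z$, so in fact both marginals converge to $R$). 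Since $\hat R_n(t) \in [0,1]$, establishing $\hat R_n(t) \cip R(t)$ is equivalent, by the bounded-variance criterion, to showing $\E[\hat R_n(t)] \to R(t)$ and $\Var(\hat R_n(t)) \to 0$, or equivalently $\E[\hat R_n(t)^2] \to R(t)^2$.

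The second moment is where the two independent copies enter. Introduce a third copy: let $\tilde{\ndata{X}}$ and $\tilde{\tilde{\ndata{X}}}$ be two \emph{conditionally i.i.d.} copies of $\ndata{X}$ given $(\ndata{Y},\ndata{Z})$. Then by conditional independence,
\begin{equation*}
\E\big[\hat R_n(t)^2\big] = \E\Big[ \p\big(T_n(\tilde{\ndata{X}},\ndata{Y},\ndata{Z}) \le t \mid \ndata{Y},\ndata{Z}\big)\,\p\big(T_n(\tilde{\tilde{\ndata{X}}},\ndata{Y},\ndata{Z}) \le t \mid \ndata{Y},\ndata{Z}\big)\Big] = \p\big(T_n(\tilde{\ndata{X}},\ndata{Y},\ndata{Z}) \le t,\ T_n(\tilde{\tilde{\ndata{X}}},\ndata{Y},\ndata{Z}) \le t\big).
\end{equation*}
Now I want to say that $(T_n(\tilde{\ndata{X}},\ndata{Y},\ndata{Z}), T_n(\tilde{\tilde{\ndata{X}}},\ndata{Y},\ndata{Z})) \cid (\tilde T, \tilde{\tilde T})$ where $\tilde T, \tilde{\tilde T}$ are i.i.d.\ with CDF $R$; granting that, since $(t,t)$ is a continuity point of the (product-form) limiting joint CDF whenever $t$ is a continuity point of $R$, the displayed probability converges to $R(t)^2$, giving $\E[\hat R_n(t)^2] \to R(t)^2$ and hence $\Var(\hat R_n(t)) \to 0$, which together with the first-moment convergence yields $\hat R_n(t) \cip R(t)$ by Chebyshev.

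The main obstacle — and the step that needs care — is justifying that the \emph{pair} $(T_n(\tilde{\ndata{X}},\ndata{Y},\ndata{Z}), T_n(\tilde{\tilde{\ndata{X}}},\ndata{Y},\ndata{Z}))$ built from two conditionally-independent resamples converges jointly to a product law. Hypothesis \eqref{eq:convergence} gives joint convergence only for the pair $(\ndata{X}, \tilde{\ndata{X}})$ where $\ndata{X}$ is the \emph{true} sample and $\tilde{\ndata{X}}$ a single resample. The bridge is the conditional-exchangeability consequence of $X \ci Y \mid Z$: the triple-wise structure means that $(\ndata{X},\tilde{\ndata{X}})$, $(\tilde{\ndata{X}},\tilde{\tilde{\ndata{X}}})$, and indeed $(\tilde{\ndata X},\ndata X)$ all have the \emph{same} joint law given $(\ndata Y,\ndata Z)$ — each is a pair of conditionally-i.i.d.\ draws from $\mathcal{L}(\ndata X\mid\ndata Z)$ — so the unconditional joint law of $(T_n(\tilde{\ndata{X}},\cdot),T_n(\tilde{\tilde{\ndata{X}}},\cdot))$ equals that of $(T_n(\ndata{X},\cdot),T_n(\tilde{\ndata{X}},\cdot))$, which converges to $(T,\tilde T)$ with independent coordinates each distributed as $R$ by \eqref{eq:convergence}. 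Identifying $\mathcal{L}(T) = \mathcal{L}(\tilde T) = R$ uses the same exchangeability to match marginals. Once this identification is in hand the rest is the routine second-moment argument sketched above; one should also note explicitly that convergence in distribution of a joint law to a product of continuous-at-$t$ marginals implies convergence of the joint CDF evaluated at $(t,t)$, since $(t,t)$ is then a continuity point of the limit.
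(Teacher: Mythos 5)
Your proof is correct and uses essentially the same strategy as the paper: convergence of the first moment to $R(t)$ plus vanishing variance, with the variance handled by rewriting $\hat R_n(t)^2$ as a joint probability of two conditionally independent copies and invoking the hypothesized joint convergence. The only redundancy is the third copy $\tilde{\tilde{\ndata{X}}}$ and the accompanying ``exchangeability bridge'': because the standing assumption $X\ci Y\mid Z$ already makes $\ndata{X}$ and $\tilde{\ndata{X}}$ conditionally i.i.d.\ given $(\ndata{Y},\ndata{Z})$, one has directly $\hat R_n(t)^2=\p(T_n(\ndata{X},\ndata{Y},\ndata{Z})\le t,\,T_n(\tilde{\ndata{X}},\ndata{Y},\ndata{Z})\le t\mid\ndata{Y},\ndata{Z})$, and \eqref{eq:convergence} applies to this pair at once with no extra copy or matching-of-laws step needed.
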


\begin{proof}[Proof of Lemma~\ref{lemma:asy-cond}]
Let $t$ be a continuity point of $R(\cdot)$. By equation~\eqref{eq:convergence}
\begin{equation*}
    \e[\hat R_n(t)]=\p(T_n(\ndata{X},\ndata{Y},\ndata{Z})\le t)\to R(t)
\end{equation*}
Now it suffices to show that
\begin{equation*}
    \Var[\hat R_n(t)]\to0.
\end{equation*}
This is equivalent to
\begin{equation*}
    \e[\hat R_n(t)^2]\to R(t)^2.
\end{equation*}
Note that
\begin{equation*}
\begin{aligned}
\hat R_n(t)^2&=\p(T_n(\ndata{X},\ndata{Y},\ndata{Z})\le t\mid \ndata{Y},\ndata{Z})^2\\
&=\p(T_n(\ndata{X},\ndata{Y},\ndata{Z})\le t,T_n({\tilde {\ndata{X}}},{{\ndata{Y}}},\ndata{Z})\le t\mid \ndata{Y},\ndata{Z}),
\end{aligned}
\end{equation*}
hence, also by equation~\eqref{eq:convergence},
\begin{equation*}
\begin{aligned}
\e[\hat R_n(t)^2]&=\e[\p(T_n(\ndata{X},\ndata{Y},\ndata{Z})\le t,T_n({\tilde {\ndata{X}}},{{\ndata{Y}}},\ndata{Z})\le t\mid \ndata{Y},\ndata{Z})]\\
&=\p(T_n(\ndata{X},\ndata{Y},\ndata{Z})\le t,T_n({\tilde {\ndata{X}}},{{\ndata{Y}}},\ndata{Z})\le t)\to\p(T\le t,\tilde T\le t)=R(t)^2.
\end{aligned}
\end{equation*}
\end{proof}

\begin{lemma}
Let
\begin{equation*}
\hat R_n(t)=\p(T_n(\ndata{X},\ndata{Y},\ndata{Z})\le t\mid \ndata{Y},\ndata{Z}).
\end{equation*}
Suppose for every $t$ which is a continuity point of a CDF $R(\cdot)$, we have
\begin{equation}
\hat R_n(t)\cip R(t).
\end{equation}
Let $r(1-\alpha)=\inf\{t:R(t)\ge1-\alpha\}$; suppose $R(\cdot)$ is continuous and strictly increasing at $r(1-\alpha)$, then
\begin{equation*}
\hat r_n(1-\alpha)\cip r(1-\alpha).
\end{equation*}
\label{lemma:asy-eq}
\end{lemma}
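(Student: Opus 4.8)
\emph{Proof proposal.} The plan is to run the standard argument that convergence in probability of distribution functions at continuity points of the limit transfers to convergence in probability of the associated quantiles, provided the limiting quantile is well-behaved at the point of interest — here guaranteed by the hypothesis that $R$ is continuous and strictly increasing at $r(1-\alpha)$. Write $r=r(1-\alpha)$, and recall that $\hat r_n(1-\alpha)=\inf\{t:\hat R_n(t)\ge 1-\alpha\}$, where for each $n$ the map $t\mapsto\hat R_n(t)=\p(T_n(\ndata{X},\ndata{Y},\ndata{Z})\le t\mid\ndata{Y},\ndata{Z})$ is a (random) distribution function, hence non-decreasing and right-continuous in $t$; the same monotonicity and right-continuity hold for $R$.

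First I would pin down $R(r)$: since $R$ is right-continuous, $R(r)\ge 1-\alpha$ by the infimum definition of $r$, while continuity (in particular left-continuity) of $R$ at $r$ together with $R(t)<1-\alpha$ for all $t<r$ gives $R(r)\le 1-\alpha$ on letting $t\uparrow r$; hence $R(r)=1-\alpha$. Next, fix $\varepsilon>0$. Because a distribution function has at most countably many discontinuities, I can choose continuity points $t_1,t_2$ of $R$ with $r-\varepsilon<t_1<r<t_2<r+\varepsilon$, and since $R$ is strictly increasing at $r$ we get $R(t_1)<R(r)=1-\alpha<R(t_2)$. Now translate the quantile event into events about $\hat R_n$ at $t_1,t_2$: by right-continuity and monotonicity of $\hat R_n$ the set $\{t:\hat R_n(t)\ge 1-\alpha\}$ equals $[\hat r_n(1-\alpha),\infty)$, so $\{\hat r_n(1-\alpha)<t_1\}\subseteq\{\hat R_n(t_1)\ge 1-\alpha\}$ and $\{\hat r_n(1-\alpha)>t_2\}\subseteq\{\hat R_n(t_2)<1-\alpha\}$. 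Since $t_1>r-\varepsilon$ and $t_2<r+\varepsilon$, whenever $\hat r_n(1-\alpha)\in[t_1,t_2]$ we have $|\hat r_n(1-\alpha)-r|<\varepsilon$, so $\{|\hat r_n(1-\alpha)-r|\ge\varepsilon\}\subseteq\{\hat R_n(t_1)\ge 1-\alpha\}\cup\{\hat R_n(t_2)<1-\alpha\}$.

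Finally, apply the hypothesis $\hat R_n(t_i)\cip R(t_i)$ at the continuity points $t_1,t_2$: since $R(t_1)<1-\alpha<R(t_2)$, both $\p(\hat R_n(t_1)\ge 1-\alpha)\to 0$ and $\p(\hat R_n(t_2)<1-\alpha)\to 0$, whence $\p(|\hat r_n(1-\alpha)-r|\ge\varepsilon)\to 0$. As $\varepsilon>0$ was arbitrary, $\hat r_n(1-\alpha)\cip r$. The only real care needed — and the only place anything can go wrong — is the bookkeeping around the generalized inverse: establishing $R(r)=1-\alpha$ from one-sided continuity, selecting $t_1,t_2$ to be continuity points so the convergence hypothesis is applicable, and correctly relating the events $\{\hat r_n(1-\alpha)<t_1\}$ and $\{\hat r_n(1-\alpha)>t_2\}$ to the values $\hat R_n(t_1),\hat R_n(t_2)$ via monotonicity and right-continuity; none of this is deep, but it should be spelled out cleanly.
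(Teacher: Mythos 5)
Your proof is correct, and it is the standard argument for transferring pointwise convergence in probability of distribution functions (at continuity points) to convergence in probability of the corresponding quantiles. The paper's proof of this lemma consists of a single line citing Lemma~11.2.1(ii) in Lehmann and Romano, which is precisely the result you have re-derived; the substance of your argument (pinning down $R(r)=1-\alpha$ via right- and left-continuity, picking continuity points $t_1<r<t_2$ with $R(t_1)<1-\alpha<R(t_2)$, and bounding $\p(|\hat r_n(1-\alpha)-r|\ge\varepsilon)$ by $\p(\hat R_n(t_1)\ge 1-\alpha)+\p(\hat R_n(t_2)<1-\alpha)\to 0$) is the proof of that cited lemma, correctly adapted to the in-probability setting. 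So you have filled in, cleanly and correctly, exactly the step the paper chose to outsource to a reference.
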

\begin{proof}[Proof of Lemma~\ref{lemma:asy-eq}]
This is a direct consequence of Lemma 11.2.1 (ii) in \citet{lehmann2006testing}.
\end{proof}

\noindent\textbf{Theorem~\ref{theorem:asym-power-local-alt}.} \textit{
The asymptotic unconditional power of the test in Appendix~\ref{sec:scalar-CRT} under the local alternatives is
\begin{equation*}
1-\Phi\left(z_{1-\alpha}-h\sqrt{\Var_{H_0}(s(X_i,Y_i,Z_i,\theta_0))}\Corr_{H_0}\left({\psi(X_i,Y_i,Z_i)-e_0(Y_i,Z_i)},s(X_i,Y_i,Z_i,\theta_0)\right)\right),
\end{equation*}
where,
\begin{equation*}
e_0(Y,Z)=\e_{H_0}[\psi(X,Y,Z)\,|\,Y,Z]
\end{equation*}
and $s$ is the score function, which, under very general regularity conditions,\footnote{See Theorem~12.2.1 in \citet{lehmann2006testing} for an example of such conditions. There, the notation $\tilde\eta$ is used instead of $s$.} admits the common form
\begin{equation*}
s(X,Y,Z,\theta_0)=\frac{\frac\partial{\partial\theta}\big|_{\theta=\theta_0}p_{\theta}(X,Y,Z)}{p_{\theta_0}(X,Y,Z)},
\end{equation*}
where $p_\theta$ is the density of $P_\theta$.
}
\begin{proof}[Proof of Theorem~\ref{theorem:asym-power-local-alt}]
Consider an asymptotically linear statistic
\begin{equation*}
T_n(\ndata{X},\ndata{Y},\ndata{Z})=n^{-1/2}\sum_{i=1}^n\psi(X_i,Y_i,Z_i)+o_{\p_{H_0}}(1),
\end{equation*}
%where $\e_{H_0}[\psi(X_i,Y_i,Z_i)]=0$ and $\Var_{H_0}(\psi(X_i,Y_i,Z_i))=\tau^2<\infty$. Under $H_0$, $T_n\cid\mathcal N(0,\tau^2)$.
Suppose we know the direction of the alternative and thus would like a test that rejects when $T_n$ is above a threshold. Since the test is to be valid conditional on $(\ndata{Y},\ndata{Z})$, it would be equivalent to consider the statistic
\begin{equation*}
S_n(\ndata{X},\ndata{Y},\ndata{Z})=T_n-n^{-1/2}\sum_{i=1}^ne_0(Y_i,Z_i)=n^{-1/2}\sum_{i=1}^n(\psi(X_i,Y_i,Z_i)-e_0(Y_i,Z_i))+o_{\p_{H_0}}(1),
\end{equation*}
where
\begin{equation*}
e_0(y,z)=\e_{H_0}[\psi(X_i,Y_i,Z_i)\mid Y_i=y,Z_i=z].
\end{equation*}
Under the null,
\begin{equation*}
S_n\cid\mathcal N\left(0,\e_{H_0}[v_0(Y,Z)]\right),
\end{equation*}
where
\begin{equation*}
v_0(y,z)=\Var_{H_0}[\psi(X_i,Y_i,Z_i)\mid Y_i=y,Z_i=z].
\end{equation*}
In addition, note that if ${\tilde {\ndata{X}}}$ is a copy of $\ndata{X}$ conditionally independent of $\ndata Y$ given $\ndata Z$ (as in Lemma~\ref{lemma:asy-cond}), then
% \begin{multline*}
% \Cov_{H_0}(\psi(X_i,Y_i,Z_i)-e_0(X_i,Z_i),\psi(X_i,\tilde Y_i,Z_i)-e_0(X_i,Z_i))\\
% =\e_{H_0}[\Cov_{H_0}(\psi(X_i,Y_i,Z_i)-e_0(X_i,Z_i),\psi(X_i,\tilde Y_i,Z_i)-e_0(X_i,Z_i)\mid X_i,Z_i)]\\
% +\Cov_{H_0}(\e_{H_0}[\psi(X_i,Y_i,Z_i)-e_0(X_i,Z_i)\mid X_i,Z_i],\e_{H_0}[\psi(X_i,\tilde Y_i,Z_i)-e_0(X_i,Z_i)\mid X_i,Z_i])
% \end{multline*}
\begin{equation*}
\begin{aligned}
&\Cov_{H_0}(\psi(X_i,Y_i,Z_i)-e_0(Y_i,Z_i),\psi(\tilde X_i, Y_i,Z_i)-e_0(Y_i,Z_i)) \\
&\hspace{1.5cm}=\e_{H_0}[\Cov_{H_0}(\psi(X_i,Y_i,Z_i)-e_0(Y_i,Z_i),\psi(\tilde X_i,Y_i,Z_i)-e_0(Y_i,Z_i)\mid Y_i,Z_i)]\\
&\hspace{2.0cm}+\Cov_{H_0}(\e_{H_0}[\psi(X_i,Y_i,Z_i)-e_0(Y_i,Z_i)\mid Y_i,Z_i],\,\e_{H_0}[\psi(\tilde X_i,Y_i,Z_i)-e_0(Y_i,Z_i)\mid Y_i,Z_i])\\
&\hspace{1.5cm}=0+0=0.
\end{aligned}
\end{equation*}
By the bivariate central limit theorem, under $H_0$,
\begin{equation*}
\left(
  \begin{array}{c}
S_n(\ndata{X},\ndata{Y},\ndata{Z})\\
S_n(\tilde{\ndata{X}},\ndata{Y},\ndata{Z})\\
  \end{array}
\right)\cid\mathcal N\left(\left(
  \begin{array}{c}
0\\
0\\
  \end{array}
\right),\left[
  \begin{array}{cc}
\e_{H_0}[v_0(Y,Z)] & 0\\
0 & \e_{H_0}[v_0(Y,Z)]\\
  \end{array}
\right]\right).
\end{equation*}
The test $\phi_n$ rejects when $S_n>\hat r_n(1-\alpha)$, accepts when $S_n<\hat r_n(1-\alpha)$, and possibly randomizes when $S_n=\hat r_n(1-\alpha)$. By Lemma~\ref{lemma:asy-eq}, $\hat r_n(1-\alpha)\cip z_{1-\alpha}\sqrt{\e_{H_0}[v_0(Y,Z)]}$ under $H_0$.

Since the null distribution $\p_{H_0}=P^n_{\theta_0}$ and the alternative is a sequence $P^n_{\theta_0+hn^{-1/2}}$, where the family is q.m.d., by contiguity (Lemma~\ref{lemma:qmd-contiguity}), $\hat r_n(1-\alpha)\cip z_{1-\alpha}\sqrt{\e_{H_0}[v_0(Y,Z)]}$ under the alternative sequence as well. To study the asymptotic power under local alternatives, we introduce Le Cam's Third Lemma.
\begin{lemma}[Le Cam's Third Lemma, Corollary~12.3.2 in \citet{lehmann2006testing}]
\label{lemma:lecamthird}
If
\begin{equation*}
\left(
  \begin{array}{c}
X_n\\
\log\frac{\di{Q_n}}{\di{P_n}}\\
  \end{array}
\right)\cid\mathcal N\left(\left(
  \begin{array}{c}
\mu_1\\
\mu_2\\
  \end{array}
\right),\left[
  \begin{array}{cc}
\sigma_1^2 & \sigma_{1,2}\\
\sigma_{1,2} & \sigma_2^2\\
  \end{array}
\right]\right)\text{ under }P_n,
\end{equation*}
where $\frac{\di{Q_n}}{\di{P_n}}$ is the likelihood ratio and $\mu_2=-\sigma_2^2/2$ so that $Q_n$ is contiguous to $P_n$, then
\begin{equation*}
X_n\cid\mathcal N(\mu_1+\sigma_{1,2},\sigma_1^2)\text{ under }Q_n.
\end{equation*}
\end{lemma}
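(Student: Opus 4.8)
The plan is to transport expectations from $Q_n$ to $P_n$ via the likelihood ratio, pass to the weak limit using the assumed joint convergence of $(X_n,\log\frac{\di{Q_n}}{\di{P_n}})$ under $P_n$, and then recognize the limiting law of $X_n$ under $Q_n$ as the one obtained by exponentially tilting the limiting Gaussian. Throughout, write $W_n=\log\frac{\di{Q_n}}{\di{P_n}}$, set to $+\infty$ on the event $N_n$ on which $Q_n$ has a part singular with respect to $P_n$, and let $\tilde V_n=e^{W_n}\ind_{W_n<\infty}$, a version of the density of the $P_n$-absolutely-continuous part of $Q_n$ (here $X_n$ may be $\rr^k$-valued). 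The only role of contiguity is through the elementary observation that $\mu_2=-\sigma_2^2/2$ forces $\e[e^W]=e^{\mu_2+\sigma_2^2/2}=1$ for the limiting normal $W\sim\mathcal N(\mu_2,\sigma_2^2)$, and I would re-derive whatever consequences of contiguity are needed from this identity.

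First I would discard the singular part. For bounded measurable $f$,
\begin{equation*}
\e_{Q_n}[f(X_n)]=\e_{P_n}[f(X_n)\,\tilde V_n]+\e_{Q_n}[f(X_n)\ind_{N_n}],\qquad |\e_{Q_n}[f(X_n)\ind_{N_n}]|\le\|f\|_\infty\,Q_n(N_n),
\end{equation*}
and taking $f\equiv1$ gives $\e_{P_n}[\tilde V_n]=1-Q_n(N_n)\le1$. From $(X_n,W_n)\cid(X,W)$ under $P_n$ with $W$ a.s.\ finite, the continuous mapping theorem gives $\tilde V_n\cid e^W$ under $P_n$ (the discontinuity of $w\mapsto e^w\ind_{w<\infty}$ at $+\infty$ has limiting probability zero), so Fatou's lemma for weak convergence yields $\liminf_n\e_{P_n}[\tilde V_n]\ge\e[e^W]=1$. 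Hence $\e_{P_n}[\tilde V_n]\to1$, $Q_n(N_n)\to0$, and $\e_{Q_n}[f(X_n)]=\e_{P_n}[f(X_n)\tilde V_n]+o(1)$.

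The main obstacle is the next step, taking the limit of $\e_{P_n}[f(X_n)\tilde V_n]$ for bounded continuous $f$: the natural limit integrand $f(x)e^w$ is unbounded in $w$, so the portmanteau theorem does not apply directly, and one must prove uniform integrability of the likelihood ratios. I would handle this by truncation: for a continuity point $M$ of the law of $e^W$,
\begin{equation*}
\e_{P_n}[f(X_n)\tilde V_n]=\e_{P_n}[f(X_n)(\tilde V_n\wedge M)]+\e_{P_n}[f(X_n)(\tilde V_n-M)^+],
\end{equation*}
where the first term converges to $\e[f(X)(e^W\wedge M)]$ since $(x,v)\mapsto f(x)(v\wedge M)$ is bounded continuous and $(X_n,\tilde V_n)\cid(X,e^W)$, and the second is at most $\|f\|_\infty\,\e_{P_n}[\tilde V_n\ind_{\tilde V_n>M}]$. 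The crucial estimate is
\begin{equation*}
\e_{P_n}[\tilde V_n\ind_{\tilde V_n>M}]=\e_{P_n}[\tilde V_n]-\e_{P_n}[\tilde V_n\ind_{\tilde V_n\le M}]\longrightarrow 1-\e[e^W\ind_{e^W\le M}]=\e[e^W\ind_{e^W>M}],
\end{equation*}
which tends to $0$ as $M\to\infty$ because $\e[e^W]=1<\infty$. Sending $n\to\infty$ and then $M\to\infty$ gives $\e_{Q_n}[f(X_n)]\to\e[f(X)e^W]$ for every bounded continuous $f$; equivalently, under $Q_n$ the law of $X_n$ converges weakly to the probability measure $B\mapsto\e[\ind_B(X)e^W]$ (a probability measure since $\e[e^W]=1$).

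Finally I would identify this limit law by Gaussian exponential tilting. Writing $(X,W)\sim\mathcal N\big((\mu_1,\mu_2),\Sigma\big)$ with $\Sigma=\left[\begin{smallmatrix}\sigma_1^2&\sigma_{1,2}\\\sigma_{1,2}&\sigma_2^2\end{smallmatrix}\right]$, completing the square in the exponent shows that $e^{w}$ (which equals the normalized tilt factor $e^{w-\mu_2-\sigma_2^2/2}$ precisely because $\mu_2+\sigma_2^2/2=0$) transforms the density of $(X,W)$ into that of $\mathcal N\big((\mu_1,\mu_2)+\Sigma(0,1)^\top,\ \Sigma\big)=\mathcal N\big((\mu_1+\sigma_{1,2},\ \mu_2+\sigma_2^2),\ \Sigma\big)$, whose $X$-marginal is $\mathcal N(\mu_1+\sigma_{1,2},\sigma_1^2)$. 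Hence $\e[f(X)e^W]=\e[f(\tilde X)]$ with $\tilde X\sim\mathcal N(\mu_1+\sigma_{1,2},\sigma_1^2)$, which is the claimed conclusion $X_n\cid\mathcal N(\mu_1+\sigma_{1,2},\sigma_1^2)$ under $Q_n$. An alternative to the middle two steps is to run the same change of measure on characteristic functions, $\e_{Q_n}[e^{\mathrm{i}t^\top X_n}]=\e_{P_n}[e^{\mathrm{i}t^\top X_n}\tilde V_n]+o(1)$, and evaluate the Gaussian integral directly, but the unboundedness of $e^{W_n}$ still forces the same truncation/uniform-integrability argument, so the obstacle above cannot be circumvented.
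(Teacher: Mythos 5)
The paper offers no proof of this statement: it is quoted verbatim as Corollary~12.3.2 of \citet{lehmann2006testing} and used as a black box, so there is nothing internal to compare against. Your argument is correct and is essentially the standard textbook proof of Le Cam's third lemma: first the abstract version (under $Q_n$, $\mathcal L(X_n)$ converges to $B\mapsto\e[\ind_B(X)e^W]$), where the key uniform-integrability/truncation step is exactly what the hypothesis $\mu_2=-\sigma_2^2/2$, i.e.\ $\e[e^W]=1$, is there to supply, and then identification of the limit by Gaussian exponential tilting. The only small caveat is that the density-based tilting computation implicitly assumes the limiting covariance matrix is nonsingular; the degenerate cases are covered at no extra cost by instead computing $\e[e^{\mathrm{i}tX+W}]=\exp\left(\mathrm{i}t(\mu_1+\sigma_{1,2})-\tfrac12 t^2\sigma_1^2\right)$ directly from the joint Gaussian moment formula, which identifies the limit law as $\mathcal N(\mu_1+\sigma_{1,2},\sigma_1^2)$ without inverting $\Sigma$.
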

By taking $X_n$ to be $S_n$, $P_n$ to be $P_{\theta_0}^n$ and $Q_n$ to be $P^n_{\theta_0+hn^{-1/2}}$ in Lemma~\ref{lemma:lecamthird}, under $P^n_{\theta_0+hn^{-1/2}}$, $S_n\cid\mathcal N(\sigma_{1,2},\e_{H_0}[v_0(Y,Z)])$ ($\log\frac{\di{Q_n}}{\di{P_n}}$ is asymptotically the score; see Example 12.3.8 in \citet{lehmann2006testing}), where
\begin{equation*}
\sigma_{1,2}=h\Cov_{H_0}(\psi(X_i,Y_i,Z_i)-e_0(Y_i,Z_i),s(X_i,Y_i,Z_i,\theta_0)).
\end{equation*}
The asymptotic power is thus
\begin{equation*}
1-\Phi\left(z_{1-\alpha}-\frac{\sigma_{1,2}}{\sqrt{\e_{H_0}[v_0(Y,Z)]}}\right).
\end{equation*}
\end{proof}

\noindent\textbf{Theorem~\ref{theorem:prop-power-mc}.} \textit{In Setting~\ref{model:moderate-dim-lr}, the 
%level-$\alpha$ 
CRT with $T_\textnormal{MC}$ has asymptotic power
equal to that of a $z$-test with standardized effect size
\[ \frac{h}{\sqrt{\sigma^2+v_Z^2}}. \]}
\begin{proof}[Proof of Theorem~\ref{theorem:prop-power-mc}]
We only prove the one-sided case, while the two-sided case can be dealt with almost identically.

Under the null, $T_\text{MC}(\tilde{\mathbf X},\ndata{Y},\ndata{Z})\mid\ndata{Y},\ndata{Z}\sim\mathcal N(n^{-1}\ndata{Y}^\top\ndata{Z}\xi,\|\ndata{Y}\|^2/n^2)$, so the power is
% \begin{multline*}
% \p_{\beta=h/\sqrt n}\p\left(\sum_{i=1}^nX_iY_i\ge \sum_{i=1}^nY_iZ_i\eta+z_{1-\alpha}\tau\|\ndata{Y}\|\right)=\p_{\beta=h/\sqrt n}\p\left(\frac{1}{\sqrt n}\sum_{i=1}^n\left(X_i-Z_i\eta\right)Y_i\ge z_{1-\alpha}\tau\frac{\|\ndata{Y}\|}{\sqrt n}\right).   
% \end{multline*}
\begin{equation}
\p_{\beta=h/\sqrt n}\left(\frac1n\ndata{Y}^\top\ndata{X}\ge\frac1n \ndata{Y}^\top\ndata{Z}\xi+z_{1-\alpha}\frac{\|\ndata{Y}\|}{n}\right)=\p_{\beta=h/\sqrt n}\left(\frac{1}{\sqrt n}\ndata{Y}^\top(\ndata{X}-\ndata{Z}\xi)\ge z_{1-\alpha}\frac{\|\ndata{Y}\|}{\sqrt n}\right).
\label{equation:mc-power}
\end{equation}

The elements of $\ndata{X}-\ndata{Z}\xi$ are conditionally independent given $\ndata{Y}$ and the distribution $\mathcal L(X-Z^\top\xi\,|\,Y)$ is (by applying the conditional distribution formula to the bivariate Gaussian distribution of $(X-Z^\top\xi,Y)$)
\begin{equation*}
\mathcal N\left(\frac{\beta Y}{(\theta+\beta\xi)^\top\Sigma(\theta+\beta\xi)+\beta^2+\sigma^2},1-\frac{\beta^2}{(\theta+\beta\xi)^\top\Sigma(\theta+\beta\xi)+\beta^2+\sigma^2}\right).
\end{equation*}
Thus,
\begin{multline*}
\frac{1}{\sqrt n}\ndata{Y}^\top(\ndata{X}-\ndata{Z}\xi)\mid\ndata{Y}\\
\sim\mathcal N\left(\frac{n^{-1/2}\beta\|\ndata{Y}\|^2}{(\theta+\beta\xi)^\top\Sigma(\theta+\beta\xi)+\beta^2+\sigma^2},\frac{\|\ndata{Y}\|^2}n\left(1-\frac{\beta^2}{(\theta+\beta\xi)^\top\Sigma(\theta+\beta\xi)+\beta^2+\sigma^2}\right)\right)
\end{multline*}
and
\begin{equation*}
\begin{aligned}
&\p_{\beta=h/\sqrt n}\left(\frac{1}{\sqrt n}\ndata{Y}^\top(\ndata{X}-\ndata{Z}\xi)\ge z_{1-\alpha}\frac{\|\ndata{Y}\|}{\sqrt n}\right)\\
&=\e\left[\p_{\beta=h/\sqrt n}\left(\frac{1}{\sqrt n}\ndata{Y}^\top(\ndata{X}-\ndata{Z}\xi)\ge z_{1-\alpha}\frac{\|\ndata{Y}\|}{\sqrt n}\,|\,\ndata{Y}\right)\right]\\
&=\e\left[\Phi\left(\frac{\frac{n^{-1}h\|\ndata{Y}\|^2}{(\theta+h\xi/\sqrt n)^\top\Sigma(\theta+h\xi/\sqrt n)+h^2/n+\sigma^2}-z_{1-\alpha}\frac{\|\ndata{Y}\|}{\sqrt n}}{\frac{\|\ndata{Y}\|}{\sqrt n}\sqrt{1-\frac{h^2/n}{(\theta+h\xi/\sqrt n)^\top\Sigma(\theta+h\xi/\sqrt n)+h^2/n+\sigma^2}}}\right)\right]\\
&\to\Phi\left(\frac{h}{\sqrt{v_Z^2+\sigma^2}}-z_{1-\alpha}\right),
\end{aligned}
\end{equation*}
where we used $\|\ndata{Y}\|^2/n\cip v_Z^2+\sigma^2$. %, which holds because all moments of $Y_i$ are finite and bounded.
To see why this is the case, note that
\begin{equation*}
Y_i\simiid\mathcal N\left(0,\frac{h^2}{n}+(\theta+h\xi/\sqrt n)^\top\Sigma_Z(\theta+h\xi/\sqrt n)+\sigma^2\right),
\end{equation*}
so we only need to show
\begin{equation}
    (\theta+h\xi/\sqrt n)^\top\Sigma_Z(\theta+h\xi/\sqrt n)\to v_Z^2.
    \label{equation:mc-to-show}
\end{equation}
Equation \eqref{equation:mc-to-show} holds because by assumption, $\theta^\top\Sigma_Z\theta\to v_Z^2$, $n^{-1}\xi^\top\Sigma_Z\xi\to0$, and by the Cauchy--Schwarz inequality, the cross term satisfies
\begin{equation*}
\theta^\top\Sigma_Z\xi/\sqrt n\le\sqrt{n^{-1} \theta^\top\Sigma_Z\theta\cdot\xi^\top\Sigma_Z\xi}\to0.
\end{equation*}

\end{proof}

\noindent\textbf{Theorem~\ref{theorem:prop-model-X-CRT-OLS}.} \textit{In Setting~\ref{model:moderate-dim-lr} with $\kappa<1$, the 
%level-$\alpha$ 
CRT with $T_\text{OLS}$ has asymptotic power
equal to that of a $z$-test with standardized effect size
\[ \frac{h}{\sigma}\sqrt{1-\kappa}. \]}

\begin{proof}[Proof of Theorem~\ref{theorem:prop-model-X-CRT-OLS}]
We only prove the one-sided case, while the two-sided case can be dealt with almost identically.

We look at the expression of the normalized OLS statistic $T_\text{OLS}(\ndata{X},\ndata{Y},\ndata{Z})=\sqrt n\hat\beta$:
\begin{equation}
T_\text{OLS}(\ndata{X},\ndata{Y},\ndata{Z})=\sqrt n\hat\beta=\frac{\ndata{X}^\top(I-\ndata{Z}(\ndata{Z}^\top \ndata{Z})^{-1}\ndata{Z}^\top)\ndata{Y}/\sqrt n}{\ndata{X}^\top(I-\ndata{Z}(\ndata{Z}^\top \ndata{Z})^{-1}\ndata{Z}^\top)\ndata{X}/n},
\label{equation:ols-expression}
\end{equation}
and the rejection region is $\{T_\text{OLS}\ge\hat c_\alpha\}$, where $\hat c_\alpha$ is the upper $\alpha$-quantile of the distribution of
\begin{equation*}
\tilde T_\text{OLS}(\tilde{\mathbf X},\ndata{Y},\ndata{Z})=\frac{\tilde{\mathbf X}^\top(I-\ndata{Z}(\ndata{Z}^\top \ndata{Z})^{-1}\ndata{Z}^\top)\ndata{Y}/\sqrt n}{\tilde{\mathbf X}^\top(I-\ndata{Z}(\ndata{Z}^\top \ndata{Z})^{-1}\ndata{Z}^\top)\tilde{\mathbf X}/n},
\end{equation*}
conditional on $(\ndata{Y},\ndata{Z})$. Looking at the numerator and denominator individually, we see that
\begin{align*}
\mathcal L\left(\tilde{\mathbf X}^\top(I-\ndata{Z}(\ndata{Z}^\top \ndata{Z})^{-1}\ndata{Z}^\top)\ndata{Y}/\sqrt n\mid\ndata{Y},\ndata{Z}\right)&\sim\mathcal N(0,\ndata Y^\top(I-\ndata{Z}(\ndata{Z}^\top \ndata{Z})^{-1}\ndata{Z}^\top)\ndata{Y}/n),\\
\mathcal L\left(\tilde{\mathbf X}^\top(I-\ndata{Z}(\ndata{Z}^\top \ndata{Z})^{-1}\ndata{Z}^\top)\tilde{\mathbf X}/n\mid\ndata{Y},\ndata{Z}\right)&\sim n^{-1}\chi_{n-p}^2\text{ \citep[Cochran's Theorem]{cochran1934distribution}}.
\end{align*}
Now we assume we are under the local alternative $\beta=h/\sqrt n$. Again by Cochran's Theorem,
\begin{equation*}
\ndata Y^\top(I-\ndata{Z}(\ndata{Z}^\top \ndata{Z})^{-1}\ndata{Z}^\top)\ndata{Y}/n\sim n^{-1}(\sigma^2+\beta^2)\chi_{n-p}^2.
\end{equation*}
Thus, for any $t\in\rr$,
\begin{equation*}
\p\left(
\tilde{\mathbf X}^\top(I-\ndata{Z}(\ndata{Z}^\top \ndata{Z})^{-1}\ndata{Z}^\top)\ndata{Y}/\sqrt n\le t\mid\ndata{Y},\ndata{Z}\right)\cip\Phi(t/\sqrt{\sigma^2(1-\kappa)}).
\end{equation*}
On the other hand, $\tilde{\mathbf X}^\top(I-\ndata{Z}(\ndata{Z}^\top \ndata{Z})^{-1}\ndata{Z}^\top)\tilde{\mathbf X}/n\ci(\ndata{Y},\ndata{Z})$ and for any $t\ne1-\kappa$,
\begin{equation*}
\p\left(\tilde{\mathbf X}^\top(I-\ndata{Z}(\ndata{Z}^\top \ndata{Z})^{-1}\ndata{Z}^\top)\tilde{\mathbf X}/n\le t\mid\ndata{Y},\ndata{Z}\right)\to\mathbf{1}_{\{t>1-\kappa\}}.
\end{equation*}
By Lemma~\ref{lemma:conditional-CDF}, for any $t\in\rr$,
\begin{equation*}
\p\left(\tilde T_\text{OLS}\le t\mid\ndata{Y},\ndata{Z}\right)\cip\Phi(\sqrt{1-\kappa}t/\sigma),
\end{equation*}
and by Lemma 11.2.1 (ii) in \citet{lehmann2006testing},
\begin{equation*}
\hat c_\alpha\cip z_{1-\alpha}\frac{\sigma}{\sqrt{1-\kappa}}.
\end{equation*}
On the other hand, we have the test statistic itself satisfies
\begin{equation*}
T_\text{OLS}\mid\ndata{X},\ndata{Z}\sim\mathcal N(h,\sigma^2n\hat\Omega_{11}),
\end{equation*}
where $\hat\Omega$ is the inverse of the matrix $(\ndata{X},\ndata{Z})^\top(\ndata{X},\ndata{Z})$ that follows an inverse-Wishart distribution, and then $n\hat\Omega_{11}\cip1/(1-\kappa)$ by moment calculations. Therefore, $T_\text{OLS}\cid\mathcal N(h,\sigma^2/(1-\kappa))$. It follows then
\begin{equation*}
\p_{\beta=h/\sqrt n}\left(T_\text{OLS}\ge\hat c_\alpha\right)\to1-\Phi_{\sigma^2/(1-\kappa)}(\frac{\sigma}{\sqrt{1-\kappa}}z_{1-\alpha}-h)=\Phi\left(\frac{h}{\sigma}\sqrt{1-\kappa}-z_{1-\alpha}\right),
\end{equation*}
where $\Phi_{\sigma^2/(1-\kappa)}$ is the CDF of $\mathcal N(0,\sigma^2/(1-\kappa))$.

\end{proof}

\begin{lemma}
Let $\mathcal L(X_n\,|\, Z_n)$ have random CDF $F_n$ and $\mathcal L(Y_n\,|\, Z_n)$ have deterministic CDF $G_n$ (in other words, $Y_n\ci Z_n$). Let $\mathcal L(Y_n\,|\, Z_n)$ converge in distribution to a point mass at $c$, $c>0$, and for a continuous and deterministic CDF $F$ on $\rr$, let $F_n(t)\cip F(t)$ for any $t\in\rr$. Let $H_n$ be the CDF of $\mathcal L(X_nY_n\,|\,Z_n)$. Then for any $t\in\rr$, $H_n(t)\cip F(t/c)$.
\label{lemma:conditional-CDF}
\end{lemma}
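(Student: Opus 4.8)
The plan is to prove the statement pointwise in $t$ by a monotone sandwich argument: since $Y_n$ is asymptotically pinned at the positive constant $c$, on a high-probability event the product $X_nY_n$ is, for each sign of $X_n$, monotone in $Y_n$ and hence squeezed between two threshold events for $X_n$ alone, which we can control through $F_n$.

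Fix $t\in\rr$. First I would record that, because $Y_n\ci Z_n$, the conditional law $\mathcal L(Y_n\mid Z_n)$ coincides with the deterministic unconditional law of $Y_n$, so convergence in distribution to $\delta_c$ gives, for every $\epsilon\in(0,c)$, the deterministic bound $\rho_n(\epsilon):=\p(Y_n\notin[c-\epsilon,c+\epsilon])\to0$. Next, on the event $B_n:=\{Y_n\in[c-\epsilon,c+\epsilon]\}$ I would establish the elementary inclusions
\[
\{X_n\le \underline s_\epsilon\}\cap B_n\ \subseteq\ \{X_nY_n\le t\}\cap B_n\ \subseteq\ \{X_n\le \bar s_\epsilon\}\cap B_n,
\]
where $\underline s_\epsilon=\min\!\big(t/(c-\epsilon),\,t/(c+\epsilon)\big)$ and $\bar s_\epsilon=\max\!\big(t/(c-\epsilon),\,t/(c+\epsilon)\big)$; the two cases $t\ge0$ and $t<0$ are checked separately, each being a one-line consequence of $0<c-\epsilon\le Y_n\le c+\epsilon$ together with a case split on $\sgn(X_n)$. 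Taking $\p(\cdot\mid Z_n)$ of these inclusions and using that $F_n$ is by definition the conditional CDF of $X_n$ given $Z_n$, along with $\p(B_n^c\mid Z_n)=\rho_n(\epsilon)$, yields the random-function bound
\[
F_n(\underline s_\epsilon)-\rho_n(\epsilon)\ \le\ H_n(t)\ \le\ F_n(\bar s_\epsilon)+\rho_n(\epsilon).
\]

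To finish, I would invoke the hypothesis $F_n(s)\cip F(s)$ at the two fixed points $s=\underline s_\epsilon$ and $s=\bar s_\epsilon$, together with $\rho_n(\epsilon)\to0$, to conclude that for each fixed $\epsilon$ and each $\delta>0$, with probability tending to $1$ we have $F(\underline s_\epsilon)-\delta\le H_n(t)\le F(\bar s_\epsilon)+\delta$. Then I would let $\epsilon\downarrow0$: since $\underline s_\epsilon,\bar s_\epsilon\to t/c$ and $F$ is continuous at $t/c$, both $F(\underline s_\epsilon)$ and $F(\bar s_\epsilon)$ converge to $F(t/c)$, so a routine $\epsilon$–$\delta$ bookkeeping (pick $\epsilon$ so the two endpoints lie within $\delta$ of $F(t/c)$, then take $n$ large) gives $\p(|H_n(t)-F(t/c)|>\delta)\to0$ for every $\delta>0$, i.e., $H_n(t)\cip F(t/c)$. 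The degenerate case $t=0$, where $\{X_nY_n\le0\}$ agrees with $\{X_n\le0\}$ on $\{Y_n>0\}$, is the same argument with $\underline s_\epsilon=\bar s_\epsilon=0$.

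I expect the only genuine subtlety — and the reason a direct perturbation estimate ``$X_nY_n\approx cX_n$'' does not work — to be that $X_n$ is not tight in any way we control (its conditional CDF $F_n$ is itself random), so one cannot bound $|X_nY_n-cX_n|=|X_n|\,|Y_n-c|$ in any useful uniform sense; routing the argument through monotone thresholds in $X_n$ rather than through a multiplicative error is exactly what makes the pointwise hypothesis $F_n(s)\cip F(s)$ directly applicable. A secondary point worth flagging is that the lemma does \emph{not} assume $X_n\ci Y_n\mid Z_n$, so $H_n(t)$ cannot simply be written as $\e[F_n(t/Y_n)\mid Z_n]$; the sandwich sidesteps ever needing the joint conditional law of $(X_n,Y_n)$ given $Z_n$, using only the marginal conditional CDF $F_n$ and the unconditional concentration of $Y_n$ at $c$.
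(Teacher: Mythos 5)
Your argument is correct and rests on the same sandwich-through-threshold strategy the paper uses for this lemma: bound the event $\{X_nY_n\le t\}$ between threshold events for $X_n$ alone on a high-probability set where $Y_n$ is near $c$, pass $\p(\cdot\mid Z_n)$ through the inclusions, and then combine the pointwise convergence $F_n(s)\cip F(s)$, the concentration of $Y_n$, and the continuity of $F$ at $t/c$. Where you differ---and where you are in fact more careful than the paper---is in using the \emph{two-sided} localization $B_n=\{Y_n\in[c-\epsilon,c+\epsilon]\}$ together with the two thresholds $\underline s_\epsilon$ and $\bar s_\epsilon$. The paper (after normalizing $c=1$) controls only the upper tail $\{Y_n>1+\delta\}$ and asserts the set inclusion $\{X_n\le t/(1+\delta),\,Y_n\le 1+\delta\}\subseteq\{X_nY_n\le t\}$; that inclusion fails whenever $Y_n$ is small or negative, e.g.\ for $t<0$ take $X_n=t/(1+\delta)<0$ and $Y_n$ close to $0^+$, so that $X_nY_n\approx 0>t$, or for any $t$ allow $Y_n<0$. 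Your lower threshold $Y_n\ge c-\epsilon$ is exactly what makes both inclusions genuine set inclusions for every sign of $t$ and of $X_n$, so your proof not only works but quietly repairs this latent issue in the paper's version. Your two closing observations are also correct and worth keeping: the lemma does not assume $X_n\ci Y_n\mid Z_n$, so one cannot write $H_n(t)$ as $\e[F_n(t/Y_n)\mid Z_n]$, and without any magnitude control on $X_n$ a multiplicative perturbation bound $|X_n|\,|Y_n-c|$ is useless, which is why the monotone-threshold routing is the right one.
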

\begin{proof}[Proof of Lemma~\ref{lemma:conditional-CDF}]
Without loss of generality, assume $c=1$. Fix $t\in\rr$ and $\varepsilon>0$. Pick $\delta>0$ such that $|F(t)-F(t/(1+\delta))|\le\varepsilon/2$.
\begin{equation*}
\begin{aligned}
H_n(t)&=\p\left(X_nY_n\le t\mid Z_n\right)\\
&\ge\p\left(X_n\le\frac{t}{1+\delta},Y_n\le1+\delta\mid Z_n\right)\\
&=\p\left(\left\{X_n\le\frac{t}{1+\delta}\right\}\setminus\left\{Y_n>1+\delta\right\}\mid Z_n\right)\\
&\ge\p\left(X_n\le\frac{t}{1+\delta}\mid Z_n\right)-\p\left(Y_n>1+\delta\mid Z_n\right)\\
&=F_n(t/(1+\delta))-(1-G_n(1+\delta))\cip F(t/(1+\delta)).
\end{aligned}
\end{equation*}
It follows that $\p(H_n(t)\ge F(t/(1+\delta))-\varepsilon/2)\to1$. By the choice of $\delta$, $\p(H_n(t)\ge F(t)-\varepsilon)\to1$. Similarly, we can get $\p(H_n(t)\le F(t)+\varepsilon)\to1$, thus proving the claim.
\end{proof}

\noindent\textbf{Theorem~\ref{theorem:prop-distilled-power}.} \textit{Under Setting~\ref{model:moderate-dim-lr} with $\Sigma_Z=I$ and $\xi=0$, if the empirical distribution of $(\sqrt n\theta_j)_{j=1}^{p-1}$ converges to a distribution represented by a random variable $B_0$ and $\|\sqrt n\theta\|_2^2/p\to\e[B_0^2]$,
% \begin{enumerate}
%     %\item[(a)] $\lim_{n\to\infty}p/n=\kappa>0$;
%     \item[(a)] $X_i,Z_{ij}\simiid\mathcal N(0,1)$, i.e., $\Sigma_Z=I$, $\eta=0$, $\tau^2=1$ in Setting~\ref{model:moderate-dim-lr}. %, thus making $\eta=0$;
%     \item[(b)] The empirical distribution of $(\sqrt n\theta_j)_{j=1}^p$ converges to a distribution represented by a random variable $B_0$ and $\|\sqrt n\theta\|_2^2/p\to\e[B_0^2]$.
% \end{enumerate}
then the 
%level-$\alpha$ 
CRT with the distilled lasso statistic with lasso parameter $\lambda$ has asymptotic power
equal to that of a $z$-test with standardized effect size
\[ \frac h{\tau_\lambda}. \]}

\begin{proof}[Proof of Theorem~\ref{theorem:prop-distilled-power}]

To use the results in \citet{bayati2011lasso}, we apply the following re-normalization: assume $(\ndata{X},\ndata{Z})$ is divided by $\sqrt n$, $(\beta,\theta)$ is multiplied by $\sqrt n$, and the statistic is $T_\text{distilled}(\ndata{X},\ndata{Y},\ndata{Z})=(\ndata{Y}-\ndata{Z}\hat\theta_\lambda)^\top\ndata{X}$. The proof is a direct consequence of Lemma~\ref{lemma:theorem-training-loss}.
\end{proof}

\begin{lemma}
Assume Setting~\ref{model:moderate-dim-lr} with $\Sigma_Z=I$, $\xi=0$, $\sqrt n\beta$ universally bounded (but not necessarily a constant), and $\varepsilon_i$'s and $X_i$'s do not change with $n,p$ as long as $n\ge i$. If the empirical distribution of $(\sqrt n\theta_j)_{j=1}^{p-1}$ converges to a distribution represented by a random variable $B_0$ and $\|\sqrt n\theta\|_2^2/p\to\e[B_0^2]$, then we have
\begin{equation}
    \frac{\|\ndata{Y}-\ndata{Z}\hat\theta_\lambda\|_2^2}{n}\cas\frac{\lambda^2}{\alpha_\lambda^2},
    \label{equation:AMP-variance}
\end{equation}

\begin{equation}
\frac1n(\ndata{Y}-\ndata{Z}\hat\theta_\lambda)^\top(\ndata{Y}-\ndata{Z}\theta)\cas\frac{\lambda}{\alpha_\lambda\tau_\lambda}\sigma^2,
\label{equation:AMP-signal}
\end{equation}
and
\begin{equation*}
T_\textnormal{distilled}(\ndata{X},\ndata{Y},\ndata{Z})-\frac{\sqrt n\beta\lambda}{\alpha_\lambda\tau_\lambda}\cid\mathcal N\left(0,\frac{\lambda^2}{\alpha_\lambda^2}\right).
\end{equation*}
Here, $\alpha_\lambda$ and $\tau_\lambda$ satisfy
\begin{equation}
\begin{aligned}
\lambda&=\alpha_\lambda\tau_\lambda\left(1-\kappa\e[\eta'(B_0+\tau_\lambda W;\alpha_\lambda\tau_\lambda)]\right),\\
\tau_{\lambda}^2&=\sigma^2+\kappa\e[(\eta(B_0+\tau_\lambda W;\alpha_\lambda\tau_\lambda)-B_0)^2],
\end{aligned}
\label{equation:amp-fixed-point-equation}
\end{equation}
where $W\sim\mathcal N(0,1)$ is independent of $B_0$ and $\eta'$ is the derivative of $\eta$.
\label{lemma:theorem-training-loss}
\end{lemma}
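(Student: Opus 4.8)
The plan is to renormalize the regression so that it matches the conventions of \citet{bayati2011lasso}, import their asymptotic (state-evolution) characterization of the lasso, and combine it with the lasso KKT identity and a few elementary Gaussian computations; the only genuinely new ingredient is the asymptotic correlation between the fitted residual and the noise, which I would extract from an exact algebraic identity rather than by re-running a conditioning argument. After the stated renormalization, $\theta\mapsto\hat\theta_\lambda$ is the lasso $\argmin_\theta\frac12\|\ndata Y-\ndata Z\theta\|_2^2+\lambda\|\theta\|_1$ with i.i.d.\ $\mathcal N(0,1/n)$ design $\ndata Z\in\rr^{n\times(p-1)}$, signal $\theta$ whose empirical distribution converges to $B_0$ with $\|\theta\|_2^2/(p-1)\to\e[B_0^2]$, noise $w:=\ndata Y-\ndata Z\theta=\ndata X\beta+\varepsilon$ (i.i.d.\ $\mathcal N(0,\sigma^2+h^2/n)$, with $\frac1n\|w\|_2^2\to\sigma^2$), and aspect ratio $n/(p-1)\to1/\kappa$. \citet{bayati2011lasso} then supply AMP iterates $x^t$ (estimates) and $z^t$ (working residuals), with $z^t=w+\ndata Z(\theta-x^t)+\frac1\delta\langle\eta'(x^{t-1}+\ndata Z^\top z^{t-1};\alpha_{t-1}\tau_{t-1})\rangle z^{t-1}$ ($\delta=n/(p-1)$, $\langle\cdot\rangle$ the coordinate average), such that: (i) $\frac1n\|z^t\|_2^2\cas\tau_t^2\to\tau_\lambda^2$ and $\frac1n\|z^t-z^{t-1}\|_2^2\to0$ as $t\to\infty$; (ii) for pseudo-Lipschitz $\psi$, $\frac1{p-1}\sum_j\psi\big((x^{t-1}+\ndata Z^\top z^{t-1})_j,\theta_j\big)\cas\e[\psi(B_0+\tau_{t-1}W,B_0)]$ with $W\sim\mathcal N(0,1)\ci B_0$; and (iii) $\lim_t\lim_n\frac1{p-1}\|x^t-\hat\theta_\lambda\|_2^2=0$, where $\alpha_\lambda,\tau_\lambda$ and $\frac1\delta\to\kappa$ solve \eqref{equation:amp-fixed-point-equation}. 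I will also use the KKT conditions $\ndata Z^\top r=\lambda g$, $g\in\partial\|\hat\theta_\lambda\|_1$, for $r:=\ndata Y-\ndata Z\hat\theta_\lambda$, and the fact that $\ndata Y\mapsto\ndata Z\hat\theta_\lambda(\ndata Y)$ is the gradient of a convex function, hence $1$-Lipschitz, so $\ndata Y\mapsto r(\ndata Y)$ is $1$-Lipschitz with a.e.\ derivative the orthogonal projection $P_r:=I-\ndata Z_E(\ndata Z_E^\top\ndata Z_E)^{-1}\ndata Z_E^\top$ onto the orthocomplement of the active columns.

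For \eqref{equation:AMP-variance}, writing $c_t=\frac1\delta\langle\eta'(\cdots)\rangle\to c_*:=\kappa\,\e[\eta'(B_0+\tau_\lambda W;\alpha_\lambda\tau_\lambda)]$, the AMP recursion at convergence gives $r=\lim_t(z^t-c_tz^{t-1})$, so by (i) $\frac1n\|r\|_2^2\to\tau_\lambda^2(1-c_*)^2$; the first equation of \eqref{equation:amp-fixed-point-equation} says $1-c_*=\lambda/(\alpha_\lambda\tau_\lambda)$, hence $\frac1n\|r\|_2^2\cas\lambda^2/\alpha_\lambda^2$ (replacing $w$ by $\varepsilon$, i.e.\ dropping the $h^2/n$, shifts this limit by $O(1/\sqrt n)$ by $1$-Lipschitzness, so it is harmless). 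For \eqref{equation:AMP-signal} --- the key step --- I combine $r=w-\ndata Z(\hat\theta_\lambda-\theta)$ with $\ndata Z^\top w=\lambda g+\ndata Z^\top\ndata Z(\hat\theta_\lambda-\theta)$ (the latter from $\ndata Z^\top r=\lambda g$ and $\ndata Z^\top w=\ndata Z^\top\ndata Y-\ndata Z^\top\ndata Z\theta$); eliminating $\|\ndata Z(\hat\theta_\lambda-\theta)\|_2^2=\|w-r\|_2^2$ yields the exact identity
\begin{equation*}
r^\top w=\|r\|_2^2+\lambda\,g^\top(\hat\theta_\lambda-\theta).
\end{equation*}
At the AMP fixed point $g_j=\big((x^{t-1}+\ndata Z^\top z^{t-1})_j-x^t_j\big)/(\alpha_{t-1}\tau_{t-1})$ and $x^t_j=\eta\big((x^{t-1}+\ndata Z^\top z^{t-1})_j;\alpha_{t-1}\tau_{t-1}\big)$, so $\frac1{p-1}g^\top(\hat\theta_\lambda-\theta)$ is a pseudo-Lipschitz functional of $\big((x^{t-1}+\ndata Z^\top z^{t-1})_j,\theta_j\big)$; by (ii)--(iii) it converges to $\frac1{\alpha_\lambda\tau_\lambda}\e[(A-\eta(A;\alpha_\lambda\tau_\lambda))(\eta(A;\alpha_\lambda\tau_\lambda)-B_0)]$ with $A=B_0+\tau_\lambda W$. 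Expanding the product, using $\e[W(\eta(A;\alpha_\lambda\tau_\lambda)-B_0)]=\tau_\lambda\e[\eta'(A;\alpha_\lambda\tau_\lambda)]$ (Gaussian integration by parts) and the two equations of \eqref{equation:amp-fixed-point-equation} (which give $\e[(\eta(A;\alpha_\lambda\tau_\lambda)-B_0)^2]=(\tau_\lambda^2-\sigma^2)/\kappa$ and $\tau_\lambda(1-c_*)=\lambda/\alpha_\lambda$), a short computation gives $\frac{\lambda(p-1)}n\cdot\frac1{p-1}g^\top(\hat\theta_\lambda-\theta)\to\frac{\lambda\sigma^2}{\alpha_\lambda\tau_\lambda}-\frac{\lambda^2}{\alpha_\lambda^2}$; adding $\frac1n\|r\|_2^2\to\lambda^2/\alpha_\lambda^2$ gives $\frac1n r^\top w\cas\frac{\lambda}{\alpha_\lambda\tau_\lambda}\sigma^2$, which is \eqref{equation:AMP-signal} since $\ndata Y-\ndata Z\theta=w$.

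For the central limit theorem I would work conditionally on $(\ndata Z,\varepsilon)$, so $\ndata X\sim\mathcal N(0,I_n/n)$ is fresh, and decompose $r=r_0+\rho$ with $r_0:=r(\ndata Z\theta+\varepsilon)$ (which does not involve $\ndata X$) and $\rho:=r(\ndata Y)-r_0$, so $T_\textnormal{distilled}=r^\top\ndata X=r_0^\top\ndata X+\rho^\top\ndata X$. Since $r_0$ is a fixed vector with $\|r_0\|_2^2/n\to\lambda^2/\alpha_\lambda^2$ a.s.\ (the argument of Step 1 with $\beta=0$) and is independent of $\ndata X$, $r_0^\top\ndata X\mid(\ndata Z,\varepsilon)\sim\mathcal N(0,\|r_0\|_2^2/n)$, so $r_0^\top\ndata X\cid\mathcal N(0,\lambda^2/\alpha_\lambda^2)$. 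For $\rho^\top\ndata X$: because $\|\rho\|_2\le\|\ndata X\beta\|_2$ by $1$-Lipschitzness and $P_r$ is a.e.\ locally constant in $\ndata X$ (the active set is), the $\ndata X$-gradient of $\rho^\top\ndata X$ is $\rho+(\sqrt n\beta)P_r\ndata X$, whose (conditional) second moment over $1/n$ tends to $0$; Gaussian--Poincar\'e thus gives $\Var(\rho^\top\ndata X\mid\ndata Z,\varepsilon)\to0$, and Stein's identity gives $\e[\rho^\top\ndata X\mid\ndata Z,\varepsilon]=\e[r(\ndata Y)^\top\ndata X\mid\ndata Z,\varepsilon]=\frac{\sqrt n\beta}n\,\e[\trace P_r\mid\ndata Z,\varepsilon]=\sqrt n\beta\,(1-\e[|E|]/n)$, where $|E|/(p-1)\cas\e[\eta'(A;\alpha_\lambda\tau_\lambda)]$ is the asymptotic fraction of nonzero lasso coefficients (the same limit for the noise-$w$ problem as for $\varepsilon$), so $\e[|E|]/n\to c_*$ and $\e[\rho^\top\ndata X\mid\ndata Z,\varepsilon]\to\frac{\sqrt n\beta\lambda}{\alpha_\lambda\tau_\lambda}$; hence $\rho^\top\ndata X-\frac{\sqrt n\beta\lambda}{\alpha_\lambda\tau_\lambda}\cip0$. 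Slutsky then yields $T_\textnormal{distilled}-\frac{\sqrt n\beta\lambda}{\alpha_\lambda\tau_\lambda}\cid\mathcal N(0,\lambda^2/\alpha_\lambda^2)$, first conditionally on a.e.\ $(\ndata Z,\varepsilon)$ and hence unconditionally.

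The step I expect to cost the most is \eqref{equation:AMP-signal}: the asymptotic correlation between residual and noise, which the literature has not computed. The algebraic identity above reduces it to a pseudo-Lipschitz AMP functional, but one must still justify (a) the passage from finite-$t$ AMP to the lasso --- the $\lim_t\lim_n$ exchange together with the fact that the AMP step-$t$ subgradient converges to the genuine lasso subgradient $g$ --- and (b) the discrepancy between the noise level $\sigma^2+h^2/n$ actually present and the $\sigma^2$ in the state-evolution equations. The remaining technical points --- identifying $\lim_t(z^t-c_tz^{t-1})$ with the lasso residual in Step 1, and upgrading $|E|/n\cas c_*$ to $L^1$ (bounded) convergence in Step 3 --- are routine by comparison.
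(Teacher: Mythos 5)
Your computations are correct and both (2) and (3) genuinely depart from the paper's route, so a comparison is warranted.

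\paragraph{Where you agree with the paper.} Step (1) is essentially the paper's argument: identify $\ndata{Y}-\ndata{Z}\theta^t$ with $z^t - c_t z^{t-1}$, use $\|z^t-z^{t-1}\|/\sqrt n\to 0$ to reduce to $(1-c_*)^2\|z^{t-1}\|^2/n\to(1-c_*)^2\tau_\lambda^2$, and invoke the first fixed-point equation. This is the same route.

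\paragraph{Where you differ on (2).} The paper proves $\frac1n(\ndata{Y}-\ndata{Z}\hat\theta_\lambda)^\top\varepsilon'\cas\sigma^2\lambda/(\alpha_\lambda\tau_\lambda)$ by directly invoking Lemma~F.3(d) of \citet{bayati2011lasso} with $\varphi(u,v)=v$, which gives $\langle\varepsilon',z^t\rangle\to\sigma^2$, and then combining with $\ndata{Y}-\ndata{Z}\theta^t=z^t-c_tz^{t-1}$. Your route instead derives the exact KKT identity $r^\top w=\|r\|_2^2+\lambda g^\top(\hat\theta_\lambda-\theta)$, computes the limit of $\frac1{p-1}g^\top(\hat\theta_\lambda-\theta)$ via state evolution of a pseudo-Lipschitz functional, and then uses (1). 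Your algebra is right: $\e[(A-\hat B)(\hat B-B_0)]=-\e[(\hat B-B_0)^2]+\tau_\lambda^2\e[\eta'(A)]=\frac1\kappa(\sigma^2-\lambda\tau_\lambda/\alpha_\lambda)$, and the scaling by $\lambda(p-1)/n\to\lambda\kappa$ together with $\|r\|^2/n\to\lambda^2/\alpha_\lambda^2$ gives the stated limit. The trade-off: the paper's route leans on an already-proved black-box result, whereas yours is more self-contained and exposes exactly which state-evolution averages drive the limit. The price you pay is the passage from the AMP step-$t$ subgradient to the actual lasso subgradient $g$. You flag this, and it is indeed the weakest link: $g$ is not a continuous functional of $\hat\theta_\lambda$ (two solutions close in $\ell_2$ can have different supports, hence different off-support subgradient entries), so the $L^2$ convergence $\|\theta^t-\hat\theta_\lambda\|^2/(p-1)\to0$ from Bayati--Montanari's Theorem~1.8 does not by itself control $g^\top(\hat\theta_\lambda-\theta)$. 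Bayati--Montanari do control a subgradient norm in their Lemma~3.1 (the paper reuses exactly this in the proof of Lemma~\ref{lemma:theorem-BH-distilled}), so the gap is likely closeable along those lines, but it is not routine and would need a dedicated argument. By contrast, the $\sigma^2$ vs.\ $\sigma^2+h^2/n$ issue you raise in (b) is genuinely minor: the effective noise $\varepsilon'=\varepsilon+\beta\ndata X$ has empirical second moment $\to\sigma^2$, which is all the Bayati--Montanari framework requires.

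\paragraph{Where you differ on (3).} The paper computes $\mathcal L(\ndata{X}\,|\,\ndata{Y},\ndata{Z})=\mathcal N\big(\frac{\beta}{n\sigma^2+\beta^2}\varepsilon',\frac{\sigma^2}{n\sigma^2+\beta^2}I\big)$ explicitly, so that once (1) and (2) are in hand the conditional law of $T_\textnormal{distilled}$ is immediate and the CLT follows by Slutsky. You instead condition on $(\ndata{Z},\varepsilon)$ so that $\ndata{X}$ is fresh Gaussian, split $r=r_0+\rho$, get the $\mathcal N(0,\lambda^2/\alpha_\lambda^2)$ fluctuation from $r_0^\top\ndata X$, and control the mean and concentration of $\rho^\top\ndata X$ via Stein's identity and Gaussian--Poincar\'e. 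Notably your CLT does not use (2) at all (the mean is obtained directly from $\trace P_r$), which is conceptually pleasing but means you would still need your KKT argument to prove (2) as an independent conclusion of the lemma. Your Poincar\'e bound is correct ($\nabla(\rho^\top\ndata X)=\rho+\beta P_r\ndata X$, both terms $O_{L^2}(\beta)$, so $\Var\le O(\beta^2/n)$), but the whole argument relies on $r(\ndata Y)$ being a.e.\ differentiable with derivative $P_r$ and on the selected-set cardinality $|E|$ being uniformly integrable after normalization. These points are standard for the lasso (and the latter is easy since $|E|/n\le 1$), but they add regularity overhead that the paper's route sidesteps entirely; the paper's conditional-Gaussian computation is exact, finite-$n$, and requires no differentiability of the solution map. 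Overall your proof is correct in outline and offers an instructive alternative decomposition, but the paper's version of (2) and (3) is tighter precisely because it avoids the subgradient and differentiability subtleties.
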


\begin{proof}[Proof of Lemma~\ref{lemma:theorem-training-loss}]
We assume $\varepsilon_i$'s and $X_i$'s do not change with $n,p$ to satisfy Definition~1 (b) in \citet{bayati2011lasso} by
\begin{equation*}
\|\varepsilon\|_2^2/n\cas\sigma^2\text{ and }\|\beta\ndata{X}\|_2^2/n\cas0.
\end{equation*}
This additional assumption on $\varepsilon_i$'s and $X_i$'s does not change the asymptotic power; in fact, it does not change the power for any fixed pair of $(n,p)$, because the power is a marginal quantity for each pair of $(n,p)$ and does not depend on the relationship of the random variables between different pairs of $(n,p)$'s.

To use the results in \citet{bayati2011lasso}, we again apply the following re-normalization: assume $(\ndata{X},\ndata{Z})$ is divided by $\sqrt n$, $(\beta,\theta)$ is multiplied by $\sqrt n$, and the statistic is $T_\text{distilled}(\ndata{X},\ndata{Y},\ndata{Z})=(\ndata{Y}-\ndata{Z}\hat\theta_\lambda)^\top\ndata{X}$.

Note that in the $\ndata Y$ against $\ndata Z$ regression, we can absorb $\ndata X$ into the error and under the assumption that $\beta$ stays universally bounded, the effective error
\begin{equation*}
    \varepsilon'=\ndata{Y}-\ndata{Z}\theta=\varepsilon+\beta\ndata{X}
\end{equation*}
still has the property that its empirical distribution converges to $\mathcal N(0,\sigma^2)$ and its second moment converges to $\sigma^2$. We first prove \eqref{equation:AMP-variance}. The AMP iteration is
\begin{equation*}
\begin{aligned}
\theta^{t+1}&=\eta(\ndata{Z}^\top z^t+\theta^t;\alpha_\lambda\tau_t),\\
z^t&=\ndata{Y}-\ndata{Z}\theta^t+\kappa z^{t-1}\langle\eta'(\ndata{Z}^\top z^{t-1}+\theta^{t-1};\alpha_\lambda\tau_{t-1})\rangle\\
\tau_{t+1}^2&=\sigma^2+\kappa\e[(\eta(B_0+\tau_t W;\alpha_\lambda\tau_t)-B_0)^2],
\end{aligned}
\end{equation*}
where $\eta'$ is the derivative of $\eta$ and $\langle\cdot\rangle$ means taking the average of the coordinates of a vector. We denote
\begin{equation*}
w_t=\kappa\langle\eta'(\ndata{Z}^\top z^{t-1}+\theta^{t-1};\alpha_\lambda\tau_{t-1})\rangle.
\end{equation*}

% Using the notations in \citet{bayati2011lasso}, \eqref{equation:AMP-variance} translates to
% \begin{equation*}
% \frac{\|y-A\hat x\|_2^2}{n}\cip\frac{\lambda^2}{\alpha_\lambda^2}.
% \end{equation*}
We first see that by the reverse triangle inequality,
\begin{equation*}
\left|\frac{\|\ndata{Y}-\ndata{Z}\hat\theta\|_2}{\sqrt n}-\frac{\|\ndata{Y}-\ndata{Z}\theta^t\|_2}{\sqrt n}\right|\le\frac{\|\ndata{Z}(\theta^t-\hat\theta)\|_2}{\sqrt n}.
\end{equation*}
Note that
\begin{equation*}
\frac{\|\ndata{Z}(\theta^t-\hat\theta)\|_2^2}{n}\le\frac{\sigma_{\max{}}^2(\ndata{Z})\|\theta^t-\hat\theta\|_2^2}{n},
\end{equation*}
where $\sigma_{\max{}}(\ndata{Z})$ is almost surely bounded (see, e.g., Theorem~F.2 in \citet{bayati2011lasso}) and Theorem~1.8 in \citet{bayati2011lasso} states that
\begin{equation*}
\lim_{t\to\infty}\lim_{n\to\infty}\frac{\|\theta^t-\hat\theta\|_2^2}{n}=0\text{ almost surely}.
\end{equation*}
Thus,
\begin{equation*}
\lim_{t\to\infty}\lim_{n\to\infty}\frac{\|\ndata{Z}(\theta^t-\hat\theta)\|_2^2}{n}=0\text{ almost surely}.
\end{equation*}
Now we just have to show
\begin{equation}
\lim_{t\to\infty}\lim_{n\to\infty}\frac{\|\ndata{Y}-\ndata{Z}\theta^t\|_2^2}{n}=\frac{\lambda^2}{\alpha_\lambda^2}\text{ almost surely},
\label{equation:amp-residual}
\end{equation}
which will prove \eqref{equation:AMP-variance}. By definition, $\ndata{Y}-\ndata{Z}\theta^t=z^t-z^{t-1}w_t$. By the reverse triangle inequality,
\begin{equation*}
\left|\frac{\|z^t-z^{t-1}w_t\|_2}{\sqrt n}-\frac{\|z^{t-1}(1-w_t)\|_2}{\sqrt n}\right|\le\frac{\|z^t-z^{t-1}\|_2}{\sqrt n},
\end{equation*}
and the right hand side goes to $0$ as stated by Lemma~4.3 in \citet{bayati2011lasso}. Thus, to prove \eqref{equation:amp-residual}, we can just analyze the limit of $\|z^{t-1}(1-w_t)\|_2^2/n$. Directly by Lemma~4.1 in \citet{bayati2011lasso},
\begin{equation*}
\lim_{t\to\infty}\lim_{n\to\infty}\frac1n\|z^{t-1}\|_2^2=\lim_{t\to\infty}\tau_t^2=\tau_\lambda^2\text{ almost surely}.
\end{equation*}
Almost surely,
% By Equation~(4.11) in \citet{bayati2011lasso} and the bounded convergence theorem,%\footnote{Need to make sure the right hand side is continuous.}
\begin{equation}
\begin{aligned}
\lim_{t\to\infty}\lim_{n\to\infty}w_t&=\lim_{t\to\infty}\kappa\e[\eta'(B_0+\tau_{t-1}W;\alpha_\lambda\tau_{t-1})]&\text{(Equation~(4.11) in \citet{bayati2011lasso})}\\
&=\kappa\e[\eta'(B_0+\tau_\lambda W;\alpha_\lambda\tau_\lambda)]&\text{(bounded convergence theorem)}\\
&=1-\frac\lambda{\alpha_\lambda\tau_\lambda}.&\text{(definition of $\alpha_\lambda$ and $\tau_\lambda$, Equation~\eqref{equation:amp-fixed-point-equation})}
\end{aligned}
\label{equation:wt-limit}
\end{equation}
Combining the above results, we get
\begin{equation*}
\lim_{t\to\infty}\lim_{n\to\infty}\frac{\|z^{t-1}(1-w_t)\|_2^2}{n}=\lim_{t\to\infty}\lim_{n\to\infty}(1-w_t)^2\lim_{t\to\infty}\lim_{n\to\infty}\frac1n\|z^{t-1}\|_2^2=\frac{\lambda^2}{\alpha_\lambda^2}\text{ almost surely}.
\end{equation*}

Now we prove \eqref{equation:AMP-signal}. By (F.12) in Lemma F.3(d) in \citet{bayati2011lasso} (take $\varphi(u,v)=v$, take their $r$ and $s$ to both be $t$, their $w$ is our $\varepsilon'$ and their $b^t$ is our $\varepsilon'-z^t$),
\begin{equation*}
\lim_{n\to\infty}\langle \varepsilon'-z^t,\varepsilon'\rangle=0\Rightarrow\sigma^2=\langle\varepsilon',\varepsilon'\rangle=\lim_{n\to\infty}\langle \varepsilon',z^t\rangle\text{ almost surely}.
\end{equation*}
Thus,
\begin{equation*}
\langle\varepsilon',\ndata{Y}-\ndata{Z}\theta^t\rangle=\langle \varepsilon',z^t- z^{t-1}w^t\rangle\cas\sigma^2-\sigma^2w^t\text{ as }n\to\infty.
\end{equation*}
Combining the above equation with \eqref{equation:wt-limit}, we see that
\begin{equation*}
\lim_{t\to\infty}\lim_{n\to\infty}\langle\varepsilon',\ndata{Y}-\ndata{Z}\theta^t\rangle=\sigma^2\frac{\lambda}{\alpha_\lambda\tau_\lambda}\text{ almost surely}.
\end{equation*}
What we are interested in is the limit of $\langle\varepsilon',\ndata{Y}-\ndata{Z}\hat\theta\rangle$ as $n\to\infty$. Note that
\begin{equation*}
\langle\varepsilon',\ndata{Y}-\ndata{Z}\theta^t\rangle-\langle\varepsilon',\ndata{Y}-\ndata{Z}\hat\theta\rangle=\langle\varepsilon',\ndata{Z}(\hat\theta-\theta^t)\rangle.
\end{equation*}
By the Cauchy--Schwartz inequality,
\begin{equation*}
|\langle\varepsilon',\ndata{Z}(\hat\theta-\theta^t)\rangle|\le\sqrt{\frac{\|\varepsilon'\|_2^2}{n}\frac{\|\ndata{Z}(\theta^t-\hat\theta)\|_2^2}{n}}.
\end{equation*}
Since $\|\varepsilon'\|_2^2/n\cas\sigma^2$ and we have showed $\|\ndata{Z}(\theta^t-\hat\theta)\|_2^2/n\cas0$ (as $n\to\infty$ then $t
\to\infty$), this means
\begin{equation*}
\lim_{n\to\infty}\langle\varepsilon',\ndata{Y}-\ndata{Z}\hat\theta\rangle=\lim_{t\to\infty}\lim_{n\to\infty}\langle\varepsilon',\ndata{Y}-\ndata{Z}\theta^t\rangle=\sigma^2\frac{\lambda}{\alpha_\lambda\tau_\lambda}\text{ almost surely}.
\end{equation*}

Note that
\begin{equation*}
\ndata{X}\mid \ndata{Y},\ndata{Z}\sim\mathcal N\left(\frac{\beta}{n\sigma^2+\beta^2}\varepsilon',\frac{\sigma^2}{n\sigma^2+\beta^2}\right),
\end{equation*}
where we remind the reader that $\varepsilon'=\ndata{Y}-\ndata{Z}^\top\theta=\varepsilon+\beta\ndata{X}$. Hence,
\begin{equation*}
T_\textnormal{distilled}(\ndata{X},\ndata{Y},\ndata{Z})\mid \ndata{Y},\ndata{Z}\sim\mathcal N\left(\frac{\beta}{n\sigma^2+\beta^2}(\ndata{Y}-\ndata{Z}\hat\theta_\lambda)^\top\varepsilon',\frac{\sigma^2}{n\sigma^2+\beta^2}\|\ndata{Y}-\ndata{Z}\hat\theta_\lambda\|_2^2\right)
\end{equation*}
Now it is clear that
\begin{equation*}
T_\textnormal{distilled}(\ndata{X},\ndata{Y},\ndata{Z})-\frac{\beta\lambda}{\alpha_\lambda\tau_\lambda}\cid\mathcal N\left(0,\frac{\lambda^2}{\alpha_\lambda^2}\right).
\end{equation*}
\end{proof}

\noindent\textbf{Theorem~\ref{theorem:prop-model-X-unlabeled}.} \textit{In Setting~\ref{model:moderate-dim-lr} with $\xi$ and $\Var(X\,|\,Z)$ unknown but fixed to be $1$, if there are $m$ additional data points $(X_i,Z_i)_{i=n+1}^{n+m}$, $n_*=n+m$, $n/n_*\to\kappa_*$ and $\kappa\kappa_*<1$, then the conditional CRT with statistic $T_\textnormal{MC}$ has asymptotic power 
lower-bounded (the $\liminf$ is lower-bounded) by that of a $z$-test with standardized effect size
\[ \frac{h\sqrt{1-\kappa\kappa_*}}{\sqrt{\sigma^2+v_Z^2\frac{1}{{1-\kappa\kappa_*}}}} \]
and upper-bounded (the $\limsup$ is upper-bounded) by that of a $z$-test with standardized effect size
\[ \frac{h\sqrt{1-\kappa\kappa_*}}{\sqrt{\sigma^2+v_Z^2\max\left(0,\frac{1-\frac{(1+\sqrt{1/\kappa})^2}{(1-\sqrt{\kappa\kappa_*})^2}\kappa\kappa_*}{1-\kappa\kappa_*}\right)}}. \]}

\begin{proof}[Proof of Theorem~\ref{theorem:prop-model-X-unlabeled}]
This proof uses some results and notation in the detailed introduction of the conditional CRT in Appendix~\ref{sec:conditional-crt} and should be read after that.

We first show that in our asymptotic regime, we can assume $\Var(X\,|\,Z)$ is known. Then we analyze the asymptotic power assuming $\Var(X\,|\,Z)$ is known.
\paragraph{Knowledge of $\Var(X\,|\,Z)$.}
We show that we could assume $\Var(X\,|\,Z)$ is known by making the following claim: suppose we obtain a cutoff without knowing $\Var(X\,|\,Z)$ and another oracle cutoff with the knowledge of $\Var(X\,|\,Z)$ and then we proceed to use the two cutoffs to perform the CRT with the same test statistic. The two decisions differ if and only if the test statistic falls between the two cutoffs, and we claim the probability of this happening goes to $0$.

By the conditional nature, the following modified statistic is equivalent when used for the CRT.
\begin{equation*}
T_\text{modified}(\ndata{Y},\ndata{X}_*,\ndata{Z}_*)=\frac{T_\text{MC}^\text{ess}(\ndata{Y},\ndata{X}_*,\ndata{Z}_*)}{\sqrt{\ndata{Y}^\top I_{n\times n_*} A_{\ndata{Z}_*}A_{\ndata{Z}_*}^\top I_{n_*\times n}\ndata{Y}}}=\frac{\ndata{Y}^\top I_{n\times n_*}A_{\ndata{Z}_*}{A_{\ndata{Z}_*}^\top\varepsilon^X_*}{}}{\sqrt{\ndata{Y}^\top I_{n\times n_*} A_{\ndata{Z}_*}A_{\ndata{Z}_*}^\top I_{n_*\times n}\ndata{Y}}}.
\end{equation*}
If we know $\Var(X\,|\,Z)=1$, the test is simply $T_\text{modified}\ge z_{1-\alpha}$. When we do not know $\Var(X\,|\,Z)$, the test can be done by replacing $z_{1-\alpha}$ with the $\alpha$-upper quantile of
\begin{equation*}
\mathcal L\left(\frac{\ndata{Y}^\top I_{n\times n_*}A_{\ndata{Z}_*}\|A_{\ndata{Z}_*^\top}\varepsilon^X_*\|\frac{W}{\|W\|}}{\sqrt{\ndata{Y}^\top I_{n\times n_*} A_{\ndata{Z}_*}A_{\ndata{Z}_*}^\top I_{n_*\times n}\ndata{Y}}}\mid\ndata{Y},\ndata{Z}_*,\|A_{\ndata{Z}_*}^\top\varepsilon_*^X\|\right), W\text{ is independent }\mathcal N\left(0,I_{n_*-p}\right),
\label{equation:ols-model-x-no-tau}
\end{equation*}
which we denote by $\hat c_\alpha^n$. Evidently, we are interested in the limiting behavior of
\begin{equation*}
\p_{\beta=h/\sqrt n}\left(T_\text{modified}(\ndata{Y},\ndata{X}_*,\ndata{Z}_*)\in\left(\min(z_{1-\alpha},\hat c_\alpha^n),\max(z_{1-\alpha},\hat c_\alpha^n)\right)\right),
\end{equation*}
which we will show goes to $0$.

Since
\begin{equation*}
\mathcal L\left(\frac{\ndata{Y}^\top I_{n\times n_*}A_{\ndata{Z}_*}\frac{\|A_{\ndata{Z}_*^\top}\varepsilon^X_*\|}{\sqrt{n_*-p}}{W}{}}{\sqrt{\ndata{Y}^\top I_{n\times n_*} A_{\ndata{Z}_*}A_{\ndata{Z}_*}^\top I_{n_*\times n}\ndata{Y}}}\mid\ndata{Y},\ndata{Z}_*,\|A_{\ndata{Z}_*}^\top\varepsilon_*^X\|\right)=\mathcal N\left(0,\underbrace{\frac{\|A_{\ndata{Z}_*}^\top\varepsilon_*^X\|^2}{n_*-p}}_{\cip1}\right)
\end{equation*}
and
\begin{equation*}
\mathcal L\left(\frac{\sqrt{n_*-p}}{\|W\|}\mid\ndata{Y},\ndata{Z}_*,\|A_{\ndata{Z}_*}^\top\varepsilon_*^X\|\right)=\underbrace{\sqrt{n_*-p}\cdot\text{Inv-}\chi^2_{n_*-p}}_{\cip1},
\end{equation*}
we can use Lemma~\ref{lemma:conditional-CDF} and Lemma 11.2.1 (ii) in \citet{lehmann2006testing} to establish that $\hat c_\alpha^n$ converges to $z_{1-\alpha}$ in probability (note that by this analysis, the statement is true under both the null and the alternative sequence). Now for any $\delta>0$,
\begin{multline*}
\left\{T_\text{modified}(\ndata{Y},\ndata{X}_*,\ndata{Z}_*)\in\left(\min(z_{1-\alpha},\hat c_\alpha^n),\max(z_{1-\alpha},\hat c_\alpha^n)\right)\right\}\\
\subseteq\left\{|z_{1-\alpha}-\hat c^n_\alpha|>\delta\right\}\cup\left\{T_\text{modified}(\ndata{Y},\ndata{X}_*,\ndata{Z}_*)\in\left(z_{1-\alpha}-\delta, z_{1-\alpha}+\delta\right)\right\}.
\end{multline*}
Under $\beta=h/\sqrt n$, by calculating $\mathcal L(\varepsilon^X_*\,|\,\ndata Y,\ndata Z_*)$, we get
\begin{equation*}
\begin{aligned}
T_{\text{modified}}(\ndata{Y},\ndata{X}_*,\ndata{Z}_*)\mid \ndata{Y},\ndata{Z}_*&\sim\mathcal N(\mu_n(\ndata{Y},\ndata{Z}_*),\sigma^2_n(\ndata{Y},\ndata{Z}_*)),\text{ where}\\
\mu_n(\ndata{Y},\ndata{Z}_*)&=\frac{h}{\sigma^2+h^2/n}\frac{\ndata{Y}^\top(I-\ndata{Z}(\ndata{Z}_*^\top \ndata{Z}_*)^{-1}\ndata{Z}^\top)(\ndata{Y}-\ndata{Z}(\theta+\beta\eta))}{\sqrt{n}\sqrt{\ndata{Y}^\top(I-\ndata{Z}(\ndata{Z}_*^\top \ndata{Z}_*)^{-1}\ndata{Z}^\top)\ndata{Y}}},\\
\sigma^2_n(\ndata{Y},\ndata{Z}_*)&=\frac{\sigma^2}{\sigma^2+h^2/n}+\frac{h^2/n}{h^2/n+\sigma^2}\frac{\ndata{Y}^\top \ndata{Z}(\ndata{Z}_*^\top \ndata{Z}_*)^{-1}\ndata{Z}^\top \ndata{Y}}{\ndata{Y}^\top(I-\ndata{Z}(\ndata{Z}_*^\top \ndata{Z}_*)^{-1}\ndata{Z}^\top)\ndata{Y}}\\
&\quad\quad\quad-\frac{h^2/n}{\sigma^2+h^2/n}\frac{\ndata{Y}^\top \ndata{Z}(\ndata{Z}_*^\top \ndata{Z}_*)^{-1}\ndata{Z}^\top \ndata{Z}(\ndata{Z}_*^\top \ndata{Z}_*)^{-1}\ndata{Z}^\top \ndata{Y}}{\ndata{Y}^\top(I-\ndata{Z}(\ndata{Z}_*^\top \ndata{Z}_*)^{-1}\ndata{Z}^\top)\ndata{Y}}.
\end{aligned}
\end{equation*}
Note that
\begin{equation*}
\begin{aligned}
&\quad\ \ndata{Y}^\top \ndata{Z}(\ndata{Z}_*^\top \ndata{Z}_*)^{-1}\ndata{Z}^\top \ndata{Y}-\ndata{Y}^\top \ndata{Z}(\ndata{Z}_*^\top \ndata{Z}_*)^{-1}\ndata{Z}^\top \ndata{Z}(\ndata{Z}_*^\top \ndata{Z}_*)^{-1}\ndata{Z}^\top \ndata{Y}\\
&=\ndata{Y}^\top \ndata{Z}(\ndata{Z}_*^\top \ndata{Z}_*)^{-1}\ndata{Z}_*^\top \ndata{Z}_*(\ndata{Z}_*^\top \ndata{Z}_*)^{-1}\ndata{Z}^\top \ndata{Y}-\ndata{Y}^\top \ndata{Z}(\ndata{Z}_*^\top \ndata{Z}_*)^{-1}\ndata{Z}^\top \ndata{Z}(\ndata{Z}_*^\top \ndata{Z}_*)^{-1}\ndata{Z}^\top \ndata{Y}\\
&=\ndata{Y}^\top \ndata{Z}(\ndata{Z}_*^\top \ndata{Z}_*)^{-1}(\ndata Z_*^\top\ndata Z_*-\ndata{Z}^\top \ndata{Z})(\ndata{Z}_*^\top \ndata{Z}_*)^{-1}\ndata{Z}^\top \ndata{Y}\\
&=\ndata{Y}^\top \ndata{Z}(\ndata{Z}_*^\top \ndata{Z}_*)^{-1}\left(\sum_{i=n+1}^{n+m}\ndata Z_i\ndata{Z}_i^\top\right)(\ndata{Z}_*^\top \ndata{Z})^{-1}\ndata{Z}^\top \ndata{Y}\ge0,
\end{aligned}
\end{equation*}
where $\ndata Z_i$ is the $i$th row of $\ndata{Z}_*$ as a column vector. Therefore, we see that $\sigma^2_n(\ndata{Y},\ndata{Z}_*)\ge\sigma^2/(\sigma^2+h^2/n)$, so the conditional density of $\mathcal L(T_\text{modified}\mid\ndata{Y},\ndata{Z}_*)$ is upper bounded by $\sqrt{\sigma^2+h^2/n}/\sqrt{2\pi\sigma^2}$. Thus,
\begin{equation*}
\begin{aligned}
&\p_{\beta=h/\sqrt n}\left(T_\text{modified}(\ndata{Y},\ndata{X}_*,\ndata{Z}_*)\in\left(z_{1-\alpha}-\delta,z_{1-\alpha}+\delta\right)\right)\\
&=\e_{\beta=h/\sqrt n}\left[\p_{\beta=h/\sqrt n}\left(T_\text{modified}(\ndata{Y},\ndata{X}_*,\ndata{Z}_*)\in\left(z_{1-\alpha}-\delta, z_{1-\alpha}+\delta\right)\mid\ndata{Y},\ndata{Z}_*\right)\right]\\
&\le\e_{\beta=h/\sqrt n}\left[\frac{2\delta\sqrt{\sigma^2+h^2/n}}{\sqrt{2\pi\sigma^2}}\right]\le\frac{2\delta\sqrt{\sigma^2+h^2/n}}{\sqrt{2\pi\sigma^2}}.
\end{aligned}
\end{equation*}
We then obtain
\begin{multline*}
\p_{\beta=h/\sqrt n}\left(T_\text{modified}(\ndata{Y},\ndata{X}_*,\ndata{Z}_*)\in\left(\min(z_{1-\alpha},\hat c_\alpha^n),\max(z_{1-\alpha},\hat c_\alpha^n)\right)\right)\\
\le\underbrace{\p_{\beta=h/\sqrt n}\left(|z_{1-\alpha}-\hat c^n_\alpha|>\delta\right)}_{\to0}+\underbrace{\p_{\beta=h/\sqrt n}\left(T_\text{modified}(\ndata{Y},\ndata{X}_*,\ndata{Z}_*)\in\left(z_{1-\alpha}-\delta, z_{1-\alpha}+\delta\right)\right)}_{\le\frac{2\delta\sqrt{\sigma^2+h^2/n}}{\sqrt{2\pi\sigma^2}}\to\frac{2\delta}{\sqrt{2\pi}}}
\end{multline*}
and hence
\begin{equation*}
\limsup\p_{\beta=h/\sqrt n}\left(T_\text{modified}(\ndata{Y},\ndata{X}_*,\ndata{Z}_*)\in\left(\min(z_{1-\alpha},\hat c_\alpha^n),\max(z_{1-\alpha},\hat c_\alpha^n)\right)\right)\le\frac{2\delta}{\sqrt{2\pi}}.
\end{equation*}
Let $\delta\to0$ and the claim is proved.

\paragraph{Analysis of power assuming $\Var(X\,|\,Z)=1$ is known.}

Since we condition on $\ndata Y$ in the model-X framework, it would be equivalent to consider
\begin{equation*}
T_{\text{model-X}}(\ndata{Y},\ndata{X}_*,\ndata{Z}_*)=\ndata{Y}^\top(\ndata{X}-\ndata{Z}(\ndata{Z}_*^\top \ndata{Z}_*)^{-1}\ndata{Z}_*^\top \ndata{X}_*)/\|\ndata{Y}\|.
\end{equation*}
% For presentational convenience, we introduce new parameters $n^*$, $\eta$ and $\tau^2$ such that $n^*=n+m$ and $X\mid Z\sim\mathcal N(Z\eta,\tau^2)$.
By straightforward calculation,
\begin{equation*}
\begin{aligned}
T_{\text{model-X}}(\ndata{Y},\ndata{X}_*,\ndata{Z}_*)\mid \ndata{Y},\ndata{Z}_*&\sim\mathcal N(\mu_\beta(\ndata{Y},\ndata{Z}_*),\sigma^2_\beta(\ndata{Y},\ndata{Z}_*)),\text{ where}\\
\mu_\beta(\ndata{Y},\ndata{Z}_*)&=\frac{\beta}{\sigma^2+\beta^2}\frac{\ndata{Y}^\top(I-\ndata{Z}(\ndata{Z}_*^\top \ndata{Z}_*)^{-1}\ndata{Z}^\top)(\ndata{Y}-\ndata{Z}(\theta+\beta\eta))}{\|\ndata{Y}\|},\\
\sigma^2_\beta(\ndata{Y},\ndata{Z}_*)&=\frac{\sigma^2}{\sigma^2+\beta^2}+\frac{\beta^2-\sigma^2}{\beta^2+\sigma^2}\frac{\ndata{Y}^\top \ndata{Z}(\ndata{Z}_*^\top \ndata{Z}_*)^{-1}\ndata{Z}^\top \ndata{Y}}{\|\ndata{Y}\|^2}\\
&\quad\quad\quad-\frac{\beta^2}{\sigma^2+\beta^2}\frac{\ndata{Y}^\top \ndata{Z}(\ndata{Z}_*^\top \ndata{Z}_*)^{-1}\ndata{Z}^\top \ndata{Z}(\ndata{Z}_*^\top \ndata{Z}_*)^{-1}\ndata{Z}^\top \ndata{Y}}{\|\ndata{Y}\|^2}.
\end{aligned}
\end{equation*}
Similarly, we have
\begin{equation*}
T_{\text{model-X}}(\ndata{Y},\tilde{\ndata{X}}_*,\ndata{Z}_*)\mid \ndata{Y},\ndata{Z}_*\sim\mathcal N\left(0,\sigma^2_0(\ndata{Y},\ndata{Z}_*)=\left(1-\frac{\ndata{Y}^\top \ndata{Z}(\ndata{Z}_*^\top \ndata{Z}_*)^{-1}\ndata{Z}^\top \ndata{Y}}{\|\ndata{Y}\|^2}\right)\right),
\end{equation*}
and thus we reject if $T_{\text{model-X}}(\ndata{Y},\ndata{X}_*,\ndata{Z}_*)\ge z_{1-\alpha}\sqrt{\sigma^2_0(\ndata Y,\ndata Z_*)}$.
Under $\beta=h/\sqrt n$, the power is
\begin{equation}
\begin{aligned}
&\p_{\beta=h/\sqrt n}\left(T_{\text{model-X}}(\ndata{Y},\ndata{X}_*,\ndata{Z}_*)\ge z_{1-\alpha}\sqrt{\sigma_0^2(\ndata{Y},\ndata{Z}_*)}\right)\\
&\quad=\e\left[\p_{\beta=h/\sqrt n}\left(T_{\text{model-X}}(\ndata{Y},\ndata{X}_*,\ndata{Z}_*)\ge z_{1-\alpha}\sqrt{\sigma_0^2(\ndata{Y},\ndata{Z}_*)}\mid\ndata{Y},\ndata{Z}_*\right)\right]\\
&\quad=\e\left[1-\Phi\left(\frac{z_{1-\alpha}\sqrt{\sigma_0^2(\ndata{Y},\ndata{Z}_*)}-\mu_{h/\sqrt n}(\ndata{Y},\ndata{Z}_*)}{\sqrt{\sigma^2_{h/\sqrt n}(\ndata{Y},\ndata{Z}_*)}}\right)\right].
\end{aligned}
\label{equation:ols-unlabeled-power}
\end{equation}

% Conjecture: the power is
% \begin{equation*}
% 1-\Phi(z_{1-\alpha}-\frac{h\tau\sqrt{1-p/n_*}}{\sqrt{\sigma^2+\theta^\top\Sigma\theta(1-n/n_*)}}).
% \end{equation*}

\paragraph{Mean term.}
We first look at the term
\begin{equation*}
\mu_{\beta}(\ndata{Y},\ndata{Z}_*)=\frac{\beta}{\sigma^2+\beta^2}\frac{\ndata{Y}^\top(I-\ndata{Z}(\ndata{Z}_*^\top \ndata{Z}_*)^{-1}\ndata{Z}^\top)(\ndata{Y}-\ndata{Z}(\theta+\beta\eta))}{\|\ndata{Y}\|}.
\end{equation*}

% By the Woodbury matrix identity,
% \begin{equation*}
% (\ndata{Z}_*^\top \ndata{Z}_*)^{-1}=(\ndata{Z}^\top \ndata{Z})^{-1}-(\ndata{Z}^\top \ndata{Z})^{-1}\ndata{Z}_-^\top(I+\ndata{Z}_-(\ndata{Z}^\top \ndata{Z})^{-1}\ndata{Z}_-^\top)^{-1}\ndata{Z}_-(\ndata{Z}^\top \ndata{Z})^{-1}.
% \end{equation*}
Let $\epsilon=\ndata{Y}-\ndata{Z}(\theta+\beta\eta)$ be the residue vector that is independent of $\ndata Z$.% We then have the following decomposition:
% \begin{equation*}
% \begin{aligned}
% &\ndata{Y}^\top(I-\ndata{Z}(\ndata{Z}_*^\top \ndata{Z}_*)^{-1}\ndata{Z}^\top)(\ndata{Y}-\ndata{Z}(\theta+\beta\eta))\\
% &=\epsilon^\top(I-\ndata{Z}(\ndata{Z}_*^\top \ndata{Z}_*)^{-1}\ndata{Z}^\top)(\ndata{Z}(\theta+\beta\eta)+\epsilon)\\
% &=\epsilon^\top(I-\ndata{Z}(\ndata{Z}_*^\top \ndata{Z}_*)^{-1}\ndata{Z}^\top)\epsilon-\epsilon^\top \ndata{Z}(I-(\ndata{Z}_*^\top \ndata{Z}_*)^{-1}\ndata{Z}^\top \ndata{Z}))(\theta+\beta\eta)\\
% &=\epsilon^\top(I-\ndata{Z}(\ndata{Z}_*^\top \ndata{Z}_*)^{-1}\ndata{Z}^\top)\epsilon-\epsilon^\top \ndata{Z}(\ndata{Z}^\top \ndata{Z})^{-1}\ndata{Z}_-^\top(I+\ndata{Z}_-(\ndata{Z}^\top \ndata{Z})^{-1}\ndata{Z}_-^\top)^{-1}\ndata{Z}_-(\theta+\beta\eta)\\
% &=\epsilon^\top\epsilon-\epsilon^\top(\ndata{Z}(\ndata{Z}_*^\top \ndata{Z}_*)^{-1}\ndata{Z}^\top)\epsilon-\epsilon^\top \ndata{Z}(\ndata{Z}^\top \ndata{Z})^{-1}\ndata{Z}_-^\top(I+\ndata{Z}_-(\ndata{Z}^\top \ndata{Z})^{-1}\ndata{Z}_-^\top)^{-1}\ndata{Z}_-(\theta+\beta\eta).
% \end{aligned}
% \end{equation*}
\begin{equation*}
\begin{aligned}
&\ndata{Y}^\top(I-\ndata{Z}(\ndata{Z}_*^\top \ndata{Z}_*)^{-1}\ndata{Z}^\top)(\ndata{Y}-\ndata{Z}(\theta+\beta\eta))\\
&=\epsilon^\top(I-\ndata{Z}(\ndata{Z}_*^\top \ndata{Z}_*)^{-1}\ndata{Z}^\top)(\ndata{Z}(\theta+\beta\eta)+\epsilon)\\
&=\epsilon^\top(I-\ndata{Z}(\ndata{Z}_*^\top \ndata{Z}_*)^{-1}\ndata{Z}^\top)\epsilon+\epsilon^\top(I-\ndata{Z}(\ndata{Z}_*^\top \ndata{Z}_*)^{-1}\ndata{Z}^\top)\ndata{Z}(\theta+\beta\eta)\\
%&=\epsilon^\top(I-\ndata{Z}(\ndata{Z}_*^\top \ndata{Z}_*)^{-1}\ndata{Z}^\top)\epsilon-\epsilon^\top \ndata{Z}(\ndata{Z}^\top \ndata{Z})^{-1}\ndata{Z}_-^\top(I+\ndata{Z}_-(\ndata{Z}^\top \ndata{Z})^{-1}\ndata{Z}_-^\top)^{-1}\ndata{Z}_-(\theta+\beta\eta)\\
&=\epsilon^\top\epsilon-\epsilon^\top(\ndata{Z}(\ndata{Z}_*^\top \ndata{Z}_*)^{-1}\ndata{Z}^\top)\epsilon+\epsilon^\top(I-\ndata{Z}(\ndata{Z}_*^\top \ndata{Z}_*)^{-1}\ndata{Z}^\top)\ndata{Z}(\theta+\beta\eta).
\end{aligned}
\end{equation*}
We normalize the above expression by $n$ and study each term. Note that we are under $\beta=h/\sqrt n$.
\begin{enumerate}
\item Since $\epsilon^\top\epsilon\sim(\sigma^2+\beta^2)\chi_n^2$, $n^{-1}\epsilon^\top\epsilon\cip\sigma^2$.

\item Let $A_i=(\ndata{Z}_*^\top \ndata{Z}_*)^{-1}\ndata{Z}_i\ndata{Z}_i^\top$, where $\ndata{Z}_i$ is the $i$th row of $\ndata{Z}_*$ as a column vector. Since all $A_i$'s are exchangeable and $\sum_{i=1}^{n_*}A_i=I_p$, $\e[A_i]=I_p/n_*$.
\begin{equation*}
\begin{aligned}
\e[n^{-1}\epsilon^\top(\ndata{Z}(\ndata{Z}_*^\top \ndata{Z}_*)^{-1}\ndata{Z}^\top)\epsilon]&=\frac{\sigma^2+\beta^2}{n}\e[\trace(\ndata{Z}(\ndata{Z}_*^\top \ndata{Z}_*)^{-1}\ndata{Z}^\top)]\\
&=\frac{\sigma^2+\frac1nh^2}{n}\e[\trace((\ndata{Z}_*^\top \ndata{Z}_*)^{-1}(\ndata{Z}^\top \ndata{Z})]\\
&=\frac{\sigma^2+\frac1nh^2}{n}\trace(\e[(\ndata{Z}_*^\top \ndata{Z}_*)^{-1}(\ndata{Z}^\top \ndata{Z})])\\
&=\frac{\sigma^2+\frac1nh^2}{n}\trace\left(\e\left[\sum_{j=1}^nA_j\right]\right)\\
&=\left(\sigma^2+\frac1nh^2\right)\trace(I_p/n_*)\\
&=\left(\sigma^2+\frac1nh^2\right)\frac{p}{n_*}\to\sigma^2\kappa\kappa_*.
\end{aligned}
\end{equation*}
% \begin{equation*}
% \begin{aligned}
% \e[\epsilon^\top(\ndata{Z}(\ndata{Z}_*^\top \ndata{Z}_*)^{-1}\ndata{Z}^\top)\epsilon]
% \end{aligned}
% \end{equation*}

Note that
\begin{equation*}
\begin{aligned}
0&=\Var\left(\trace(\ndata{Z}_*(\ndata{Z}_*^\top \ndata{Z}_*)^{-1}\ndata{Z}_*^\top)\right)\\
&=\Var\left(\trace\left(\sum_{i=1}^{n_*}A_i\right)\right)\\
&=n_*\Var(\trace(A_1))+n_*(n_*-1)\Cov(\trace(A_1),\trace(A_2))\\
&\quad\Rightarrow\Cov(\trace(A_1),\trace(A_2))=-\frac1{n_*-1}\Var(\trace(A_1)).
\end{aligned}
\end{equation*}

Thus, we have
\begin{equation*}
\begin{aligned}
\frac{\Var\left(\e[n^{-1}\epsilon^\top(\ndata{Z}(\ndata{Z}_*^\top \ndata{Z}_*)^{-1}\ndata{Z}^\top)\epsilon\mid\ndata{Z}_*]\right)}{\sigma^2+h^2/n}&=n^{-2}\Var\left(\trace(\ndata{Z}(\ndata{Z}_*^\top \ndata{Z}_*)^{-1}\ndata{Z}^\top)\right)\\
&=n^{-2}\Var\left(\trace\left(\sum_{i=1}^nA_i\right)\right)\\
&=n^{-1}\Var(\trace(A_1))+n^{-1}(n-1)\Cov(\trace(A_1),\trace(A_2))\\
&=n^{-1}\Var(\trace(A_1))+n^{-1}(n-1)\left(-\frac1{n_*-1}\Var(\trace(A_1))\right)\\
&=\frac{n_*-n}{n(n_*-1)}\Var(\trace(A_1))\le\frac{n_*-n}{n(n_*-1)}\to0,
\end{aligned}
\end{equation*}
where we use $\trace(A_1)=\lambda_\text{max}(\ndata{Z}_1^\top(\ndata{Z}_*^\top \ndata{Z}_*)^{-1}\ndata{Z}_1)\le1$. Too see this, note that $\ndata{Z}_1^\top(\ndata{Z}_*^\top \ndata{Z}_*)^{-1}\ndata{Z}_1$ is the $(1,1)$-entry of $\ndata{Z}_*(\ndata{Z}_*^\top \ndata{Z}_*)^{-1}\ndata{Z}_*^\top$, so $\lambda_\text{max}(\ndata{Z}_1^\top(\ndata{Z}_*^\top \ndata{Z}_*)^{-1}\ndata{Z}_1)\le\lambda_\text{max}(\ndata{Z}_*(\ndata{Z}_*^\top \ndata{Z}_*)^{-1}\ndata{Z}_*^\top)=1$ (a projection matrix).

Recall the variance formula for Gaussian quadratic forms \citep{rencher2008linear}, i.e., if $W\sim\mathcal N(\mu, \Sigma)$, then
\begin{equation*}
\Var(W^\top\Lambda W)=2\trace(\Lambda\Sigma\Lambda\Sigma)+4\mu^\top\Lambda\Sigma\Lambda\mu.
\end{equation*}
Thus, we have
\begin{equation*}
\begin{aligned}
\frac{\e\left[\Var\left(n^{-1}\epsilon^\top(\ndata{Z}(\ndata{Z}_*^\top \ndata{Z}_*)^{-1}\ndata{Z}^\top)\epsilon\mid \ndata{Z}_*\right)\right]}{(\sigma^2+h^2/n)^2}&=2n^{-2}\e\left[\trace\left(\left(\ndata{Z}(\ndata{Z}_*^\top \ndata{Z}_*)^{-1}\ndata{Z}^\top\right)^2\right)\right]\\
&\le2n^{-2}\e\left[\lambda_\text{max}(\ndata{Z}(\ndata{Z}_*^\top \ndata{Z}_*)^{-1}\ndata{Z}^\top)^2\trace(I_p)\right]\\
&\le2n^{-2}p\to0.
\end{aligned}
\end{equation*}
Here,
\begin{equation}
\begin{aligned}
&\lambda_\text{max}(\ndata{Z}(\ndata{Z}_*^\top \ndata{Z}_*)^{-1}\ndata{Z}^\top)\le\lambda_\text{max}\left(\ndata{Z}_*(\ndata{Z}_*^\top \ndata{Z}_*)^{-1}\ndata{Z}_*^\top\right)=1,
\label{equation:lambda-max}
\end{aligned}
\end{equation}
because $\ndata{Z}(\ndata{Z}_*^\top \ndata{Z}_*)^{-1}\ndata{Z}^\top$ is the matrix of the first $n$ rows and $n$ columns of $\ndata{Z}_*(\ndata{Z}_*^\top \ndata{Z}_*)^{-1}\ndata{Z}_*^\top$.
To sum up,
\begin{equation*}
\frac1n\epsilon^\top(\ndata{Z}(\ndata{Z}_*^\top \ndata{Z}_*)^{-1}\ndata{Z}^\top)\epsilon\cip\sigma^2\kappa\kappa_*.
\end{equation*}

\item Trivially,
\begin{equation*}
\e\left[\epsilon^\top(I-\ndata{Z}(\ndata{Z}_*^\top \ndata{Z}_*)^{-1}\ndata{Z}^\top)\ndata{Z}(\theta+\beta\eta)\mid\ndata{Z}_*\right]=0.
\end{equation*}

% \begin{equation*}
% \e[\epsilon^\top Z(\ndata{Z}^\top\ndata{Z})^{-1}Z_-^\top(I+Z_-(\ndata{Z}^\top\ndata{Z})^{-1}Z_-^\top)^{-1}Z_-(\theta+\beta\eta)]=0.
% \end{equation*}
As for the variance,
\begin{equation*}
\begin{aligned}
\frac{\Var\left[\epsilon^\top(I-\ndata{Z}(\ndata{Z}_*^\top \ndata{Z}_*)^{-1}\ndata{Z}^\top)\ndata{Z}(\theta+\beta\eta)\mid\ndata{Z}_*\right]}{\sigma^2+h^2/n}&=(\theta+\beta\eta)^\top\ndata{Z}^\top(I-\ndata{Z}(\ndata{Z}_*^\top \ndata{Z}_*)^{-1}\ndata{Z}^\top)^2\ndata{Z}(\theta+\beta\eta)\\
% &\le\lambda_\text{max}\left((I+\ndata{Z}_-(\ndata{Z}^\top \ndata{Z})^{-1}\ndata{Z}_-^\top)^{-1}\ndata{Z}_-(\ndata{Z}^\top \ndata{Z})^{-1}\ndata{Z}_-^\top(I+\ndata{Z}_-(\ndata{Z}^\top \ndata{Z})^{-1}\ndata{Z}_-^\top)^{-1}\right)(\theta+\beta\eta)^\top \ndata{Z}_-^\top \ndata{Z}_-(\theta+\beta\eta)\\
% &\le\lambda_\text{max}\left((I+\ndata{Z}_-(\ndata{Z}^\top \ndata{Z})^{-1}\ndata{Z}_-^\top)^{-1}\right)(\theta+\beta\eta)^\top \ndata{Z}_-^\top \ndata{Z}_-(\theta+\beta\eta)\\
&\le(\theta+\beta\eta)^\top \ndata{Z}^\top \ndata{Z}(\theta+\beta\eta).
\end{aligned}
\end{equation*}

Then we have
\begin{equation*}
\begin{aligned}
&\e\left(\Var\left[\epsilon^\top(I-\ndata{Z}(\ndata{Z}_*^\top \ndata{Z}_*)^{-1}\ndata{Z}^\top)\ndata{Z}(\theta+\beta\eta)\mid\ndata{Z}_*\right]\right)\\
&\le n^{-2}(\sigma^2+h^2/n)(\theta+\beta\eta)^\top\e\left[\ndata{Z}^\top \ndata{Z}\right](\theta+\beta\eta)\\
&=\frac{\sigma^2+h^2/n}{n}(\theta+\beta\eta)^\top\Sigma_Z(\theta+\beta\eta)\to0,
\end{aligned}
\end{equation*}
using $\theta^\top\Sigma_Z\theta\to v_Z^2<\infty$, $\beta=h/\sqrt n$, $\eta^\top\Sigma_Z\eta$ bounded and $\theta^\top\Sigma_Z\eta\le\sqrt{\theta^\top\Sigma_Z\theta\cdot\eta^\top\Sigma_Z\eta}$.
\end{enumerate}
We have established
\begin{equation*}
\frac1n\ndata{Y}^\top(I-\ndata{Z}(\ndata{Z}_*^\top \ndata{Z}_*)^{-1}\ndata{Z}^\top)(\ndata{Y}-\ndata{Z}(\theta+\beta\eta))\cip\sigma^2(1-\kappa\kappa_*).
\end{equation*}
On the other hand, since
\begin{equation*}
Y_i\simiid\mathcal N\left(0,\frac{h^2}{n}+(\theta+h\eta/\sqrt n)^\top\Sigma_Z(\theta+h\eta/\sqrt n)+\sigma^2\right),
\end{equation*}
we can use \eqref{equation:mc-to-show} to get $\|\ndata{Y}\|^2/n\cip v_Z^2+\sigma^2$.
Now we can see that
\begin{equation*}
\mu_{h/\sqrt n}(\ndata{Y},\ndata{Z}_*)=\frac{h}{\sigma^2+\frac1nh^2}\frac{\frac1n\ndata{Y}^\top(I-\ndata{Z}(\ndata{Z}_*^\top \ndata{Z}_*)^{-1}\ndata{Z}^\top)(\ndata{Y}-\ndata{Z}(\theta+\beta\eta))}{\frac1{\sqrt n}\|\ndata{Y}\|}\cip\frac{h(1-\kappa\kappa_*)}{\sqrt{v_Z^2+\sigma^2}}.
\end{equation*}

\paragraph{Variance term.} Next, we look at
\begin{equation*}
\begin{aligned}
\sigma^2_\beta(\ndata{Y},\ndata{Z}_*)&=\frac{\sigma^2}{\sigma^2+\beta^2}+\frac{\beta^2-\sigma^2}{\beta^2+\sigma^2}\frac{\ndata{Y}^\top \ndata{Z}(\ndata{Z}_*^\top \ndata{Z}_*)^{-1}\ndata{Z}^\top \ndata{Y}}{\|\ndata{Y}\|^2}\\
&\quad\quad\quad-\frac{\beta^2}{\sigma^2+\beta^2}\frac{\ndata{Y}^\top \ndata{Z}(\ndata{Z}_*^\top \ndata{Z}_*)^{-1}\ndata{Z}^\top \ndata{Z}(\ndata{Z}_*^\top \ndata{Z}_*)^{-1}\ndata{Z}^\top \ndata{Y}}{\|\ndata{Y}\|^2}.
\end{aligned}
\end{equation*}
We first note that
\begin{equation*}
\frac{\ndata{Y}^\top \ndata{Z}(\ndata{Z}_*^\top \ndata{Z}_*)^{-1}\ndata{Z}^\top \ndata{Z}(\ndata{Z}_*^\top \ndata{Z}_*)^{-1}\ndata{Z}^\top \ndata{Y}}{\|\ndata{Y}\|^2}\le\frac{\ndata{Y}^\top \ndata{Z}(\ndata{Z}_*^\top \ndata{Z}_*)^{-1}\ndata{Z}^\top \ndata{Y}}{\|\ndata{Y}\|^2}\le\lambda_\text{max}\left(\ndata{Z}(\ndata{Z}_*^\top \ndata{Z}_*)^{-1}\ndata{Z}^\top \right)\le1
\end{equation*}
by \eqref{equation:lambda-max}, so the last term in the expression of $\sigma^2_\beta(\ndata Y,\ndata Z_*)$ (recall $\beta=h/\sqrt n$) satisfies
\begin{equation*}
\frac{\beta^2}{\sigma^2+\beta^2}\frac{\ndata{Y}^\top \ndata{Z}(\ndata{Z}_*^\top \ndata{Z}_*)^{-1}\ndata{Z}^\top \ndata{Z}(\ndata{Z}_*^\top \ndata{Z}_*)^{-1}\ndata{Z}^\top \ndata{Y}}{\|\ndata{Y}\|^2}\to0.
\end{equation*}
Similarly,
\begin{equation}
\sigma^2_{\beta}(\ndata{Y},\ndata{Z}_*)-\sigma^2_0(\ndata{Y},\ndata{Z}_*)=\frac{\beta^2}{\sigma^2+\beta^2}\bigg(2\frac{\ndata{Y}^\top \ndata{Z}(\ndata{Z}_*^\top \ndata{Z}_*)^{-1}\ndata{Z}^\top \ndata{Y}}{\|\ndata{Y}\|^2}-\frac{\ndata{Y}^\top \ndata{Z}(\ndata{Z}_*^\top \ndata{Z}_*)^{-1}\ndata{Z}^\top \ndata{Z}(\ndata{Z}_*^\top \ndata{Z}_*)^{-1}\ndata{Z}^\top \ndata{Y}}{\|\ndata{Y}\|^2}\bigg)\to0.
\label{equation:diff-sigma2}
\end{equation}

Next, we analyze $\ndata{Y}^\top \ndata{Z}(\ndata{Z}_*^\top \ndata{Z}_*)^{-1}\ndata{Z}^\top \ndata{Y}$.
% \begin{equation*}
% \begin{aligned}
% &\ndata{Y}^\top \ndata{Z}(\ndata{Z}_*^\top \ndata{Z}_*)^{-1}\ndata{Z}^\top \ndata{Y}\\
% &=\epsilon^\top(I-\ndata{Z}(\ndata{Z}_*^\top \ndata{Z}_*)^{-1}\ndata{Z}^\top)\epsilon-2\epsilon^\top \ndata{Z}(I-(\ndata{Z}_*^\top \ndata{Z}_*)^{-1}\ndata{Z}^\top \ndata{Z}))(\theta+\beta\eta)\\
% &\quad+(\theta+\beta\eta)^\top \ndata{Z}^\top(I-\ndata{Z}(\ndata{Z}_*^\top \ndata{Z}_*)^{-1}\ndata{Z}^\top)\ndata{Z}(\theta+\beta\eta)\\
% &=\epsilon^\top(I-\ndata{Z}(\ndata{Z}_*^\top \ndata{Z}_*)^{-1}\ndata{Z}^\top)\epsilon-2\epsilon^\top Z(\ndata{Z}^\top\ndata{Z})^{-1}Z_-^\top(I+Z_-(\ndata{Z}^\top\ndata{Z})^{-1}Z_-^\top)^{-1}Z_-(\theta+\beta\eta)\\
% &\quad+(\theta+\beta\eta)^\top Z_-^\top(I+Z_-(\ndata{Z}^\top\ndata{Z})^{-1}Z_-^\top)^{-1}Z_-(\theta+\beta\eta)\\
% \end{aligned}
% \end{equation*}
\begin{equation*}
\begin{aligned}
&\ndata{Y}^\top \ndata{Z}(\ndata{Z}_*^\top \ndata{Z}_*)^{-1}\ndata{Z}^\top \ndata{Y}\\
&=\epsilon^\top(\ndata{Z}(\ndata{Z}_*^\top \ndata{Z}_*)^{-1}\ndata{Z}^\top)\epsilon+2\epsilon^\top \ndata{Z}(\ndata{Z}_*^\top \ndata{Z}_*)^{-1}\ndata{Z}^\top\ndata{Z}(\theta+\beta\eta)+(\theta+\beta\eta)^\top \ndata{Z}^\top\ndata{Z}(\ndata{Z}_*^\top \ndata{Z}_*)^{-1}\ndata{Z}^\top\ndata{Z}(\theta+\beta\eta).
%&=\epsilon^\top(I-\ndata{Z}(\ndata{Z}_*^\top \ndata{Z}_*)^{-1}\ndata{Z}^\top)\epsilon-2\epsilon^\top Z(\ndata{Z}^\top\ndata{Z})^{-1}Z_-^\top(I+Z_-(\ndata{Z}^\top\ndata{Z})^{-1}Z_-^\top)^{-1}Z_-(\theta+\beta\eta)\\
%&\quad+(\theta+\beta\eta)^\top Z_-^\top(I+Z_-(\ndata{Z}^\top\ndata{Z})^{-1}Z_-^\top)^{-1}Z_-(\theta+\beta\eta)\\
\end{aligned}
\end{equation*}
We again look at these terms one by one (after normalization by $n$).
\begin{enumerate}
\item We have already shown
\begin{equation*}
\frac1n\epsilon^\top(\ndata{Z}(\ndata{Z}_*^\top \ndata{Z}_*)^{-1}\ndata{Z}^\top)\epsilon\cip\sigma^2\kappa\kappa_*.
\end{equation*}
\item
\begin{equation*}
\e\left[\epsilon^\top \ndata{Z}(\ndata{Z}_*^\top \ndata{Z}_*)^{-1}\ndata{Z}^\top\ndata{Z}(\theta+\beta\eta)\mid\ndata{Z}_*\right]=0.
\end{equation*}
\begin{equation*}
\begin{aligned}
&\frac{\e\left(\Var\left[n^{-1}\epsilon^\top \ndata{Z}(\ndata{Z}_*^\top \ndata{Z}_*)^{-1}\ndata{Z}^\top\ndata{Z}(\theta+\beta\eta)\mid\ndata{Z}_*\right]\right)}{\sigma^2+h^2/n}\\
&=\frac{1}{n^2}\e\left((\theta+\beta\eta)^\top\ndata{Z}^\top\ndata{Z}(\ndata{Z}_*^\top \ndata{Z}_*)^{-1}\ndata{Z}^\top\ndata{Z}(\ndata{Z}_*^\top \ndata{Z}_*)^{-1}\ndata{Z}^\top\ndata{Z}(\theta+\beta\eta)\right)\\
&\le\frac{1}{n^2}\e\left((\theta+\beta\eta)^\top\ndata{Z}^\top\ndata{Z}(\ndata{Z}_*^\top \ndata{Z}_*)^{-1}\ndata{Z}_*^\top\ndata{Z}_*(\ndata{Z}_*^\top \ndata{Z}_*)^{-1}\ndata{Z}^\top\ndata{Z}(\theta+\beta\eta)\right)\\
&=\frac{1}{n^2}\e\left((\theta+\beta\eta)^\top\ndata{Z}^\top\ndata{Z}(\ndata{Z}_*^\top \ndata{Z}_*)^{-1}\ndata{Z}^\top\ndata{Z}(\theta+\beta\eta)\right)\\
&\le\frac{1}{n^2}\e\left((\theta+\beta\eta)^\top\ndata{Z}^\top\ndata{Z}(\theta+\beta\eta)\right)\\
&=\frac{1}{n}(\theta+\beta\eta)^\top\Sigma_Z(\theta+\beta\eta)\to0,
\end{aligned}
\end{equation*}
where the second to last step is because $\lambda_\text{max}(\ndata{Z}(\ndata{Z}_*^\top \ndata{Z}_*)^{-1}\ndata{Z}^\top)\le1$ and the last step is because $\theta^\top\Sigma_Z\theta\to v_Z^2<\infty$, $\beta=h/\sqrt n$, $\eta^\top\Sigma_Z\eta$ bounded and $\theta^\top\Sigma_Z\eta\le\sqrt{\theta^\top\Sigma_Z\theta\cdot\eta^\top\Sigma_Z\eta}$.

\item We now assume without loss of generality that $\Sigma_Z=I_p$, which we can achieve by absorbing $\Sigma_Z^{1/2}$ into $(\theta+\beta\eta)$.
We have the loose bounds
\begin{equation*}
\frac1n(\theta+\beta\eta)^\top \ndata{Z}^\top\ndata{Z}(\ndata{Z}_*^\top \ndata{Z}_*)^{-1}\ndata{Z}^\top\ndata{Z}(\theta+\beta\eta)\ge0,
\end{equation*}
\begin{equation*}
\begin{aligned}
&\frac1n(\theta+\beta\eta)^\top \ndata{Z}^\top\ndata{Z}(\ndata{Z}_*^\top \ndata{Z}_*)^{-1}\ndata{Z}^\top\ndata{Z}(\theta+\beta\eta)\\
&=\frac{1}{nn_*}(\theta+\beta\eta)^\top{\ndata{Z}^\top\ndata{Z}}{}\left(\frac{\ndata{Z}_*^\top \ndata{Z}_*}{n_*}\right)^{-1}{\ndata{Z}^\top\ndata{Z}}{}(\theta+\beta\eta)\\
&\le\frac{1}{\lambda_\text{min}\left(\frac{\ndata{Z}_*^\top \ndata{Z}_*}{n_*}\right)}\frac{1}{nn_*}(\theta+\beta\eta)^\top{\ndata{Z}^\top\ndata{Z}}{}{\ndata{Z}^\top\ndata{Z}}{}(\theta+\beta\eta)\\
&\le\frac{\lambda_\text{max}\left(\frac{\ndata{Z}\ndata{Z}^\top}{p}\right)}{\lambda_\text{min}\left(\frac{\ndata{Z}_*^\top \ndata{Z}_*}{n_*}\right)}\frac{p}{nn_*}(\theta+\beta\eta)^\top{\ndata{Z}^\top}{}{\ndata{Z}}{}(\theta+\beta\eta)\\
&=\frac{\lambda_\text{max}\left(\frac{\ndata{Z}\ndata{Z}^\top}{p}\right)}{\lambda_\text{min}\left(\frac{\ndata{Z}_*^\top \ndata{Z}_*}{n_*}\right)}\frac{p}{n_*}(\theta+\beta\eta)^\top\frac{{\ndata{Z}^\top}{}{\ndata{Z}}{}}{n}(\theta+\beta\eta)\\
&\cip\frac{(1+\sqrt{1/\kappa})^2}{(1-\sqrt{\kappa\kappa_*})^2}\kappa\kappa_*v_Z^2,
\end{aligned}
\end{equation*}
and
\begin{equation*}
\begin{aligned}
&\frac1n(\theta+\beta\eta)^\top \ndata{Z}^\top\ndata{Z}(\ndata{Z}_*^\top \ndata{Z}_*)^{-1}\ndata{Z}^\top\ndata{Z}(\theta+\beta\eta)\\
&\le(\theta+\beta\eta)^\top\frac{{\ndata{Z}^\top}{}{\ndata{Z}}{}}{n}(\theta+\beta\eta)\\
&\cip v_Z^2,
\end{aligned}
\end{equation*}
Thus, we already have
\begin{equation*}
\p_{\beta=h/\sqrt n}\left(1-\frac{\sigma^2\kappa\kappa_*+v_Z^2\min\left(1,\kappa\kappa_*\frac{(1+\sqrt{1/\kappa})^2}{(1-\sqrt{\kappa\kappa_*})^2}\right)}{\sigma^2+v_Z^2}\le\sigma^2_\beta(\ndata{Y},\ndata{Z}_*)\le1-\frac{\sigma^2\kappa\kappa_*}{\sigma^2+v_Z^2}\right)\to1.
\end{equation*}
Since both the lower and upper bounds in the above equation are positive, together with \eqref{equation:diff-sigma2} we get
\begin{equation*}
\frac{\sigma^2_0(\ndata{Y},\ndata{Z}_*)}{\sigma^2_\beta(\ndata{Y},\ndata{Z}_*)}\cip1.
\end{equation*}
Then we get
% \begin{equation*}
% \p\left(\Phi\left(\frac{h\tau(1-\kappa\kappa_*)}{\sqrt{v_Z^2+\sigma^2(1-\kappa\kappa_*)}}-z_{1-\alpha}\right)\le\eqref{equation:ols-unlabeled-power}\le\Phi\left(\frac{h\tau\sqrt{1-\kappa\kappa_*}}{\sigma}\right)-z_{1-\alpha}\right)\to1.
% \end{equation*}
\begin{equation*}
\begin{aligned}
\limsup\eqref{equation:ols-unlabeled-power}&\le\Phi\left(\frac{h\tau\sqrt{1-\kappa\kappa_*}}{\sqrt{\sigma^2+v_Z^2\max\left(0,\frac{1-\frac{(1+\sqrt{1/\kappa})^2}{(1-\sqrt{\kappa\kappa_*})^2}\kappa\kappa_*}{1-\kappa\kappa_*}\right)}}-z_{1-\alpha}\right),\\
\liminf\eqref{equation:ols-unlabeled-power}&\ge\Phi\left(\frac{h\tau\sqrt{1-\kappa\kappa_*}}{\sqrt{\sigma^2+v_Z^2\frac{1}{{1-\kappa\kappa_*}}}}-z_{1-\alpha}\right).
\end{aligned}
\end{equation*}
\end{enumerate}
\end{proof}

\noindent\textbf{Conjecture~\ref{conjecture:model-X-unlabeled}.} \textit{In Setting~\ref{model:moderate-dim-lr}, if there are $m$ additional data points $(X_i,Z_i)_{i=n+1}^{n+m}$, $n_*=n+m$, $n/n_*\to\kappa_*$ and $\kappa\kappa_*<1$, then the conditional CRT with statistic $T_\textnormal{MC}$ has asymptotic power
equal to that of a $z$-test with standardized effect size
\[ \frac{h\sqrt{1-\kappa\kappa_*}}{\sqrt{\sigma^2+v_Z^2(1-\kappa_*)}}. \]}

\begin{proof}[Analysis of Conjecture~\ref{conjecture:model-X-unlabeled}]
Finding the asymptotic power is finding the exact limit of \eqref{equation:ols-unlabeled-power}, which requires a more careful analysis. Picking up from the end of the proof of Theorem~\ref{theorem:prop-model-X-unlabeled}, we now find the limit of the expectation of the third term in the variance decomposition normalized by $n$, i.e.,
\begin{equation*}
\lim\e\left[\frac1n(\theta+\beta\eta)^\top \ndata{Z}^\top\ndata{Z}(\ndata{Z}_*^\top \ndata{Z}_*)^{-1}\ndata{Z}^\top\ndata{Z}(\theta+\beta\eta)\right].
\end{equation*}
Recall that we are assuming $\Sigma_Z=I$ without loss of generality. We will show $\e[\ndata{Z}^\top\ndata{Z}(\ndata{Z}_*^\top\ndata{Z}_*)^{-1}\ndata{Z}^\top\ndata{Z}]=c(p,n,n_*)I_p$ is a mutliple of $I_p$. Once this is done, we will find the limit of $c(p,n,n_*)/n$.

To see this, we show that the expectation (a $p\times p$ matrix) is invariant under orthogonal transformation. Let $Q$ be any $p\times p$ orthogonal matrix.
\begin{equation*}
\begin{aligned}
\e[Q \ndata{Z}^\top\ndata{Z}(\ndata{Z}_*^\top\ndata{Z}_*)^{-1}\ndata{Z}^\top\ndata{Z}Q^\top]&=\e[Q \ndata{Z}^\top\ndata{Z}Q^\top(Q\ndata{Z}_*^\top\ndata{Z}_*Q^\top)^{-1}Q\ndata{Z}^\top\ndata{Z}Q^\top]\\
&=\e[\ndata{W}^\top \ndata{W}(\ndata{W}_*^\top \ndata{W}_*)^{-1}\ndata{W}^\top \ndata{W}]\text{ ($\ndata{W}_*=\ndata{Z}_*Q^\top\eqd\ndata{Z}_*$)}\\
&=\e[\ndata{Z}^\top\ndata{Z}(\ndata{Z}_*^\top\ndata{Z}_*)^{-1}\ndata{Z}^\top\ndata{Z}].
\end{aligned}
\end{equation*}
This shows the expectation must be a multiple of $I_p$. Now we consider
\begin{equation*}
\begin{aligned}
n_*I_p&=\e[\ndata{Z}_*^\top\ndata{Z}_*]\\
&=\e[\ndata{Z}_*^\top\ndata{Z}_*(\ndata{Z}_*^\top\ndata{Z}_*)^{-1}\ndata{Z}_*^\top\ndata{Z}_*]\\
&=\e\left[\left(\sum_{i=1}^{n_*}\ndata{Z}_i\ndata{Z}_i^\top\right)(\ndata{Z}_*^\top\ndata{Z}_*)^{-1}\left(\sum_{i=1}^{n_*}\ndata{Z}_i\ndata{Z}_i^\top\right)\right]\\
&=\sum_{i=1}^{n_*}\e\left[\ndata{Z}_i\ndata{Z}_i^\top(\ndata{Z}_*^\top\ndata{Z}_*)^{-1}\ndata{Z}_i\ndata{Z}_i^\top\right]+\sum_{i\ne j}\e\left[\ndata{Z}_i\ndata{Z}_i^\top(\ndata{Z}_*^\top\ndata{Z}_*)^{-1}\ndata{Z}_j\ndata{Z}_j^\top\right]\\
&=n_*\e[\ndata{Z}_1\ndata{Z}_1^\top(\ndata{Z}_*^\top\ndata{Z}_*)^{-1}\ndata{Z}_1\ndata{Z}_1^\top]+n_*(n_*-1)\e[\ndata{Z}_1\ndata{Z}_1^\top(\ndata{Z}_*^\top\ndata{Z}_*)^{-1}\ndata{Z}_2\ndata{Z}_2^\top]\\
&=n_*a_{p,n_*}I_p+n_*(n_*-1)b_{p,n_*}I_p,
\end{aligned}
\end{equation*}
where $\ndata{Z}_i$ is the $i$th row of $\ndata{Z}_*$ as a column vector and
\begin{equation*}
\begin{aligned}
\e[\ndata{Z}_1\ndata{Z}_1^\top(\ndata{Z}_*^\top\ndata{Z}_*)^{-1}\ndata{Z}_1\ndata{Z}_1^\top]&=a_{p,n_*}I_p,\\
\e[\ndata{Z}_1\ndata{Z}_1^\top(\ndata{Z}_*^\top\ndata{Z}_*)^{-1}\ndata{Z}_2\ndata{Z}_2^\top]&=b_{p,n_*}I_p
\end{aligned}
\end{equation*}
are multiples of $I_p$ by the same argument. From this we get $a_{p,n_*}+(n_*-1)b_{p,n_*}=1$. Similarly,
\begin{equation*}
\begin{aligned}
&\e[\ndata{Z}^\top\ndata{Z}(\ndata{Z}_*^\top\ndata{Z}_*)^{-1}\ndata{Z}^\top\ndata{Z}]\\
&=\e\left[\left(\sum_{i=1}^{n}\ndata{Z}_i\ndata{Z}_i^\top\right)(\ndata{Z}_*^\top\ndata{Z}_*)^{-1}\left(\sum_{i=1}^{n}\ndata{Z}_i\ndata{Z}_i^\top\right)\right]\\
&=n\e[\ndata Z_1\ndata Z_1^\top(\ndata{Z}_*^\top\ndata{Z}_*)^{-1}\ndata Z_1\ndata Z_1^\top]+n(n-1)\e[\ndata Z_1\ndata Z_1^\top(\ndata{Z}_*^\top\ndata{Z}_*)^{-1}\ndata Z_2\ndata Z_2^\top]\\
&=na_{p,n_*}I_p+n(n-1)b_{p,n_*}I_p\\
&=na_{p,n_*}I_p+n(n-1)\frac{1-a_{p,n_*}}{n_*-1}I_p.
\end{aligned}
\end{equation*}
Therefore, by representing $b_{p,n_*}$ with $a_{p,n_*}$, we have
\begin{equation*}
\begin{aligned}
n_*^{-1}c(p,n,n_*)I_p&=\e[\ndata{Z}^\top\ndata{Z}(\ndata{Z}_*^\top\ndata{Z}_*)^{-1}\ndata{Z}^\top\ndata{Z}/n_*]\\
&=\frac{n}{n_*}a_{p,n_*}I_p+n(n-1)\frac{1-a_{p,n_*}}{n_*(n_*-1)}I_p\\
&=\frac{n}{n_*}\left(\left(1-\frac{n-1}{n_*-1}\right)a_{p,n_*}+\frac{n-1}{n_*-1}\right)I_p.
\end{aligned}
\end{equation*}
We have shown that
\begin{equation}
n_*^{-1}c(p,n,n_*)=\frac{n}{n_*}\left(\left(1-\frac{n-1}{n_*-1}\right)a_{p,n_*}+\frac{n-1}{n_*-1}\right).
\label{equation:central}
\end{equation}
Recall that we are interested in the limit of this term, so we can focus on the limit of $a_{p,n_*}$.
\begin{equation*}
\begin{aligned}
a_{p,n_*}&=\e\left[\trace\left(\frac{\ndata{Z}_1\ndata{Z}_1^\top(\ndata{Z}_*^\top\ndata{Z}_*)^{-1}\ndata{Z}_1\ndata{Z}_1^\top}{p}\right)\right]\\
&=\e\left[\trace\left(\frac{\ndata{Z}_1^\top(\ndata{Z}_*^\top\ndata{Z}_*)^{-1}\ndata{Z}_1\ndata{Z}_1^\top \ndata{Z}_1}{p}\right)\right]\\
&=\e\left[\ndata{Z}_1^\top(\ndata{Z}_*^\top\ndata{Z}_*)^{-1}\ndata{Z}_1\cdot\frac{\|\ndata{Z}_1\|^2}{p}\right].
\end{aligned}
\end{equation*}
To study this expectation, note that
\begin{equation*}
\begin{aligned}
\e\left[\ndata{Z}_1^\top(\ndata{Z}_*^\top\ndata{Z}_*)^{-1}\ndata{Z}_1\right]&=\e\left[\trace\left(\ndata{Z}_1^\top(\ndata{Z}_*^\top\ndata{Z}_*)^{-1}\ndata{Z}_1\right)\right]\\
&=\e\left[\trace\left((\ndata{Z}_*^\top\ndata{Z}_*)^{-1}\ndata{Z}_1\ndata{Z}_1^\top\right)\right].
\end{aligned}
\end{equation*}
Note that by symmetry,
\begin{equation*}
\begin{aligned}
n_*\e\left[\trace\left((\ndata{Z}_*^\top\ndata{Z}_*)^{-1}\ndata{Z}_1\ndata{Z}_1^\top\right)\right]&=\sum_{i=1}^{n_*}\e\left[\trace\left((\ndata{Z}_*^\top\ndata{Z}_*)^{-1}\ndata{Z}_i\ndata{Z}_i^\top\right)\right]\\
&=\e\left[\trace\left(\sum_{i=1}^{n_*}(\ndata{Z}_*^\top\ndata{Z}_*)^{-1}\ndata{Z}_i\ndata{Z}_i^\top\right)\right]\\
&=\e\left[\trace\left((\ndata{Z}_*^\top\ndata{Z}_*)^{-1}\left(\sum_{i=1}^{n_*}\ndata{Z}_i\ndata{Z}_i^\top\right)\right)\right]\\
&=\e\left[\trace\left((\ndata{Z}_*^\top\ndata{Z}_*)^{-1}\left(\ndata{Z}_*^\top\ndata{Z}_*\right)\right)\right]\\
&=\e\left[\trace\left(I_p\right)\right]=p\Rightarrow\e\left[\ndata{Z}_1^\top(\ndata{Z}_*^\top\ndata{Z}_*)^{-1}\ndata{Z}_1\right]=p/n_*.
\end{aligned}
\end{equation*}
We can also note that $\ndata{Z}_1^\top(\ndata{Z}_*^\top\ndata{Z}_*)^{-1}\ndata{Z}_1$ is the first diagonal element of the projection matrix $\ndata{Z}_*(\ndata{Z}_*^\top\ndata{Z}_*)\ndata{Z}_*^\top$, whose eigenvalues are all $0$ or $1$. Thus, $0\le \ndata{Z}_1^\top(\ndata{Z}_*^\top\ndata{Z}_*)^{-1}\ndata{Z}_1\le1$.

Note that all the equations above are exact and no limit has been taken yet. Now we show the number sequence $a_{p,n_*}\to\kappa\kappa_*$, which we recall is the limit of $p/n_*$.

For any $\delta>0$, note that $\|\ndata{Z}_1\|^2\sim\chi_p^2$.
\begin{equation*}
\begin{aligned}
&\e\left[\ndata{Z}_1^\top(\ndata{Z}_*^\top\ndata{Z}_*)^{-1}\ndata{Z}_1\cdot\frac{\|\ndata{Z}_1\|^2}{p}\right]\\
&=\e\left[\ndata{Z}_1^\top(\ndata{Z}_*^\top\ndata{Z}_*)^{-1}\ndata{Z}_1\cdot\frac{\|\ndata{Z}_1\|^2}{p}\cdot\mathbb I(\frac{\|\ndata{Z}_1\|^2}{p}\le1+\delta)\right]+\e\left[\ndata{Z}_1^\top(\ndata{Z}_*^\top\ndata{Z}_*)^{-1}\ndata{Z}_1\cdot\frac{\|\ndata{Z}_1\|^2}{p}\cdot\mathbb I(\frac{\|\ndata{Z}_1\|^2}{p}>1+\delta)\right]\\
&\le(1+\delta)\e\left[\ndata{Z}_1^\top(\ndata{Z}_*^\top\ndata{Z}_*)^{-1}\ndata{Z}_1\right]+\e\left[1\cdot\frac{\|\ndata{Z}_1\|^2}{p}\cdot\mathbb I(\frac{\|\ndata{Z}_1\|^2}{p}>1+\delta)\right]\\
&=(1+\delta)\frac{p}{n_*}+\e\left[\frac{\|\ndata{Z}_1\|^2}{p}\cdot\mathbb I(\frac{\|\ndata{Z}_1\|^2}{p}>1+\delta)\right]\\
&\le(1+\delta)\frac{p}{n_*}+\sqrt{\e\left[\frac{\|\ndata{Z}_1\|^4}{p^2}\right]\e\left[\mathbb I(\frac{\|\ndata{Z}_1\|^2}{p}>1+\delta)^2\right]}\\
&=(1+\delta)\frac{p}{n_*}+\sqrt{\frac{p^2+2p}{p^2}\p\left(\frac{\|\ndata{Z}_1\|^2}{p}>1+\delta\right)}\\
&=(1+\delta)\frac{p}{n_*}+\sqrt{1+\frac2p}\sqrt{\p\left(\frac{\|\ndata{Z}_1\|^2}{p}>1+\delta\right)}\\
&\le(1+\delta)\frac{p}{n_*}+\sqrt{1+\frac2p}\sqrt{\p\left(\left|\frac{\|\ndata{Z}_1\|^2}{p}-1\right|>\delta\right)}\\
&\le(1+\delta)\frac{p}{n_*}+\sqrt{1+\frac2p}\sqrt{\frac{\Var\left(\|\ndata{Z}_1\|^2/p\right)}{\delta^2}}\\
&=(1+\delta)\frac{p}{n_*}+\sqrt{1+\frac2p}\sqrt{\frac{2}{p\delta^2}}\to(1+\delta)\kappa\kappa_*.
\end{aligned}
\end{equation*}
Since $\delta$ can be arbitrarily small, this shows
\begin{equation}
\limsup\e\left[\ndata{Z}_1^\top(\ndata{Z}_*^\top\ndata{Z}_*)^{-1}\ndata{Z}_1\cdot\frac{\|\ndata{Z}_1\|^2}{p}\right]\le\kappa\kappa_*.
\label{equation:limsup}
\end{equation}

On the other hand,
\begin{equation*}
\begin{aligned}
&\e\left[\ndata{Z}_1^\top(\ndata{Z}_*^\top\ndata{Z}_*)^{-1}\ndata{Z}_1\cdot\frac{\|\ndata{Z}_1\|^2}{p}\right]\\
&=\e\left[\ndata{Z}_1^\top(\ndata{Z}_*^\top\ndata{Z}_*)^{-1}\ndata{Z}_1\cdot\frac{\|\ndata{Z}_1\|^2}{p}\cdot\mathbb I(\frac{\|\ndata{Z}_1\|^2}{p}\ge1-\delta)\right]+\e\left[\ndata{Z}_1^\top(\ndata{Z}_*^\top\ndata{Z}_*)^{-1}\ndata{Z}_1\cdot\frac{\|\ndata{Z}_1\|^2}{p}\cdot\mathbb I(\frac{\|\ndata{Z}_1\|^2}{p}<1-\delta)\right]\\
&\ge\e\left[\ndata{Z}_1^\top(\ndata{Z}_*^\top\ndata{Z}_*)^{-1}\ndata{Z}_1\cdot\frac{\|\ndata{Z}_1\|^2}{p}\cdot\mathbb I(\frac{\|\ndata{Z}_1\|^2}{p}\ge1-\delta)\right]\\
&\ge(1-\delta)\e\left[\ndata{Z}_1^\top(\ndata{Z}_*^\top\ndata{Z}_*)^{-1}\ndata{Z}_1\cdot\mathbb I(\frac{\|\ndata{Z}_1\|^2}{p}\ge1-\delta)\right]\\
&=(1-\delta)\e\left[\ndata{Z}_1^\top(\ndata{Z}_*^\top\ndata{Z}_*)^{-1}\ndata{Z}_1\right]-(1-\delta)\e\left[\ndata{Z}_1^\top(\ndata{Z}_*^\top\ndata{Z}_*)^{-1}\ndata{Z}_1\cdot\mathbb I(\frac{\|\ndata{Z}_1\|^2}{p}<1-\delta)\right]\\
&=(1-\delta)\frac{p}{n_*}-(1-\delta)\e\left[\ndata{Z}_1^\top(\ndata{Z}_*^\top\ndata{Z}_*)^{-1}\ndata{Z}_1\cdot\mathbb I(\frac{\|\ndata{Z}_1\|^2}{p}<1-\delta)\right]\\
&\ge(1-\delta)\frac{p}{n_*}-(1-\delta)\e\left[1\cdot\mathbb I(\frac{\|\ndata{Z}_1\|^2}{p}<1-\delta)\right]\\
&\ge(1-\delta)\frac{p}{n_*}-(1-\delta)\p\left(\left|\frac{\|\ndata{Z}_1\|^2}{p}-1\right|>\delta\right)\\
&\ge(1-\delta)\frac{p}{n_*}-(1-\delta)\frac{\Var\left(\|\ndata{Z}_1\|^2/p\right)}{\delta^2}\\
&=(1-\delta)\frac{p}{n_*}-(1-\delta)\frac{2}{p\delta^2}\to(1-\delta)\kappa\kappa_*.
\end{aligned}
\end{equation*}
Since $\delta$ can be arbitrarily small, this shows
\begin{equation}
\liminf\e\left[\ndata{Z}_1^\top(\ndata{Z}_*^\top\ndata{Z}_*)^{-1}\ndata{Z}_1\cdot\frac{\|\ndata{Z}_1\|^2}{p}\right]\ge\kappa\kappa_*.
\label{equation:liminf}
\end{equation}
Equations~\eqref{equation:limsup} and \eqref{equation:liminf} show $a_{p,n_*}\to\kappa\kappa_*$. This together with equation~\eqref{equation:central} shows
\begin{equation*}
\begin{aligned}
n^{-1}c(p,n,n_*)&=\left(1-\frac{n-1}{n_*-1}\right)a_{p,n_*}+\frac{n-1}{n_*-1}\\
&\to\left(1-\kappa_*\right)\kappa\kappa_*+\kappa_*\\
&=\kappa_*(1+\kappa(1-\kappa_*)).
\end{aligned}
\end{equation*}
% Hence, in a more direct form, it is the limit of
% \begin{equation*}
% \begin{aligned}
% \frac{n}{n_*}-\frac{n}{n_*}\frac{n_*-n}{n_*}\left(1-\frac{p}{n_*}\right).
% \end{aligned}
% \end{equation*}

To sum up, we have shown
\begin{equation*}
\frac1n\e\left[(\theta+\beta\eta)^\top \ndata{Z}^\top\ndata{Z}(\ndata{Z}_*^\top \ndata{Z}_*)^{-1}\ndata{Z}^\top\ndata{Z}(\theta+\beta\eta)\right]=\frac{c(p,n,n_*)}{n}(\theta+\beta\eta)^\top\Sigma_Z(\theta+\beta\eta)\to\kappa_*(1+\kappa(1-\kappa_*))v_Z^2.
\end{equation*}
We conjecture that actually (e.g., if we can show the variance converges to $0$)
\begin{equation*}
\frac1n(\theta+\beta\eta)^\top \ndata{Z}^\top\ndata{Z}(\ndata{Z}_*^\top \ndata{Z}_*)^{-1}\ndata{Z}^\top\ndata{Z}(\theta+\beta\eta)\cip\kappa_*(1+\kappa(1-\kappa_*))v_Z^2,
\end{equation*}
in which case \eqref{equation:ols-unlabeled-power} converges to
\begin{equation*}
\Phi\left(\frac{h\sqrt{1-\kappa\kappa_*}}{\sqrt{\sigma^2+v_Z^2(1-\kappa_*)}}-z_{1-\alpha}\right).
\end{equation*}
\end{proof}

\begin{thm}
Let $J_0$ and $J_1$ form a partition of $\{1,2,\dots,p\}$ and $|J_0|/p\to\gamma\in(0,1)$. Consider $p$ random variables $p_j=1-F_j(T_j)$, which should be thought of as $p$-values. Assume the following conditions.
\begin{enumerate}
    % \item All distributions considered here are continuous.
    \item[(a)] For $j\in J_0$, $1-F_j(T_j)\sim\Unif[0,1]$; for any $t\in\rr$, $F_j(t)\cip F^{(0)}(t)$ and for $j\in J_1$, $F_j(t)\cip F^{(1)}(t)$. $F^{(0)}$ and $F^{(1)}$ are deterministic CDFs of random variables with common connected support and continuous densities on their support. For $j'\in J_1$, $T_{j'}\cid T^{(1)}$, which has the same support as $F^{(0)}$ and $F^{(1)}$ and continuous density on the support.
    \item[(b)] Within $J_0$ or $J_1$, $p_j$'s are exchangeable.
    \item[(c)] For %$(j_1,j_2)\sim\Unif\left(\{(k_1,k_2)\in J_0\times J_0:k_1\ne k_2\}\right)$ and $(j_3,j_4)\sim\Unif\left(\{(k_3,k_4)\in J_1\times J_1:k_3\ne k_4\}\right)$,
    distinct $j_1,j_2\in J_0$ and distinct $j_3,j_4\in J_1$,
    the following two pairs of random variables are asymptotically pairwise independent: $(T_{j_1},T_{j_2})$ and $(T_{j_3},T_{j_4})$. That is, both pairs converge in distribution to a bivariate random vector (not necessarily the same random vector) with independent components.
\end{enumerate}
Then let $G$ be the CDF of $1-F^{(1)}(T^{(1)})$, $q\in(0,1)$, and
\begin{equation*}
t(g)=\max\{t\in(0,1]:g(t)\le q\}.
\end{equation*}
When the set is empty, define $t(g)=0$. Let
\begin{equation*}
g_\textnormal{BH}(t)=\frac{t}{\gamma t+(1-\gamma)G(t)}
\end{equation*}
and
\begin{equation*}
g_\textnormal{AdaPT}(t)=\frac{\gamma t+(1-\gamma)\left(1-G(1-t)\right)}{\gamma t+(1-\gamma)G(t)}.
\end{equation*}
The for $t_\textnormal{BH}=t(g_\textnormal{BH})$ and almost every $q\in(0,1)$, at least one of the following cases is true: (i) $t_\textnormal{BH}\in(0,1)$ and $g_\text{BH}'(t_\textnormal{BH})\ne0$, (ii) $t_\textnormal{BH}=1$ and $g_\textnormal{BH}(1)<q$, or (iii) $t_\text{BH}=0$. In cases (i) or (ii), for the BH procedure at level $q$, the FDP and realized power converge in probability to
% \begin{equation*}
% \frac{\gamma F^{(0)}(-t_\textnormal{AdaPT})}{\gamma (1-F^{(0)}(t_\textnormal{AdaPT}))+(1-\gamma)(1-F^{(1)}(t_\textnormal{AdaPT}))}\quad\text{and}\quad1-F^{(1)}(t_\textnormal{AdaPT})
% \end{equation*}
\begin{equation*}
\frac{\gamma t_\textnormal{BH}}{\gamma t_\textnormal{BH}+(1-\gamma)G(t_\textnormal{BH}))}\quad\text{and}\quad G(t_\textnormal{BH}),
\end{equation*}
respectively. In case (i), the asymptotic realized power simplifies to $\gamma q$. In case (iii), the realized power converges in probability to $0$.

For $t_\textnormal{AdaPT}=t(g_\textnormal{AdaPT})$ and almost every $q\in(0,1)$, at least one of the following cases is true: (i) $t_\textnormal{AdaPT}\in(0,1)$ and $g_\textnormal{AdaPT}'(t_\textnormal{AdaPT})\ne0$, (ii) $t_\textnormal{AdaPT}=1$ and $g_\textnormal{AdaPT}(1)<q$, or (iii) $t_\textnormal{AdaPT}=0$.
In cases (i) or (ii), for the AdaPT procedure at level $q$, the FDP and realized power converge in probability to
% \begin{equation*}
% \frac{\gamma F^{(0)}(-t_\textnormal{AdaPT})}{\gamma (1-F^{(0)}(t_\textnormal{AdaPT}))+(1-\gamma)(1-F^{(1)}(t_\textnormal{AdaPT}))}\quad\text{and}\quad1-F^{(1)}(t_\textnormal{AdaPT})
% \end{equation*}
\begin{equation*}
\frac{\gamma t_\textnormal{AdaPT}}{\gamma t_\textnormal{AdaPT}+(1-\gamma)G(t_\textnormal{AdaPT}))}\quad\text{and}\quad G(t_\textnormal{AdaPT}),
\end{equation*}
respectively. In case (iii), the realized power converges in probability to $0$.
\label{theorem:BH-AdaPT-CRT}
\end{thm}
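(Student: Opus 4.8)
The plan is to reduce both procedures to the behaviour of a pair of empirical processes and then reproduce, at the process level, the classical analysis of BH/AdaPT for independent $p$-values. Write $\hat R_n^{(0)}(t)=|J_0|^{-1}\#\{j\in J_0: p_j\le t\}$, $\hat R_n^{(1)}(t)=|J_1|^{-1}\#\{j\in J_1: p_j\le t\}$, and $\hat R_n(t)=p^{-1}\#\{j:p_j\le t\}=\tfrac{|J_0|}{p}\hat R_n^{(0)}(t)+\tfrac{|J_1|}{p}\hat R_n^{(1)}(t)$; similarly let $\hat Q_n(t)=p^{-1}\big(1+\#\{j:p_j\ge 1-t\}\big)$. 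The BH threshold $\hat t_\textnormal{BH}$ equals (up to the convention in the paper's footnote, which does not affect the realized rejection set) the largest $t$ with $g_{n,\textnormal{BH}}(t):=t/\hat R_n(t)\le q$, and the AdaPT threshold $\hat t_\textnormal{AdaPT}$ the largest $t$ with $g_{n,\textnormal{AdaPT}}(t):=\hat Q_n(t)/\hat R_n(t)\le q$. Since $g_\textnormal{BH}=\mathrm{id}/\bar R$ and $g_\textnormal{AdaPT}=\bar Q/\bar R$ with $\bar R(t)=\gamma t+(1-\gamma)G(t)$ and $\bar Q(t)=\gamma t+(1-\gamma)\big(1-G(1-t)\big)$, the whole argument hinges on showing $\hat R_n(t)\cip\bar R(t)$ and $\hat Q_n(t)\cip\bar Q(t)$, uniformly on compacts of $(0,1]$.

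First I would prove pointwise convergence in probability of the empirical CDFs. For $\hat R_n^{(0)}$: by condition (a) each null $p_j$ is exactly uniform, so $\E[\hat R_n^{(0)}(t)]=t$; by exchangeability (b) and the asymptotic pairwise independence of $(T_{j_1},T_{j_2})$ in (c), pushed through the $p$-value transform (using that on $J_0$ the $F_j$ converge in probability to the \emph{continuous} limit $F^{(0)}$, so the transform is continuous in the limit), $\p(p_{j_1}\le t,p_{j_2}\le t)\to t^2$ for a fixed off-diagonal pair, whence $\Var(\hat R_n^{(0)}(t))\to 0$ by the usual bound ($O(1/|J_0|)$ from the diagonal plus the vanishing off-diagonal covariance) — this is exactly the computation in Lemma~\ref{lemma:asy-cond}. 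For $\hat R_n^{(1)}$ the $p$-values carry two sources of randomness, $p_j=1-F_j(T_j)$ with $F_j$ random. Pointwise convergence in probability of $F_j$ to the continuous $F^{(1)}$ upgrades, by monotonicity plus continuity of the limit (a P\'olya/Dini argument), to $\sup_t|F_j(t)-F^{(1)}(t)|\cip 0$, so $F_j(T_j)-F^{(1)}(T_j)\cip 0$; combined with $T_j\cid T^{(1)}$ and continuity of $F^{(1)}$ and of the law of $T^{(1)}$, Slutsky gives $1-F_j(T_j)\cid 1-F^{(1)}(T^{(1)})$, hence $\E[\hat R_n^{(1)}(t)]\to G(t)$ at continuity points of $G$ (and $G$ is continuous by the density assumptions in (a)). Applying the same Slutsky/continuous-mapping argument to the pair $(T_{j_3},T_{j_4})$, which by (c) converges to a product law, gives $\p(p_{j_3}\le t,p_{j_4}\le t)\to G(t)^2$, so $\Var(\hat R_n^{(1)}(t))\to 0$. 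Then $\Var(\hat R_n(t))\to 0$ follows from these two and Cauchy--Schwarz on the cross term, so $\hat R_n(t)\cip\bar R(t)$; running the identical argument with $\{p_j\ge 1-t\}$ in place of $\{p_j\le t\}$ (null limit $t$, non-null limit $1-G(1-t)$) yields $\hat Q_n(t)\cip\bar Q(t)$.

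Since $\hat R_n,\hat Q_n$ are monotone and $\bar R,\bar Q$ continuous, pointwise convergence on a countable dense set upgrades to uniform convergence in probability on $[\varepsilon,1]$ for every $\varepsilon>0$; and $\bar R(t)\ge\gamma t>0$ there, so $g_{n,\textnormal{BH}}\cip g_\textnormal{BH}$ and $g_{n,\textnormal{AdaPT}}\cip g_\textnormal{AdaPT}$ uniformly on $[\varepsilon,1]$. The threshold convergence is then the standard case analysis; write $g$ for $g_\textnormal{BH}$ or $g_\textnormal{AdaPT}$ and $t^\star=t(g)$. In case (i) the crossing of $q$ is transversal ($g'(t^\star)\ne 0$), so for small $\eta$, $g(t^\star-\eta)<q<g(t^\star+\eta)$; uniform convergence forces the empirical threshold into $(t^\star-\eta,t^\star+\eta)$ with probability $\to 1$, i.e.\ $\hat t\cip t^\star$, and moreover $g(t^\star)=q$ with equality. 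In case (ii), $g(1)<q$ forces $\hat t\cip 1$. In case (iii), $g>q$ on all of $(0,1]$, hence on $[\varepsilon,1]$, and uniform convergence gives $\hat t<\varepsilon$ with probability $\to 1$ for every $\varepsilon$. The ``almost every $q$'' qualifier is exactly what excludes the remaining possibility that $g$ attains a local minimum equal to $q$ (a non-transversal touching), since for each local minimum the bad $q$-set is Lebesgue-null and there are at most countably many to rule out; this is the step where I would invoke a variant of the argument of \citet{ferreira2006benjamini}.

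Finally I would compose: with $\hat t\cip t^\star$ and $\hat R_n^{(0)},\hat R_n^{(1)},\hat R_n$ converging uniformly to continuous limits, the realized power equals $\hat R_n^{(1)}(\hat t)\cip G(t^\star)$ (in case (iii), $\le\hat R_n^{(1)}(\varepsilon)\cip G(\varepsilon)\to 0$ as $\varepsilon\downarrow 0$, using $G(0^+)=0$), and the FDP equals $\tfrac{(|J_0|/p)\,\hat R_n^{(0)}(\hat t)}{\hat R_n(\hat t)}\cip\tfrac{\gamma t^\star}{\gamma t^\star+(1-\gamma)G(t^\star)}$ whenever the denominator limit is positive; in case (i) the identity $g(t^\star)=q$ gives the stated simplification $\gamma q$. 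Carrying out the same computation with $g_\textnormal{AdaPT},t_\textnormal{AdaPT}$ in place of $g_\textnormal{BH},t_\textnormal{BH}$ completes the proof. I expect the main obstacle to be the joint step in the second paragraph: because the CRT $p$-values mix a random conditional CDF $F_j$ with a random statistic $T_j$, and because (c) provides only \emph{pairwise} (not mutual) asymptotic independence, the convergence of $\hat R_n^{(1)}$ must be obtained via a careful first-and-second-moment argument rather than a law of large numbers, with scrupulous attention to the continuity of the $p$-value transform in the limit so that weak convergence of the $T$'s transfers to the $p$'s.
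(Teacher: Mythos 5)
Your proposal takes essentially the same route as the paper: compute first and second moments of the empirical $p$-value CDFs (using exact uniformity for nulls, Slutsky for non-nulls, and asymptotic pairwise independence to kill the variance), upgrade pointwise to uniform convergence via a P\'olya/Dini argument (the paper's Lemma~\ref{lemma:unif-conv-in-prob}), and then carry out the threshold stability analysis by cases with a measure-zero exceptional set of $q$'s. Your handling of the two sources of randomness in the non-null $p$-values---uniformizing the convergence of the random $F_j$ before composing with the weakly convergent $T_j$---is in fact a bit more explicit than the paper, which compresses this into one line.

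The one place your justification is imprecise is the ``almost every $q$'' step. You argue that the only bad touchings are local minima of $g$ equal to $q$ and that there are ``at most countably many'' of them, but the set of critical points of a $C^1$ function need not be countable (it can even contain intervals), and non-transversal crossings need not occur at local minima. What you actually need is that the set of critical \emph{values} $\{g(t):g'(t)=0\}$ has Lebesgue measure zero, which is exactly Sard's theorem; this is the tool the paper invokes (their Lemma~\ref{lemma:sard}). Your appeal to a ``variant of Ferreira--Zwinderman'' points in the right direction but isn't a self-contained argument. Replacing the local-minima count with Sard's theorem closes this gap and otherwise your proof is correct.
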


\begin{proof}[Proof of Theorem~\ref{theorem:BH-AdaPT-CRT}]
We only prove the case for AdaPT, because the proof for BH is similar and slightly easier. We suppress the subsript in $g_\text{AdaPT}$.

If $q>g(1)$, then (ii) holds. If $q<\inf\{g(t):t\in(0,1]\}$, then (iii) holds. If $q\in(\inf\{g(t):t\in(0,1], g(1))$, then because $g$ is continuous, we can see that $t_\text{AdaPT}\in(0,1)$ (note that we are considering maximum and $(0,1]$ is closed on the right, so the maximum must exist and not equal to $1$). And in this case, $g(t_\text{AdaPT})=q$, since otherwise $g(t_\text{AdaPT})<q$ and $t_\text{AdaPT}$ could be smaller because of $g$'s continuity. Next we show for almost every $q\in(\inf\{g(t):t\in(0,1], g(1))$, case (i) holds. We just need to show that the set $\{g(t):g'(t)=0\}$ has measure zero, which is a simple application of Sard's theorem (take $n=m=k=1$, $f(x)=g(\arctan(x)/\pi+1/2)$, where $\arctan(x)/\pi+1/2$ is just some function that maps $\rr$ to $(0,1)$ with continuous nonzero derivative), where $g'$ is continuous on (0,1) because $F^{(1)}$, $T^{(1)}$ and thus $G$ have continuous densities on a common connected support.
\begin{lemma}[Sard's theorem, \citet{sard1942measure}]
Let $f:\rr^n\to\rr^m$ be $k$ times continuously differentiable, where $k\ge\max(n-m+1,1)$. Let $A$ be the set of points $x\in\rr^n$ such that the Jacobian matrix of $f$ has rank smaller than $m$. Then the image $f(A)$ has Lebesgue measure zero in $\rr^m$.
\label{lemma:sard}
\end{lemma}

In case (i), $t_\text{AdaPT}\in(0,1)$, we have established that $g(t_\text{AdaPT})=q$. Since $g'(t_\text{AdaPT})\ne0$, we must have $g'(t_\text{AdaPT})<0$, since otherwise $t_\text{AdaPT}$ could also be smaller. Thus, in case (i), $g'(t_\text{AdaPT})<0$ and for any sufficiently small $\varepsilon>0$ there exists a point $t^*$ in $(t_\text{AdaPT}-\varepsilon,t_\text{AdaPT})$ such that $g(t^*)<q$. In case (ii), since $g$ is continuous and $g(1)<q$, it also holds that for any sufficiently small $\varepsilon>0$ there exists a point $t^*$ in $(t_\text{AdaPT}-\varepsilon,t_\text{AdaPT})$ such that $g(t^*)<q$.

Let
\begin{equation*}
\hat t_p=\min\{t\in(0,1]:g_p(t)\equiv\frac{1/p+\#\{j:p_j\ge1-t\}/p}{\#\{j:p_j\le t\}/p}\le q\}.\footnote{The term $1/p$ does not matter asymptotically, in that we can still consider the numerator as an empirical CDF of the $p_j$'s.}
\end{equation*}
We analyze cases (i) and (ii). We begin by showing $\hat t_p\cip t_\text{AdaPT}$. Take any sufficiently small $\varepsilon>0$. We have established that there exists a point $t^*$ in $(t_\text{AdaPT}-\varepsilon,t_\text{AdaPT})$ such that $g(t^*)<q$. Then
\begin{equation*}
\begin{aligned}
\p(\hat t_p>t_\text{AdaPT}-\varepsilon)&\ge \p(\hat t_p\ge t^*)\\
&\ge\p(g_p(t^*)\le q)\\
&\ge\p(|g_p(t^*)-g(t^*)|<|g(t^*)-q|)\to1.
\end{aligned}
\end{equation*}
In case (ii), we get $\p(\hat t_p\le t_\text{AdaPT}=1)=1$ for free and the proof is concluded. Next we consider case (i) and assume $\varepsilon\in(0,1-t_\text{AdaPT})$. Choose $\delta_1\in\left(0,\min(1-t_\text{AdaPT}-\varepsilon,1-G(t_\text{AdaPT}+\varepsilon))\right)$. Let $\delta_3=\min\{g(t)-q:t\le[t_\text{AdaPT}+\varepsilon,1]\}$, which is positive since otherwise $g$ could attain a value no more than $q$ in $[t_\text{AdaPT}+\varepsilon,1]$, violating $t_\text{AdaPT}$'s definition.
Now observe that the function $\frac{\gamma x+(1-\gamma)y}{\gamma z+(1-\gamma)w}$ is continuous in $(x,y,z,w)$ on $\{(x,y,z,w)\in[0,1]^4:z,w\ge\min(1-t_\text{AdaPT}-\varepsilon,1-G(t_\text{AdaPT}+\varepsilon)-\varepsilon)-\delta_1\}$, and thus uniformly continuous. So we can choose $\delta_2\in(0,\delta_1)$ such that whenever $(x,y,z,w),(x',y',z',w')\in\{(x,y,z,w)\in[0,1]^4:z,w\ge\min(1-t_\text{AdaPT}-\varepsilon,1-G(t_\text{AdaPT}+\varepsilon))\}$ and $|x-x'|,|y-y'|,|z-z'|,|w-w'|<\delta_2$, $|\frac{\gamma x+(1-\gamma)y}{\gamma z+(1-\gamma)w}-\frac{\gamma x'+(1-\gamma)y'}{\gamma z'+(1-\gamma)w'}|<\delta_3$.

Now we show the empirical CDFs of the non-null and null $p$-values converge pointwise, which implies uniform convergence by Lemma~\ref{lemma:unif-conv-in-prob}. Let $\hat G^{(1)}_p(t)=\#\{j\in J_1:p_j\le t\}/|J_1|$. Due to exchangeability, for any $j\in J_1$,
\begin{equation*}
\begin{aligned}
\e[\hat G^{(1)}_p(t)]&=\p[p_j\le t]=\p(1-F_j(T_j)\le t)\\
&\to\p(1-F^{(1)}(T^{(1)})\le t)=G_1(t).
\end{aligned}
\end{equation*}
As for the variance, we have for any distinct $j,k\in J_1$,
\begin{equation*}
\Var[\hat G^{(1)}_p(t)]=\frac{1}{(1-\gamma)p}\Var[\ind(p_j\le t)]+\frac{(1-\gamma)p((1-\gamma)p-1)}{(1-\gamma)^2p^2}\Cov(\ind(p_j\le t),\ind(p_k\le t)).
\end{equation*}

Since all the $F_j$'s converges in distribution to deterministic and continuous CDFs, by the asymptotic pairwise independence, $\Var[\hat G^{(1)}_p(t)]\to0$, therefore establishing $\hat G^{(1)}_p(t)\cip G_1(t)$. We can show the empirical CDF of the null $p$-values converges in probability in the same way.

By Lemma~\ref{lemma:unif-conv-in-prob}, with probability converging to $1$, all the CDFs fall within a $\delta_2$-neighborhood around the limit CDFs, and by the previously showed uniform continuity, $|g_p(t)-g(t)|<\delta_3$ for $t\in[t_\text{AdaPT}+\varepsilon,1]$. By the definition of $\delta_3$, this means with probability converging to $1$, $g_p(t)>q$ for all $t\in[t_\text{AdaPT}+\varepsilon,1]$. Now we have
\begin{equation*}
\p(\hat t_p<t_\text{AdaPT}+\varepsilon)=\p(g_p(t)>q,\forall t\in[t_\text{AdaPT}+\varepsilon,1])\to1.
\end{equation*}
Combining the results, we have
\begin{equation*}
\lim_{p\to\infty}\p(|\hat t_p-t_\text{AdaPT}|<\varepsilon)=1.
\end{equation*}
Due to the uniform convergence of the CDFs, the results on the expressions on the asymptotic FDP and realized power then follow by noticing that the FDP and the realized power are
\begin{equation*}
\frac{\#\{j\in J_0:p_j\le\hat t_p\}}{\#\{j:p_j\le\hat t_p\}}\quad\text{and}\quad\frac{\#\{j\in J_1:p_j\le\hat t_p\}}{|J_1|},
\end{equation*}
respectively.

Finally, we consider case (iii). In this case, we have $q<\inf\{g(t):t\in(0,1]\}$. Take any small positive $\delta$, we have $q<\inf\{g(t):t\in[\delta,1]\}$. Since $0$ is excluded from $[\delta,1]$, we can again use the uniform continuity argument we used before, where we consider $g_p(t)$ as a function of four inputs, and the two inputs in the denominator are bounded away from $0$. In this way, we can show $g_p(t)\cip t$ uniformly for $t\in[\delta,1]$, so that we have
\begin{equation*}
\p(\hat t_p<\delta)=\p(g_p(t)\ge q, \forall t\in[\delta,1])\to1.
\end{equation*}
Thus, the asymptotic realized power satisfies
\begin{equation*}
\begin{aligned}
\frac{\#\{j\in J_1:p_j\le\hat t_p\}}{|J_1|}&=\ind_{\{\hat t_p<\delta\}}\frac{\#\{j\in J_1:p_j\le\hat t_p\}}{|J_1|}+\ind_{\{\hat t_p\ge\delta\}}\frac{\#\{j\in J_1:p_j\le\hat t_p\}}{|J_1|}\\
&\le\underbrace{\ind_{\{\hat t_p<\delta\}}}_{\cip1}\underbrace{\frac{\#\{j\in J_1:p_j\le\delta\}}{|J_1|}}_{\cip G(\delta)}+\underbrace{\ind_{\{\hat t_p\ge\delta\}}}_{\cip0}\underbrace{\frac{\#\{j\in J_1:p_j\le\hat t_p\}}{|J_1|}}_{\le1}\\
&\cip G(\delta).
\end{aligned}
\end{equation*}
Since $\delta$ can be arbitrarily small and $G$ has no point mass at $0$ because all distributions considered here are continuous, we have shown the realized power on the left hand side converges in distribution to $0$.

\end{proof}

% \begin{proof}[Proof of Theorem~\ref{theorem:BH-CRT}]
% If we can show that $\hat G_p(t)=\#\{j:p_j\le t\}/p\cip\gamma G_0(t)+(1-\gamma)G_1(t)$ for each $t$, then the convergence is uniform by Lemma~\ref{lemma:unif-conv-in-prob}.
\begin{lemma}
Let $\{G_n\}$ be a sequence of random CDFs. If for every $t\in\rr$, $G_n(t)\cip G(t)$, with $G$ being a continuous CDF, then the convergence is uniform in $t$, in the sense that
\begin{equation*}
\sup_{t\in\rr}|G_n(t)-G(t)|\cip0.
\end{equation*}
\label{lemma:unif-conv-in-prob}
\end{lemma}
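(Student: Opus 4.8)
The plan is to adapt the classical covering argument (Pólya's theorem / the device behind Glivenko--Cantelli) that turns pointwise convergence of distribution functions into uniform convergence when the limit is continuous, and to carry the ``in probability'' through a finite union bound. The key point is that for each accuracy level $\varepsilon>0$ one can discretize the real line into a \emph{fixed, finite} number of points on which $G$ increases by at most $\varepsilon$, and then control $G_n-G$ everywhere by its behavior on that finite grid.

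First I would fix $\varepsilon>0$ and, using the continuity of $G$ together with $\lim_{t\to-\infty}G(t)=0$ and $\lim_{t\to+\infty}G(t)=1$, choose points $-\infty=t_0<t_1<\cdots<t_N=+\infty$ with $G(t_j)-G(t_{j-1})\le\varepsilon$ for every $j$ (conventions $G(t_0)=0$, $G(t_N)=1$; concretely pick $t_1$ with $G(t_1)\le\varepsilon$ and $t_{N-1}$ with $G(t_{N-1})\ge1-\varepsilon$, then insert intermediate points via the intermediate value theorem applied to the continuous $G$). Crucially $N=N(\varepsilon)$ does not depend on $n$. Next, for arbitrary $t\in\rr$ pick $j$ with $t_{j-1}\le t\le t_j$; monotonicity of $G_n$ and of $G$ gives
\begin{equation*}
G(t)-\varepsilon-\Delta_n \;\le\; G(t_j)-\varepsilon-\Delta_n \;\le\; G_n(t_{j-1}) \;\le\; G_n(t) \;\le\; G_n(t_j) \;\le\; G(t_{j-1})+\varepsilon+\Delta_n \;\le\; G(t)+\varepsilon+\Delta_n,
\end{equation*}
where $\Delta_n:=\max_{0\le j\le N}|G_n(t_j)-G(t_j)|$. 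Taking the supremum over $t$ yields $\sup_{t\in\rr}|G_n(t)-G(t)|\le\varepsilon+\Delta_n$, a bound whose right-hand side no longer depends on $t$.

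Finally I would observe that $\Delta_n\cip0$: it is a maximum of finitely many terms $|G_n(t_j)-G(t_j)|$, each of which converges to $0$ in probability --- by hypothesis for the finite $t_j$'s, and trivially at $t_0$ and $t_N$ since $G_n(-\infty)=G(-\infty)=0$ and $G_n(+\infty)=G(+\infty)=1$ surely. A union bound then gives, for any $\delta>0$, $\p(\Delta_n>\delta)\le\sum_{j=0}^{N}\p(|G_n(t_j)-G(t_j)|>\delta)\to0$. Combining with the previous step, $\p\!\left(\sup_{t}|G_n(t)-G(t)|>2\varepsilon\right)\le\p(\Delta_n>\varepsilon)\to0$, and since $\varepsilon>0$ is arbitrary the claim follows. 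I do not expect any genuine obstacle here; the only points needing a moment's care are the bookkeeping at the infinite endpoints and the fact that the grid size $N$ must be chosen independently of $n$, so that the union bound involves only a fixed number of vanishing terms.
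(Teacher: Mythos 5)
Your proposal is correct and follows essentially the same Pólya-type covering argument as the paper: both discretize the line into a finite grid on which $G$ increases by at most a fixed small amount, use monotonicity of CDFs to pass from the grid to all of $\rr$, and control the finitely many grid deviations by a union bound (the paper phrases this as the event $\max_i|G_n(x_i)-G(x_i)|\le\varepsilon/2$ having probability tending to one).
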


\begin{proof}[Proof of Lemma~\ref{lemma:unif-conv-in-prob}]
Take any $\varepsilon>0$. Find a finite number of points $x_1,x_2,\dots,x_N$ such that $G(x_1)\le\varepsilon/2$, $G(x_{i})-G(x_{i-1})\le\varepsilon/2$ and $1-G(x_N)\le\varepsilon/2$. We have
\begin{equation*}
\p\left(\max_{1\le i\le N}|G_n(x_i)-G(x_i)|\le\varepsilon/2\right)\to1.
\end{equation*}
Now for any $x\in[x_{i-1},x_{i}]$ ($x_0=-\infty,x_{N+1}=\infty$), on the event $\max_{1\le i\le N}|G_n(x_i)-G(x_i)|\le\varepsilon/2$,
\begin{equation*}
G_n(x)\ge G_n(x_{i-1})\ge G(x_{i-1})-\varepsilon/2\ge G(x_i)-\varepsilon/2-\varepsilon/2\ge G(x)-\varepsilon,
\end{equation*}
\begin{equation*}
G_n(x)\le G_n(x_{i})\le G(x_{i})+\varepsilon/2\le G(x_{i-1})+\varepsilon/2+\varepsilon/2\le G(x)+\varepsilon.
\end{equation*}
Thus,
\begin{equation*}
\p(\sup_{t}|G_n(t)-G(t)|<\varepsilon)\ge\p(\max_{1\le i\le N}|G_n(x_i)-G(x_i)|\le\varepsilon/2)\to1.
\end{equation*}
\end{proof}

\noindent\textbf{Theorem~\ref{theorem:coro-BH-CRT}.} \textit{In Setting~\ref{model:lr-iid}, for Lebesgue-almost-every $q\in(0,1)$, BH or AdaPT at level $q$ using CRT $p$-values based on the statistics in Section~\ref{sec:CRT} (respectively, their absolute values) have the following one-sided (respectively, two-sided) effective $\pi_\mu$'s with respect to BH or AdaPT at level $q$:
\begin{enumerate}
    \item For the marginal covariance statistic, the effective $\pi_\mu$ is the distribution of $\frac{1}{\sqrt{\sigma^2+\kappa\e[B_0^2]}}{B_0}$.
    \item For the OLS statistic, assuming $\kappa<1$, the effective $\pi_\mu$ is the distribution of $\frac{\sqrt{1-\kappa}}\sigma B_0$.
    \item For the distilled lasso statistic, the effective $\pi_\mu$ is the distribution of $\frac1{\tau_\lambda}B_0$.
\end{enumerate}}

\begin{proof}[Proof of Theorem~\ref{theorem:coro-BH-CRT}]

We prove the case of one-sided $p$-values, while it is clear that the heart of these results is at the asymptotic pairwise independence of the variable important statistics $T_j$'s, and switching to two-sided $p$-values (or other reasonable $p$-values) only effectively changes using $T_j$ into using $|T_j|$, the results for which can be established almost identically. We use $\Phi_{a^2}$ to denote the CDF of $\mathcal N(0,a^2)$.
% \label{sec:bh-crt-app}

\begin{enumerate}
\item \textit{Marginal covariance}.
% In this section, we consider running BH or AdaPT with $p$-values obtained from the CRT using marginal covariances in the model-X framework. We let
Let $T_j=n^{-1/2}\ndata{Y}^\top\ndata{X}_j$ and $F_j$ be the CDF of $\mathcal N(0,\|\ndata{Y}\|_2^2/n)$. Now we check the conditions of Theorem~\ref{theorem:BH-AdaPT-CRT}.
\begin{enumerate}
    \item It is clear that $1-F_j(T_j)\sim\Unif[0,1]$ for the null variables. Note that $\|\ndata{Y}\|^2/n\cip\sigma^2+\kappa\e[B_0^2]$ (see, e.g., the proof of Theorem~\ref{theorem:prop-power-mc}), so for any $j$ and $t$, $F_j(t)\cip\Phi_{\sigma^2+\kappa\e[B_0^2]}(t)$. We will find $\mathcal L(T^{(1)})$ with Lemma~\ref{lemma:wishart-char}.
    \item This is true because $\beta_j$'s are i.i.d.
    \item We verify this condition by introducing Lemma~\ref{lemma:wishart-char}, which also completes part (a).
\end{enumerate}
\begin{lemma}
In Setting~\ref{model:lr-iid}, for distinct $j$ and $k$ we have
\label{lemma:wishart-char}
\begin{equation*}
    \left(\begin{array}{c}
T_j\\
T_k
  \end{array}\right)-\left(\begin{array}{c}
\sqrt n\beta_j\\
\sqrt n\beta_k\\
  \end{array}\right)\cid\mathcal N\left(\left(\begin{array}{c}
0\\
0\\
  \end{array}\right),\left[\begin{array}{cc}
\sigma^2+\kappa\e[B_0^2] & 0\\
0 & \sigma^2+\kappa\e[B_0^2]\\
  \end{array}\right]\right).
    \end{equation*}
\end{lemma}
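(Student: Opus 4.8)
The plan is to expand $T_j = n^{-1/2}\ndata{Y}^\top\ndata{X}_j$ via $\ndata{Y} = \ndata{X}\beta + \varepsilon$ and to separate out the contribution of $\ndata{X}_j$ to itself. Writing
\[
T_j = \sqrt n\,\beta_j\cdot\frac{\|\ndata{X}_j\|_2^2}{n} \;+\; \frac{1}{\sqrt n}\sum_{\ell\ne j}\beta_\ell\,\ndata{X}_\ell^\top\ndata{X}_j \;+\; \frac{1}{\sqrt n}\varepsilon^\top\ndata{X}_j,
\]
the first term equals $\sqrt n\beta_j + o_\p(1)$ because $\sqrt n\beta_j$ is bounded (the support of $\pi_1$ is bounded) and $\|\ndata{X}_j\|_2^2/n\cas1$. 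In the sum, the single term with $\ell = k$ is $n^{-1/2}\beta_k\ndata{X}_k^\top\ndata{X}_j$, which is $o_\p(1)$ since $\beta_k = O(n^{-1/2})$ and $\ndata{X}_k^\top\ndata{X}_j = O_\p(\sqrt n)$; the analogous $\ell = j$ term in $T_k$ is handled symmetrically. Hence, up to an $o_\p(1)$ vector, $(T_j - \sqrt n\beta_j,\ T_k - \sqrt n\beta_k)$ equals $(n^{-1/2}U^\top\ndata{X}_j,\ n^{-1/2}U^\top\ndata{X}_k)$, where $U := \sum_{\ell\notin\{j,k\}}\beta_\ell\ndata{X}_\ell + \varepsilon$.

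Next I would condition on $\mathcal F := \sigma(\beta,\ndata{X}_j,\ndata{X}_k)$. Given $\mathcal F$, the vector $U$ is Gaussian with mean $0$ and covariance $\bigl(\sigma^2 + \sum_{\ell\notin\{j,k\}}\beta_\ell^2\bigr)I_n$, and is independent of $(\ndata{X}_j,\ndata{X}_k)$ (because $\beta\ci(\ndata{X},\varepsilon)$ and the $\ndata{X}_\ell$'s and $\varepsilon$ are mutually independent). Therefore, conditionally on $\mathcal F$, the pair $(n^{-1/2}U^\top\ndata{X}_j,\ n^{-1/2}U^\top\ndata{X}_k)$ is a centered bivariate Gaussian with covariance
\[
\Sigma_n := \Bigl(\sigma^2 + \sum_{\ell\notin\{j,k\}}\beta_\ell^2\Bigr)\cdot\frac1n\begin{pmatrix}\|\ndata{X}_j\|_2^2 & \ndata{X}_j^\top\ndata{X}_k\\ \ndata{X}_j^\top\ndata{X}_k & \|\ndata{X}_k\|_2^2\end{pmatrix}.
\]
By the strong law, $\|\ndata{X}_j\|_2^2/n\cas1$, $\|\ndata{X}_k\|_2^2/n\cas1$, and $\ndata{X}_j^\top\ndata{X}_k/n\cas0$; moreover $\sum_\ell\beta_\ell^2 = (p/n)\cdot\tfrac1p\sum_{\ell=1}^p(\sqrt n\beta_\ell)^2\cas\kappa\e[B_0^2]$ by the strong law applied to the i.i.d.\ bounded variables $\sqrt n\beta_\ell$, while $\beta_j^2,\beta_k^2\cas0$. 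Consequently $\Sigma_n\cas(\sigma^2+\kappa\e[B_0^2])I_2 =: \Sigma$.

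Finally I would upgrade this to unconditional weak convergence. For $v\in\rr^2$, the conditional characteristic function of $(n^{-1/2}U^\top\ndata{X}_j,\ n^{-1/2}U^\top\ndata{X}_k)$ given $\mathcal F$ is $\exp(-\tfrac12 v^\top\Sigma_n v)$, which is bounded by $1$ and converges almost surely to $\exp(-\tfrac12 v^\top\Sigma v)$; by bounded convergence the unconditional characteristic function converges to that of $\mathcal N(0,\Sigma)$, so $(n^{-1/2}U^\top\ndata{X}_j,\ n^{-1/2}U^\top\ndata{X}_k)\cid\mathcal N(0,\Sigma)$, and absorbing the $o_\p(1)$ remainders via Slutsky yields the claim. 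The main obstacle is really just the bookkeeping — keeping track of which terms are asymptotically negligible and keeping straight the two scalings of ``$\beta$'' (individual coefficients of order $n^{-1/2}$ versus total squared norm of order $1$). The one step deserving care is $\ndata{X}_j^\top\ndata{X}_k/n\cas0$, since this off-diagonal vanishing is precisely what produces the diagonal limiting covariance, i.e.\ the asymptotic pairwise independence of $T_j$ and $T_k$ required to invoke Theorem~\ref{theorem:BH-AdaPT-CRT}.
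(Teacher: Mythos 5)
Your proof is correct, and it takes a genuinely different route from the paper's. The paper conditions on $(\ndata{Y},\beta)$: by Lemma~\ref{lemma:gaussian-covariate-conditional}, the vector $\ndata{X}^\top\ndata{Y}$ is exactly Gaussian given $(\ndata{Y},\beta)$, and the conditional mean and covariance of $(T_j - \sqrt n\beta_j,\ T_k - \sqrt n\beta_k)$ are shown to converge in probability to $0$ and $(\sigma^2+\kappa\e[B_0^2])I_2$, respectively, after which the same bounded-convergence-of-characteristic-functions step you invoke finishes the proof. You instead condition on $(\beta,\ndata{X}_j,\ndata{X}_k)$, expand $\ndata Y=\ndata X\beta+\varepsilon$ to peel off the $O_\p(n^{-1/2})$ self- and cross-terms, and use the exact conditional Gaussianity of $U=\sum_{\ell\notin\{j,k\}}\beta_\ell\ndata X_\ell+\varepsilon$. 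Both choices of conditioning $\sigma$-field yield exact finite-sample Gaussianity and the same limit. The paper's version is more modular --- Lemma~\ref{lemma:gaussian-covariate-conditional} is reused verbatim in Lemma~\ref{lemma:mc-knockoff} and Theorem~\ref{theorem:retro-mc} --- while yours is more self-contained and makes the mechanism behind the asymptotic pairwise independence explicit: it is the almost-sure (or in-probability) vanishing of $\ndata X_j^\top\ndata X_k/n$, rather than anything about $\ndata Y$, that kills the off-diagonal. One small stylistic note: since Setting~\ref{model:lr-iid} draws a fresh $\beta$ (and $\ndata X$, $\varepsilon$) for each $(n,p)$, there is no canonical coupling across $n$, so the cleaner statement is $\Sigma_n\cip\Sigma$ (the paper writes $\cip$ throughout); the bounded-convergence argument goes through unchanged because $\exp(-\tfrac12 v^\top\Sigma_n v)$ converging in probability to a constant, being uniformly bounded, has converging expectation.
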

To sum up, the $p$-values we obtain from marginal covariance satisfy the conditions of Theorem~\ref{theorem:BH-AdaPT-CRT} with $F^{(1)}=\Phi_{\sigma^2+\kappa\e[B_0^2]}$ and $T^{(1)}\sim B_0+\sqrt{\sigma^2+\kappa\e[B_0^2]}W$, where $W$ is a standard Gaussian random variable independent of $B_0$. We obtain the effective $\pi_\mu$ by dividing the $T_j$'s by $\sqrt{\sigma^2+\kappa\e[B_0^2]}$.

\item \textit{OLS}.
%In this section, we assume $\kappa<1$ and consider plugging in the OLS $p$-values into BH or AdaPT. Specifically, let
Let $T_j=\sqrt n\hat\beta_j$ be the (normalized) OLS estimate for covariate $X_j$. Let $\ndata{X}_{\text{-}j}$ be $\ndata{X}$ with the $j$th column removed and $F_j$ be the CDF of $\mathcal L(T_j\,|\,\ndata{Y},\ndata{X}_{\text{-}j})$ under the null. We now check the conditions of Theorem~\ref{theorem:BH-AdaPT-CRT}.

\begin{enumerate}
    \item It is clear that $1-F_j(T_j)\sim\Unif[0,1]$ for the null variables, and we have shown that for any $j$ and $t$, $F_j(t)\cip\Phi_{\sigma^2/(1-\kappa)}(t)$ in the proof of Theorem~\ref{theorem:prop-power-mc}.
    \item This is true because $\beta_j$'s are i.i.d. We will find $\mathcal L(T^{(1)})$ in part (c).
    \item Since
    \begin{equation*}
    \left(\begin{array}{c}
\sqrt n\hat\beta_j\\
\sqrt n\hat\beta_k
  \end{array}\right)-\left(\begin{array}{c}
\sqrt n\beta_j\\
\sqrt n\beta_k
  \end{array}\right)\mid\ndata{X}\sim\mathcal N\left(\left(\begin{array}{c}
0\\
0
  \end{array}\right),\sigma^2\left[\left(\frac{\ndata{X}^\top\ndata{X}}{n}\right)^{-1}\right]_{\{j,k\},\{j,k\}}\right),
    \end{equation*}
    we can show
    \begin{equation*}
    \left(\begin{array}{c}
\sqrt n\hat\beta_j\\
\sqrt n\hat\beta_k
  \end{array}\right)-\left(\begin{array}{c}
\sqrt n\beta_j\\
\sqrt n\beta_k
  \end{array}\right)\cid\mathcal N\left(\left(\begin{array}{c}
0\\
0
  \end{array}\right),\frac{\sigma^2}{1-\kappa}I_2\right).
    \end{equation*}
    once we verify that any $2\times2$ sub-diagonal matrix of $\left(\ndata{X}^\top\ndata{X}/n\right)^{-1}$ converges in probability to $(1-\kappa)^{-1}I_2$, which follows directly from a computation of the first and second moments of the inverse Wishart distribution.
\end{enumerate}
To sum up, the $p$-values we obtain from OLS satisfy the conditions of Theorem~\ref{theorem:BH-AdaPT-CRT} with $F^{(1)}=\Phi_{\sigma^2/(1-\kappa)}$ and $T^{(1)}\sim B_0+\sigma W/\sqrt{1-\kappa}$, where $W$ is a standard Gaussian random variable independent of $B_0$. We obtain the effective $\pi_\mu$ by dividing the $T_j$'s by $\sigma/\sqrt{1-\kappa}$.

\item \textit{Distilled lasso}.
% In this section, we consider the power of BH and AdaPT with $p$-values calulated by CRT using the distilled lasso statistic. Mathematically, we let
Let $T_j=(\ndata{Y}-\ndata{X}_{\text{-}j}\hat\beta_\lambda^{(\text{-}j)})^\top\ndata{X}_j$, where %$T_j=n^{-1/2}\sum_{i=1}^nX_{ij}(Y_i-X_{i,\text{-}j}\hat\theta_\lambda^{(\text{-}j)})$, where $X_{i,\text{-}j}$ is the $i$th row of $\ndata{X}_{\text{-}j}$ and
$\hat\beta_\lambda^{(\text{-}j)}$ is the lasso coefficient of fitting lasso with parameter $\lambda$ on $\ndata{Y}$ against $\ndata{X}_{\text{-}j}$. Our $F_j$ in this case is the CDF of $\mathcal N(0,\|\ndata{Y}-\ndata{X}_{\text{-}j}\hat\beta_\lambda^{(\text{-}j)}\|_2^2/n)$. We now check the conditions for Theorem~\ref{theorem:BH-AdaPT-CRT}.

\begin{enumerate}
    \item It is clear that $1-F_j(T_j)\sim\Unif[0,1]$ for the null variables. By Lemma~\ref{lemma:theorem-training-loss}, we have for any $j$, $F_j(t)\cip\Phi_{\lambda^2/\alpha_\lambda^2}(t)$. We will find $\mathcal L(T^{(1)})$ in part (c).
    \item This is true because $\beta_j$'s are i.i.d.
    \item We verify this condition by introducing Lemma~\ref{lemma:theorem-BH-distilled}.
\end{enumerate}

\begin{lemma}
\label{lemma:theorem-BH-distilled}
In Setting~\ref{model:lr-iid}, for distinct $j$ and $k$ we have
\begin{equation*}
\left(\begin{array}{c}
T_j\\
T_k
  \end{array}\right)-\frac{\lambda}{\alpha_\lambda\tau_\lambda}\left(\begin{array}{c}
\sqrt n\beta_j\\
\sqrt n\beta_k
  \end{array}\right)\cid\mathcal N\left(\left(\begin{array}{c}
0\\
0
  \end{array}\right),\frac{\lambda^2}{\alpha_\lambda^2}I_2\right).
\end{equation*}
\end{lemma}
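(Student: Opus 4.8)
The plan is to deduce the joint statement from the marginal one already established in Lemma~\ref{lemma:theorem-training-loss} (which gives $T_j-\tfrac{\lambda}{\alpha_\lambda\tau_\lambda}\sqrt n\beta_j\cid\mathcal N(0,\lambda^2/\alpha_\lambda^2)$) together with \emph{asymptotic independence} of $T_j$ and $T_k$, and to obtain the latter by a leave-two-out argument. Throughout I would keep the re-normalization used in the proof of Theorem~\ref{theorem:prop-distilled-power}, condition on $\beta$ (so that, a.s.\ over $\beta$, the hypotheses of Lemma~\ref{lemma:theorem-training-loss} hold, and in fact hold simultaneously for the designs obtained by deleting one or two columns, since deleting $O(1)$ columns changes neither $\kappa$ nor the limiting law of $B_0$, hence neither $\alpha_\lambda$ nor $\tau_\lambda$), and write $R_j=\ndata{Y}-\ndata{X}_{\text{-}j}\hat\beta^{(\text{-}j)}_\lambda$, $R_k=\ndata{Y}-\ndata{X}_{\text{-}k}\hat\beta^{(\text{-}k)}_\lambda$, and the leave-two-out residual $R_{jk}=\ndata{Y}-\ndata{X}_{\text{-}\{j,k\}}\hat\beta^{(\text{-}\{j,k\})}_\lambda$, so that $T_j=R_j^\top\ndata{X}_j$, $T_k=R_k^\top\ndata{X}_k$, while $R_{jk}$ involves neither $\ndata{X}_j$ nor $\ndata{X}_k$.

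The first and hardest step is a leave-one-out stability estimate: $\|R_j-R_{jk}\|_2^2/n\to0$ and $\|R_k-R_{jk}\|_2^2/n\to0$ a.s. Since Lemma~\ref{lemma:theorem-training-loss} gives $\|R_j\|_2^2/n,\ \|R_k\|_2^2/n,\ \|R_{jk}\|_2^2/n\to\lambda^2/\alpha_\lambda^2$, by the parallelogram identity it is enough to prove the cross-term limit $R_j^\top R_{jk}/n\to\lambda^2/\alpha_\lambda^2$, i.e.\ that restoring the single column $\ndata{X}_k$ to the lasso regression perturbs its fitted values by $o(\sqrt n)$ in Euclidean norm. I would obtain this from the AMP characterization of the lasso in \citet{bayati2011lasso}: the near-equivalence of the lasso solution and the AMP iterates (their Theorem~1.8), combined with the insensitivity of the AMP recursion and its fixed point to adjoining one coordinate to the design, forces the two solutions to have fitted values within $o(\sqrt n)$ of each other. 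This ``one extra predictor changes a $\Theta(n)$-predictor lasso fit by $o(\sqrt n)$'' statement is the main obstacle and the only place the leave-one-out bookkeeping is really needed.

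With Step~1 in hand, the rest is routine. Let $\mathcal G=\sigma(\ndata{Y},\ndata{X}_{\text{-}\{j,k\}},\beta)$, which makes $R_{jk}$ measurable. Because $\ndata{Y}=\beta_j\ndata{X}_j+\beta_k\ndata{X}_k+(\text{rest})$ with $\beta_j,\beta_k=O(n^{-1/2})$ and the nuisance part $\mathcal G$-adapted up to its $\varepsilon$-component, a Gaussian-conditioning computation shows that, given $\mathcal G$, the rows of $(\ndata{X}_j,\ndata{X}_k)$ are independent Gaussian vectors whose within-row cross-covariance is of vanishing order (it is proportional to $\beta_j\beta_k$), so $(R_{jk}^\top\ndata{X}_j,R_{jk}^\top\ndata{X}_k)\mid\mathcal G$ is bivariate Gaussian with off-diagonal covariance $\to0$, each marginal variance $\to\lambda^2/\alpha_\lambda^2$ (from $\|R_{jk}\|_2^2/n\to\lambda^2/\alpha_\lambda^2$), and conditional mean $\to\tfrac{\lambda}{\alpha_\lambda\tau_\lambda}\sqrt n\beta_j$ (resp.\ $\tfrac{\lambda}{\alpha_\lambda\tau_\lambda}\sqrt n\beta_k$) by the residual--effective-error correlation limit \eqref{equation:AMP-signal} applied to the two-columns-deleted regression, whose effective error is $\beta_j\ndata{X}_j+\beta_k\ndata{X}_k+\varepsilon$. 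Next I would replace $R_{jk}$ by $R_j$ (resp.\ $R_k$) in the first (resp.\ second) coordinate: writing $T_j=R_{jk}^\top\ndata{X}_j+(R_j-R_{jk})^\top\ndata{X}_j$, the correction depends on $\ndata{X}_j$ only through $\ndata{X}_j$ itself while $R_j-R_{jk}$ is essentially $\ndata{X}_j$-free, so a conditional second-moment bound using Step~1 gives $(R_j-R_{jk})^\top\ndata{X}_j=o_p(1)$ (note the crude Cauchy--Schwarz bound $\|R_j-R_{jk}\|_2\|\ndata{X}_j\|_2=o_p(\sqrt n)$ is not enough---the cancellation from the random signs of $\ndata{X}_j$ is what makes it $o_p(1)$), and similarly for $k$. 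Hence $(T_j,T_k)$ and $(R_{jk}^\top\ndata{X}_j,R_{jk}^\top\ndata{X}_k)$ share the same limit; since the conditional limit above does not depend on $\mathcal G$ (only on $\beta$, through $\beta_j,\beta_k$), taking conditional characteristic functions and integrating---first over $\mathcal G$, then over $\beta$ by dominated convergence---yields the claimed $\mathcal N(0,\tfrac{\lambda^2}{\alpha_\lambda^2}I_2)$ limit for $(T_j-\tfrac{\lambda}{\alpha_\lambda\tau_\lambda}\sqrt n\beta_j,\ T_k-\tfrac{\lambda}{\alpha_\lambda\tau_\lambda}\sqrt n\beta_k)$.
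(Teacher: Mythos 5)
Your overall architecture matches the paper's: introduce the leave-two-out residual $R_{jk}$, prove a leave-one-out stability estimate, condition on $\sigma(\ndata{Y},\ndata{X}_{\text{-}\{j,k\}},\beta)$ to get a bivariate Gaussian with vanishing cross-covariance (proportional to $\beta_j\beta_k$), identify the conditional mean and variance via the residual--error limits from Lemma~\ref{lemma:theorem-training-loss}, then transport the result from $(R_{jk}^\top\ndata{X}_j,R_{jk}^\top\ndata{X}_k)$ to $(T_j,T_k)$ by a conditional second-moment bound (rather than the lossy Cauchy--Schwarz), exactly as the paper does. You also correctly note that deleting $O(1)$ columns leaves $\kappa$, the empirical law of $B_0$, and hence $\alpha_\lambda,\tau_\lambda$ unchanged, and that the effective error for the leave-two-out regression is $\beta_j\ndata{X}_j+\beta_k\ndata{X}_k+\varepsilon$.

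However, there is a genuine gap in your Step~1, which you yourself flag as ``the main obstacle.'' You propose to derive $\|R_j-R_{jk}\|_2^2/n\to 0$ from ``the near-equivalence of the lasso solution and the AMP iterates (their Theorem~1.8), combined with the insensitivity of the AMP recursion and its fixed point to adjoining one coordinate.'' That second ingredient is not a result one can cite: the AMP iterates for the $(p-1)$-column and $p$-column designs are distinct trajectories whose individual convergence to their respective lasso solutions does not, by itself, couple the two lasso solutions to within $o(\sqrt n)$ in fitted-value norm. The paper's actual argument is quite different and much more concrete: it invokes the four-term decomposition in the proof of Lemma~3.1 of \citet{bayati2011lasso} applied with the roles of $\hat\beta_\lambda$ and $\hat\beta_\lambda^{(\text{-}1)}$ (padded with a zero) to get the subgradient inequality
\[
\frac{\|\hat{\ndata{Y}}-\hat{\ndata{Y}}^{(\text{-}1)}\|^2}{2p}\le\bigl|\langle\mathrm{sg}(\mathcal C,\hat\beta_\lambda^{(\text{-}1)}),\hat\beta_\lambda-\hat\beta_\lambda^{(\text{-}1)}\rangle\bigr|,
\]
and then constructs, via the KKT conditions for the $(p-1)$-column problem, a specific subgradient $\mathrm{sg}(\mathcal C,\hat\beta_\lambda^{(\text{-}1)})$ whose only nonzero coordinate is $-(\ndata{Y}-\hat{\ndata{Y}}^{(\text{-}1)})^\top\ndata{X}_1$, which has bounded (indeed $O_p(1)$) size by Lemma~\ref{lemma:theorem-training-loss}, while $\|\hat\beta_\lambda-\hat\beta_\lambda^{(\text{-}1)}\|^2/p$ stays bounded by Theorem~1.5 of \citet{bayati2011lasso}. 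Cauchy--Schwarz then forces $\|\hat{\ndata{Y}}-\hat{\ndata{Y}}^{(\text{-}1)}\|^2/n\to 0$. This subgradient/KKT argument is the heart of the lemma; without supplying it (or something equivalently rigorous), your Step~1 does not close.

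One more small point: your Step~1 as written targets $R_j^\top R_{jk}/n\to\lambda^2/\alpha_\lambda^2$ via polarization, which is a perfectly fine reformulation, but the paper proves the squared-difference statement $\|\hat{\ndata{Y}}-\hat{\ndata{Y}}^{(\text{-}1)}\|^2/n\to 0$ directly and only once (restoring one column at a time, noting the leave-two-out statement reduces to the leave-one-out one by absorbing $\beta_1\ndata{X}_1$ into the error), which is simpler. Everything after Step~1 in your proposal is a correct re-description of the paper's argument.
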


To sum up, the $p$-values we obtain from the distilled lasso statistic satisfy the conditions of Theorem~\ref{theorem:BH-AdaPT-CRT} with $F^{(1)}=\Phi_{\lambda^2/\alpha_\lambda^2}$ and $T^{(1)}\sim \lambda B_0/(\alpha_\lambda\tau_\lambda)+\lambda W/\alpha_\lambda$, where $W$ is a standard Gaussian random variable independent of $B_0$. We obtain the effective $\pi_\mu$ by dividing the $T_j$'s by $\lambda/\alpha_\lambda$.

\end{enumerate}
\end{proof}

\begin{lemma}
\label{lemma:gaussian-covariate-conditional}
In Setting~\ref{model:lr-iid}, let $(X^\top,Y)$ represent a random row vector that has the same distribution as one generic row of $[\ndata X, \ndata Y]$ conditional on $\beta$, then
\begin{equation}
X\mid Y,\beta\sim\mathcal N\left(\frac{Y}{\|\beta\|^2+\sigma^2}\beta,I-\frac{1}{\|\beta\|^2+\sigma^2}\beta\beta^\top\right)
\label{equation:conditional-row}
\end{equation}
and
\begin{equation}
\ndata{X}^\top\ndata{Y}\mid\ndata{Y},\beta\sim\mathcal N\left(\frac{\|\ndata{Y}\|^2}{\|\beta\|^2+\sigma^2}\beta,\|\ndata{Y}\|^2\left(I-\frac{1}{\|\beta\|^2+\sigma^2}\beta\beta^\top\right)\right).
\label{equation:conditional-mc}
\end{equation}
\end{lemma}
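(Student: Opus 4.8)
The plan is to first establish that, conditionally on $\beta$, the pair $(X^\top,Y)$ is jointly Gaussian with mean zero, then read off the conditional law of $X$ given $(Y,\beta)$ from the standard Gaussian conditioning formula, and finally obtain \eqref{equation:conditional-mc} by summing independent copies across the $n$ rows.

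For the first step, recall from Setting~\ref{model:lr-iid} that $\beta\ci(\ndata X,\varepsilon)$, so conditioning on $\beta$ leaves a generic row with $X\sim\mathcal N(0,I_p)$, $\epsilon\sim\mathcal N(0,\sigma^2)$, $X\ci\epsilon$, and $Y=X^\top\beta+\epsilon$. Hence $(X^\top,Y)$ is a linear image of the Gaussian vector $(X^\top,\epsilon)$ and is itself Gaussian with mean zero; its covariance blocks are $\Cov(X\mid\beta)=I_p$, $\Cov(X,Y\mid\beta)=\e[X(X^\top\beta+\epsilon)\mid\beta]=\e[XX^\top]\beta=\beta$ (using $X\ci\epsilon$ and $\e\epsilon=0$), and $\Var(Y\mid\beta)=\beta^\top\beta+\sigma^2=\|\beta\|^2+\sigma^2$. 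Applying the formula for the conditional distribution of a sub-vector of a multivariate Gaussian then yields \eqref{equation:conditional-row} directly, with conditional mean $\beta Y/(\|\beta\|^2+\sigma^2)$ and conditional covariance $I_p-\beta\beta^\top/(\|\beta\|^2+\sigma^2)$.

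For the second step, observe that conditionally on $\beta$ the rows $(\ndata X_{i\cdot}^\top,Y_i)$ are i.i.d., so each $\ndata X_{i\cdot}$ given $(Y_i,\beta)$ has the law \eqref{equation:conditional-row}; moreover $\ndata X_{1\cdot},\dots,\ndata X_{n\cdot}$ remain conditionally independent given $(\ndata Y,\beta)$, with $\ndata X_{i\cdot}\mid\ndata Y,\beta$ depending on $\ndata Y$ only through $Y_i$. Writing $\ndata X^\top\ndata Y=\sum_{i=1}^n Y_i\,\ndata X_{i\cdot}$, each summand is, conditionally on $(\ndata Y,\beta)$, Gaussian with mean $Y_i^2\beta/(\|\beta\|^2+\sigma^2)$ and covariance $Y_i^2(I_p-\beta\beta^\top/(\|\beta\|^2+\sigma^2))$; summing these independent contributions and using $\sum_i Y_i^2=\|\ndata Y\|^2$ gives \eqref{equation:conditional-mc}.

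I do not expect a real obstacle here; the only points needing care are (a) invoking $\beta\ci(\ndata X,\varepsilon)$ so that conditioning on $\beta$ preserves both the Gaussianity and the independence structure, and (b) noting that conditioning on the entire vector $\ndata Y$ rather than on $Y_i$ alone does not couple the rows, which is precisely what allows $\ndata X^\top\ndata Y$ to be treated as a sum of independent conditional Gaussians.
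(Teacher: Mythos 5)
Your proposal is correct and follows essentially the same route as the paper: compute the joint Gaussian law of $(X^\top,Y)$ given $\beta$, invoke the standard conditioning formula to obtain \eqref{equation:conditional-row}, and then express $\ndata X^\top\ndata Y$ as a sum of $n$ conditionally independent Gaussian vectors $Y_i\ndata X_{i\cdot}$ to obtain \eqref{equation:conditional-mc}. The extra remarks you make — that $\beta\ci(\ndata X,\varepsilon)$ is what justifies the conditioning, and that conditioning on the full $\ndata Y$ does not couple the rows — are correct and slightly more explicit than the paper, but do not change the argument.
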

\begin{proof}[Proof of Lemma~\ref{lemma:gaussian-covariate-conditional}]
Jointly, we have
\begin{equation*}
\left(\begin{array}{c}
X\\
Y
  \end{array}\right)\mid\beta\sim\mathcal N\left(\left(\begin{array}{c}
0\\
0
  \end{array}\right),\left[\begin{array}{cc}
I_p & \beta\\
\beta^\top & \|\beta\|^2_2+\sigma^2\\
  \end{array}\right]\right).
\end{equation*}
Apply the formula of the conditional Gaussian distribution and we get Equation~\eqref{equation:conditional-row}. Thus,
\begin{equation*}
YX\mid Y,\beta\sim\mathcal N\left(\frac{Y^2}{\|\beta\|^2+\sigma^2}\beta,Y^2\left(I-\frac{1}{\|\beta\|^2+\sigma^2}\beta\beta^\top\right)\right).
\end{equation*}
We notice that the left hand side of Equation~\eqref{equation:conditional-mc} is just a summation of $n$ independent Gaussian random vectors with their distributions given by the above formula, and the validity of Equation~\eqref{equation:conditional-mc} then follows.
\end{proof}

\begin{proof}[Proof of Lemma~\ref{lemma:wishart-char}]
% By direct calculation,
% \begin{equation*}
% X\mid Y,\beta\sim\mathcal N\left(\frac{y}{\|\beta\|^2+\sigma^2}\beta,I-\frac{1}{\|\beta\|^2+\sigma^2}\beta\beta^\top\right)
% \end{equation*}
% and
% \begin{equation*}
% \ndata{X}^\top\ndata{Y},\beta\mid\ndata{Y}\sim\mathcal N\left(\frac{\|\ndata{Y}\|^2}{\|\beta\|^2+\sigma^2}\beta,\|\ndata{Y}\|^2\left(I-\frac{1}{\|\beta\|^2+\sigma^2}\beta\beta^\top\right)\right).
% \end{equation*}
Let $T_j=n^{-1/2}\ndata{X}_j^\top\ndata{Y}$. Applying Lemma~\ref{lemma:gaussian-covariate-conditional}, we have that for $j\ne k$,
\begin{multline*}
\left(\begin{array}{c}
T_j-\sqrt n\beta_j\\
T_k-\sqrt n\beta_k\\
\end{array}\right)\mid\ndata{Y},\beta\\
\sim\mathcal N\left(\left(\frac{\|\ndata{Y}\|^2/n}{\|\beta\|^2+\sigma^2}-1\right)\left(\begin{array}{c}
\sqrt n\beta_j\\
\sqrt n\beta_k\\
\end{array}\right),\frac{\|\ndata{Y}\|^2}{n}\left[\begin{array}{cc}
1-\frac{\beta_j^2}{\|\beta\|^2+\sigma^2} & -\frac{\beta_j\beta_k}{\|\beta\|^2+\sigma^2}\\
-\frac{\beta_j\beta_k}{\|\beta\|^2+\sigma^2} & 1-\frac{\beta_k^2}{\|\beta\|^2+\sigma^2}\\
\end{array}\right]\right),
\end{multline*}
which converges in distribution to $\mathcal N(0,(\sigma^2+\kappa\e[B_0^2])I_2)$ because
\begin{equation*}
    \|\ndata{Y}\|^2/n\cip\sigma^2+\kappa\e[B_0^2],
\end{equation*}
\begin{equation*}
    \|\beta\|^2\cip\kappa\e[B_0^2],
\end{equation*}
\begin{equation*}
\beta_j^2,\beta_k^2,\beta_j\beta_k\cip0,
\end{equation*}
and $\sqrt n\beta_j$'s are universally bounded.
\end{proof}

\begin{proof}[Proof of Lemma~\ref{lemma:theorem-BH-distilled}]
To use the results in \citet{bayati2011lasso}, we apply the following re-normalization to Setting~\ref{model:lr-iid}: assume $\ndata{X}$ is divided by $\sqrt n$ and $\beta$ is multiplied by $\sqrt n$. As explained in the proof of Lemma~\ref{lemma:theorem-training-loss}, we additionally assume that $\varepsilon_i$'s and $X_{ij}$'s do not change with $n,p$ as long as $n\ge i$ and $p\ge j$, which does not change the distribution of Setting~\ref{model:lr-iid} for each fixed pair $(n,p)$.

Let $\hat{\ndata{Y}}^{(\text{-}j)}=\ndata{X}_{\text{-}j}\hat\beta_\lambda^{(\text{-}j)}$, where $\ndata{X}_{\text{-}j}$ is $\ndata{X}$ with the $j$th column removed, and $\hat\beta_\lambda^{(\text{-}j)}$ is the lasso coefficient from regressing $\ndata{Y}$ on $\ndata{X}_{\text{-}j}$ with penalty parameter $\lambda$. We replace $j,k$ with $1,2$ in the proof, which we can do due to exchangeability. Note that
\begin{equation*}
\left(\begin{array}{c}
T_1\\
T_2
  \end{array}\right)=\left(\begin{array}{c}
(\ndata{Y}-\hat{\ndata{Y}}^{(\text{-}1)})^\top\ndata{X}_1\\
(\ndata{Y}-\hat{\ndata{Y}}^{(\text{-}2)})^\top\ndata{X}_2\\
  \end{array}\right).
\end{equation*}
We first consider the statistic
\begin{equation*}
\left(\begin{array}{c}
\tilde T_1\\
\tilde T_2
  \end{array}\right)=\left(\begin{array}{c}
(\ndata{Y}-\hat{\ndata{Y}}^{(\text{-}(1:2))})^\top\ndata{X}_1\\
(\ndata{Y}-\hat{\ndata{Y}}^{(\text{-}(1:2))})^\top\ndata{X}_2\\
  \end{array}\right),
\end{equation*}
where $\hat{\ndata{Y}}^{(\text{-}(1:2))}=\ndata{X}_{\text{-}(1:2)}\hat\beta_\lambda^{(\text{-}(1:2))}$, $\ndata{X}_{\text{-}(1:2)}$ is $\ndata{X}$ with its first two columns removed, and $\hat\beta_\lambda^{(\text{-}(1:2))}$ is the lasso coefficient from regressing $\ndata{Y}$ on $\ndata{X}_{\text{-}(1:2)}$ with penalty parameter $\lambda$.

Consider a random row vector $(X_1, X_2, X_{\text{-}(1:2)}^\top, Y)$ that has the same distribution of a generic row of $[\ndata{X}, \ndata{Y}]$. Applying Lemma~\ref{lemma:conditional-covariate-2} which we will introduce shortly, we have (note the re-normalization at the beginning of this proof)
\begin{equation*}
\left(\begin{array}{c}
X_1\\
X_2
  \end{array}\right)\mid X_{\text{-}(1:2)}, Y,\beta\sim\mathcal N\left(\frac{Y-X_{\text-(1:2)}^\top\beta_{\text-(1:2)}}{n\sigma^2+\beta_1^2+\beta_2^2}\left(\begin{array}{c}
\beta_1\\
\beta_2
  \end{array}\right),\frac1n\left[\begin{array}{cc}
1-\frac{\beta_1^2}{n\sigma^2+\beta_1^2+\beta_2^2} & -\frac{\beta_1\beta_2}{n\sigma^2+\beta_1^2+\beta_2^2}\\
-\frac{\beta_1\beta_2}{n\sigma^2+\beta_1^2+\beta_2^2} & 1-\frac{\beta_2^2}{n\sigma^2+\beta_1^2+\beta_2^2}\\
  \end{array}\right]\right).
\end{equation*}
It is then easy to see that (by writing $(\tilde T_1,\tilde T_2)$ as a sum of $n$ independent Gaussian random vectors)
\begin{multline*}
\left(\begin{array}{c}
\tilde T_1\\
\tilde T_2
  \end{array}\right)\mid\ndata{Y},\ndata{X}_{\text{-}(1:2)},\beta\sim\\
  \mathcal N\left(\frac{(\ndata{Y}-\ndata{X}_{\text{-}(1:2)}\hat\beta_\lambda^{(\text{-}(1:2))})^\top\varepsilon'}{n\sigma^2+\beta_1^2+\beta_2^2}\left(\begin{array}{c}
\beta_1\\
\beta_2
  \end{array}\right),\frac{\|\ndata{Y}-\ndata{X}_{\text{-}(1:2)}\hat\beta_\lambda^{(\text{-}(1:2))}\|^2}{n}\left[\begin{array}{cc}
1-\frac{\beta_1^2}{n\sigma^2+\beta_1^2+\beta_2^2} & -\frac{\beta_1\beta_2}{n\sigma^2+\beta_1^2+\beta_2^2}\\
-\frac{\beta_1\beta_2}{n\sigma^2+\beta_1^2+\beta_2^2} & 1-\frac{\beta_2^2}{n\sigma^2+\beta_1^2+\beta_2^2}\\
  \end{array}\right]\right),
\end{multline*}
where $\varepsilon'=\ndata{Y}-\ndata{X}_{\text-(1:2)}\beta_{\text-(1:2)}=\varepsilon+\beta_1\ndata{X}_1+\beta_2\ndata{X}_2$ is the effective error in the model $\ndata{Y}\sim\ndata{X}_{\text{-}(1:2)}$. Using the results from the proof of Lemma~\ref{lemma:theorem-training-loss}, we can find the limits of
\begin{equation*}
\frac{(\ndata{Y}-\ndata{X}_{\text{-}(1:2)}\hat\beta_\lambda^{(\text{-}(1:2))})^\top\varepsilon'}{n}\quad\text{and}\quad\frac{\|\ndata{Y}-\ndata{X}_{\text{-}(1:2)}\hat\beta_\lambda^{(\text{-}(1:2))}\|^2}{n},
\end{equation*}
and see that
\begin{equation*}
\left(\begin{array}{c}
\tilde T_1\\
\tilde T_2
  \end{array}\right)-\frac{\lambda}{\alpha_\lambda\tau_\lambda}\left(\begin{array}{c}
\beta_1\\
\beta_2
  \end{array}\right)\cid\mathcal N\left(\left(\begin{array}{c}
0\\
0
  \end{array}\right),\frac{\lambda^2}{\alpha_\lambda^2}\left[\begin{array}{cc}
1 & 0\\
0 & 1\\
  \end{array}\right]\right),
\end{equation*}
which is a bivariate Gaussian distribution with i.i.d. components. Now we just have to show
\begin{equation*}
\left(\begin{array}{c}
\tilde T_1\\
\tilde T_2
  \end{array}\right)-\left(\begin{array}{c}
T_1\\
T_2
  \end{array}\right)\cid0.
\end{equation*}
It suffices to show $T_1-\tilde T_1\cid0$ marginally (same for $T_2-\tilde T_2$ because of symmetry). Note that $T_1-\tilde T_1=\left(\hat{\ndata Y}^{(\text-(1:2))}-\hat{\ndata Y}^{(-1)}\right)^\top\ndata X_1$, and we can again apply Lemma~\ref{lemma:conditional-covariate-2} to get $\mathcal L(X_1\,|\, X_{\text-1},Y,\beta)$, where $(X_1, X_{\text{-}1}^\top, Y)$ has the same distribution of a generic row of $[\ndata{X}, \ndata{Y}]$. We would get
\begin{equation*}
T_1-\tilde T_1\mid\ndata{Y},\ndata{X}_{(\text-1)},\beta\sim\mathcal N\left(\frac{\beta_1}{n\sigma^2+\beta_1^2}(\hat{\ndata{Y}}^{(\text-1)}-\hat{\ndata{Y}}^{(\text-1:2)})^\top(\varepsilon+\beta_1\ndata{X}_1),\frac{\sigma^2}{n\sigma^2+\beta_1^2}\|\hat{\ndata{Y}}^{\text-1}-\hat{\ndata{Y}}^{(\text-1:2)}\|^2\right).
\end{equation*}
Now it remains to show $\|\hat{\ndata{Y}}^{(\text-1)}-\hat{\ndata{Y}}^{(\text-1:2)}\|^2/n\cip0$, which would imply the above variance and mean (use Cauchy--Schwartz) both go to zero. Note that we can simplify this problem to $\|\hat{\ndata{Y}}-\hat{\ndata{Y}}^{(\text-1)}\|^2/n\cip0$, because both regression processes ignore the first column of $\ndata X$ and we can just treat $\beta_1\ndata{X}_1$ as part of the error vector, which does not change the asymptotic distribution of the error.

Line (d) of the first decomposition used in the proof of Lemma~3.1 in \citet{bayati2011lasso} (we take their $x$ to be our $\hat\beta_\lambda^{(\text-1)}$ and their $r$ to be our $\hat\beta_\lambda-\hat\beta_\lambda^{(\text-1)}$. Here, we slightly abuse notation: $\hat\beta_\lambda^{(\text{-}1)}$ is originally $(p-1)$-dimensional, and we add a zero as its first coordinate to make it comparable with $\hat\beta_\lambda$) shows that the sum of four terms is non-positive, and immediately after it is shown that three of those terms, including $\frac{\|\hat{\ndata{Y}}-\hat{\ndata{Y}}^{(\text-1)}\|^2}{2p}$ (their $A$ is our $\ndata X$, and $\ndata X\hat\beta_\lambda=\hat{\ndata Y}$ and $\ndata X\hat\beta_\lambda^{(\text-1)}=\hat{\ndata Y}^{(\text-1)}$), are non-negative, guaranteeing that the remaining term, $\langle\text{sg}(\mathcal C,\hat\beta_\lambda^{(\text{-}1)}),\hat\beta_\lambda-\hat\beta_\lambda^{(\text{-}1)}\rangle$ is negative and has absolute value greater than each of the three non-negative terms. Thus:

% It is shown immediately afterwards that the first, second and fourth term in (d)'s expression are non-negative, which immediately implies the third term has a larger absolute value than the fourth term (their $A$ is our $\ndata X$, and $\ndata X\hat\beta_\lambda=\hat{\ndata Y}$ and $\ndata X\hat\beta_\lambda^{(\text-1)}=\hat{\ndata Y}^{(\text-1)}$):
\begin{equation}
\frac{\|\hat{\ndata{Y}}-\hat{\ndata{Y}}^{(\text-1)}\|^2}{2p}\le|\langle\text{sg}(\mathcal C,\hat\beta_\lambda^{(\text{-}1)}),\hat\beta_\lambda-\hat\beta_\lambda^{(\text{-}1)}\rangle|,
\label{equation:sub-gradient-inequ}
\end{equation}
where $\text{sg}(\mathcal C,\beta)$ is any subgradient of $\mathcal C(\beta)=\|\ndata Y-\ndata X\beta\|^2/2+\lambda\|\beta\|_1$, i.e.,
\begin{equation}
-\ndata X^\top(\ndata Y-\ndata X\beta)+\lambda\gamma
\label{equation:subgradient-C}
\end{equation}
for a $\gamma$ that is a subgradient of the $p$-dimensional $L_1$ norm. By the Cauchy--Schwarz inequality,
\begin{equation}
|\langle\text{sg}(\mathcal C,\hat\beta_\lambda^{(\text{-}1)}),\hat\beta_\lambda-\hat\beta_\lambda^{(\text{-}1)}\rangle|\le\sqrt{\frac{\|\text{sg}(\mathcal C,\hat\beta_\lambda^{(\text{-}1)})\|^2}{p}}\sqrt{\frac{\|\hat\beta_\lambda-\hat\beta_\lambda^{(\text{-}1)}\|^2}{p}}.
\label{equation:sub-gradient-inequ-2}
\end{equation}
Since $\hat\beta_\lambda^{(\text{-}1)}$ is a lasso solution, the Karush--Kuhn--Tucker (KKT) conditions imply
\begin{equation}
\ndata{X}_{\text-1}^\top(\ndata{Y}-\ndata X\hat\beta_\lambda^{(-1)})=\lambda\gamma^*,
\label{equation:gamma-beta-minus-1}
\end{equation}
where for $j=1,\dots,p-1$, (note again that we have added a zero to $\hat\beta_\lambda^{(\text{-}1)}$ to make it $p$-dimensional)
\begin{equation}
\gamma^*_j\in\left\{
\begin{aligned}
&\{1\}, &(j+1)\text{st coordinate of }\hat\beta_\lambda^{(\text-1)}>0,\\
&\{-1\}, &(j+1)\text{st coordinate of }\hat\beta_\lambda^{(\text-1)}<0,\\
&[-1,1], &(j+1)\text{st coordinate of }\hat\beta_\lambda^{(\text-1)}=0.\\
\end{aligned}\right.
\label{equation:gamma-subgradient}
\end{equation}
Since the first coordinate of $\hat\beta_\lambda^{(\text{-}1)}$ is zero, directly from the definition of a subgradient \eqref{equation:subgradient-C}, the first coordinate of $\text{sg}(\mathcal C,\hat\beta_\lambda^{(\text{-}1)})$ can be any number in
\begin{equation}
\left[-\ndata{X}_{1}^\top(\ndata{Y}-\ndata X\hat\beta_\lambda^{(\text-1)})-\lambda,-\ndata{X}_{1}^\top(\ndata{Y}-\ndata X\hat\beta_\lambda^{(\text-1)})+\lambda\right].
\label{equation:subg-1}
\end{equation}
For the remaining $(p-1)$ coordinates, they can be $-\ndata{X}_{\text-1}^\top(\ndata{Y}-\ndata X\hat\beta_\lambda^{(\text-1)})+\lambda\gamma$ for any $\gamma$ that satisfies \eqref{equation:gamma-subgradient} (i.e., a subgradient of the $(p-1)$-dimensional $L_1$ norm at $\hat\beta_\lambda^{(\text-1)}$), and specifically, we can let $\gamma$ be the one that satisfies \eqref{equation:gamma-beta-minus-1}, so that the subgradient in these $(p-1)$ dimensions cancels to $0$. This way, we have defined a $\text{sg}(\mathcal C,\hat\beta_\lambda^{(\text{-}1)})$ so that its first coordinate is (take the midpoint of \eqref{equation:subg-1})
\begin{equation}
-(\ndata{Y}-\underbrace{\hat{\ndata{Y}}^{(\text-1)}}_{=\ndata X\hat\beta_\lambda^{(\text-1)}})^\top\ndata{X}_1
\label{equation:sg-1st-coord}
\end{equation}
and all other coordinates are zero. Note that
\begin{equation*}
\mathcal L\left(-(\ndata{Y}-\hat{\ndata{Y}}^{(\text-1)})^\top\ndata{X}_1\mid\ndata{Y},\ndata{X}_{\text-1}\right)=\mathcal N(0,\|\ndata{Y}-\hat{\ndata{Y}}^{(\text-1)}\|^2/n)
\end{equation*}
and $\|\ndata{Y}-\hat{\ndata{Y}}^{(\text-1)}\|^2/n\cas\lambda^2/\alpha_\lambda^2$ by Lemma~\ref{lemma:theorem-training-loss}. Thus, \eqref{equation:sg-1st-coord} converges to $\mathcal N(0,\lambda^2/\alpha_\lambda^2)$ in distribution.
This way, the squared $L_2$ norm of the selected $\text{sg}(\mathcal C,\hat\beta_\lambda^{(\text-1)})$ divided by $p$ converges to zero. On the other hand,
\begin{equation*}
\frac{\|\hat\beta_\lambda-\hat\beta_\lambda^{(\text-1)}\|^2}{p}\le\frac2p(\|\hat\beta_\lambda\|^2+\|\hat\beta_\lambda^{(\text-1)}\|^2),
\end{equation*}
and the right hand side converges to a constant as a corollary of Theorem~1.5 in \citet{bayati2011lasso}. Now we get $\|\hat{\ndata{Y}}^{(\text-1)}-\hat{\ndata{Y}}^{(\text-1:2)}\|^2/n\cip0$ from \eqref{equation:sub-gradient-inequ} and \eqref{equation:sub-gradient-inequ-2}, because $n/p$ converges to a positive constant.
\end{proof}

\begin{lemma}
Let $W_1$ and $W_2$ be independent $q$-dimensional and $r$-dimensional standard multivariate Gaussian random vectors. Let $Y\mid W_1,W_2\sim\mathcal N(W_1^\top\zeta_q+W_2^\top\zeta_r,\sigma^2)$. Then,
\begin{equation}
W_1\mid\left(\begin{array}{c}
W_2\\
Y
  \end{array}\right)\sim\mathcal N\left(\frac{Y-W_2^\top\zeta_r}{\sigma^2+\|\zeta_q\|^2_2}\zeta_q,I_q-\frac{1}{\sigma^2+\|\zeta_q\|_2^2}\zeta_q^\top\zeta_q\right).
\label{equation:conditional-covariate-part}
\end{equation}
\label{lemma:conditional-covariate-2}
\end{lemma}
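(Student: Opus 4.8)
\noindent\textbf{Proof proposal (Lemma~\ref{lemma:conditional-covariate-2}).} The plan is to derive \eqref{equation:conditional-covariate-part} from a single application of the Gaussian conditioning formula, carried out conditionally on $W_2$ throughout. First I would note that, conditionally on $W_2$, the vector $(W_1^\top,Y)^\top$ is jointly Gaussian: since $W_1\ci W_2$ we have $W_1\mid W_2\sim\mathcal N(0,I_q)$, and writing $Y=W_1^\top\zeta_q+W_2^\top\zeta_r+\epsilon$ with $\epsilon\sim\mathcal N(0,\sigma^2)$ independent of $(W_1,W_2)$, we see that given $W_2$ the scalar $Y$ is an affine image of the Gaussian vector $W_1$ (with $W_2^\top\zeta_r$ a constant once $W_2$ is fixed) plus independent Gaussian noise.

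Next I would read off the conditional moments. The conditional means are $\e[W_1\mid W_2]=0$ and $\e[Y\mid W_2]=W_2^\top\zeta_r$, and the conditional covariances are $\Var(W_1\mid W_2)=I_q$, $\Cov(W_1,Y\mid W_2)=\Var(W_1\mid W_2)\zeta_q=\zeta_q$, and $\Var(Y\mid W_2)=\zeta_q^\top\zeta_q+\sigma^2=\sigma^2+\|\zeta_q\|_2^2$. Hence
\begin{equation*}
\begin{pmatrix}W_1\\ Y\end{pmatrix}\mid W_2\sim\mathcal N\left(\begin{pmatrix}0\\ W_2^\top\zeta_r\end{pmatrix},\begin{bmatrix}I_q & \zeta_q\\ \zeta_q^\top & \sigma^2+\|\zeta_q\|_2^2\end{bmatrix}\right).
\end{equation*}

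Finally I would invoke the standard formula for the law of one block of a jointly Gaussian vector given the other block, applied to $W_1$ given $Y$ with everything still conditioned on $W_2$; this is legitimate because $\sigma^2+\|\zeta_q\|_2^2>0$, and since the display above already pins down the full joint law of $(W_1,W_2,Y)$, conditioning further on $Y$ is again just the Gaussian formula. It gives conditional mean $\zeta_q(\sigma^2+\|\zeta_q\|_2^2)^{-1}(Y-W_2^\top\zeta_r)$ and conditional covariance $I_q-(\sigma^2+\|\zeta_q\|_2^2)^{-1}\zeta_q\zeta_q^\top$, which is \eqref{equation:conditional-covariate-part} (with $\zeta_q\zeta_q^\top$ the intended rank-one matrix in the covariance). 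There is essentially no obstacle here; the only point worth a sentence of justification is that one may condition on $W_2$ first without loss, which holds because the Gaussian family is closed under conditioning and $\mathcal L\big((W_1,Y)\mid W_2\big)$ together with $\mathcal L(W_2)$ reconstitutes the joint law, so iterated conditioning commutes as needed.
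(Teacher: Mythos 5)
Your proof is correct, and it takes a slightly different route than the paper's. The paper writes down the full joint $(q+r+1)$-dimensional Gaussian law of $(W_1^\top, W_2^\top, Y)^\top$, explicitly inverts the lower-right $(r+1)\times(r+1)$ block of its covariance matrix, and then applies the conditional Gaussian formula in one shot to condition on $(W_2, Y)$. You instead exploit the independence of $W_1$ and $W_2$ to condition in two stages: first on $W_2$ (which is free, giving a $(q+1)$-dimensional conditional Gaussian for $(W_1, Y)$ with $W_2^\top\zeta_r$ appearing only as a mean shift) and then on the scalar $Y$. This sidesteps the $(r+1)\times(r+1)$ matrix inversion entirely, replacing it with the much simpler scalar-block conditioning formula, at the modest cost of one sentence justifying that iterated conditioning is legitimate. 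Both are straightforward; yours is a bit more economical. You also correctly read the conditional covariance as the rank-one matrix $I_q - (\sigma^2+\|\zeta_q\|_2^2)^{-1}\zeta_q\zeta_q^\top$, which is clearly what is meant even though the statement writes $\zeta_q^\top\zeta_q$ (the scalar inner product) by an apparent transposition slip.
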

\begin{proof}[Proof of Lemma~\ref{lemma:conditional-covariate-2}]
Jointly, we have
\begin{equation*}
\left(\begin{array}{c}
W_1\\
W_2\\
Y
  \end{array}\right)\sim\mathcal N\left(\left(\begin{array}{c}
0\\
0\\
0
  \end{array}\right),\left[\begin{array}{ccc}
I_q & 0 & \zeta_q\\
0 & I_r & \zeta_r\\
\zeta_q^\top & \zeta_r^\top & \|\zeta_q\|_2^2+\|\zeta_r\|_2^2+\sigma^2\\
  \end{array}\right]\right).
\end{equation*}
Note that
\begin{equation*}
\left[\begin{array}{cc}
I_r & \zeta_r\\
\zeta_r^\top & \|\zeta_q\|_2^2+\|\zeta_r\|_2^2+\sigma^2\\
  \end{array}\right]^{-1}=\left[\begin{array}{cc}
I_r+\frac{\zeta_r\zeta_r^\top}{\sigma^2+\|\zeta_q\|_2^2} & -\frac{\zeta_r}{\sigma^2+\|\zeta_q\|_2^2}\\
-\frac{\zeta_r^\top}{\sigma^2+\|\zeta_q\|_2^2} & \frac{1}{\sigma^2+\|\zeta_q\|_2^2}\\
  \end{array}\right].
\end{equation*}
Thus, directly apply the formula for the conditional Gaussian distribution and we have \eqref{equation:conditional-covariate-part}.
\end{proof}

\begin{thm}
Let $J_0$ and $J_1$ form a partition of $\{1,2,\dots,p\}$ and $|J_0|/p\to\gamma\in(0,1)$. Consider $p$ random variables $W_j$, which can be thought of as the $W_j$'s in a knockoffs procedure. Assume the following conditions.   
\begin{enumerate}
    % \item All distributions considered here are continuous.
    \item For $j\in J_0$, $W_j\cid W^{(0)}\sim G^{(0)}$ and for $j\in J_1$, $W_j\cid W^{(1)}\sim G^{(1)}$. $G^{(0)}$ and $G^{(1)}$ are deterministic CDFs of random variables with a common support which is connected and symmetric around $0$, and continuous densities on that support.
    \item Within $J_0$ or $J_1$, the $W_j$'s are exchangeable.
    \item For distinct $j_1,j_2\in J_0$ and distinct $j_3,j_4\in J_1$, the following two pairs of random variables are asymptotically pairwise independent: $(W_{j_1},W_{j_2})$ and $(W_{j_3},W_{j_4})$. That is, both pairs converge in distribution to a bivariate random vector (not necessarily the same random vector) with independent components.
\end{enumerate}
Let (min over an empty set is defined to be infinity)
\begin{equation*}
w_\textnormal{KF}=\min\{w\ge0:g(w)\le q\},
\end{equation*}
where
\begin{equation*}
g(w)=\frac{\gamma G^{(0)}(-w)+(1-\gamma)(G^{(1)}(-w))}{\gamma (1-G^{(0)}(w))+(1-\gamma)(1-G^{(1)}(w))}.
\end{equation*}
Then for almost every $q\in(0,1)$, at least one of the following cases is true: (i) $w_\textnormal{KF}>0$, $g'(w_\textnormal{KF})\ne0$, (ii) $w_\textnormal{KF}=0$, $g'(0)<0$, (iii) $w_\textnormal{KF}=0$, $g(0)<q$, or (iv) $w_\textnormal{KF}=\infty$. In cases (i), (ii), or (iii), for the knockoff filter \eqref{equation:knockoff-adapt} at level $q$ applied to $W_1,\dots,W_p$, the FDP and realized power converge in probability to
\begin{equation*}
\frac{\gamma G^{(0)}(-w_\textnormal{KF})}{\gamma (1-G^{(0)}(w_\textnormal{KF}))+(1-\gamma)(1-G^{(1)}(w_\textnormal{KF}))}\quad\text{and}\quad1-G^{(1)}(w_\textnormal{KF}),
\end{equation*}
respectively. In case (iv), the realized power converges in probability to $0$.
\label{theorem:knockoff-power}
\end{thm}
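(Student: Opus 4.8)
The plan is to follow the template of the proof of Theorem~\ref{theorem:BH-AdaPT-CRT}, with the empirical distributions of the $W_j$'s playing the role that the empirical CDFs of the $p$-values played there. First I would show that, writing $\hat G^{(0)}_p$ and $\hat G^{(1)}_p$ for the empirical CDFs of $(W_j)_{j\in J_0}$ and $(W_j)_{j\in J_1}$, we have $\hat G^{(0)}_p(w)\cip G^{(0)}(w)$ and $\hat G^{(1)}_p(w)\cip G^{(1)}(w)$ for every $w$: exchangeability within $J_0$ (resp. $J_1$) together with the assumed convergence in distribution of each $W_j$ gives convergence of the mean of each empirical CDF, while the asymptotic pairwise independence makes its variance vanish, exactly as in the proof of Theorem~\ref{theorem:BH-AdaPT-CRT}. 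Lemma~\ref{lemma:unif-conv-in-prob} upgrades this to uniform convergence in probability, and consequently the functions
\[ g_p(w)=\frac{\#\{j:W_j\le -w\}/p}{\#\{j:W_j\ge w\}/p} \]
converge in probability to $g(w)$, uniformly on every compact subset of the interior of the common support on which the denominator stays bounded away from $0$.

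Next I would identify the knockoffs threshold $\hat w_p$ from \eqref{equation:knockoff-adapt} with $\min\{w\in\{|W_j|:W_j\ne0\}:g_p(w)\le q\}$; the extra $1/p$ in the numerator of \eqref{equation:knockoff-adapt} is asymptotically negligible because that numerator, divided by $p$, is itself (up to the $1/p$) an empirical CDF of the $W_j$'s. I would then prove $\hat w_p\cip w_\textnormal{KF}$ by the same two-sided argument used for $\hat t_p$ in Theorem~\ref{theorem:BH-AdaPT-CRT}. For the lower bound, minimality of $w_\textnormal{KF}$ gives $g>q$ on $[0,w_\textnormal{KF})$, and continuity of $g$ then yields a uniform gap $g\ge q+\delta$ on the compact interval $[0,w_\textnormal{KF}-\varepsilon]$, so uniform convergence of $g_p$ forces $\hat w_p>w_\textnormal{KF}-\varepsilon$ with probability tending to $1$. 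For the upper bound I would split into the cases of the statement: in case (i), minimality forces $g(w_\textnormal{KF})=q$ and then $g'(w_\textnormal{KF})\ne0$ must in fact be $g'(w_\textnormal{KF})<0$, so there is $w^*\in(w_\textnormal{KF},w_\textnormal{KF}+\varepsilon)$ with $g(w^*)<q$ and $\mathbb P(g_p(w^*)\le q)\to1$; cases (ii) and (iii) are handled identically with $w_\textnormal{KF}=0$, using $g'(0)<0$ or $g(0)<q$ together with the positivity of the density near $0$ (so that $\{|W_j|\}$ contains points arbitrarily close to $0$ at which $g_p\le q$), giving $\hat w_p\cip 0$. The values of $q$ for which none of (i)--(iv) holds — namely $w_\textnormal{KF}\in(0,\infty)$ with $g'(w_\textnormal{KF})=0$, together with the single value $q=g(0)$ — lie in the critical values of $g$ on $(0,\infty)$, which has Lebesgue measure zero by Sard's theorem (Lemma~\ref{lemma:sard}); here $g$ is continuously differentiable on the interior of the support since $G^{(0)}$ and $G^{(1)}$ have continuous densities there.

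Given $\hat w_p\cip w_\textnormal{KF}$, the FDP $\#\{j\in J_0:W_j\ge\hat w_p\}/\#\{j:W_j\ge\hat w_p\}$ and the realized power $\#\{j\in J_1:W_j\ge\hat w_p\}/|J_1|$ converge in probability to the claimed constants by combining the uniform convergence of the empirical CDFs with the continuity of $G^{(0)},G^{(1)}$ at $w_\textnormal{KF}$ (and using the symmetry of the null distribution around $0$, so that $1-G^{(0)}(w_\textnormal{KF})=G^{(0)}(-w_\textnormal{KF})$, which is exactly what makes the estimated FDP and the true FDP share the same limit). Finally, in case (iv), where $g>q$ on all of $[0,\infty)$, the same uniform-convergence argument shows $g_p>q$ on $[0,w_0]$ with probability tending to $1$ for any $w_0$ below the top of the support, so eventually $\hat w_p\ge w_0$ or $\hat w_p=\infty$; hence the realized power is at most $\#\{j\in J_1:W_j\ge w_0\}/|J_1|\cip1-G^{(1)}(w_0)$, which can be made arbitrarily small, so it converges to $0$.

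The step I expect to be the main obstacle is $\hat w_p\cip w_\textnormal{KF}$: specifically, controlling the ratio $g_p$ near the right edge of the support, where its denominator degenerates, and dealing with the discreteness introduced by the $\min$ over $\{|W_j|:W_j\ne0\}$ in \eqref{equation:knockoff-adapt} (the analogue of the $1/p$-correction subtlety that also appears in Theorem~\ref{theorem:BH-AdaPT-CRT}); once the uniform convergence of the two empirical CDFs is established, the remainder is routine bookkeeping.
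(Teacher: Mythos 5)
Your proposal is correct and follows essentially the same route as the paper's proof: empirical-CDF convergence via exchangeability and asymptotic pairwise independence, Lemma~\ref{lemma:unif-conv-in-prob} for uniform convergence, a two-sided sandwich argument (using the uniform continuity of $\frac{\gamma x+(1-\gamma)y}{\gamma z+(1-\gamma)t}$ away from a degenerate denominator) to establish $\hat w_p\cip w_\textnormal{KF}$, and Sard's theorem for the measure-zero set of exceptional $q$'s. The one point you make more explicit than the paper is the use of the symmetry of $G^{(0)}$ to identify $1-G^{(0)}(w_\textnormal{KF})$ with $G^{(0)}(-w_\textnormal{KF})$, which is indeed needed for the stated FDP limit to coincide with the actual FDP; the paper relies on this tacitly, since it is automatic for knockoffs statistics.
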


\begin{proof}[Proof of Theorem~\ref{theorem:knockoff-power}]
If $q>g(0)$, then (iii) holds. If $q<\inf\{g(w):w\ge0\}$, then (iv) holds. If $q\in(\inf\{g(w):w\ge0\}, g(0))$, then because $g$ is continuous, we can see that $w_\text{KF}\in(0,\infty)$ (note that we are considering minimum and $[0,\infty)$ is closed on the left, so the minimum must exist and not equal to $0$). And in this case, $g(w_\text{KF})=q$, since otherwise $g(w_\text{KF})<q$ and $w_\text{KF}$ could be smaller because of $g$'s continuity. Next we show for almost every $q\in(\inf\{g(w):w\ge0\}, g(0))$, condition (i) is met. We just need to show that the set $\{g(w):g'(w)=0\}$ has measure zero, which is a simple application of Sard's theorem (Lemma~\ref{lemma:sard}, take $n=m=k=1$, $f(x)=g(a\arctan(x))$ for a suitable $a$ such that $a\arctan(x)$ matches the support of $G^{(0)}$ and $G^{(1)}$ when that support is finite; otherwise just take $f=g$), where $g'$ is continuous because $G^{(0)}$ and $G^{(1)}$ have continuous densities with a common support.

In case (i), we have established that $g(w_\text{KF})=q$. Since $g'(w_\text{KF})\ne0$, we must have $g'(w_\text{KF})<0$, since otherwise $w_\text{KF}$ could also be smaller. Thus, in cases (i) and (ii), $g'(w_\text{KF})<0$ and for any sufficiently small $\varepsilon>0$ there exists a point $w^*$ in $(w_\text{KF},w_\text{KF}+\varepsilon)$ such that $g(w^*)<q$. In case (iii), since $g$ is continuous and $g(0)<q$, it also holds that for any sufficiently small $\varepsilon>0$ there exists a point $w^*$ in $(w_\text{KF},w_\text{KF}+\varepsilon)$ such that $g(w^*)<q$.

Let
\begin{equation*}
\hat w_p=\min\{w>0:g_p(w)\equiv\frac{1/p+\#\{j:W_j\le-w\}/p}{\#\{j:W_j\ge w\}/p}\le q\}.\footnote{Formally, we only take the minimum over $w\in\{|W_j|: W_j\ne0\}$. Such a difference is important when $g_p(w)\le q$ for all $w>0$. The term $1/p$ does not matter asymptotically, in that we can still consider the numerator as an empirical CDF of the $W_j$'s.}
\end{equation*}
We analyze cases (i), (ii), and (iii). We begin by showing $\hat w_p\cip w_\text{KF}$. Take any sufficiently small $\varepsilon>0$. We have established that there exists a point $w^*$ in $(w_\text{KF},w_\text{KF}+\varepsilon)$ such that $g(w^*)<q$. Then
\begin{equation*}
\begin{aligned}
\p(\hat w_p<w_\text{KF}+\varepsilon)&\ge \p(\hat w_p\le w^*)\\
&\ge\p(g_p(w^*)\le q)\\
&\ge\p(|g_p(w^*)-g(w^*)|<|g(w^*)-q|)\to1.
\end{aligned}
\end{equation*}
In case (iii), we get $\p(\hat w_p\ge w_\text{KF}=0)=1$ for free and the proof is concluded. Next we consider cases (i) and (ii) and assume $\varepsilon<w_\text{KF}$. Choose $\delta_1\in\left(0,\min(1-G^{(0)}(w_\text{KF}-\varepsilon),1-G^{(1)}(w_\text{KF}-\varepsilon))\right)$. Let $\delta_3=\min\{g(w)-q:0\le w\le w_\text{KF}-\varepsilon\}$, which is positive since otherwise $g$ could attain a value no more than $q$ in $[0,w_\text{KF}-\varepsilon]$, violating $w_\text{KF}$'s definition.
Now observe that the function $\frac{\gamma x+(1-\gamma)y}{\gamma z+(1-\gamma)t}$ is continuous in $(x,y,z,t)$ on $\{(x,y,z,t)\in[0,1]^4:z,t\ge\min(1-G^{(0)}(w_\text{KF}-\varepsilon),1-G^{(1)}(w_\text{KF}-\varepsilon))-\delta_1\}$, and thus uniformly continuous.
Choose $\delta_2\in(0,\delta_1)$ such that whenever $(x,y,z,t),(x',y',z',t')\in\{(x,y,z,t)\in[0,1]^4:z,t\ge\min(1-G^{(0)}(\tau_\text{KF}-\varepsilon),1-G^{(1)}(\tau_\text{KF}-\varepsilon))\}$ and $|x-x'|,|y-y'|,|z-z'|,|t-t'|<\delta_2$, $|\frac{\gamma x+(1-\gamma)y}{\gamma z+(1-\gamma)t}-\frac{\gamma x'+(1-\gamma)y'}{\gamma z'+(1-\gamma)t'}|<\delta_3$.

By Lemma~\ref{lemma:unif-conv-in-prob}, with probability converging to $1$, all the CDFs fall within a $\delta_2$-neighborhood around the limit CDFs, and by the previously shown uniform continuity, $|g_p(w)-g(w)|<\delta_3$ for $w\in[0,w_\text{KF}-\varepsilon]$. By the definition of $\delta_3$, this means with probability converging to $1$, $g_p(w)>q$ for all $w\in[0,w_\text{KF}-\varepsilon]$. Now we have
\begin{equation*}
\p(\hat w_p>w_\text{KF}-\varepsilon)=\p(g_p(w)>q,\forall w\in[0, w_\text{KF}-\varepsilon])\to1.
\end{equation*}
Combining the results, we have
\begin{equation*}
\lim_{p\to\infty}\p(|\hat w_p-w_\text{KF}|<\varepsilon)=1.
\end{equation*}
Similar to the proof of Theorem~\ref{theorem:BH-AdaPT-CRT}, we can show that
\begin{equation*}
\frac{\#\{j\in J_0:W_j\le t\}}{|J_0|}\cip G^{(0)}(t)\quad\text{and}\quad\frac{\#\{j\in J_1:W_j\le t\}}{|J_1|}\cip G^{(1)}(t),
\end{equation*}
and the convergence is uniform over $t\in\rr$ by Lemma~\ref{lemma:unif-conv-in-prob}. The result then follows by noticing that the FDP and realized power are
\begin{equation*}
\frac{\#\{j\in J_0:W_j\ge\hat w_p\}}{\#\{j:W_j\ge\hat w_p\}}\quad\text{and}\quad\frac{\#\{j\in J_1:W_j\ge\hat w_p\}}{|J_1|},
\end{equation*}
respectively.

Last, we look at case (iv). Similar to the end of the proof of Theorem~\ref{theorem:BH-AdaPT-CRT}, we can show that in this case the asymptotic realized power is $0$ by showing that for any $M>0$, $\p(w_\text{KF}\ge M)\to1$.
\end{proof}

\begin{lemma}
Let $J_0$ and $J_1$ form a partition of $\{1,2,\dots,p\}$ and $|J_0|/p\to\gamma\in(0,1)$. If $W_j=f(T_j,\tilde T_j)$ for a continuous antisymmetric function $f$, then the following conditions imply conditions 1, 2, and 3 of Theorem~\ref{theorem:knockoff-power}.
\begin{enumerate}
    \item For $j\in J_0$, $(T_j,\tilde T_j)\cid T^{(0)}$ and for $j\in J_1$, $(T_j,\tilde T_j)\cid T^{(1)}$. $T^{(0)}$, $T^{(1)}$, and $f$ are such that the distributions of $f(T^{(0)})$ and $f(T^{(1)})$ have a common support and continuous densities.
    \item Within $J_0$ or $J_1$, the $(T_j,\tilde T_j)$'s are exchangeable.
    \item For distinct $j_1,j_2\in J_0$ and distinct $j_3,j_4\in J_1$, the following two pairs of random vectors are asymptotically pairwise independent:
    \begin{equation*}
    \left(\left(  \begin{array}{c}
T_{j_1}\\
\tilde T_{j_1}\\
  \end{array}\right),\left(  \begin{array}{c}
T_{j_2}\\
\tilde T_{j_2}\\
  \end{array}\right)\right)\quad\text{and}\quad\left(\left(  \begin{array}{c}
T_{j_3}\\
\tilde T_{j_3}\\
  \end{array}\right),\left(  \begin{array}{c}
T_{j_4}\\
\tilde T_{j_4}\\
  \end{array}\right)\right).
    \end{equation*}
\end{enumerate}
\label{lemma:knockoff-conditions}
\end{lemma}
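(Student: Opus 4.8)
The plan is to deduce hypotheses~1--3 of Theorem~\ref{theorem:knockoff-power} from hypotheses~1--3 of the lemma, using essentially one tool throughout: the continuous mapping theorem applied to the fixed continuous map $f$ (and to the map $((u_1,u_2),(v_1,v_2))\mapsto(f(u_1,u_2),f(v_1,v_2))$ on pairs of $\rr^2$-vectors), together with a single symmetry observation specific to the knockoffs construction.

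For hypothesis~1 of Theorem~\ref{theorem:knockoff-power}: since $(T_j,\tilde T_j)\cid T^{(i)}$ for $j\in J_i$ (hypothesis~1 of the lemma) and $f$ is continuous, the continuous mapping theorem gives $W_j=f(T_j,\tilde T_j)\cid W^{(i)}:=f(T^{(i)})$ for $j\in J_i$, $i\in\{0,1\}$; thus $G^{(0)},G^{(1)}$ are the deterministic CDFs of $f(T^{(0)}),f(T^{(1)})$, which by hypothesis~1 of the lemma have a common (connected) support with continuous densities on it. The one extra property required by Theorem~\ref{theorem:knockoff-power} is that this common support is symmetric around $0$. This I would get from the fact that for a null index $j\in J_0$ the knockoffs construction makes $(T_j,\tilde T_j)\eqd(\tilde T_j,T_j)$ (the pairwise-exchangeability requirement on $T$ in Section~\ref{sec:model-x-knockoffs}, restricted to a null coordinate, combined with $\tilde{\mathbf X}\ci\mathbf Y\mid\mathbf X$), whence antisymmetry of $f$ yields $W_j=f(T_j,\tilde T_j)\eqd f(\tilde T_j,T_j)=-W_j$; passing to the limit, $W^{(0)}\eqd-W^{(0)}$, so $G^{(0)}$, and hence the support it shares with $G^{(1)}$, is symmetric around $0$.

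Hypothesis~2 of Theorem~\ref{theorem:knockoff-power} is immediate: applying the fixed function $f$ coordinate-pair-wise commutes with permuting the indices, so exchangeability of $\big((T_j,\tilde T_j)\big)_{j\in J_i}$ (hypothesis~2 of the lemma) descends to exchangeability of $(W_j)_{j\in J_i}$. For hypothesis~3, take distinct $j_1,j_2\in J_0$ (the case of distinct $j_3,j_4\in J_1$ being identical). By hypothesis~3 of the lemma the $\rr^4$-vector $\big((T_{j_1},\tilde T_{j_1}),(T_{j_2},\tilde T_{j_2})\big)$ converges in distribution to some $(V_1,V_2)$ with $V_1\ci V_2$; applying the continuous mapping theorem to the continuous block map above gives $(W_{j_1},W_{j_2})\cid(f(V_1),f(V_2))$, and $f(V_1)\ci f(V_2)$ since functions of independent random vectors are independent, so $(W_{j_1},W_{j_2})$ converges to a bivariate vector with independent components, as required.

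The proof is almost entirely bookkeeping with the continuous mapping theorem; the only step needing genuine thought is the symmetry-around-$0$ claim, and even there the point is simply to recall that on a null coordinate the knockoff statistics satisfy $(T_j,\tilde T_j)\eqd(\tilde T_j,T_j)$, so that the antisymmetry $f(x,y)=-f(y,x)$ forces the limiting null law $G^{(0)}$ (and thus the common support) to be symmetric. If one prefers not to invoke null-coordinate exchangeability of $T$ here, one can instead add $(T_j,\tilde T_j)\eqd(\tilde T_j,T_j)$ for $j\in J_0$ to the lemma's hypotheses, which holds automatically for every knockoff importance statistic.
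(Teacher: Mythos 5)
Your proof is correct, and the continuous-mapping bookkeeping is exactly what the paper has in mind; the paper omits the argument as ``immediate'', so there is no written proof to compare against, but this is clearly the intended route. Your most valuable contribution is the observation about symmetry: as literally written, hypothesis~1 of the lemma asserts only a common support with continuous densities, while condition~1 of Theorem~\ref{theorem:knockoff-power} additionally demands that this support be connected and symmetric about $0$, and neither property follows from the lemma's stated hypotheses alone. You supply symmetry exactly the right way, by noting that under the knockoffs construction a null coordinate satisfies $(T_j,\tilde T_j)\eqd(\tilde T_j,T_j)$ (from pairwise exchangeability of $[\ndata X,\tilde{\ndata X}]$ together with $\tilde{\ndata X}\ci\ndata Y\mid\ndata X$), whence antisymmetry of $f$ gives $W_j\eqd -W_j$ and, in the limit, $W^{(0)}\eqd -W^{(0)}$; your proposed fix of adding $(T_j,\tilde T_j)\eqd(\tilde T_j,T_j)$ for $j\in J_0$ to the lemma's hypotheses is the cleanest way to make the statement self-contained. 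One small slip: in your first paragraph you insert a parenthetical ``(connected)'' when quoting hypothesis~1, but the lemma does not state connectedness; it too needs to be either added to the hypotheses or verified at the point of application (both are immediate in the paper's uses, where the limiting laws are differences of Gaussians). The remaining two parts of your argument---exchangeability passing through $f$, and asymptotic pairwise independence passing through the block map $(v_1,v_2)\mapsto(f(v_1),f(v_2))$---are correct and require no comment.
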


The proof of Lemma~\ref{lemma:knockoff-conditions} is immediate and thus omitted.

\noindent\textbf{Theorem~\ref{theorem:coro-knockoff}.} \textit{In Setting~\ref{model:lr-iid}, for almost every $q\in(0,1)$, knockoffs with $\tilde{\mathbf{X}}$ an i.i.d. copy of $\ndata{X}$ and the antisymmetric function $f(x,y)=x-y$ at level $q$ with marginal covariance or OLS test statistic has the following one-sided effective $\pi_\mu$'s with respect to the AdaPT procedure at level $q$:
\begin{enumerate}
    \item For the marginal covariance statistic, the effective $\pi_\mu$ is the distribution of $\frac{1}{\sqrt{2(\sigma^2+\kappa\e[B_0^2])}}B_0$.
    \item For the OLS statistic, assuming $\kappa<1/2$, the effective $\pi_\mu$ is the distribution of $\frac{\sqrt{1-2\kappa}}{\sqrt{2\sigma^2}}B_0$.
\end{enumerate}}

\begin{proof}[Proof of Theorem~\ref{theorem:coro-knockoff}]

Similar to the proof of Theorem~\ref{theorem:coro-BH-CRT}, we analyze the two statistics separately.

\begin{enumerate}
\item \textit{Marginal covariance}.
Consider using $T_j=n^{-1/2}\ndata{X}_j^\top\ndata{Y}$ and $\tilde T_j=n^{-1/2}\tilde{\mathbf X}_j^\top\ndata{Y}$. To utilize Lemma~\ref{lemma:knockoff-conditions}, we introduce Lemma~\ref{lemma:mc-knockoff}. The proof is based on a tedious yet straightforward computation of the characteristic function of the Wishart distribution.
\begin{lemma}
In Setting~\ref{model:lr-iid} with the knockoffs procedure that takes $\tilde{\mathbf{X}}$ to be an i.i.d. copy of $\ndata{X}$, let $T_j=n^{-1/2}\ndata{X}_j^\top\ndata{Y}$ and $\tilde T_j=n^{-1/2}\tilde{\mathbf X}_j^\top\ndata{Y}$. We have for $j\ne k$,
% \begin{equation*}
%     \left(\begin{array}{c}
% T_j-\tilde T_{j}-\beta_j\\
% T_k-\tilde T_{k}-\beta_k\\
%   \end{array}\right)\cid\mathcal N\left(0,2\left(\sigma^2+\kappa\e[B_0^2]\right)I_2\right).
% \end{equation*}
\begin{equation*}
    \left(\begin{array}{c}
T_j-\sqrt n\beta_j\\
T_k-\sqrt n\beta_k\\
\tilde T_j\\
\tilde T_k\\
  \end{array}\right)\cid\mathcal N\left(0,\left(\sigma^2+\kappa\e[B_0^2]\right)I_4\right).
\end{equation*}
\label{lemma:mc-knockoff}
\end{lemma}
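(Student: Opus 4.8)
The plan is to prove this by first conditioning on $(\ndata{Y},\beta)$, on which event the four statistics are \emph{exactly} jointly Gaussian with an explicit (random) mean and block-diagonal covariance, and then passing to the limit; this is the same strategy used in the proof of Lemma~\ref{lemma:wishart-char}, now extended to include the knockoff coordinates.

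First I would handle the original coordinates. Applying Lemma~\ref{lemma:gaussian-covariate-conditional} to $\ndata{X}^\top\ndata{Y}$ and restricting to coordinates $j,k$, we get that $(T_j-\sqrt n\beta_j,T_k-\sqrt n\beta_k)\mid\ndata{Y},\beta$ is bivariate Gaussian with mean $\big(\tfrac{\|\ndata{Y}\|^2/n}{\|\beta\|^2+\sigma^2}-1\big)(\sqrt n\beta_j,\sqrt n\beta_k)$ and covariance $\tfrac{\|\ndata{Y}\|^2}{n}\big(I_2-\tfrac{1}{\|\beta\|^2+\sigma^2}(\beta_j,\beta_k)^\top(\beta_j,\beta_k)\big)$, exactly as in the proof of Lemma~\ref{lemma:wishart-char}. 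Next I would handle the knockoff coordinates: since $\tilde{\ndata{X}}$ is an i.i.d.\ copy of $\ndata{X}$, it is independent of $(\ndata{X},\varepsilon,\beta)$, hence $\ndata{X}\ci\tilde{\ndata{X}}\mid(\ndata{Y},\beta)$; therefore $(\tilde T_j,\tilde T_k)\mid\ndata{Y},\beta$ is independent of $(T_j,T_k)$ and equals the $j,k$ coordinates of $\tilde{\ndata{X}}^\top\ndata{Y}/\sqrt n$, which given $\ndata{Y}$ is $\mathcal N(0,\tfrac{\|\ndata{Y}\|^2}{n}I_2)$. Combining the two blocks, the full four-vector is conditionally Gaussian with a block-diagonal covariance and the above mean shift in the first two coordinates only; in particular the cross-block covariance is exactly $0$.

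Then I would take limits of the conditional parameters. The engine is $\|\ndata{Y}\|^2/n\cip\sigma^2+\kappa\e[B_0^2]$: conditionally on $\beta$ the $Y_i$ are i.i.d.\ $\mathcal N(0,\|\beta\|^2+\sigma^2)$, so $\|\ndata{Y}\|^2/n$ concentrates around $\|\beta\|^2+\sigma^2$ (a conditional second-moment bound suffices, using that $\|\beta\|^2$ is bounded because $\sqrt n\beta_j$ has bounded support), while $\|\beta\|^2=\tfrac pn\cdot\tfrac1p\sum_j(\sqrt n\beta_j)^2\to\kappa\e[B_0^2]$ a.s.\ by the strong law and $p/n\to\kappa$. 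Since $\beta_j,\beta_k=O(n^{-1/2})$, the rank-one correction vanishes and the mean shift $\cip 0$. Hence the conditional mean $\mu_n\cip 0$ and the conditional covariance $\Sigma_n\cip(\sigma^2+\kappa\e[B_0^2])I_4$.

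Finally, to recover the unconditional limit I would use characteristic functions: for $t\in\rr^4$, $\e[e^{it^\top V_n}]=\e[e^{it^\top\mu_n-\frac12 t^\top\Sigma_n t}]$, and since the integrand is bounded by $1$ with $\mu_n\cip0$, $\Sigma_n\cip(\sigma^2+\kappa\e[B_0^2])I_4$, bounded convergence gives $\e[e^{it^\top V_n}]\to e^{-\frac12(\sigma^2+\kappa\e[B_0^2])\|t\|^2}$, so Lévy's continuity theorem yields the claim. No step is genuinely hard; the only points requiring care are the joint conditional-then-unconditional treatment of the randomness of $\beta$ inside $\|\ndata{Y}\|^2/n$ (routine given $B_0$ has bounded support) and the observation that the cross-block covariance vanishes exactly via $\ndata{X}\ci\tilde{\ndata{X}}\mid(\ndata{Y},\beta)$.
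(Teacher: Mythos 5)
Your proof is correct and follows essentially the same strategy as the paper's: condition on $(\ndata{Y},\beta)$, apply Lemma~\ref{lemma:gaussian-covariate-conditional} to the real coordinates, note that the knockoff coordinates are conditionally independent $\mathcal N(0,\|\ndata{Y}\|^2/n)$, and pass to the limit using $\|\ndata{Y}\|^2/n\cip\sigma^2+\kappa\e[B_0^2]$ and the boundedness of the $\sqrt n\beta_j$. The only difference is that you spell out the final convergence step via characteristic functions and bounded convergence, whereas the paper simply states the conditional distribution and asserts the limit; your added detail is a harmless (and arguably welcome) elaboration of a step the paper treats as routine.
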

This means that asymptotically, we can think of all the $T_j$'s and $\tilde T_j$'s as independent, $\tilde T_j\sim\mathcal N(0,\sigma^2+\kappa\e[B_0^2])$ and $T_j\sim B_0+\sqrt{\sigma^2+\kappa\e[B_0^2]}Z$, where $Z\sim\mathcal N(0,1)$ is independent of $B_0$. Thus, we can think of $W_j=T_j-\tilde T_j$ and $W_k=T_k-\tilde T_k$ as independent for distinct $j$ and $k$, with distribution $B_0+\sqrt{2(\sigma^2+\kappa\e[B_0^2]})Z$,  where $Z\sim\mathcal N(0,1)$ is independent of $B_0$. We obtain the effective $\pi_\mu$ by dividing the $W_j$'s by $\sqrt{2(\sigma^2+\kappa\e[B_0^2])}$.

\item \textit{OLS}.
We consider letting $T_j=\sqrt n\hat\beta_j$ and $\tilde T_j=\sqrt n\hat\beta_{j+p}$, where $\kappa<1/2$ and $\hat\beta$ is the OLS coefficient of $\ndata{Y}$ against $[\ndata{X},\tilde{\ndata{X}}]$. We just check the conditions in Lemma~\ref{lemma:knockoff-conditions}. The second condition is obvious. For the other two conditions, notice that for $j\ne k$,
\begin{equation*}
    \left(\begin{array}{c}
\sqrt n\hat\beta_j\\
\sqrt n\hat\beta_{j+p}\\
\sqrt n\hat\beta_k\\
\sqrt n\hat\beta_{k+p}
  \end{array}\right)-\left(\begin{array}{c}
\sqrt n\beta_j\\
0\\
\sqrt n\beta_k\\
0\\
  \end{array}\right)\mid\ndata{X},\tilde{\mathbf{X}}\sim\mathcal N\left(\left(\begin{array}{c}
0\\
0\\
0\\
0\\
  \end{array}\right),\sigma^2\left[\left(\begin{array}{cc}
\ndata{X}^\top\ndata{X} & \ndata{X}^\top\tilde{\mathbf{X}}\\
\tilde{\mathbf{X}}^\top\ndata{X} & \tilde{\mathbf{X}}^\top\tilde{\mathbf{X}} \\
  \end{array}\right)^{-1}\right]_{(j,j+p,k,k+p),(j,j+p,k,k+p)}\right).
\end{equation*}
Thus, we can show
\begin{equation*}
    \left(\begin{array}{c}
\sqrt n\hat\beta_j\\
\sqrt n\hat\beta_{j+p}\\
\sqrt n\hat\beta_k\\
\sqrt n\hat\beta_{k+p}
  \end{array}\right)-\left(\begin{array}{c}
\sqrt n\beta_j\\
0\\
\sqrt n\beta_k\\
0\\
  \end{array}\right)\cid\mathcal N\left(\left(\begin{array}{c}
0\\
0\\
0\\
0\\
  \end{array}\right),\frac{\sigma^2}{1-2\kappa}I_4\right),
\end{equation*}
once we verify that any $4\times4$ sub-diagonal matrix of
\begin{equation*}
\left(\begin{array}{cc}
\ndata{X}^\top\ndata{X} & \ndata{X}^\top\tilde{\mathbf{X}}\\
\tilde{\mathbf{X}}^\top\ndata{X} & \tilde{\mathbf{X}}^\top\tilde{\mathbf{X}} \\
  \end{array}\right)^{-1}
\end{equation*}
converges in probability to $(1-2\kappa)^{-1}I_4$, which follows directly from a computation of the first and second moments of the inverse Wishart distribution. This means that asymptotically, we can think of all the $T_j$'s and $\tilde T_j$'s as independent, $\tilde T_j\sim\mathcal N(0,\sigma^2/(1-2\kappa))$ and $T_j\sim B_0+\sigma Z/\sqrt{1-2\kappa}$, where $Z\sim\mathcal N(0,1)$ is independent of $B_0$. Thus, we can think of $W_j=T_j-\tilde T_j$ and $W_k=T_k-\tilde T_k$ as independent for distinct $j$ and $k$, with distribution $B_0+\sqrt2\sigma Z/\sqrt{1-2\kappa}$, where $Z\sim\mathcal N(0,1)$ is independent of $B_0$. We obtain the effective $\pi_\mu$ by dividing the $W_j$'s by $\sqrt2\sigma/\sqrt{1-2\kappa}$.
\end{enumerate}
\end{proof}

\begin{proof}[Proof of Lemma~\ref{lemma:mc-knockoff}]
By Lemma~\ref{lemma:gaussian-covariate-conditional},
% \begin{equation*}
% X\mid Y,\beta\sim\mathcal N\left(\frac{y}{\|\beta\|^2+\sigma^2}\beta,I-\frac{1}{\|\beta\|^2+\sigma^2}\beta\beta^\top\right),
% \end{equation*}
\begin{equation*}
\ndata{X}^\top\ndata{Y}\mid\ndata{Y},\beta\sim\mathcal N\left(\frac{\|\ndata{Y}\|^2}{\|\beta\|^2+\sigma^2}\beta,\|\ndata{Y}\|^2\left(I-\frac{1}{\|\beta\|^2+\sigma^2}\beta\beta^\top\right)\right),
\end{equation*}
and since $\tilde{\ndata X}^\top\ndata Y\mid\ndata Y,\beta\sim\mathcal N(0,\|\ndata Y\|^2I_p)$, $\tilde{\ndata X}\ci\ndata X\mid\ndata Y$,
\begin{equation*}
\left(\begin{array}{c}
\ndata{X}^\top\ndata{Y}\\
\tilde{\ndata{X}}^\top\ndata{Y}\\
\end{array}\right)\mid\ndata{Y},\beta\sim\mathcal N\left(\frac{\|\ndata{Y}\|^2}{\|\beta\|^2+\sigma^2}\left(\begin{array}{c}
\beta\\
0\\
\end{array}\right),\|\ndata{Y}\|^2\left[\begin{array}{cc}
   I-\frac{1}{\|\beta\|^2+\sigma^2}\beta\beta^\top  & 0 \\
   0  & I\\
\end{array}\right]\right)
\end{equation*}
Let $T_j=n^{-1/2}\ndata{X}_j^\top\ndata{Y}$ and $\tilde T_j=n^{-1/2}\tilde{\ndata{X}}_j^\top\ndata{Y}$. For $j\ne k$,
\begin{multline*}
\left(\begin{array}{c}
T_j-\sqrt n\beta_j\\
T_k-\sqrt n\beta_k\\
\tilde T_j\\
\tilde T_k\\
\end{array}\right)\mid\ndata{Y},\beta\\
\sim\mathcal N\left(\left(\frac{\|\ndata{Y}\|^2/n}{\|\beta\|^2+\sigma^2}-1\right)\left(\begin{array}{c}
\sqrt n\beta_j\\
\sqrt n\beta_k\\
0\\
0\\
\end{array}\right),\frac{\|\ndata{Y}\|^2}{n}\left[\begin{array}{cccc}
1-\frac{\beta_j^2}{\|\beta\|^2+\sigma^2} & -\frac{\beta_j\beta_k}{\|\beta\|^2+\sigma^2} & 0 & 0\\
-\frac{\beta_j\beta_k}{\|\beta\|^2+\sigma^2} & 1-\frac{\beta_k^2}{\|\beta\|^2+\sigma^2} & 0 & 0\\
0 & 0 & 1 & 0\\
0 & 0 & 0 & 1\\
\end{array}\right]\right),
\end{multline*}
which converges in distribution to $\mathcal N(0,(\sigma^2+\kappa\e[B_0^2])I_4)$.
\end{proof}

\begin{thm}
In Setting~\ref{model:lr-iid} with the knockoff procedure that takes $\tilde{\mathbf{X}}$ to be an i.i.d. copy of $\ndata{X}$, let $W_j$ be $f(\sqrt n\hat\beta_j^\lambda,\sqrt n\hat\beta_{j+p}^\lambda)$ for a continuous antisymmetric function $f$ that is not almost everywhere $0$, where $\hat\beta^\lambda$ is the lasso estimate with penalty parameter $\lambda$. Assume $\alpha_\lambda$ and $\tau_\lambda$ are defined as in \citet{bayati2011lasso} (note that the number of covariates is $2p$ instead of $p$). Let $G^{(0)}$ be the CDF of $f(\eta(\tau_\lambda Z_1;\alpha_\lambda\tau_\lambda),\eta(\tau_\lambda Z_2;\alpha_\lambda\tau_\lambda))$ and $G^{(1)}$ be the CDF of $f(\eta(B^{\textnormal{alt}}_0+\tau_\lambda Z_1;\alpha_\lambda\tau_\lambda),\eta(\tau_\lambda Z_2;\alpha_\lambda\tau_\lambda))$, where $Z_1,Z_2\simiid\mathcal N(0,1)$, independent of $B_0^\textnormal{alt}$, and $B_0^\textnormal{alt}$ has the same distribution as $(B_0\mid B_0\ne0)$. Assume $f$ is such that $G^{(0)}$ and $G^{(1)}$ are CDFs that only have a point mass at $0$ and have continuous densities elsewhere. Let (min over an empty set is defined to be infinity)
\begin{equation*}
w_\textnormal{KF}=\min\{w\ge0:g(w)\le q\},
\end{equation*}
where
\begin{equation*}
g(w)=\left\{
\begin{aligned}
&\frac{\gamma G^{(0)}(-w)+(1-\gamma)(G^{(1)}(-w))}{\gamma (1-G^{(0)}(w))+(1-\gamma)(1-G^{(1)}(w))}, &w>0,\\
&\lim_{w\to0^+}g(w), &w=0.
\end{aligned}\right.
\end{equation*}
For almost every $q\in(0,1)$, one of the following cases is true:
\begin{enumerate}
    \item[(a)] $w_\textnormal{KF}>0$ and $g'(w_\textnormal{KF})\ne0$, then the FDP and realized power of the knockoffs procedure at level $q$ converge in probability to
\begin{equation*}
\frac{\gamma G^{(0)}(-w_\textnormal{KF})}{\gamma (1-G^{(0)}(w_\textnormal{KF}))+(1-\gamma)(1-G^{(1)}(w_\textnormal{KF}))}\quad\text{and}\quad1-G^{(1)}(w_\textnormal{KF}),
\end{equation*}
respectively;

\item[(b)] $w_\textnormal{KF}=0$ and either the right derivative $g'(0)<0$ or $g(0)<q$, then the FDP and realized power of the knockoffs procedure at level $q$ converge in probability to
\begin{equation*}
\lim_{w\to0^+}\frac{\gamma G^{(0)}(-w)}{\gamma (1-G^{(0)}(w))+(1-\gamma)(1-G^{(1)}(w))}\quad\text{and}\quad\lim_{w\to0^+}1-G^{(1)}(w),
\end{equation*}
respectively;

\item[(c)] $w_\textnormal{KF}=\infty$, then the realized power of the knockoffs procedure at level $q$ converges in probability to $0$.
\end{enumerate}

\label{theorem:lasso-fdr-power}
\end{thm}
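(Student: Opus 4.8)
The plan is to reduce Theorem~\ref{theorem:lasso-fdr-power} to a version of Theorem~\ref{theorem:knockoff-power} in which the limiting statistic CDFs $G^{(0)}$ and $G^{(1)}$ are allowed a point mass at $0$, and to verify the structural hypotheses of that theorem through Lemma~\ref{lemma:knockoff-conditions}. Within-group exchangeability of the pairs $(\sqrt n\hat\beta_j^\lambda,\sqrt n\hat\beta_{j+p}^\lambda)$ is immediate because the $\beta_j$'s are i.i.d.\ and $\tilde{\mathbf X}$ is an i.i.d.\ copy of $\mathbf X$, so the work is twofold: (i) identify the limiting marginal law of a single pair $(\sqrt n\hat\beta_j^\lambda,\sqrt n\hat\beta_{j+p}^\lambda)$, and (ii) show that for $j_1\neq j_2$ the two pairs are asymptotically independent. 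For (i) I would apply the \citet{bayati2011lasso} re-normalization (divide $[\mathbf X,\tilde{\mathbf X}]$ by $\sqrt n$, multiply the coefficient vector by $\sqrt n$), observing that we are now running the lasso of $\mathbf Y$ on an $n\times 2p$ Gaussian design with $2p$ covariates and $n$ samples (so $2p/n\to2\kappa$) and true coefficient vector $(\sqrt n\beta,\mathbf 0)$, whose empirical distribution converges to the mixture $\tfrac12(\gamma\delta_0+(1-\gamma)\pi_1)+\tfrac12\delta_0$; this is exactly the setting in which $\alpha_\lambda,\tau_\lambda$ are defined via \eqref{equation:amp-fixed-point-equation}, and \citet{bayati2011lasso} gives that $\sqrt n\hat\beta_i^\lambda$ has asymptotic conditional law $\eta(\beta_i^\ast+\tau_\lambda Z;\alpha_\lambda\tau_\lambda)$ given $\beta_i^\ast$. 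Specializing to $i\in J_0$ (where $\beta_i^\ast=0$), $i\in J_1$ (where $\beta_i^\ast$ has the law of $B_0^{\mathrm{alt}}=(B_0\mid B_0\neq0)$), and their knockoff indices $i+p$ (where $\beta_{i+p}^\ast=0$ always) produces exactly the generators of $G^{(0)}$ and $G^{(1)}$ in the statement.

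The genuinely new step, and the one I expect to be the main obstacle, is promoting state-evolution control---which only governs empirical averages of pseudo-Lipschitz test functions acting one coordinate at a time---to a joint statement about the two designated coordinates $j$ and $j+p$ of $\hat\beta^\lambda$, and then about two such pairs simultaneously. I would handle this by the same leave-columns-out device used in the proof of Lemma~\ref{lemma:theorem-BH-distilled}. For the index set $S$ of interest (namely $S=\{j,j+p\}$ for step (i) and $S=\{j_1,j_1+p,j_2,j_2+p\}$ for step (ii)), let $\hat\beta^{(-S)}$ be the lasso fit of $\mathbf Y$ on $[\mathbf X,\tilde{\mathbf X}]$ with the columns in $S$ deleted, and $\hat{\mathbf Y}^{(-S)}$ the corresponding fit. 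The columns in $S$ are i.i.d.\ $\mathcal N(0,I_n/n)$ and independent of the rest, so conditioning on $\mathbf Y$, on the deleted design, and on $\beta$, and applying the conditional-Gaussian identities of Lemma~\ref{lemma:conditional-covariate-2} and Lemma~\ref{lemma:gaussian-covariate-conditional}, the inner products $(\hat{\mathbf Y}^{(-S)})^\top\mathbf X_j$ and $(\hat{\mathbf Y}^{(-S)})^\top\tilde{\mathbf X}_j$ are jointly Gaussian with independent components across distinct columns of $S$ and variances governed by $\|\mathbf Y-[\mathbf X,\tilde{\mathbf X}]_{-S}\hat\beta^{(-S)}\|_2^2/n\to\lambda^2/\alpha_\lambda^2$ (the argument behind Lemma~\ref{lemma:theorem-training-loss}). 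The crux is then $\|\hat{\mathbf Y}-\hat{\mathbf Y}^{(-S)}\|_2^2/n\cip0$, so that the full lasso coordinates $\sqrt n\hat\beta_i^\lambda$, $i\in S$, are asymptotically $\eta$ applied to these decoupled Gaussian ``effective observations''; as in Lemma~\ref{lemma:theorem-BH-distilled} this is extracted from the KKT/subgradient optimality conditions of the two lasso problems together with Cauchy--Schwarz and the boundedness of $\|\hat\beta^\lambda\|_2^2/p$ (a corollary of Theorem~1.5 in \citet{bayati2011lasso}). The ``extra care'' is that $\hat\beta_i^\lambda$ can be exactly $0$, so the leave-out comparison must be phrased through subgradients rather than an explicit formula. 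Combining with the marginal identification yields that $(\sqrt n\hat\beta_{j_1}^\lambda,\sqrt n\hat\beta_{j_1+p}^\lambda)$ and $(\sqrt n\hat\beta_{j_2}^\lambda,\sqrt n\hat\beta_{j_2+p}^\lambda)$ converge jointly to independent pairs, each an $\eta$-image of independent Gaussians plus the signal $B_0^{\mathrm{alt}}$ on the non-knockoff coordinate when $j\in J_1$, which is exactly hypothesis~3 of Lemma~\ref{lemma:knockoff-conditions}.

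Finally I would re-run the proof of Theorem~\ref{theorem:knockoff-power} with the modification that $G^{(0)}$ and $G^{(1)}$ carry an atom at $0$: define $g(w)$ for $w>0$ by the same ratio and extend it to $w=0$ by its right limit; on $(0,\infty)$ the densities of $G^{(0)},G^{(1)}$ are continuous, so Sard's theorem (Lemma~\ref{lemma:sard}) still excludes, for Lebesgue-almost-every $q$, the critical values of $g$, and one splits into the cases $w_{\mathrm{KF}}>0$ with $g'(w_{\mathrm{KF}})\neq0$ (case (a)), $w_{\mathrm{KF}}=0$ with $g'(0^+)<0$ or $g(0)<q$ (case (b)), and $w_{\mathrm{KF}}=\infty$ (case (c)). The convergence $\hat w_p\cip w_{\mathrm{KF}}$ and the uniform convergence of the empirical CDFs of the $W_j=f(\sqrt n\hat\beta_j^\lambda,\sqrt n\hat\beta_{j+p}^\lambda)$---via Lemma~\ref{lemma:unif-conv-in-prob} together with the asymptotic pairwise independence just established---go through verbatim; the only bookkeeping difference is that $\#\{j:W_j\ge w\}/p$ and $\#\{j:W_j\le-w\}/p$ as $w\downarrow0$ converge to the appropriately weighted $1-G^{(1)}(0^+)$ and $G^{(0)}(0^-)$, which is where the right-limit expressions in case~(b) originate. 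The limits of the FDP and realized power, $\#\{j\in J_0:W_j\ge\hat w_p\}/\#\{j:W_j\ge\hat w_p\}$ and $\#\{j\in J_1:W_j\ge\hat w_p\}/|J_1|$, then evaluate to the displayed formulas; this last paragraph I expect to be routine given Theorems~\ref{theorem:knockoff-power} and~\ref{theorem:BH-AdaPT-CRT}, with the real difficulty concentrated in the leave-columns-out joint-independence argument of the second paragraph.
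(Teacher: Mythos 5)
Your strategy is genuinely different from the paper's, and it has a gap at the decisive step. The paper's proof of the key ingredient (Lemma~\ref{lemma:theorem-lasso-central}) never attempts to characterize the asymptotic law of a \emph{fixed} coordinate $\hat\beta_j^\lambda$. Instead it relabels so that odd columns are real and even columns are knockoffs, and then uses exchangeability of the knockoff columns to decouple the designated pair: because permuting even-labelled columns is a distributional symmetry, $\p(f(\hat\beta^\lambda_{2J-1},\hat\beta^\lambda_{2J})\le t)=\p(f(\hat\beta^\lambda_{2J-1},\hat\beta^\lambda_{2J'})\le t)$ for $J,J'\simiid\Unif([p])$. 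This reduces everything to the convergence in probability of one-coordinate empirical CDFs, which AMP state evolution provides (Lemma~\ref{lemma:lasso-amp-even}, where the point mass at $0$ is handled by squeezing the indicator between Lipschitz functions and letting a vanishing $\varepsilon$-ball shrink). The hypergeometric variance computation then separates knockoff from real coordinates. No leave-one-out analysis appears anywhere.

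Your leave-columns-out route, modeled on Lemma~\ref{lemma:theorem-BH-distilled}, stalls at the sentence "so that the full lasso coordinates $\sqrt n\hat\beta_i^\lambda$, $i\in S$, are asymptotically $\eta$ applied to these decoupled Gaussian effective observations." In Lemma~\ref{lemma:theorem-BH-distilled} that transfer is immediate because the distilled statistic \emph{is} a leave-out residual-column inner product, so $\|\hat{\ndata Y}-\hat{\ndata Y}^{(-S)}\|_2^2/n\to 0$ directly controls the statistic. Here the statistic is the full-fit coordinate itself. The one-dimensional KKT characterization gives $\hat\beta^\lambda_j=\eta(\ndata X_j^\top\ndata r_{-j};\lambda)$ (after normalizing $\|\ndata X_j\|^2\to1$), but $\ndata r_{-j}=\ndata Y-\hat{\ndata Y}+\ndata X_j\hat\beta^\lambda_j$ is the partial residual from the \emph{full} fit, and $\ndata X_j^\top\ndata r_{-j}=\lambda\gamma_j+\hat\beta^\lambda_j$ with $\gamma_j$ the subgradient at $\hat\beta^\lambda_j$: a self-consistent identity, not a closed form. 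Even after passing to the genuine leave-out residual $\ndata Y-\hat{\ndata Y}^{(-j)}$ via your closeness claim, the resulting threshold is $\lambda$, not $\alpha_\lambda\tau_\lambda$, and the noise scale is $\lambda/\alpha_\lambda$, not $\tau_\lambda$. Since $\eta(cx;cy)=c\,\eta(x;y)$, you would obtain a quantity equal to $\tfrac{\lambda}{\alpha_\lambda\tau_\lambda}\eta(B_0+\tau_\lambda Z;\alpha_\lambda\tau_\lambda)$, off by the factor $\lambda/(\alpha_\lambda\tau_\lambda)=1-\kappa\e[\eta']$ from the target. Recovering the missing Onsager/degrees-of-freedom correction is exactly the nontrivial content of the AMP state-evolution identification of the lasso law, and it does not come for free from the KKT conditions and Cauchy--Schwarz as your analogy suggests. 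This is the step you would have to supply, and it is precisely the step the paper's exchangeability argument is designed to avoid.
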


\begin{proof}[Proof of Theorem~\ref{theorem:lasso-fdr-power}]
Let
\begin{equation*}
\hat w_p=\min\{w>0:g_p(w)\equiv\frac{1/p+\#\{j:f(\sqrt n\hat\beta^\lambda_{j},\sqrt n\hat\beta^\lambda_{j+p})\le-w\}/p}{\#\{j:f(\sqrt n\hat\beta^\lambda_{j},\sqrt n\hat\beta^\lambda_{j+p})\ge w\}/p}\le q\}.\footnote{Formally, we only take the minimum over $w\in\{|f(\sqrt n\hat\beta^\lambda_{j},\sqrt n\hat\beta^\lambda_{j+p})|: f(\sqrt n\hat\beta^\lambda_{j},\sqrt n\hat\beta^\lambda_{j+p})\ne0\}$. Such a difference is important when $g_p(w)\le q$ for all $w>0$. The term $1/p$ does not matter asymptotically, in that we can still consider the numerator as an empirical CDF of the $f(\sqrt n\hat\beta^\lambda_{j},\sqrt n\hat\beta^\lambda_{j+p})$'s.}
\end{equation*}
Similar to the proof of Theorem~\ref{theorem:knockoff-power}, if we can show the convergence of the empirical CDFs, we can show $\hat w_p$ converges in probability to $w_\textnormal{KF}$ and the results of the theorem then follow. Hence, we only need the results from Lemma~\ref{lemma:theorem-lasso-central}.
% \begin{equation*}
% \tau_\text{KF}=\inf\{t>0:\frac{\gamma G_0(-t)+(1-\gamma)G_1(-t)}{\gamma (1-G_0(t))+(1-\gamma)(1-G_1(t))}\le q\},
% \end{equation*}
% according to the results we have, provided that $\tau_\text{KF}>0$. The condition $\tau_\text{KF}$ is satisfied if $q$ is small enough, and in practice we expect it to always hold (otherwise we would either reject all hypotheses or reject all the ones with non-zero $W_j$). The proof is exactly the same as Theorem~\ref{theorem:knockoff-power}. The power then converges by Slusky's Theorem.
\end{proof}

\begin{lemma}
\label{lemma:theorem-lasso-central}
Under the setting in Theorem~\ref{theorem:lasso-fdr-power}, for any nonzero $t\in\rr$,
\begin{equation*}
\frac{1}{p}\sum_{j=1}^p\mathbb I(f(\sqrt n\hat\beta^\lambda_{j},\sqrt n\hat\beta^\lambda_{j+p})\le t)\cip\p\left(f(\eta(B_0+\tau_\lambda Z_1;\alpha_\lambda\tau_\lambda),\eta(\tau_\lambda Z_2;\alpha_\lambda\tau_\lambda))\le t\right)
\end{equation*}
and
\begin{equation*}
\frac{1}{\#\{j\in[p]:\beta_j=0\}}\sum_{j=1}^p\mathbb I(f(\sqrt n\hat\beta^\lambda_{j},\sqrt n\hat\beta^\lambda_{j+p})\le t,\beta_j=0)\cip\p\left(f(\eta(\tau_\lambda Z_1;\alpha_\lambda\tau_\lambda),\eta(\tau_\lambda Z_2;\alpha_\lambda\tau_\lambda))\le t\right),
\end{equation*}
where $B_0\sim\gamma\delta_0+(1-\gamma)\pi_1$ and $Z_1,Z_2\simiid\mathcal N(0,1)$ are independent of $B_0$. These imply
\begin{equation*}
\frac{1}{\#\{j\in[p]:\beta_j\ne0\}}\sum_{j=1}^p\mathbb I(f(\sqrt n\hat\beta^\lambda_{j},\sqrt n\hat\beta^\lambda_{j+p})\le t,\beta_j\ne0)\cip\p\left(f(\eta(B_0^\textnormal{alt}+\tau_\lambda Z_1;\alpha_\lambda\tau_\lambda),\eta(\tau_\lambda Z_2;\alpha_\lambda\tau_\lambda))\le t\right).
\end{equation*}
\end{lemma}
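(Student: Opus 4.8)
The plan is to reduce the lemma to two weak-convergence statements about empirical measures of within-pair quantities — one restricted to the null indices, one over all indices — and then to establish the crux, namely that a covariate's lasso coefficient and its knockoff's lasso coefficient are asymptotically conditionally independent given the true coefficient. First I would note that all three displays are empirical CDFs of $f(\sqrt n\hat\beta_j^\lambda,\sqrt n\hat\beta_{j+p}^\lambda)$, averaged over $j\in[p]$, over $\{j:\beta_j=0\}$, and over $\{j:\beta_j\ne0\}$ respectively. Since $f$ is continuous and the limiting CDFs $G^{(0)},G^{(1)}$ are assumed continuous away from $0$, the portmanteau theorem applied at the continuity point $t\ne0$ reduces everything to: (A) the empirical law of $(\sqrt n\hat\beta_j^\lambda,\sqrt n\hat\beta_{j+p}^\lambda)$ over $\{j:\beta_j=0\}$ converges in probability, weakly, to $\mathcal L(\eta(\tau_\lambda Z_1;\alpha_\lambda\tau_\lambda))\otimes\mathcal L(\eta(\tau_\lambda Z_2;\alpha_\lambda\tau_\lambda))$; and (B) the empirical law of $(\sqrt n\hat\beta_j^\lambda,\sqrt n\hat\beta_{j+p}^\lambda,\sqrt n\beta_j)$ over all $j\in[p]$ converges in probability to $\mathcal L\bigl(\eta(B_0+\tau_\lambda Z_1;\alpha_\lambda\tau_\lambda),\eta(\tau_\lambda Z_2;\alpha_\lambda\tau_\lambda),B_0\bigr)$ with $Z_1\perp Z_2\perp B_0$. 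The third display then follows from (A) and (B) by subtracting the null part from the full sum, using $\#\{j:\beta_j=0\}/p\to\gamma$ (which produces the $B_0^{\textnormal{alt}}$ form since $\pi_1$ puts no mass at $0$).

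Next I would set up the AMP machinery for the augmented problem. Regressing $\mathbf Y$ on $[\mathbf X,\tilde{\mathbf X}]\in\mathbb R^{n\times 2p}$ is a lasso instance with i.i.d.\ $\mathcal N(0,1)$ design, true coefficient $(\beta,0)$, aspect ratio $n/(2p)\to 1/(2\kappa)$, and empirical coefficient law tending to $\frac12(\gamma\delta_0+(1-\gamma)\pi_1)+\frac12\delta_0$; feeding this into the AMP characterization of the lasso from \citet{bayati2011lasso} (with the usual renormalization, dividing the design by $\sqrt n$ and multiplying coefficients by $\sqrt n$) defines the state-evolution scalars $(\alpha_\lambda,\tau_\lambda)$ and yields the single-coordinate limits: the empirical law of $(\sqrt n\hat\beta_j^\lambda,\sqrt n\beta_j)$ over $j\in[p]$ tends to $\mathcal L(\eta(B_0+\tau_\lambda W;\alpha_\lambda\tau_\lambda),B_0)$ and the empirical law of $\sqrt n\hat\beta_{j+p}^\lambda$ over the knockoff indices tends to $\mathcal L(\eta(\tau_\lambda W;\alpha_\lambda\tau_\lambda))$. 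This identifies both marginals of every pair; only the joint structure remains.

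Third I would run the symmetry argument. Because the $p$ triples $(\mathbf X_j,\tilde{\mathbf X}_j,\beta_j)$ are i.i.d.\ and both $\mathbf Y=\sum_j\beta_j\mathbf X_j+\varepsilon$ and the $\ell_1$ penalty are invariant under jointly permuting the originals and their knockoffs, the sequence $\bigl((\hat\beta_j^\lambda,\hat\beta_{j+p}^\lambda,\beta_j)\bigr)_{j=1}^p$ is exchangeable. For (A) I would use in addition that the truly-null columns — all knockoff columns together with the original columns having $\beta_j=0$ — form an exchangeable family whose single-coordinate empirical law converges to the \emph{deterministic} measure $\mu:=\mathcal L(\eta(\tau_\lambda W;\alpha_\lambda\tau_\lambda))$; a U-statistic argument (by exchangeability the fixed matching of each null original with its knockoff is distributionally a uniformly random matching, and random-matching averages of a bounded continuous test function concentrate on $\int\!\!\int\psi\,d\mu\,d\mu$ once the empirical measure converges to a deterministic limit) forces the empirical law of the null pairs to converge to $\mu\otimes\mu$. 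For (B) I would instead fix $j$, condition on $(\mathbf X,\mathbf Y,\tilde{\mathbf X}_{-j})$ — under which $\tilde{\mathbf X}_j$ is a fresh $\mathcal N(0,I_n)$ column — and pin down the conditional law of $\hat\beta_{j+p}^\lambda$ by a leave-one-out comparison with the lasso on $[\mathbf X,\tilde{\mathbf X}_{-j}]$, using the KKT/subgradient decomposition together with the fit-stability and coefficient-norm controls of Lemma~\ref{lemma:theorem-training-loss} and \citet{bayati2011lasso}, exactly as in the proof of Lemma~\ref{lemma:theorem-BH-distilled}, to show that $\sqrt n\hat\beta_{j+p}^\lambda$ is asymptotically $\eta(\tau_\lambda Z_2;\alpha_\lambda\tau_\lambda)$ with $Z_2$ a functional of $\tilde{\mathbf X}_j$ alone and hence asymptotically independent of $\hat\beta_j^\lambda$; exchangeability then promotes the coordinatewise identification of the mean, together with four-coordinate decoupling for the variance, to convergence in probability of the empirical measure of the triples.

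The hard part will be (B): AMP speaks coordinate by coordinate, so the joint behavior of a covariate and its knockoff is not handed to us, and a naive leave-one-out is deceptive because the undebiased lasso residual has squared norm $\lambda^2/\alpha_\lambda^2$ rather than $\tau_\lambda^2$, so the AMP debiasing correction must be threaded through the argument; the exchangeability-to-independence reduction bypasses this cleanly for the all-null pairs in (A) but not for a general pair. A secondary, purely technical, issue is the atom at $0$: $f$ composed with the $\eta$'s has positive mass at $0$, which is why the lemma is stated only for $t\ne0$ and why the assumed continuity of $G^{(0)}$ and $G^{(1)}$ away from $0$ is exactly what is needed to convert weak convergence of the empirical measures into convergence of the empirical CDFs.
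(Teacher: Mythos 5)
Your reduction to weak convergence of empirical measures, your argument for the null pairs (A) via exchangeability of the truly null columns plus random-matching concentration, your handling of the $t\ne0$ atom, and your subtraction argument for the third display are all correct and align with the paper. The gap is in (B), and it is a genuine one: you have missed the observation that makes the symmetry argument work for \emph{all} pairs, not just the null ones, and you have substituted for it a leave-one-out argument that, as you yourself flag, you do not actually complete.

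The exchangeability you need for (B) is weaker than what you invoke for (A): the knockoff columns $\tilde{\mathbf X}_1,\dots,\tilde{\mathbf X}_p$ \emph{alone} are exchangeable. They are i.i.d., all carry true coefficient zero, and permuting them permutes only the knockoff coordinates of $\hat\beta^\lambda$ while leaving the original coordinates and $\beta$ fixed. Consequently, for \emph{any} original index $j$ — null or not — the pair $(\hat\beta^\lambda_j,\hat\beta^\lambda_{j+p})$ has the same marginal law as $(\hat\beta^\lambda_j,\hat\beta^\lambda_{k+p})$ for any fixed $k$, and averaging over $j$ and over $k$ converts the matched pairing into a uniformly random pairing, exactly the step you use in (A) but without ever requiring the original member of the pair to be null. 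The paper exploits precisely this: it writes $\e[\tfrac1p\sum_j\mathbb I(f(\hat\beta_{2j-1},\hat\beta_{2j})\le t)]=\p(f(\hat\beta_{2J-1},\hat\beta_{2J'})\le t)$ with $J,J'\simiid\Unif([p])$, shows the ``odd'' and ``even'' empirical CDFs each converge in probability to deterministic limits (the latter via the hypergeometric argument and Lemma~\ref{lemma:lasso-amp-even}), and deduces $\e[\hat F^{\text{odd}}_p(s)\hat F^{\text{even}}_p(t)]\to F^{\text{odd}}(s)F^{\text{even}}(t)$; the variance is controlled by the same argument with four indices. No leave-one-out and no AMP debiasing appear anywhere.

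The leave-one-out route you propose for (B) is, as written, not a proof. You correctly note that the plain lasso residual has squared norm tending to $\lambda^2/\alpha_\lambda^2$ rather than the state-evolution scale $\tau_\lambda^2$, and say ``the AMP debiasing correction must be threaded through the argument'' — but that is exactly the hard part, and it is not a local computation: the lasso coefficient of a single freshly added column is not a simple function of that column and the leave-one-out residual, because adding the column perturbs the entire solution, and the AMP debiased residual is defined by an iterative scheme rather than in closed form from the leave-one-out fit. The leave-one-out argument in the proof of Lemma~\ref{lemma:theorem-BH-distilled} controls only the \emph{residual vector} $\|\hat{\ndata{Y}}^{(\text{-}1)}-\hat{\ndata{Y}}^{(\text{-}(1:2))}\|^2/n\cip 0$, not an individual coefficient of the augmented fit, which is why it sidesteps the debiasing issue there but cannot be imported wholesale here. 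Once you replace this with the knockoff-column exchangeability step, the rest of your plan goes through.
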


\begin{proof}[Proof of Lemma~\ref{lemma:theorem-lasso-central}]
To use the results in \citet{bayati2011lasso}, we apply the following re-normalization to Setting~\ref{model:lr-iid}: assume $\ndata{X}$ is divided by $\sqrt n$ and $\beta$ is multiplied by $\sqrt n$.

For simplicity of notation, we relabel the covariates, so all odd-labeled covariates correspond to real covariates, and all even-labeled covariates correspond to knockoffs. We condition on $\beta_{1:\infty}$. Note that the relabeling means only odd $\beta_j$'s correspond to draws from $\gamma\delta_0 +(1-\gamma)\pi_1$, and the even $\beta_j$'s are just zero.
\begin{equation*}
\begin{aligned}
&\e\left[\frac{1}{p}\sum_{j=1}^p\mathbb I(f(\hat\beta^\lambda_{2j-1},\hat\beta^\lambda_{2j})\le t)\right]\\
&=\p(f(\hat\beta^\lambda_{2J-1},\hat\beta^\lambda_{2J})\le t)\qquad\text{($J\sim\Unif([p])$)}\\
&=\p(f(\hat\beta^\lambda_{2J-1},\hat\beta^\lambda_{2J'})\le t)\qquad\text{($J,J'\simiid\Unif([p])$, by exchangeability)}.
\end{aligned}
\end{equation*}
We know the limit of this probability if we can show that
\begin{equation*}
(\hat\beta^\lambda_{2J-1},\hat\beta^\lambda_{2J'})\cid(\eta(B_0+\tau_\lambda Z_1;\alpha_\lambda\tau_\lambda),\eta(\tau_\lambda Z_2;\alpha_\lambda\tau_\lambda)),
\end{equation*}
where $Z_1,Z_2\simiid\mathcal N(0,1)$, independent of $B_0$.
\begin{equation*}
\p(\hat\beta^\lambda_{2J-1}\le s,\hat\beta^\lambda_{2J'}\le t)=\e[\hat F^\text{odd}_p(s)\hat F^\text{even}_p(t)],
\end{equation*}
where
\begin{equation*}
\hat F^\text{odd}_p(s)=\frac1p\sum_{j=1}^p\mathbb I(\hat\beta^\lambda_{2j-1}\le s),\hat F^\text{even}_p(t)=\frac1p\sum_{j=1}^p\mathbb I(\hat\beta^\lambda_{2j}\le t).
\end{equation*}
We need these two terms to converge in probability, which would give us asymptotic independence of $(\hat\beta^\lambda_{2J-1},\hat\beta^\lambda_{2J'})$ by convergence of CDF via the bounded convergence theorem. Note that we only have to analyze $t\ne0$, which corresponds to the continuity points. By exchangeability,
\begin{equation*}
\sum_{j=1}^p\mathbb I(\hat\beta^\lambda_{2j}\le t)\mid\sum_{j=1}^{2p}\mathbb I(\hat\beta^\lambda_{j}\le t,\beta_j=0)\sim\text{Hypergeometric}(p+p\gamma_p,{\sum_{j=1}^{2p}\mathbb I(\hat\beta^\lambda_{j}\le t,\beta_j=0)},p).
\end{equation*}
Here,
\begin{equation*}
\gamma_p=\frac{\#\{j\in[p]:\beta_{2j-1}=0\}}{p}\to\gamma.
\end{equation*}
The hypergeometric distribution divided by $p$ has mean
\begin{equation*}
\frac{\sum_{j=1}^{2p}\mathbb I(\hat\beta^\lambda_{j}\le t,\beta_j=0)}{p+p\gamma_p}
\end{equation*}
and variance
\begin{equation*}
\frac{\sum_{j=1}^{2p}\mathbb I(\hat\beta^\lambda_{j}\le t,\beta_j=0)}{p+p\gamma_p}\left(1-\frac{\sum_{j=1}^{2p}\mathbb I(\hat\beta^\lambda_{j}\le t,\beta_j=0)}{p+p\gamma_p}\right)\frac{\gamma_p}{p-1}.
\end{equation*}

\begin{lemma}
In Setting~\ref{model:lr-iid} with the knockoff procedure that takes $\tilde{\ndata X}$ to be an i.i.d. copy of $\ndata X$, assume the $\varepsilon_i$'s, $X_{ij}$'s and $\tilde X_{ij}$'s do not change with $n,p$ as long as $n\ge i$ and $p\ne j$, then
\begin{equation*}
\frac{1}{2p}\sum_{j=1}^{2p}\mathbb I(\hat\beta^\lambda_{j}\le t,\beta_j=0)\cas\frac{\gamma+1}{2}\p(\eta(\tau_\lambda Z;\alpha_\lambda\tau_\lambda)\le t)
\end{equation*}
for any $t\ne0$, where $Z\sim\mathcal N(0,1)$.
\label{lemma:lasso-amp-even}
\end{lemma}

\begin{proof}[Proof of Lemma~\ref{lemma:lasso-amp-even}]
For $\varepsilon>0$, let $\phi(x,y)=\mathbb I(x\le t,|y|\le\varepsilon)$. Take
\begin{equation*}
\phi_{1,k}(x,y)=1-\min(1,k\times\inf_{z\le t\text{ and }|w|\le\varepsilon}\|(z,w)-(x,y)\|),
\end{equation*}
and
\begin{equation*}
\phi_{2,k}(x,y)=\min(1,k\times\inf_{z\ge t\text{ or }|w|\ge\varepsilon}\|(z,w)-(x,y)\|).
\end{equation*}
It is clear that
\begin{equation*}
\phi_{2,k}(x,y)\le\phi(x,y)\le\phi_{1,k}(x,y),
\end{equation*}
% \begin{equation*}
% \phi_{1,n}(x,y),\phi_{2,n}(x,y)\to\phi(x,y),
% \end{equation*}
and for each $k>0$, $\phi_{1,k}$ and $\phi_{2,k}$ are uniformly continuous, %and equation~\eqref{eq:lasso-amp-dist} holds.
so we have almost surely \citep{bayati2011lasso},
\begin{equation}
\lim_{p\to\infty}\frac1{2p}\sum_{j=1}^{2p}\psi(\hat\beta^\lambda_j,\beta_{j})
=\e[\psi(\eta(B_\textnormal{all}+\tau_\lambda Z;\alpha_\lambda\tau_\lambda),B_\text{all})], \qquad\psi=\phi_{1,k},\phi_{2,k},
\label{eq:lasso-amp-dist}
\end{equation}
where $Z\sim N(0,1)$ independent of $B_\textnormal{all}\sim\frac{\gamma+1}2\delta_0+\frac{1-\gamma}{2}\pi_1$.%, $\tau_*=\tau_*(\alpha(\lambda))$, $\theta_*=\alpha(\lambda)\tau_*$. Let $\{\beta_{2j-1}\}_{j=1}^\infty$ converges in distribution to $\gamma\delta_0+(1-\gamma)\delta_{h}$. and $\beta_{2j}=0$. Let $F_0$ and $F_1$ be the CDFs of $|\eta(\tau_*Z;\theta_*)|$ and $|\eta(h+\tau_*Z;\theta_*)|$. Let $G_0$ be the CDF of $F_0-F_0$, where the two $F_0$'s stand for independent random variables following $F_0$, and $G_1$ be the CDF of $F_1-F_0$, where $F_0$ and $F_1$ represent two independent random variables following $F_0$ and $F_1$.

Now we assume $\varepsilon$ is such that $B_\textnormal{all}$ does not have point mass at $\varepsilon$, which holds for almost every $\varepsilon>0$, then
\begin{equation}
\begin{aligned}
&\e[\phi_{1,k}(\eta(B_\textnormal{all}+\tau_\lambda Z;\alpha_\lambda\tau_\lambda),B_\text{all})]\\
&=\p(B_\text{all}=0)\e[\phi_{1,k}(\eta(\tau_\lambda Z;\alpha_\lambda\tau_\lambda),0)]+\p(0<|B_\textnormal{all}|\le\varepsilon+\frac1k)\e[\phi_{1,k}(\eta(B_\textnormal{all}+\tau_\lambda Z;\alpha_\lambda\tau_\lambda),B_\textnormal{all})\mid0<|B_\textnormal{all}|\le\varepsilon+\frac1k]\\
&\to\frac{1+\gamma}{2}\e[\phi(\eta(\tau_\lambda Z;\alpha_\lambda\tau_\lambda),0)]+\p(0<|B_\textnormal{all}|\le\varepsilon)\e[\phi(\eta(B_\textnormal{all}+\tau_\lambda Z;\alpha_\lambda\tau_\lambda),B_\textnormal{all})\mid0<|B_\textnormal{all}|\le\varepsilon]\\
&\e[\phi_{2,k}(\eta(B_\textnormal{all}+\tau_\lambda Z;\alpha_\lambda\tau_\lambda),B_\textnormal{all})]\\
&=\p(B_\textnormal{all}=0)\e[\phi_{2,k}(\eta(\tau_\lambda Z;\alpha_\lambda\tau_\lambda),0)]+\p(0<|B_\textnormal{all}|\le\varepsilon)\e[\phi_{2,k}(\eta(B_\textnormal{all}+\tau_\lambda Z;\alpha_\lambda\tau_\lambda),B_\textnormal{all})\mid0<|B_\textnormal{all}|\le\varepsilon]\\
&\to\frac{1+\gamma}{2}\e[\phi(\eta(\tau_\lambda Z;\alpha_\lambda\tau_\lambda),0)]+\p(0<|B_\textnormal{all}|\le\varepsilon)\e[\phi(\eta(B_\textnormal{all}+\tau_\lambda Z;\alpha_\lambda\tau_\lambda),B_\textnormal{all})\mid0<|B_\textnormal{all}|\le\varepsilon]
\end{aligned}
\label{eq:sup-inf}
\end{equation}
as $k\to\infty$, by the bounded convergence theorem. Since
\begin{equation*}
\frac{1}{2p}\sum_{j=1}^{2p}\phi_{2,k}(\hat\beta^\lambda_j,\beta_j)\le\frac{1}{2p}\sum_{j=1}^{2p}\phi(\hat\beta^\lambda_j,\beta_j)\le\frac{1}{2p}\sum_{j=1}^{2p}\phi_{1,k}(\hat\beta^\lambda_j,\beta_j),
\end{equation*}
we have,% for each $n$ large enough,
\begin{equation*}
\limsup_{p\to\infty}\frac{1}{2p}\sum_{j=1}^{2p}\phi(\hat\beta^\lambda_j,\beta_j)\le\lim_{p\to\infty}\frac{1}{2p}\sum_{j=1}^{2p}\phi_{1,k}(\hat\beta^\lambda_j,\beta_j)=\e[\phi_{1,k}(\eta(B_\textnormal{all}+\tau_\lambda Z;\alpha_\lambda\tau_\lambda),B_\text{all})]
\end{equation*}
and
\begin{equation*}
\liminf_{p\to\infty}\frac{1}{2p}\sum_{j=1}^{2p}\phi(\hat\beta^\lambda_j,\beta_j)\ge\lim_{p\to\infty}\frac{1}{2p}\sum_{j=1}^{2p}\phi_{2,k}(\hat\beta^\lambda_j,\beta_j)=\e[\phi_{2,k}(\eta(B_\textnormal{all}+\tau_\lambda Z;\alpha_\lambda\tau_\lambda),B_\text{all})].
\end{equation*}
Then it follows from equations~\eqref{eq:lasso-amp-dist} and \eqref{eq:sup-inf} that
\begin{multline*}
\frac{1}{2p}\sum_{j=1}^{2p}\mathbb I(\hat\beta^\lambda_j\le t,|\beta_j|\le\varepsilon)\\
\to\frac{\gamma+1}{2}\e[\phi(\eta(\tau_\lambda Z;\alpha_\lambda\tau_\lambda),0)]+\p(0<|B_\textnormal{all}|\le\varepsilon)\e[\phi(\eta(B_\textnormal{all}+\tau_\lambda Z;\alpha_\lambda\tau_\lambda),B_\textnormal{all})\mid0<|B_\textnormal{all}|\le\varepsilon].
\end{multline*}
The second term goes to $0$ as $\varepsilon\to0$ since $|\phi|\le1$.
Now we want to show
\begin{equation*}
\lim_{\varepsilon\to0}\lim_{p\to\infty}\frac{1}{2p}\sum_{j=1}^{2p}\mathbb I(\hat\beta^\lambda_j\le t,|\beta_j|\le\varepsilon)=\lim_{p\to\infty}\frac{1}{2p}\sum_{j=1}^{2p}\mathbb I(\hat\beta^\lambda_j\le t,|\beta_j|=0),
\end{equation*}
while the difference is
\begin{equation*}
\lim_{p\to\infty}\frac{1}{2p}\sum_{j=1}^{2p}\mathbb I(\hat\beta^\lambda_j\le t,0<|\beta_j|\le\varepsilon)\le\lim_{p\to\infty}\frac{1}{2p}\sum_{j=1}^{2p}\mathbb I(0<|\beta_j|\le\varepsilon)=\frac{1-\gamma}{2}\pi_1((0,\varepsilon]),
\end{equation*}
which converges to $0$ as $\varepsilon\to0$.

Now we have shown for any $t\ne0$,
\begin{equation*}
\frac{1}{2p}\sum_{j=1}^{2p}\mathbb I(\hat\beta^\lambda_j\le t,|\beta_j|=0)\cas\frac{\gamma+1}{2}\p(\eta(\tau_\lambda Z;\alpha_\lambda\tau_\lambda)\le t).
\end{equation*}
% \begin{equation*}
% \begin{aligned}
% \frac{1}{p}\sum_{j=1}^p\mathbb I(|\hat\beta_j|\le t,|\beta_j|=0)-\frac{1}{p}\sum_{j=1}^p\mathbb I(|\hat\beta_j|\le 0,|\beta_j|=0)&=\frac1p\sum_{j=1}^p\mathbb I(0<|\hat\beta_j|\le t,|\beta_j|=0)\\
% &\le\frac1p\sum_{j=1}^p\mathbb I(0<|\hat\beta_j|\le t).
% \end{aligned}
% \end{equation*}
% Using equation~(4) in \citet{bogdan2013supplementary}, we can show
% \begin{equation*}
% \frac{1}{p}\sum_{j=1}^p\mathbb I(|\hat\beta_j|= 0,|\beta_j|=0)\cip\gamma\p(|\eta(\tau_*Z;\theta_*)|=0).
% \end{equation*}
\end{proof}

By Lemma~\ref{lemma:lasso-amp-even}, letting $B_p=\frac{1}{2p}\sum_{j=1}^{2p}\mathbb I(\hat\beta^\lambda_{j}\le t,\beta_j=0)$,
\begin{equation*}
\e\left[\frac{1}{p}\sum_{j=1}^{p}\mathbb I(\hat\beta^\lambda_{2j}\le t)\right]=\frac{2}{1+\gamma_p}\e[B_p]\to\p(\eta(\tau_\lambda Z;\alpha_\lambda\tau_\lambda)\le t)
\end{equation*}
by the bounded convergence theorem.
\begin{equation*}
\begin{aligned}
\Var\left[\frac{1}{p}\sum_{j=1}^{p}\mathbb I(\hat\beta^\lambda_{2j}\le t)\right]&=\e\left[\frac{2B_p}{1+\gamma_p}(1-\frac{2B_p}{1+\gamma_p})\frac{\gamma_p}{p-1}\right]+\Var\left[\frac{2B_p}{1+\gamma_p}\right]\\
&\le\frac{\gamma_p}{p-1}+\frac{4}{(1+\gamma_p)^2}\Var[B_p]\to0,
\end{aligned}
\end{equation*}
where $\Var[B_p]\to0$ by the bounded convergence theorem since $B_p$ converges to a constant. Now we have shown for $t\ne0$,
\begin{equation*}
\hat F^\text{even}_p(t)\cip\p(\eta(\tau_\lambda Z;\alpha_\lambda\tau_\lambda)\le t),
\end{equation*}
%which is uniform for all $t\ge0$ by an argument similar to Lemma~\ref{lemma:unif-conv-in-prob};
and convergence of $\hat F^\text{odd}_p(s)$ follows from
\begin{equation*}
\hat F^\text{odd}_p(s)=2\times\frac{1}{2p}\sum_{j=1}^{2p}\mathbb I(\hat\beta^\lambda_j\le s)-\hat F^\text{even}_p(s),
\end{equation*}
where the convergence of $\frac{1}{2p}\sum_{j=1}^{2p}\mathbb I(\hat\beta^\lambda_j\le s)$ for $s\ne0$ can be established using uniformly continuous functions as upper and lower bounds on the indicator function by the same technique as in Lemma~\ref{lemma:lasso-amp-even}.

We have now proved the first result of Lemma~\ref{lemma:theorem-lasso-central} in expectation, and proceed to the variance. We need to analyze the following to apply the Markov inequality.
\begin{equation*}
\begin{aligned}
\e[(\frac{1}{p}\sum_{j=1}^p\mathbb I(f(\hat\beta^\lambda_{2j-1},\hat\beta^\lambda_{2j})\le t))^2]&=\frac1p\p(f(\hat\beta^\lambda_{2J_1-1},\hat\beta^\lambda_{2J_1'})\le t)\\
&\quad+\frac{p-1}p\p(f(\hat\beta^\lambda_{2J_1-1},\hat\beta^\lambda_{2J_1'})\le t,f(\hat\beta^\lambda_{2J_2-1},\hat\beta^\lambda_{2J_2'})\le t),
\end{aligned}
\end{equation*}
where
\begin{equation*}
(J_1,J_1',J_2,J_2')\simiid\Unif(\{(i,j,k,\ell)\in[p]^4:i\ne k,j\ne \ell\}).
\end{equation*}
We can evaluate this by showing that asymptotically $(\hat\beta^\lambda_{2J_1-1},\hat\beta^\lambda_{2J_1'},\hat\beta^\lambda_{2J_2-1},\hat\beta^\lambda_{2J_2'})$ converges in distribution to four independent random variables. Per the results we have shown, we can define CDFs $F_\text{even}$ and $F_\text{odd}$ such that for $t\ne0$, $\hat F_p^\text{even}(t)\cip F_\text{even}(t)$ and $\hat F_p^\text{odd}(t)\cip F_\text{odd}(t)$. Thus,
\begin{equation*}
\begin{aligned}
\p(\hat\beta^\lambda_{2J_1-1}\le s,\hat\beta^\lambda_{2J_1'}\le t,\hat\beta^\lambda_{2J_2-1}\le s',\hat\beta^\lambda_{2J_2'}\le t')&=\e\left[\hat F_\text{odd}(s)\hat F_\text{even}(t)\left(\frac{p\hat F_\text{odd}(s')-1}{p-1}\right)\left(\frac{p\hat F_\text{even}(t')-1}{p-1}\right)\right]\\
&\to F_\text{odd}(s)F_\text{even}(t)F_\text{odd}(s')F_\text{even}(t')
\end{aligned}
\end{equation*}
by the bounded convergence theorem, assuming $s,s',t,t'\ne0$ and $s\le s'$ and $t\le t'$ without loss of generality. It follows immediately that $(\hat\beta^\lambda_{2J_1-1},\hat\beta^\lambda_{2J_1'},\hat\beta^\lambda_{2J_2-1},\hat\beta^\lambda_{2J_2'})$ converges in distribution to four independent random variables. We are now able to claim convergence in probability for
\begin{equation*}
\frac{1}{p}\sum_{j=1}^p\mathbb I(f(\hat\beta^\lambda_{2j-1},\hat\beta^\lambda_{2j})\le t).
\end{equation*}
For
\begin{equation*}
\frac{1}{|\{j\in[p]:\beta_{2j-1}=0\}|}\sum_{j=1}^p\mathbb I(f(\hat\beta^\lambda_{2j-1},\hat\beta^\lambda_{2j})\le t,\beta_{2j-1}=0),
\end{equation*}
let $N_p=\{j\in[p]:\beta_{2j-1}=0\}$. To show its convergence, we use the same technique where we compute its first and second moments by finding the asymptotic distribution of a four-dimensional random vector. Specifically, We just need to show that
\begin{equation*}
(\hat\beta^\lambda_{2J_1-1},\hat\beta^\lambda_{2J_1'},\hat\beta^\lambda_{2J_2-1},\hat\beta^\lambda_{2J_2'})\cid(\eta(\tau_\lambda Z_1;\alpha_\lambda\tau_\lambda),\eta(\tau_\lambda Z_2;\alpha_\lambda\tau_\lambda),\eta(\tau_\lambda Z_3;\alpha_\lambda\tau_\lambda),\eta(\tau_\lambda Z_4;\alpha_\lambda\tau_\lambda)),
\end{equation*}
where
\begin{equation*}
(J_1,J_1',J_2,J_2')\sim\Unif(\{(i,j,k,\ell)\in N_p^4: i\ne k,j\ne \ell\})\text{ and }Z_1,Z_2,Z_3,Z_4\simiid\mathcal N(0,1).
\end{equation*}
Note that
\begin{multline*}
\p(\hat\beta^\lambda_{2J_1-1}\le s,\hat\beta^\lambda_{2J_1'}\le t,\hat\beta^\lambda_{2J_2-1}\le s',\hat\beta^\lambda_{2J_2'}\le t')\\
=\e\left[\hat F_p^\text{odd null}(s)\hat F_p^\text{even null}(t)\left(\frac{p\gamma_p\hat F_p^\text{odd null}(s')-1}{p\gamma_p-1}\right)\left(\frac{p\gamma_p-\hat F_p^\text{even null}(t')}{p\gamma_p-1}\right)\right],   
\end{multline*}
assuming $s,s',t,t'\ne0$ and $s\le s'$ and $t\le t'$ without loss of generality, where
\begin{equation*}
\hat F_p^\text{odd null}(s)=\frac1{|N_p|}\sum_{j\in N_p}\mathbb I(\hat\beta^\lambda_{2j-1}\le s),\qquad \hat F_p^\text{even null}(t)=\frac1{|N_p|}\sum_{j\in N_p}\mathbb I(\hat\beta^\lambda_{2j}\le t).
\end{equation*}
The result follows if we show that for $t\ne0$
\begin{equation*}
\hat F_p^\text{odd null}(t),\hat F_p^\text{even null}(t)\cip\p(\eta(\tau_\lambda Z;\alpha_\lambda\tau_\lambda)\le t).
\end{equation*}
This is true because
\begin{equation*}
\gamma_p\hat F_p^\text{odd null}(t)=2B_p-\hat F_\text{even}(t),
\end{equation*}
and $\hat F_p^\text{odd null}(t)\eqd\hat F_p^\text{even null}(t)$ by exchangeability.
\end{proof}

\noindent\textbf{Theorem~\ref{theorem:retro-mc}.} \textit{Consider using the test statistics $T=n^{-1}\ndata{X}^\top\ndata{Y}$ for the CRT, and $T_j=n^{-1}\ndata{X}_j^\top\ndata{Y}$ for multiple testing with CRT $p$-values and knockoffs. Let $M_{\textnormal{retro}}^2$ be the asymptotic second moment of the retrospectively collected $Y_i$, i.e.,
\begin{equation*}
M_{\textnormal{retro}}^2=\frac{\e[Y_\textnormal{raw}^2g(Y_\textnormal{raw})]}{\e[g(Y_\textnormal{raw})]},
\end{equation*}
where $Y_\textnormal{raw}\sim\mathcal N(0,\sigma^2+v_Z^2)$ is drawn from the asymptotic distribution of $Y$ without rejection.\footnote{$M_\textnormal{retro}$ always exists because $g(y)\in[0,1]$ and is not almost everywhere zero.} Note that in Setting~\ref{model:retro-crt}, the corresponding $v_Z^2$ (or $v_{X_{\text{-}j}}^2$) is equal to $\kappa\e[B_0^2]$.
\begin{enumerate}
    \item In Setting~\ref{model:retro}, the asymptotic power of the CRT is equal to that of a $z$-test with standardized effect size
    \[ \frac{h{M_{\textnormal{retro}}}}{v_Z^2+\sigma^2}. \]
\item In Setting~\ref{model:retro-crt}, for almost all $q\in(0,1)$, BH or AdaPT at level $q$ applied to CRT $p$-values using $T_j$ (or $|T_j|$) have one-sided (or two-sided) effective $\pi_\mu$ given by the distribution of $\frac{M_{\textnormal{retro}}}{\sigma^2+\kappa\e[B_0^2]}B_0$ with respect to BH or AdaPT at level $q$.
\item In Setting~\ref{model:retro-crt}, for almost all $q\in(0,1)$, knockoffs with $\tilde{\ndata{X}}$ an i.i.d. copy of $\ndata{X}$, antisymmetric function $f(x,y)=x-y$, test statistic $T_j$, and level $q$ has one-sided effective $\pi_\mu$ given by the distribution of $\frac{M_{\textnormal{retro}}}{\sqrt{2}(\sigma^2+\kappa\e[B_0^2])}B_0$ with respect to AdaPT at level $q$.
\end{enumerate}}

\begin{proof}[Proof of Theorem~\ref{theorem:retro-mc}]
We prove the three statements in the theorem one by one. In the proof, we rescale $T$ and $\tilde T$ by $\sqrt n$, as we will make explicit later when needed.
\begin{enumerate}
    \item The retrospective sampling does not affect how we run the CRT, since the CRT is carried out using the distribution $\tilde X\mid Z\sim\mathcal N(Z\xi,1)$. Hence, we should still analyze \eqref{equation:mc-power}. Same as the rest of the proof of Theorem~\ref{theorem:prop-power-mc}, we only need to show that instead of $\|Y\|/n\cip\sigma^2+v_Z^2$, we have
$\|\ndata{Y}\|^2/n\cip M_\text{retro}^2$, which holds because the $Y_i$'s are i.i.d. with all absolute moments satisfying ($k=0,1,\dots$)
\begin{equation}
\e[|Y_i|^k]=\frac{\e[|Y_\text{non-asymptotic}|^kg(Y_\text{non-asymptotic})]}{\e[g(Y_\text{non-asymptotic})]}\to\frac{\e[|Y_\text{raw}|^kg(Y_\text{raw})]}{\e[g(Y_\text{raw})]},
\label{equation:y-moment}
\end{equation}
where
\begin{equation*}
Y_\text{non-asymptotic}\sim\mathcal N\left(0,\frac{h^2}{n}+(\theta+h\eta/\sqrt n)^\top\Sigma_Z(\theta+h\eta/\sqrt n)+\sigma^2\right).
\end{equation*}
Equation~\eqref{equation:y-moment} holds because we can write, for $k=0,1,\dots$,
\begin{multline*}
\e[|Y_\text{non-asymptotic}|^kg(Y_\text{non-asymptotic})]=\left(\frac{h^2}{n}+(\theta+h\eta/\sqrt n)^\top\Sigma_Z(\theta+h\eta/\sqrt n)+\sigma^2\right)^{k/2}\\
\times\e\left[|W|^kg\left(\sqrt{\frac{h^2}{n}+(\theta+h\eta/\sqrt n)^\top\Sigma_Z(\theta+h\eta/\sqrt n)+\sigma^2}W\right)\right], W\sim\mathcal N(0,1),
\end{multline*}
use \eqref{equation:mc-to-show} to establish
\begin{equation*}
\sqrt{\frac{h^2}{n}+(\theta+h\eta/\sqrt n)^\top\Sigma_Z(\theta+h\eta/\sqrt n)+\sigma^2}W\cid Y_\text{raw},
\end{equation*}
and finally apply the dominated convergence theorem with $|W|^k$ as the dominating function.

\item Applying Lemma~\ref{lemma:gaussian-covariate-conditional}, we have
\begin{equation*}
\ndata{X}^\top\ndata{Y}\mid\ndata{Y},\beta\sim\mathcal N\left(\frac{\|\ndata{Y}\|^2}{\|\beta\|^2+\sigma^2}\beta,\|\ndata{Y}\|^2\left(I-\frac{1}{\|\beta\|^2+\sigma^2}\beta\beta^\top\right)\right).
\end{equation*}
Let $T_j=n^{-1/2}\ndata{X}_j^\top\ndata{Y}$. For $j\ne k$,
\begin{multline*}
\left(\begin{array}{c}
T_j-\frac{M_\textnormal{retro}^2\sqrt n\beta_j}{\kappa\e[B_0^2]+\sigma^2}\\
T_k-\frac{M_\textnormal{retro}^2\sqrt n\beta_k}{\kappa\e[B_0^2]+\sigma^2}\\
\end{array}\right)\mid\ndata{Y},\beta\\
\sim\mathcal N\left(\left(\frac{\|\ndata{Y}\|^2/n}{\|\beta\|^2+\sigma^2}-\frac{M_\textnormal{retro}^2}{\kappa\e[B_0^2]+\sigma^2}\right)\left(\begin{array}{c}
\sqrt n\beta_j\\
\sqrt n\beta_k\\
\end{array}\right),\frac{\|\ndata{Y}\|^2}{n}\left[\begin{array}{cc}
1-\frac{\beta_j^2}{\|\beta\|^2+\sigma^2} & -\frac{\beta_j\beta_k}{\|\beta\|^2+\sigma^2}\\
-\frac{\beta_j\beta_k}{\|\beta\|^2+\sigma^2} & 1-\frac{\beta_k^2}{\|\beta\|^2+\sigma^2}\\
\end{array}\right]\right),
\end{multline*}
which converges in distribution to $\mathcal N(0,M^2_\text{retro}I_2)$. We get the desired result by dividing both sides by $M_\textnormal{retro}$.

\item We have shown in Lemma~\ref{lemma:mc-knockoff} that
\begin{equation*}
\left(\begin{array}{c}
\ndata{X}^\top\ndata{Y}\\
\tilde{\ndata{X}}^\top\ndata{Y}\\
\end{array}\right)\mid\ndata{Y},\beta\sim\mathcal N\left(\frac{\|\ndata{Y}\|^2}{\|\beta\|^2+\sigma^2}\left(\begin{array}{c}
\beta\\
0\\
\end{array}\right),\|\ndata{Y}\|^2\left[\begin{array}{cc}
   I-\frac{1}{\|\beta\|^2+\sigma^2}\beta\beta^\top  & 0 \\
   0  & I\\
\end{array}\right]\right).
\end{equation*}
Let $T_j=n^{-1/2}\ndata{X}_j^\top\ndata{Y}$, $\tilde T_j=n^{-1/2}\tilde{\ndata{X}}_j^\top\ndata{Y}$. For $j\ne k$,
\begin{multline*}
\left(\begin{array}{c}
T_j-\frac{M_\textnormal{retro}^2\sqrt n\beta_j}{\kappa\e[B_0^2]+\sigma^2}\\
T_k-\frac{M_\textnormal{retro}^2\sqrt n\beta_k}{\kappa\e[B_0^2]+\sigma^2}\\
\tilde T_j\\
\tilde T_k\\
\end{array}\right)\mid\ndata{Y},\beta\\
\sim\mathcal N\left(\left(\frac{\|\ndata{Y}\|^2/n}{\|\beta\|^2+\sigma^2}-\frac{M_\textnormal{retro}^2}{\kappa\e[B_0^2]+\sigma^2}\right)\left(\begin{array}{c}
\sqrt n\beta_j\\
\sqrt n\beta_k\\
0\\
0\\
\end{array}\right),\frac{\|\ndata{Y}\|^2}{n}\left[\begin{array}{cccc}
1-\frac{\beta_j^2}{\|\beta\|^2+\sigma^2} & -\frac{\beta_j\beta_k}{\|\beta\|^2+\sigma^2} & 0 & 0\\
-\frac{\beta_j\beta_k}{\|\beta\|^2+\sigma^2} & 1-\frac{\beta_k^2}{\|\beta\|^2+\sigma^2} & 0 & 0\\
0 & 0 & 1 & 0\\
0 & 0 & 0 & 1\\
\end{array}\right]\right),
\end{multline*}
which converges in distribution to $\mathcal N(0,M^2_\text{retro}I_4)$. We get the desired result by dividing both sides by $M_\textnormal{retro}$.
\end{enumerate}
\end{proof}
\section{Fixed-X test with the OLS coefficient}
\label{sec:fixed-X-OLS}
Consider the fixed-X test in Section~\ref{sec:CRT-OLS}. Under the sequence of alternatives $\beta=h/\sqrt{n}$, the power is
\begin{equation*}
\p_{\beta=h/\sqrt n}\left(\hat\beta_1>z_\alpha\sigma\sqrt{{\hat\Omega}_{11}}\right)=\e\left[\p_{\beta=h/\sqrt n}\left(\hat\beta_1>z_\alpha\sigma\sqrt{{\hat\Omega}_{11}}\mid\ndata{X},\ndata{Z}\right)\right]=\e\left[\Phi\left(\frac{h}{\sigma\sqrt{n\hat\Omega_{11}}}-z_\alpha\right)\right].
\end{equation*}
Since $\hat\Omega\sim\text{Inv-Wishart}(\Omega=\Sigma^{-1},n)$, where $\Sigma$ is the joint covariance matrix of $(X,Z)$, we have
% \begin{equation*}
% \e[n\hat\Omega_{11}]=\frac{n\Omega_{11}}{n-p-1}\to\frac{1}{1-\kappa}\Omega_{11}\quad\text{and}\quad\Var[n\hat\Omega_{11}]=\frac{2n^2\Omega_{11}^2}{(n-p-1)^2(n-p-3)}\to0.
% \end{equation*}
% Thus,
$n\hat\Omega_{11}\cip\Omega_{11}/(1-\kappa)$ by moment cacluations. Note that $\Omega_{11}=1$, and it then follows that the asymptotic power of the fixed-X test with OLS coefficient is
\begin{equation*}
\p_{\beta=h/\sqrt n}(\hat\beta_1>z_\alpha\sigma\sqrt{{\hat\Omega}_{11}})\to\Phi\left(\frac{h}{\sigma}\sqrt{\frac{1-\kappa}{\Omega_{11}}}-z_\alpha\right)=\Phi\left(\frac{h}{\sigma}\sqrt{1-\kappa}-z_\alpha\right),
\end{equation*}
the same as CRT with the OLS coefficient!
\section{The CRT with unlabeled data}
\label{sec:conditional-crt}
We discuss the conditional CRT in detail using the concrete example in Section~\ref{sec:CRT-conditional}, i.e., Setting~\ref{model:moderate-dim-lr} but with the following changes: $\xi$ is unknown; $\Var(X\,|\,Z)=1$ but is unknown to the CRT.

We notice that we could write
\begin{equation*}
\ndata{X}_*=\ndata{Z}_*\xi+\varepsilon^X_*,\varepsilon^X_*\sim\mathcal N(0,I_{n_*}),
\end{equation*}
where $\varepsilon_*^X$ is independent of $\ndata{Z}_*$ and $\varepsilon$. It can be seen that under the null hypothesis $H_0:{X}\ci {Y}\mid {Z}$, we have $\varepsilon^X_*\ci\ndata{Y}\mid\ndata{Z}_*$. Then
\begin{equation*}
\begin{aligned}
\ndata{X}_*&=\left(\ndata{Z}_*\left(\ndata{Z}_*^\top\ndata{Z}_*\right)^{-1}\ndata{Z}_*^\top\right)\ndata{X}_*+\left(I_{n_*}-\ndata{Z}_*\left(\ndata{Z}_*^\top\ndata{Z}_*\right)^{-1}\ndata{Z}_*^\top\right)\ndata{X}_*\\
&=\left(\ndata{Z}_*\left(\ndata{Z}_*^\top\ndata{Z}_*\right)^{-1}\ndata{Z}_*^\top\right)\ndata{X}_*+\left(I_{n_*}-\ndata{Z}_*\left(\ndata{Z}_*^\top\ndata{Z}_*\right)^{-1}\ndata{Z}_*^\top\right)\varepsilon^X_*\\
&=\left(\ndata{Z}_*\left(\ndata{Z}_*^\top\ndata{Z}_*\right)^{-1}\ndata{Z}_*^\top\right)\ndata{X}_*+A_{\ndata{Z}_*}A_{\ndata{Z}_*}^\top\varepsilon^X_*\\
&=\left(\ndata{Z}_*\left(\ndata{Z}_*^\top\ndata{Z}_*\right)^{-1}\ndata{Z}_*^\top\right)\ndata{X}_*+A_{\ndata{Z}_*}\|A_{\ndata{Z}_*}^\top\varepsilon_*^X\|\frac{A_{\ndata{Z}_*}^\top\varepsilon_*^X}{\|A_{\ndata{Z}_*}^\top\varepsilon_*^X\|},
\end{aligned}
\end{equation*}
where $A_{\ndata{Z}_*}$ is an $n_*\times(n_*-p)$ matrix that satisfies
\begin{equation*}
A_{\ndata{Z}_*}A_{\ndata{Z}_*}^\top=I_{n_*}-\ndata{Z}_*\left(\ndata{Z}_*^\top\ndata{Z}_*\right)^{-1}\ndata{Z}_*^\top,\quad A_{\ndata{Z}_*}^\top A_{\ndata{Z}_*}=I_{n_*-p}.
\end{equation*}
Such an $A_{\ndata{Z}_*}$ exists because $I_{n_*}-\ndata{Z}_*\left(\ndata{Z}_*^\top\ndata{Z}_*\right)^{-1}\ndata{Z}_*^\top$ is a projection matrix.
Hence, $A_{\ndata{Z}_*}^\top\varepsilon_*^X\mid\ndata{Z}_*\sim\mathcal N(0,I_{n_*-p})$. Let $H_{\ndata{Z}_*}=\ndata{Z}_*\left(\ndata{Z}_*^\top\ndata{Z}_*\right)^{-1}\ndata{Z}_*^\top$ and it follows that under the null,
\begin{equation*}
\ndata{X}_*\eqd H_{\ndata{Z}_*}\ndata{X}_*+A_{\ndata{Z}_*}\|A_{\ndata{Z}_*}^\top\varepsilon_*^X\|\cdot\tilde{U}\Bigm|\ndata{Z}_*,\ndata{Y},H_{\ndata{Z}_*}\ndata{X}_*,\|A_{\ndata{Z}_*}^\top\varepsilon_*^X\|,
\end{equation*}
where $\mathcal L\left(\tilde U\mid\ndata{Z}_*,\ndata{Y},H_{\ndata{Z}_*}\ndata{X}_*,\|A_{\ndata{Z}_*}^\top\varepsilon_*^X\|\right)$ is the uniform distribution on the sphere $\mathbb S^{n_*-p-1}$. Armed with this observation, we now know under the null,
\begin{equation*}
T(\ndata{X}_*,\ndata{Y},\ndata{Z}_*)\eqd T\left(H_{\ndata{Z}_*}\ndata{X}_*+A_{\ndata{Z}_*}\|A_{\ndata{Z}_*}^\top\varepsilon_*^X\|\cdot\tilde{U},\ndata{Y},\ndata{Z}_*\right)\Bigm|\ndata{Z}_*,\ndata{Y},H_{\ndata{Z}_*}\ndata{X}_*,\|A_{\ndata{Z}_*}^\top\varepsilon_*^X\|.
\end{equation*}
We know exactly the conditional distribution on the right hand side (at least, we can sample from it to get an empirical estimate), and a cutoff can thus be obtained.

As a concrete example, consider the marginal correlation $T_\text{MC}(\ndata{X}_*,\ndata{Y},\ndata{Z}_*)=\ndata{Y}^\top\ndata{X}$. Same as the previous derivation, we write
\begin{equation*}
T_\text{MC}(\ndata{X}_*,\ndata{Y},\ndata{Z}_*)=\ndata{Y}^\top\left(\ndata{Z}\left(\ndata{Z}_*^\top\ndata{Z}_*\right)^{-1}\ndata{Z}_*^\top\right)\ndata{X}_*+\ndata{Y}^\top\left(\ndata{X}-\left(\ndata{Z}\left(\ndata{Z}_*^\top\ndata{Z}_*\right)^{-1}\ndata{Z}_*^\top\right)\ndata{X}_*\right).
\end{equation*}
The first term is a discardable constant conditional on $\ndata{Z}_*,\ndata{Y},H_{\ndata{Z}_*}\ndata{X}_*,\|A_{\ndata{Z}_*}^\top\varepsilon_*^X\|$, making the second term the essential part, which admits an interesting interpretation as a generalization of the OLS coefficient with unlabeled data. To elaborate, note that we can write
\begin{equation*}
\beta\ndata{X}+\ndata{Z}\theta=\beta(I-\ndata{Z}(\ndata{Z}^\top \ndata{Z})^{-1}\ndata{Z}^\top)\ndata{X}+\ndata{Z}(\beta(\ndata{Z}^\top \ndata{Z})^{-1}\ndata{Z}^\top \ndata{X}+\theta)),
\end{equation*}
and thus run OLS equivalently by solving
\begin{equation*}
(\hat\beta,\hat\theta)=\argmin\limits_{\beta,\theta}\|\ndata{Y}-\beta(I-\ndata{Z}(\ndata{Z}^\top \ndata{Z})^{-1}\ndata{Z}^\top)\ndata{X}+\ndata{Z}(-\beta(\ndata{Z}^\top \ndata{Z})^{-1}\ndata{Z}^\top \ndata{X}-\theta))\|^2_2.
\end{equation*}
Then we have
\begin{equation}
\hat\beta=[\ndata{X}^\top(I-\ndata{Z}(\ndata{Z}^\top \ndata{Z})^{-1}\ndata{Z}^\top)^2\ndata{X}]^{-1}\ndata{X}^\top(I-\ndata{Z}(\ndata{Z}^\top \ndata{Z})^{-1}\ndata{Z}^\top)\ndata{Y}=\frac{\sum_{i=1}^n\tilde\tau_iY_i}{\sum_{i=1}^n\tilde\tau_i^2},
\label{eq:model-x-beta1}
\end{equation}
where
\begin{equation*}
\tilde\tau=\ndata{X}-\ndata{Z}(\ndata{Z}^\top \ndata{Z})^{-1}\ndata{Z}^\top \ndata{X}\mid \ndata{Z}\sim\mathcal N(0,I-\ndata{Z}(\ndata{Z}^\top \ndata{Z})^{-1}\ndata{Z}^\top).
\end{equation*}
Since the denominator of \eqref{eq:model-x-beta1} satisfies $n^{-1}\sum_{i=1}^n\tilde\tau_i^2\cip(1-\kappa)$, the essence of OLS statistic without unlabeled data is $\ndata{Y}^\top(I-\ndata{Z}(\ndata{Z}^\top \ndata{Z})^{-1}\ndata{Z}^\top)\ndata{X}$. A natural generalization is thus $\ndata{Y}^\top\left(I_{n\times n_*}-\ndata{Z}\left(\ndata{Z}_*^\top\ndata{Z}_*\right)^{-1}\ndata{Z}_*^\top\right)\ndata{X}_*$.

On the other hand, if we consider the original OLS statistic, its essence $\ndata{Y}^\top(I-\ndata{Z}(\ndata{Z}^\top \ndata{Z})^{-1}\ndata{Z}^\top)\ndata{X}$ is equal to $\ndata{Y}^\top(I-\ndata{Z}(\ndata{Z}^\top \ndata{Z})^{-1}\ndata{Z}^\top)\varepsilon^X$, and the only parameter unknown for its distribution given $\ndata{Y},\ndata{Z}_*$ is the scalar $\Var(X\,|\,Z)$. As we eventually learn $\Var(X\,|\,Z)$ asymptotically, even from the labeled data alone, it makes no difference by conditioning on the unlabeled data.

Now we proceed by removing the constant from $T_\text{MC}$,
\begin{multline*}
T^\text{ess}_\text{MC}(\ndata{X}_*,\ndata{Y},\ndata{Z}_*)=\ndata{Y}^\top\left(\ndata{X}-\left(\ndata{Z}\left(\ndata{Z}_*^\top\ndata{Z}_*\right)^{-1}\ndata{Z}_*^\top\right)\ndata{X}_*\right)\\
=\ndata{Y}^\top\left(I_{n\times n_*}-\ndata{Z}\left(\ndata{Z}_*^\top\ndata{Z}_*\right)^{-1}\ndata{Z}_*^\top\right)\varepsilon^X_*\eqd\ndata{Y}^\top I_{n\times n_*}A_{\ndata{Z}_*}\|A_{\ndata{Z}_*}^\top\varepsilon_*^X\|\cdot\tilde{U}\Bigm|\ndata{Z}_*,\ndata{Y},H_{\ndata{Z}_*}\ndata{X}_*,\|A_{\ndata{Z}_*}^\top\varepsilon_*^X\|.
\end{multline*}
The upper quantile of the last distribution could be obtained by Monte Carlo simulations. We notice that if we know $\Var(X\,|\,Z)=1$, we do not have to condition on $\|A_{\ndata{Z}_*}^\top\varepsilon_*^X\|$ and we can directly use the quantile of $\ndata{Y}^\top\left(I_{n\times n_*}-\ndata{Z}\left(\ndata{Z}_*^\top\ndata{Z}_*\right)^{-1}\ndata{Z}_*^\top\right)\varepsilon^X_*$, which simply follows an explicit Gaussian distribution conditional on $\ndata{Z}_*,\ndata{Y},H_{\ndata{Z}_*}\ndata{X}_*$. In fact, in our power analysis, we first make the above observation formal and show that we can indeed assume $\Var(X\,|\,Z)$ is known. %We then derive Conjecture~\ref{conjecture:model-X-unlabeled}.
\section{Comparison of the two $\tau_\lambda$'s for the CRT and knockoffs}
\label{sec:tau}
For presentational simplicity, we write $\tau_{\lambda_\text{CRT}}$ and $\tilde\tau_{\lambda_\text{KF}}$ for $\tau_{\lambda_\text{CRT}}^\text{CRT}$ and $\tau_{\lambda_\text{KF}}^\text{KF}$. We would like to show that the lowest $\tau_{\lambda_\text{CRT}}$ is smaller than the lowest $\tilde\tau_{\lambda_\text{KF}}$. Before we start, we make an important note on the notation: there are three parameters $\alpha$, $\tau$, and $\lambda$ in \citet{bayati2011lasso} to characterize the lasso asymptotics, and $\tau$ and $\lambda$ are defined as functions of $\alpha$ over a suitable range. In the rest of this article, we add subscript $\lambda$ to $\alpha$ and $\tau$ ($\alpha_\lambda$ and $\tau_\lambda$) to indicate their association with $\lambda$. In this section, we need to consider the change of $\tau$ as $\alpha$ varies, and as mentioned at the beginning of this section, there are two different $\tau$'s for CRT and knockoffs, so we use $\alpha$ without a subscript as a variable that varies, $\tau_{\lambda_\text{CRT}}$ and $\tilde\tau_{\lambda_\text{KF}}$ as functions of $\alpha$ in CRT and knockoffs settings, respectively, and $\tau$ as a dummy variable that does not depend on $\alpha$. We emphasize that the $\alpha$ in this section is purely an AMP parameter and has nothing to do the level of a hypothesis test.

For an $\alpha$ in a suitable range, we have $\tau_{\lambda_\text{CRT}}$ and $\tilde\tau_{\lambda_\text{KF}}$ as implicit functions of $\alpha$ via equations
\begin{equation}
\begin{aligned}
\tau_{\lambda_\text{CRT}}^2&=\sigma^2+\kappa\e[(\eta(B_0+\tau_{\lambda_\text{CRT}}Z;\alpha\tau_{\lambda_\text{CRT}})-B_0)^2],\\
\tilde\tau_{\lambda_\text{KF}}^2&=\sigma^2+2\kappa\e[(\eta(IB_0+\tilde\tau_{\lambda_\text{KF}}Z;\alpha\tilde\tau_{\lambda_\text{KF}})-IB_0)^2],
\end{aligned}
\label{equation:two-taus}
\end{equation}
where $I$ is a Bernoulli random variable with parameter $1/2$ independent of $B_0$ and $Z$. By Proposition~1.3 in \citet{bayati2011lasso}, every valid $\alpha$ for the knockoff setting is a valid $\alpha$ for the CRT setting (since the left hand side of Equation~(1.14) is non-increasing in $\alpha$ and the CRT setting doubles the $\delta$ parameter there compared to knockoffs). Therefore, if we can show that for every $\alpha$ valid in the knockoff setting, it defines a $\tau_{\lambda_\text{CRT}}$ smaller than $\tilde\tau_{\lambda_\text{KF}}$, which is valid and thus actually corresponds to a $\lambda$, then we have shown that the lowest $\tau_{\lambda_\text{CRT}}$ is smaller than the lowest $\tilde\tau_{\lambda_\text{KF}}$.

From now on, we fix $\alpha$ and thus fix $\tau_{\lambda_\text{CRT}}$ and $\tilde\tau_{\lambda_\text{KF}}$ as functions of $\alpha$. We first see that for any $\tau$,
\begin{equation*}
\e[(\eta(B_0+\tau Z;\alpha\tau)-B_0)^2]=\gamma\e[\eta(\tau Z;\alpha\tau)^2]+(1-\gamma)\e_{B\sim\pi_1}[(\eta(B+\tau Z;\alpha\tau)-B^2)],
\end{equation*}
and
\begin{equation*}
\e[(\eta(IB_0+\tau Z;\alpha\tau)-IB_0)^2]=\frac{1+\gamma}{2}\e[\eta(\tau Z;\alpha\tau)^2]+\frac{1-\gamma}{2}\e_{B\sim\pi_1}[(\eta(B+\tau Z;\alpha\tau)-B^2)],
\end{equation*}
so
\begin{equation*}
\e[(\eta(B_0+\tilde\tau_{\lambda_\text{KF}}Z;\alpha\tilde\tau_{\lambda_\text{KF}})-B_0)^2]<2\e[(\eta(IB_0+\tilde\tau_{\lambda_\text{KF}}Z;\alpha\tilde\tau_{\lambda_\text{KF}})-IB_0)^2].
\end{equation*}
By \eqref{equation:two-taus}, we immediately have
\begin{equation*}
\tilde\tau_{\lambda_\text{KF}}^2>\sigma^2+\kappa\e[(\eta(B_0+\tilde\tau_{\lambda_\text{KF}}Z;\alpha\tilde\tau_{\lambda_\text{KF}})-B_0)^2].
\end{equation*}
When $\tau^2\to0$, we have
\begin{equation*}
\tau^2<\sigma^2<\sigma^2+\kappa\e[(\eta(B_0+\tau Z;\alpha\tau)-B_0)^2].
\end{equation*}
Now consider the function $f_\alpha(\tau)=\sigma^2+\kappa\e[(\eta(B_0+\tau Z;\alpha\tau)-B_0)^2]-\tau^2$, where we have seen $f_\alpha(\tilde\tau_{\lambda_\text{KF}})<0$ and $f_\alpha(0^+)>0$. We can show $f_a$ is a continuous function of $\tau$ on $(0,\tilde\tau_{\lambda_\text{KF}}]$ with the dominated convergence theorem, because
\begin{equation*}
(\eta(B_0+\tau Z;\alpha\tau)-B_0)^2\le\max\left((\tau Z+\alpha\tau)^2,(\tau Z-\alpha\tau)^2\right)\le\underbrace{\tilde\tau_{\lambda_\text{KF}}^2\max((Z+\alpha)^2,(Z-\alpha)^2)}_{\text{dominating function}}.
\end{equation*}
By continuity, there is at least one $\tau\in(0,\tilde\tau_{\lambda_\text{KF}})$ that satisfies $f_a(\tau)=0$. i.e.,
\begin{equation*}
\tau^2=\sigma^2+\kappa\e[(\eta(B_0+\tau Z;\alpha\tau)-B_0)^2].
\end{equation*}
Due to uniqueness (Proposition~1.3 in \citet{bayati2011lasso}), this solution is $\tau_{\lambda_\text{CRT}}$ and thus $\tau^2_{\lambda_\text{CRT}}\in(0,\tilde\tau^2_{\lambda_\text{KF}})$.

\section{Simulation details}

\subsection{Oracle methods}
Before presenting our simulation results, we discuss some Bayesian methods as baselines that we compare against. They are referred to as oracle methods, because they require the knowledge of the prior distribution of the parameters, which our methods do not use and we generally do not expect to be available in practice.
\subsubsection{Controlling the Bayesian FDR}
\label{sec:bayes-fdr}
In the multiple testing problem, when we know the prior distribution of the parameters (e.g., when we know $\gamma$ and $\pi_1$ in Setting~\ref{model:lr-iid}), we can run a Bayesian procedure to incorporate the prior knowledge that we have. %In this section, we compare the methods discussed in Section~\ref{sec:multiple} with each other and with the Bayesian procedure as the baseline.
Suppose we have the posterior probabilities of the covariates being non-null as $p^B_1,\dots,p^B_p$. Without loss of generality, we assume they are ordered from large to small. Then we find $k$ such that
\begin{equation*}
\frac{\sum_{j=1}^kp^B_j}{k}>1-q>\frac{\sum_{j=1}^{k+1}p^B_j}{k+1}
\end{equation*}
and reject $1,2,\dots,k$. Finally, we reject $k+1$ with probability $r$, where $r$ satisfies
\begin{equation*}
(1-r)\frac{\sum_{j=1}^kp^B_j}{k}+r\frac{\sum_{j=1}^{k+1}p^B_j}{k+1}=1-q.
\end{equation*}
It is straightforward to see that this procedure controls the Bayesian FDR at level $q$. In fact, the above procedure controls
\begin{equation*}
\e[\text{FDP}\mid\ndata{X},\ndata{Y},\ndata{Z}]\le q.
\end{equation*}
This is neither stronger nor weaker than the FDR control conditional on the parameters elsewhere in the article:
\begin{equation*}
\e[\text{FDP}\mid\text{parameters of the model}]\le q,
\end{equation*}
while they both control the unconditional FDR.
In the simulations, we run a Gibbs sampler to estimate those posterior probabilities.

\subsubsection{Bayesian statistic is optimal for the CRT}
\label{sec:crt-opt-bayes}
Here, we show that in Setting~\ref{model:moderate-dim-lr}, the posterior probability is the optimal statistic to use for the CRT. Suppose we have a true prior $\pi$ for $\theta$, where $\pi$ is a mixture of $\delta_0$ and $\pi_1$ and $\pi_1$ has no point mass at $0$. Then by the Neyman--Pearson Lemma, the optimal level-$\alpha$ test for $H_0:\beta=0$ against $H_1:\beta\sim\pi_1$ among valid CRTs (i.e., tests conditional on $\ndata{Y},\ndata{Z}$) is the likelihood ratio test that rejects when
\begin{equation*}
T_\text{opt}(\ndata{X},\ndata{Y},\ndata{Z})=\frac{\iint\p(\ndata{X},\ndata{Z})\p_{\theta,\beta}(\ndata{Y}\mid\ndata{X},\ndata{Z})\pi(\theta)\di\theta\pi_1(\beta)\di\beta}{\int\p(\ndata{X},\ndata{Z})\p_{\theta,\beta=0}(\ndata{Y}\mid\ndata{X},\ndata{Z})\pi(\theta)\di\theta}>c_\alpha(\ndata{Y},\ndata{Z}),
\end{equation*}
where $c_\alpha$ is an appropriate cutoff (if $X$ is discrete, randomize when $T_\text{opt}=c_\alpha$). Interestingly, if we have an almost-correct prior on $\beta$, i.e., for a $\gamma\in(0,1)$, $\beta\sim\gamma\delta_0+(1-\gamma)\pi_1$, and $\beta\ci\theta$ {a priori}, then the posterior probability of $H_1$ is
\begin{multline*}
\p(H_1\text{ holds}\mid\ndata{X},\ndata{Y},\ndata{Z})\\
=\frac{(1-\gamma)\iint\p(\ndata{X},\ndata{Z})\p_{\theta,\beta}(\ndata{Y}\mid\ndata{X},\ndata{Z})\pi(\theta)\di\theta\pi_1(\beta)\di\beta}{\gamma\int\p(\ndata{X},\ndata{Z})\p_{\theta,\beta=0}(\ndata{Y}\mid\ndata{X},\ndata{Z})\pi(\theta)\di\theta+(1-\gamma)\iint\p(\ndata{X},\ndata{Z})\p_{\theta,\beta}(\ndata{Y}\mid\ndata{X},\ndata{Z})\pi(\theta)\di\theta\pi_1(\beta)\di\beta},
\end{multline*}
which is a monotone function of $T_\text{opt}(\ndata{X},\ndata{Y},\ndata{Z})$. Hence, the posterior probability is equivalent to the likelihood ratio and thus optimal, regardless of whether $\gamma$ is correctly specified.

We point out that the Bayesian methods shown in Figure~\ref{figure:comp_all} are not the BH-CRT using the oracle statistic introduced in this section, which is prohibitively expensive to simulate for large $n$ and $p$, and we expect it to have similar performance to the BH-CRT with distilled lasso based on experiments in Section~\ref{sec:simu-crt}.

\subsection{Comparison of BH and AdaPT applied to CRT $p$-values}
\label{sec:bh-adapt}

\begin{figure}[H]
    \centering
\includegraphics[width = 0.9\textwidth]{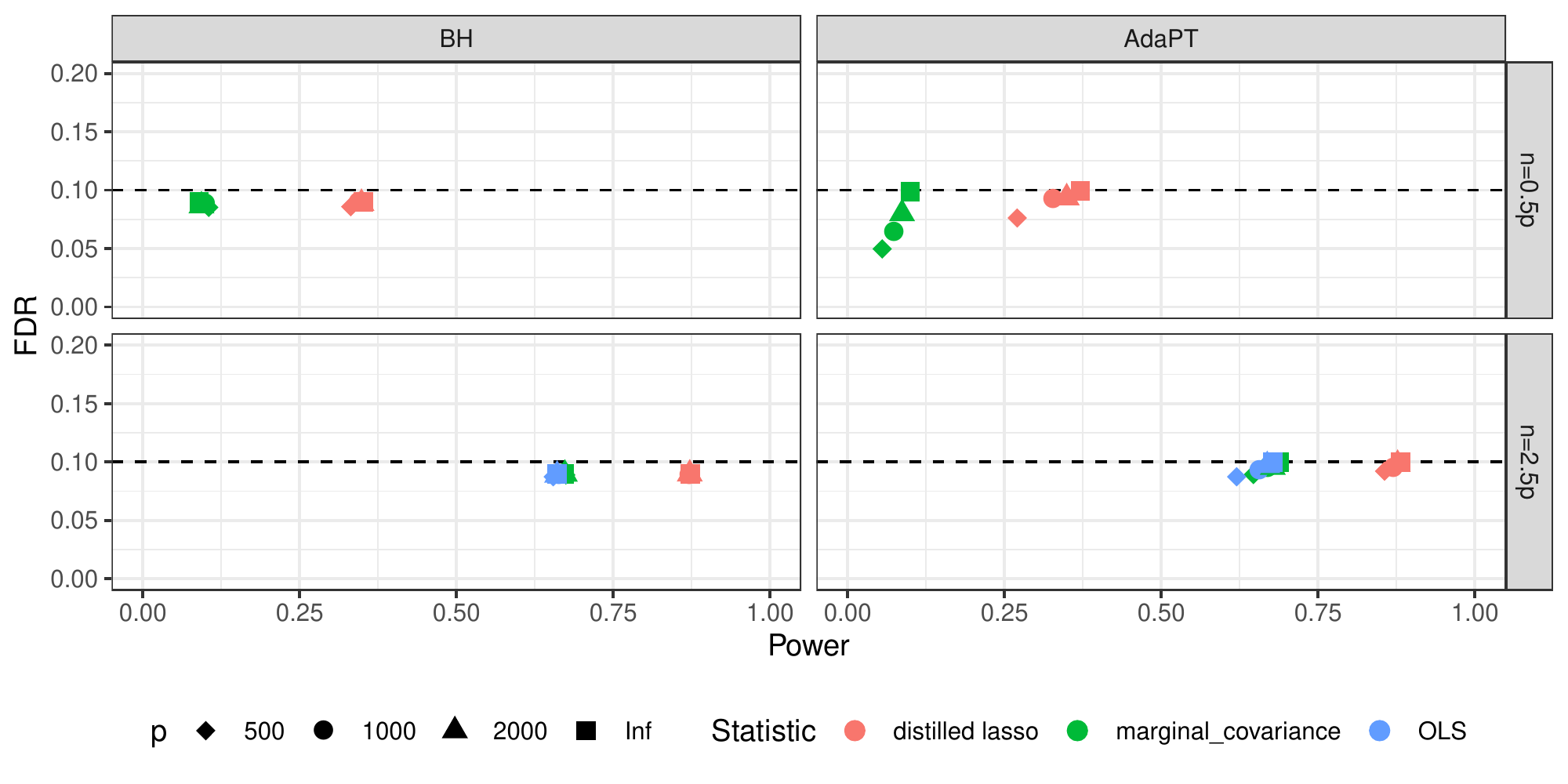}
    \caption{Comparison of BH and AdaPT applied to CRT $p$-values at FDR level $0.1$. The settings are the same as in Figure~\ref{figure:comp_all}. All standard errors are below $0.01$.} 
    \label{figure:BH-AdaPT}
\end{figure}

\end{document}